\newsavebox{\@brx}
\newcommand{\llangle}[1][]{\savebox{\@brx}{\(\m@th{#1\langle}\)}%
  \mathopen{\copy\@brx\kern-0.5\wd\@brx\usebox{\@brx}}}
\newcommand{\rrangle}[1][]{\savebox{\@brx}{\(\m@th{#1\rangle}\)}%
  \mathclose{\copy\@brx\kern-0.5\wd\@brx\usebox{\@brx}}}
\newtheorem{theorem}{Theorem}
\newtheorem{lemma}[theorem]{Lemma}
\newtheorem{proposition}[theorem]{Proposition}
\newtheorem{corollary}[theorem]{Corollary}
\theoremstyle{definition}
\newtheorem{definition}[theorem]{Definition}
\theoremstyle{remark}
\newtheorem{remark}[theorem]{Remark}
\numberwithin{theorem}{section}
\numberwithin{equation}{section}
\newtheorem{assumption}[theorem]{Assumption}
\renewcommand{\div}{\operatorname{div}}
\DeclareMathOperator{\Div}{div}
\DeclareMathOperator*{\essliminf}{ess\,lim\,inf}
\DeclareMathOperator*{\esssup}{ess\,sup}
\newcommand{\N}{\ensuremath{\mathbb{N}}}
\newcommand{\R}{\ensuremath{\mathbb{R}}}
\newcommand{\mint}{- \mskip-19,5mu \int}
\newcommand{\spt}{\operatorname{spt}}
\newcommand{\ca}{\operatorname{cap}}
\newcommand{\dx}{\mathrm{d}x}
\newcommand{\dt}{\mathrm{d}t}
\newcommand{\wto}{\rightharpoondown}
\newcommand{\wsto}{\overset{\raisebox{-1ex}{\scriptsize $*$}}{\rightharpoondown}}
\def\Xint#1{\mathchoice
    {\XXint\displaystyle\textstyle{#1}}%
    {\XXint\textstyle\scriptstyle{#1}}%
    {\XXint\scriptstyle\scriptscriptstyle{#1}}%
    {\XXint\scriptscriptstyle\scriptscriptstyle{#1}}%
    \!\int}
\def\XXint#1#2#3{\setbox0=\hbox{$#1{#2#3}{\int}$}
    \vcenter{\hbox{$#2#3$}}\kern-0.5\wd0}
\def\bint{\Xint-}
\def\dashint{\Xint{\raise4pt\hbox to7pt{\hrulefill}}}
\def\Xiint#1{\mathchoice
    {\XXiint\displaystyle\textstyle{#1}}%
    {\XXiint\textstyle\scriptstyle{#1}}%
    {\XXiint\scriptstyle\scriptscriptstyle{#1}}%
    {\XXiint\scriptscriptstyle\scriptscriptstyle{#1}}%
    \!\iint}
\def\XXiint#1#2#3{\setbox0=\hbox{$#1{#2#3}{\iint}$}
    \vcenter{\hbox{$#2#3$}}\kern-0.5\wd0}
\def\biint{\Xiint{-\!-}}
\renewcommand{\epsilon}{\varepsilon}
\newcommand{\eps}{\varepsilon}
\renewcommand{\rho}{\varrho}
\renewcommand{\epsilon}{\varepsilon}
\renewcommand{\rho}{\varrho}
\renewcommand{\d}{\:\! \mathrm{d}}
\DeclareMathOperator{\loc}{loc}
\numberwithin{equation}{section}
\begin{document}
\renewcommand{\refname}{References} 
\renewcommand{\abstractname}{Abstract} 
\title[$p$-parabolic capacity]{On notions of $p$-parabolic capacity and applications}

\author[K.~Moring]{Kristian Moring}
\address{Kristian Moring\\
	Fachbereich Mathematik, Paris-Lodron-Universität Salzburg\\
	Hellbrunner Str.~34, 5020 Salzburg, Austria}
\email{kristian.moring@plus.ac.at}

\author[C.~Scheven]{Christoph Scheven}
\address{Christoph Scheven\\
	Fakult\"at f\"ur Mathematik, Universit\"at Duisburg-Essen\\
	Thea-Leymann-Str.~9, 45127 Essen, Germany}
\email{christoph.scheven@uni-due.de}

\subjclass[2020]{35K55, 35K92, 31C15, 31C45}
\keywords{parabolic $p$-Laplace equation, parabolic capacity, nonlinear potential theory, polar sets, removability}

\begin{abstract}
We consider different notions of capacity related to the parabolic $p$-Laplace equation. Our focus is on a variational notion, which is consistent in the full range $1<p<\infty$. For such a notion we show some basic properties as well as its connection to other notions of capacity presented in the literature, and to a certain parabolic version of the Hausdorff measure. As applications, we use the introduced variational notion of capacity to study polar sets and removability results for supersolutions.
\end{abstract}

\makeatother

\maketitle

\section{Introduction}

In this paper, we consider notions of capacity and some applications for the parabolic $p$-Laplace equation
\begin{equation} \label{eq:parabolic-p-laplace}
\partial_t u - \Div ( |\nabla u|^{p-2} \nabla u ) = 0,
\end{equation}
where $1<p<\infty$. In potential theory, capacity is a central
concept. It is used for example in results related to boundary
regularity, removability and characterization of polar sets, see
e.g. the monographs \cite{HKM,watson-book} for an overview of the
theory.

Our aim in this paper is twofold. First, our objective is to present a
notion of capacity, which makes sense in the whole range $1<p<\infty$,
and also reflects the anisotropic behavior of
equation~\eqref{eq:parabolic-p-laplace}. Second, our goal is to show
that the notion of capacity satisfies some useful properties that
enable its use in applications. The beginning of our paper is devoted
to the properties of the variational capacity and its relation to the
notion of capacity introduced in~\cite{KKKP}. The latter part
demonstrates that the capacity we use is useful in potential theoretic
applications, and as examples we apply it to study polar sets and
removability results for supercaloric functions.

\subsection*{Notions of capacity used in the literature}
In the elliptic case, the notion of capacity is directly related to
the appropriate Sobolev space, and the properties are well known, see
e.g.~\cite{HKM,EG}. For
the heat equation and its generalizations ($p= 2$), different notions
for the capacity have been proposed, see
e.g.~\cite{lanconelli,watson,Pierre}. In view of function spaces, it
seems natural to consider functions that lie in the underlying
parabolic Sobolev space, and in addition, whose time derivative belong
to the corresponding dual space. This approach leads to the so-called
variational capacity, see~\cite{Pierre}. In the same article, the
author also considered a notion of capacity defined via a measure data
problem, as in Definition~\ref{def:nonlinear-capacity-compact}, and
showed that both concepts of capacity are equivalent in the case of
the heat equation. 

In the case of nonlinear parabolic equations of $p$-Laplace type, the authors
in~\cite{KKKP} used the latter approach and
defined the capacity of a set via measure data
problems, see Definition~\ref{def:nonlinear-capacity-compact}. Using
the function space setting, different variants of variational capacity have been introduced in~\cite{droniou} when $1<p<\infty$ (see also~\cite{Saraiva1,Saraiva2}), and in~\cite{AKP} when $2<p<\infty$. In both papers~\cite{AKP} and~\cite{droniou}, the function space is given by $\big\{ v \in L^p(0,T;V) : \partial_t v \in L^{p'}(0,T;V') \big\}$, in which $V = W^{1,p}_0(\Omega) \cap L^2(\Omega)$, where the latter space in $V$ can be omitted in case $\frac{2n}{n+2}<p<\infty$. However, there are also certain differences. On the one hand, the minimized quantity in~\cite{droniou} is given by the actual norm of the space, while in~\cite{AKP}, the norm is modified to respect the anisotropic behavior of equation~\eqref{eq:parabolic-p-laplace}. On the other hand, the authors in~\cite{AKP} use an intrinsic scaling in the definition of the capacity whenever $T < \infty$. Due to the intrinsic scaling and relying on the restriction $p>2$, the authors in~\cite{AKP} were able to show that their notion of variational capacity is equivalent with the notion of capacity introduced in~\cite{KKKP}, at least for compact sets.

\subsection*{The definition of (variational) capacity used in the
  present work}

We consider the space
$$
\mathcal{W}(\Omega_T) = \left\{ v \in L^p(0,T;W^{1,p}_0(\Omega)) \cap L^\infty(0,T;L^2(\Omega)) : \partial_t v \in L^{p'}(0,T;W^{-1,p'}(\Omega)) \right\}.
$$
In our notion of variational capacity, we denote the quantity to be minimized by
\begin{align*}
\|v\|_{\mathcal{W}(\Omega_T)} &= \iint_{\Omega_T} |\nabla v|^p \, \d x \d t + \esssup_{0<t<T} \int_\Omega v^2 \, \d x \\
&\quad + \sup_{\substack{ \phi \in C^\infty_0(\Omega_T) \\ \|\nabla \phi\|_{L^p(\Omega_T)} \leq 1}} \left| \iint_{\Omega_T} v \partial_t \phi \, \d x \d t \right|^{p'}.
\end{align*}
There is some freedom for choosing the precise definition of the variational capacity, provided that the base $\Omega$ of the reference set is bounded and satisfies rather mild regularity assumptions. As it was already noted in~\cite{droniou}, it is possible to use functions in $C_0^\infty(\Omega \times \R)$ or in $\mathcal{W}(\Omega_T)$ as a starting point for variational capacity.

In the present paper, we choose to work mainly with smooth functions, and define the capacity for a compact set $K \subset \Omega_T$ by
$$
\ca_{\mathrm{var}} (K, \Omega_T) = \inf \big\{ \|v\|_{\mathcal{W}(\Omega_T)} : v \in C_0^\infty(\Omega \times \R ),\, v \geq \chi_K \big\}.
$$
However, we note that in some cases notions with functions in
$\mathcal{W}(\Omega_T)$ seem to be more practical, and we show in
Appendix~\ref{appendix} that they give equivalent notions of capacity under mild regularity assumptions on $\Omega$.

Compared to the notion introduced in~\cite{AKP}, the advantage is that
no intrinsic scaling is used in our notion, such that the same notion
works in the full range $1<p<\infty$. Furthermore, the values of $T$
do not play any important role. On the other hand, the notion
in~\cite{KKKP} is based on a definition in which admissible objects
are Radon measures satisfying certain conditions, see
Definition~\ref{def:nonlinear-capacity-compact}. With this definition
an advantage is that the capacity of a compact set can be
represented via a capacitary potential, see Lemma~\ref{lem:cap-compact-representation}. However, it seems difficult to estimate capacities of some explicit sets using this definition, and to obtain some useful properties especially in the case when $p$ is close to $1$.

In Section~\ref{sec:prelim}, we gather some known results and show
some basic properties for the variational capacity defined
above. These include downward and upward monotone type convergences,
see Lemmas~\ref{lem:capvar-limit-compact}
and~\ref{lem:capvar-upwards-conv}, as well as a variant of countable
subadditivity in Lemma~\ref{lem:subadditivity}. In addition, we show
in Lemma~\ref{lem:capvar-Tinfty2} that the choice of $0<T\leq \infty$ in the reference set does not play an essential role in the notion of variational capacity.

\subsection*{Equivalence of different notions of capacities}

We are able to show that for compact sets, the notion of capacity in~\cite{KKKP} is equivalent with our variational capacity.

\begin{theorem} \label{thm:capvar-eq-cap}
Let $1<p<\infty$, $0<T\leq \infty$ and suppose that $\Omega \subset \R^n$ is a bounded open set such that $\R^n \setminus \Omega$ is uniformly $p$-fat according to Definition~\ref{def:p-fat}. Let $\ca$ and $\ca_{\mathrm{var}}$ be the notions of capacity according to Definitions~\ref{def:nonlinear-capacity-compact} and~\ref{def:varcap}, respectively.  If $K \subset \Omega_T$ is a compact set, then
$$
\ca_{\mathrm{var}} (K,\Omega_T) \approx \ca (K,\Omega_\infty)
$$
up to a positive constant depending only on $n,p$ and the parameter
$\alpha$ from Definition~\ref{def:p-fat} of $p$-fatness.
\end{theorem}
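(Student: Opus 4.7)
The plan is to establish both inequalities separately.

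For the bound $\ca_{\mathrm{var}}(K,\Omega_T)\lesssim\ca(K,\Omega_\infty)$, I would take an admissible measure $\mu$ nearly realising $\ca(K,\Omega_\infty)$ together with its capacitary potential $u$ supplied by Lemma~\ref{lem:cap-compact-representation}, which satisfies
\[
\partial_t u-\Div(|\nabla u|^{p-2}\nabla u)=\mu
\]
in the weak measure-data sense, attains zero on the parabolic boundary, and obeys $0\le u\le 1$ with $u\ge\chi_K$. Testing formally by $u$ (made rigorous through Steklov averaging in time) would give the energy bound
\[
\tfrac12\esssup_{t>0}\int_\Omega u^2\,\dx+\iint_{\Omega_\infty}|\nabla u|^p\,\dx\,\dt\le\iint_{\Omega_\infty} u\,\d\mu\le\mu(\Omega_\infty),
\]
using $u\le 1$. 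I would then produce smooth compactly supported competitors $v_{\eta,\delta}\in C_0^\infty(\Omega\times\R)$ by shifting $u$ up by $\eta>0$, truncating above and below, and mollifying in space and time, so that $v_{\eta,\delta}\ge\chi_K$ for $\eta>0$. The first two parts of $\|v_{\eta,\delta}\|_{\mathcal W(\Omega_T)}$ are controlled by the estimate above, while the dual norm of $\partial_t v_{\eta,\delta}$ is handled, via the equation for $u$, by $\|\nabla u\|_{L^p}^{p-1}\|\nabla\phi\|_{L^p}$ plus a term involving $|\iint\phi\,\d\mu|$ that is absorbed using the admissibility conditions on $\mu$ from Definition~\ref{def:nonlinear-capacity-compact}.

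For the reverse inequality $\ca(K,\Omega_\infty)\lesssim\ca_{\mathrm{var}}(K,\Omega_T)$, I would take a near-optimal $v\in C_0^\infty(\Omega\times\R)$ with $v\ge\chi_K$ and solve the parabolic obstacle problem of finding the smallest supersolution $u\in\mathcal W(\Omega_T)$ to~\eqref{eq:parabolic-p-laplace} lying above $\chi_K$, with $v$ serving as an admissible comparison. The obstacle gives rise to a nonnegative Lagrange multiplier
\[
\mu\defeq \partial_t u-\Div(|\nabla u|^{p-2}\nabla u),
\]
which, as a nonnegative distribution on $\Omega_\infty$, represents a Radon measure supported in $\overline{K}$. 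Testing this equation against a smooth cutoff equal to one on a parabolic neighbourhood of $K$, and invoking the minimality of $u$ together with the admissibility of $v$, would yield $\mu(\Omega_\infty)\lesssim\|v\|_{\mathcal W(\Omega_T)}$. Uniqueness for the measure-data problem then identifies $u$ with the capacitary potential of $\mu$, so $\mu$ is admissible in Definition~\ref{def:nonlinear-capacity-compact} and realises $\ca(K,\Omega_\infty)\le\mu(\Omega_\infty)\lesssim\|v\|_{\mathcal W(\Omega_T)}$.

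The hard part is Direction~2: realising $\mu$ as a genuine nonnegative Radon measure whose total mass is controlled by the variational norm, and then verifying its admissibility for the measure-data capacity. The $p$-fatness assumption on $\R^n\setminus\Omega$ plays a crucial role at two points. On the one hand, it provides Poincar\'e- and Wiener-type estimates ensuring that admissible competitors and test functions realise their zero lateral boundary values strongly enough for the duality pairings $\iint\phi\,\d\mu$ to close. On the other hand, it underpins the boundary regularity of the capacitary potential needed to identify the two notions of capacity for compact sets. A careful mollification and truncation scheme is used throughout to bridge the smooth admissible class of Definition~\ref{def:varcap} with the generally non-smooth capacitary potential appearing in Definition~\ref{def:nonlinear-capacity-compact}, uniformly in $p\in(1,\infty)$.
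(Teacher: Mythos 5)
Your overall plan — relating $\ca$ and $\ca_{\mathrm{var}}$ via the capacitary potential in one direction and via an obstacle problem in the other — is close in spirit to the paper's proof, which passes through the intermediate energy capacity $\ca_{\mathrm{en}}$ (Definition~\ref{def:encap}, Lemmas~\ref{lem:cap-eq-encap}, \ref{lem:W-leq-en}, \ref{lem:en-leq-W}, \ref{lem:varcap-eq-encap}). However, both directions of your argument have genuine gaps precisely where the paper does the real work.

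In Direction~1, you want to mollify the capacitary potential $u=\widehat{R}_K$ to obtain a smooth competitor in $\mathcal{W}(\Omega_T)$, and you claim that the dual-norm term $\|\partial_t(\cdot)\|_{\mathcal{V}'(\Omega_T)}^{p'}$ is controlled by the equation $\partial_tu-\Delta_pu=\mu$ "plus a term involving $\iint\phi\,\d\mu$ that is absorbed using the admissibility conditions on $\mu$." This absorption does not happen. The only information Definition~\ref{def:nonlinear-capacity-compact} gives about $\mu$ is that $0\le u_\mu\le 1$ and $\spt\mu\subset K$; there is nothing preventing $\mu$ from being, say, a surface measure on a time slice, in which case $\mu\notin\mathcal{V}'(\Omega_T)$ and $\|\partial_tu\|_{\mathcal{V}'(\Omega_T)}=\infty$ no matter how much energy $u$ has. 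Mollification does not repair this: for $\phi$ with $\|\nabla\phi\|_{L^p(\Omega_T)}\le1$, the pairing $\iint(\phi\ast\eta_\delta)\,\d\mu$ is controlled only by $\|\phi\ast\eta_\delta\|_{L^\infty}$, which is not bounded in terms of $\|\nabla\phi\|_{L^p}$ because $\mathcal{V}(\Omega_T)$ gives no control in the time variable; the bound degenerates as $\delta\to0$. The paper circumvents this in Lemma~\ref{lem:W-leq-en} by replacing $u$ with the solution $v$ of the backward-in-time problem $-\partial_tv-\Delta_pv=-2\Delta_pu$: since $u\in\mathcal{V}(\Omega_\tau)$, the right-hand side $-2\Delta_pu$ is in $\mathcal{V}'(\Omega_\tau)$, so $\partial_tv\in\mathcal{V}'$ is built in, and $v\ge u$ follows from the comparison principle because $-2\Delta_pu\ge-\partial_tu-\Delta_pu$ for a supersolution. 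That construction is the essential missing ingredient in your first direction.

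In Direction~2, the assertion that testing the Riesz measure of the obstacle solution against a cutoff, combined with "minimality of $u$," yields $\mu(\Omega_\infty)\lesssim\|v\|_{\mathcal W(\Omega_T)}$ hides the step that actually requires proof. A cutoff test gives $\mu(\Omega_\infty)\le c\big(\|\partial_t\zeta\|_{L^1}+\|\nabla u\|_{L^p}^{p-1}\|\nabla\zeta\|_{L^p}\big)$ with $\zeta$ depending on $K$; to convert this into an estimate by $\|v\|_{\mathcal{W}(\Omega_T)}$ with a constant depending only on $n,p,\alpha$, one still needs an energy estimate of the form $\|u\|_{\mathrm{en},\Omega_T}\le c\|v\|_{\mathcal{W}(\Omega_T)}$. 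That is the content of Lemma~\ref{lem:en-leq-W}, which the paper proves by solving the obstacle problem with the smooth obstacle $v$ itself (so that $\spt\mu_u\subset\{u=v\}$) and testing the weak formulation with $(((u-v-\delta)_\eps)_+\chi_{h,\tau})_\eps$. Taking $\chi_K$ rather than $v$ as the obstacle, as you propose, also creates a lower-order technical problem: $\chi_K$ is not continuous, so Proposition~\ref{prop:obstacle} does not apply directly, and the support condition $\spt\mu_u\subset\{u=\chi_K\}$ becomes awkward to exploit. Finally, the paper handles the passage from finite unions of cylinders to arbitrary compact $K$ via the decreasing-limit lemmas (\ref{lem:cap-decreasing-limit}, \ref{lem:capvar-limit-compact}) and the passage from finite $T$ to $T=\infty$ via Lemmas~\ref{lem:capvar-Tinfty} and \ref{lem:capvar-Tinfty2}; your sketch does not address either reduction.
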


The proof is given in Section~\ref{subseq:capvar-cap-eq}.
The first step in the proof is to show that the variational capacity and the
so-called energy capacity (see Definition~\ref{def:encap}) are
equivalent for compact sets of specific type, and applying the known
equivalency of energy capacity and the notion in~\cite{KKKP}
from~\cite{AKP}. We prove the former result by showing that for each
admissible function for the energy capacity, there exists an
admissible function for the variational capacity, such that the
respective norms are ordered in a suitable direction, and vice
versa. These results are stated in Lemmas~\ref{lem:W-leq-en}
and~\ref{lem:en-leq-W}. This type of strategy was used in~\cite{AKP}
for the intrinsically scaled variational capacity ($p> 2$), and already in~\cite{Pierre} in the linear case ($p=2$).

Theorem~\ref{thm:capvar-eq-cap} also implies that the variational
capacity of a compact set can be expressed via a capacitary potential,
since this is known for the nonlinear parabolic capacity
from~\cite{KKKP}, see Lemma~\ref{lem:cap-compact-representation}. This will be a useful connection in the following applications.

In Section~\ref{sec:cap-sets}, we collect and prove some useful
estimates and characterizations for the capacity of specific
sets. Some of these results are also described in~\cite{AKP,droniou}
with their notions of capacity. Especially, we obtain a formula, up to
a constant, for the capacity of space-time cylinders. For a cylinder
of the form
$$
Q_{\rho,\tau} = Q_{\rho,\tau}(z_o) = B_\rho(x_o) \times (t_o - \tau,
t_o)\subset\Omega_T,
$$
we show that
\begin{equation} \label{eq:capvar-cylinder}
\ca_{\mathrm{var}} (Q_{\rho,\tau},\Omega_T) \approx \rho^n + \tau \rho^{n-p},
\end{equation}
provided $1<p<n$, cf. Lemma~\ref{lem:capvar-cylinder}.

\subsection*{Capacity and Hausdorff measure}

In the elliptic case, it is well known that the capacity of a set is
related to its Hausdorff measure, see e.g.~\cite{HKM,EG}. In the
parabolic setting, it is natural to consider a Hausdorff measure
defined via a parabolic metric, whose unit balls are parabolic cylinders. Property~\eqref{eq:capvar-cylinder} suggests to use
cylinders of the form $Q_{\rho,\rho^p}$, since their capacity is comparable to $\rho^n$. The corresponding parabolic metric is defined by
\begin{equation} 
d_p\big((x,t),(0,0)\big) = \max \big\{ |x|, |t|^\frac{1}{p} \big\},
\end{equation}
which has already been suggested in~\cite{AS}. Using this metric we
define the $s$-dimensional Hausdorff measure $\mathcal{P}^s$, see
Section~\ref{sec:cap-hausdorff}. For the connection between the
Hausdorff measures $\mathcal{P}^s$ and the variational capacity, we
obtain the following results, see
Propositions~\ref{prop:cap-hausdorff} and \ref{prop:cap-leq-Pn}.

\begin{theorem} \label{thm:hausdorff-capvar}
Let $1<p<\infty$, $0<T\leq \infty$ and suppose that $\Omega \subset \R^n$ is a bounded open set such that $\R^n \setminus \Omega$ is uniformly $p$-fat according to Definition~\ref{def:p-fat}. Suppose that $E \subset \Omega_T$ is a set. 
\begin{enumerate}
\item If $\ca_{\mathrm{var}} (E,\Omega_T) = 0$, then $\mathcal{P}^s
  (E) = 0$ for all $s > n$. \label{eq:hausdorff-capvar1}
\item If $\mathcal{P}^n(E) = 0$, then $\ca_{\mathrm{var}}(E, \Omega_T) = 0$. \label{eq:hausdoff-capvar2}
\end{enumerate}
\end{theorem}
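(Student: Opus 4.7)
The two implications go by quite different routes. Direction~(2) follows almost immediately from the cylinder capacity estimate \eqref{eq:capvar-cylinder} together with countable subadditivity. Direction~(1), by contrast, requires turning a sequence of admissible test functions with vanishing $\|\cdot\|_{\mathcal{W}(\Omega_T)}$ into an explicit parabolic cover of $E$, and is the more delicate part.

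\textbf{Proof plan for (2).} Fix $\epsilon, \delta > 0$. Since $\mathcal{P}^n(E) = 0$, there is a cover $E \subset \bigcup_i Q_{\rho_i, \rho_i^p}(z_i)$ with $\rho_i < \delta$ and $\sum_i \rho_i^n < \epsilon$. Choosing $\delta$ small enough that each (slightly enlarged) cylinder is compactly contained in $\Omega_T$, the estimate \eqref{eq:capvar-cylinder} yields
\[
\ca_{\mathrm{var}}(Q_{\rho_i, \rho_i^p}, \Omega_T) \leq C(n,p)\bigl(\rho_i^n + \rho_i^p\cdot \rho_i^{n-p}\bigr) = 2C(n,p)\,\rho_i^n.
\]
The countable subadditivity from Lemma~\ref{lem:subadditivity}, together with the outer-regular extension of $\ca_{\mathrm{var}}$ to arbitrary sets, then gives $\ca_{\mathrm{var}}(E, \Omega_T) \leq C\epsilon$. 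Letting $\epsilon \to 0$ finishes (2).

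\textbf{Proof plan for (1).} Fix $s > n$. By hypothesis, for every $\eta > 0$ there exists $u_\eta \in C_0^\infty(\Omega \times \R)$ with $u_\eta \geq \chi_E$ and $\|u_\eta\|_{\mathcal{W}(\Omega_T)} < \eta$. The open set $A_\eta = \{u_\eta > \tfrac12\}$ then contains $E$. For each $z \in A_\eta$, let $\rho(z)$ be the smallest parabolic radius for which the average of $u_\eta$ over $Q_{\rho(z), \rho(z)^p}(z)$ is at least $\tfrac14$; continuity of $u_\eta$ guarantees its existence, while a parabolic Sobolev/Poincaré estimate combined with the smallness of $\|u_\eta\|_{\mathcal{W}}$ forces $\rho(z)\leq \delta_\eta$ for some $\delta_\eta\to0$. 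A Vitali extraction in the doubling metric space $(\R^{n+1}, d_p)$ produces a countable disjoint subfamily $\{Q_i\}$ whose $5$-fold enlargements still cover $A_\eta$. The reverse capacity estimate
\[
\|u_\eta\|_{\mathcal{W}(Q_i)} \geq c\,\rho_i^n,
\]
valid whenever the average of $u_\eta$ on $Q_i$ stays above $\tfrac14$, then combines with disjointness to yield $\sum_i \rho_i^n \leq C\eta$. Since $s > n$, we control the parabolic Hausdorff content at scale $5\delta_\eta$ by
\[
\mathcal{P}^s_{5\delta_\eta}(E) \leq C\sum_i \rho_i^s \leq C\delta_\eta^{s-n}\sum_i \rho_i^n \leq C\delta_\eta^{s-n}\eta,
\]
which tends to $0$ as $\eta \to 0$. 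Hence $\mathcal{P}^s(E) = 0$.

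\textbf{Main obstacle.} The crux of (1) is the reverse capacity estimate: if the average of $u_\eta$ on $Q = Q_{\rho, \rho^p}$ is bounded below by a fixed constant, then the restriction of $u_\eta$ to $Q$ must already contribute at least $c\rho^n$ to $\|u_\eta\|_{\mathcal{W}}$. In the superquadratic regime $p\geq 2$ this is relatively clean via a parabolic Sobolev/Poincaré inequality on $Q$, but for $1<p<2$ the $L^\infty L^2$ piece and the dual time-derivative piece of the $\mathcal{W}$-norm must both enter the argument — for the same structural reason that the three-term norm was chosen in the first place. A secondary technical point is guaranteeing both the shrinking bound $\delta_\eta\to 0$ (so that the higher dimensional content truly vanishes) and a Besicovitch/Vitali covering in the parabolic metric, which is available since $(\R^{n+1}, d_p)$ is doubling.
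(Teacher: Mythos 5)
You cover $E$ by parabolic cylinders with $\sum_i\rho_i^n<\epsilon$ and then invoke ``the countable subadditivity from Lemma~\ref{lem:subadditivity}.'' But Lemma~\ref{lem:subadditivity} asserts subadditivity only for $\ca_{\mathrm{var}}^s$ with $s=(\max\{p,p'\})^{-1}\le\frac12$. Applying it to your cover yields
\[
\ca_{\mathrm{var}}^s(E,\Omega_T)\le\sum_i\ca_{\mathrm{var}}^s\big(Q_{\rho_i,\rho_i^p},\Omega_T\big)\le C\sum_i\rho_i^{ns},
\]
and the right-hand side is \emph{not} controlled by $\big(\sum_i\rho_i^n\big)^s$: since $s<1$, one has $\sum_i a_i^s\ge\big(\sum_i a_i\big)^s$, and the sum $\sum_i \rho_i^{ns}$ may in fact diverge even if $\sum_i\rho_i^n<\epsilon$ (take $\rho_i^n=\epsilon^2/i^2$ with $ns\le 1$). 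The paper sidesteps this by proving the specialized Lemma~\ref{lem:cap-union-cylinder-upperbound}: for a union of parabolic cylinders one takes the pointwise maximum of the bump functions $v_i=\xi_iw_i$, controls each of the three pieces of $\|\cdot\|_{\mathcal{W}}$ separately, and applies Lemma~\ref{lem:petitta} to handle the $L^1$ time derivative. This gives genuine (unweighted) subadditivity for cylinder covers and is the ingredient you are missing. Without it your proof of (2) does not close.

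\textbf{Direction (1): different route, but two problems.} The paper also fixes a sequence of admissible test functions, but then \emph{sums} them, $u=\sum_i v_i$, to obtain a function in $\mathcal{W}(\Omega_T)$ that blows up at every point of $E$; it represents $\partial_t u=\div F$ with $F\in L^{p'}$, proves a parabolic Poincar\'e inequality (Corollary~\ref{cor:poincare}, via the gluing Lemma~\ref{lem:gluing}), and then deduces from Federer's density theorem that $\mathcal{P}^s(E)=0$ because $|\nabla u|^p+|F|^{p'}\in L^1$. You instead try to use one function $u_\eta$ and a Vitali covering. Two issues arise. First, your ``reverse capacity estimate'' $\|u_\eta\|_{\mathcal{W}(Q_i)}\geq c\rho_i^n$, acknowledged as the crux, is not proven, and it is the analogue of the Poincar\'e/gluing machinery in the paper. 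Second and more structurally, even granting it, the disjointness step does not give $\sum_i\rho_i^n\le C\eta$: the term $\|\cdot\|_{L^\infty(0,T;L^2(\Omega))}^2$ is a supremum in time and does \emph{not} add up over disjoint cylinders, so the contributions cannot be summed. (If $u_\eta\approx\frac12$ on a cylinder, only the $L^\infty L^2$-term detects this — the gradient and time-derivative pieces vanish there.) The paper's argument avoids this precisely by working with the genuinely $L^1$ densities $|\nabla u|^p+|F|^{p'}$ and never summing the sup-term over a cover. Your final claim that $\rho(z)\le\delta_\eta\to0$ also needs justification — it is not an automatic consequence of $\|u_\eta\|_{\mathcal{W}}\to 0$ because $u_\eta$ could be close to $1/2$ on a set of small spatial measure but of arbitrary shape.
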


Item~\eqref{eq:hausdorff-capvar1} in Theorem~\ref{thm:hausdorff-capvar} is proved with a similar strategy as in the elliptic case in~\cite{EG}. The proof applies a Poincar\'e type inequality, which is standard in the elliptic setting. In the parabolic case, we apply the elliptic Poincar\'e inequality on time slices, together with an application of a suitable gluing lemma, Lemma~\ref{lem:gluing}. The application of the gluing lemma is based on the fact that for any function $u \in \mathcal{W}(\Omega_T)$, we have the well known characterization $\partial_t u = \Div F$ for some $F \in L^{p'}(\Omega_T, \R^n)$.

On the other hand, we prove item~\eqref{eq:hausdoff-capvar2} in Theorem~\ref{thm:hausdorff-capvar} by showing that the capacity of a set can be bounded by the $n$-dimensional Hausdorff measure $\mathcal{P}^n$ of the set. In the proof, we apply a bound of the type~\eqref{eq:capvar-cylinder} with $\lesssim$ for a countable union of space time cylinders, which we prove in Lemma~\ref{lem:cap-union-cylinder-upperbound}.

\subsection*{Applications}

The concept of capacity arises naturally in the study of polar sets
and removability properties of supersolutions. For supersolutions, we
use two related notions, weak supersolutions and supercaloric
functions, see Section~\ref{subsec:supersol-supercal}. Here we mainly
focus on the properties of supercaloric functions, which are defined
as lower semicontinuous functions obeying a parabolic comparison principle, see Definition~\ref{d.supercal}.

We show that the polar set of a supercaloric function is a set of null capacity, provided that the function is integrable to a sufficiently large power. In the case $\tfrac{2n}{n+1} < p< \infty$, such an integrability condition characterizes the so called Barenblatt class, see~\cite{GKM,KuLiPa}.

\begin{theorem} \label{thm:supercal-polarset}
Let $1 < p < \infty$, $0<T\leq \infty$ and suppose that $\Omega \subset \R^n$ is a bounded open set such that $\R^n \setminus \Omega$ is uniformly $p$-fat according to Definition~\ref{def:p-fat}. Suppose that $u$ is a supercaloric function in $\Omega_T$ that satisfies $u \in L^s_{\loc}(\Omega_T)$ for sufficiently large $s$, specified in Assumption~\ref{as:barenblatt}. Then,
$$
\ca_{\mathrm{var}} (\{u=\infty\},\Omega_T) = 0.
$$
\end{theorem}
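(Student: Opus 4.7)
The plan is to first prove $\ca_{\mathrm{var}}(K,\Omega_T)=0$ for every compact $K\subset\{u=\infty\}$, and then to pass to the full Borel set $\{u=\infty\}$ using the upward monotone convergence (Lemma~\ref{lem:capvar-upwards-conv}) and countable subadditivity (Lemma~\ref{lem:subadditivity}) of the variational capacity proved in Section~\ref{sec:prelim}. Fix such a compact $K$, a cylinder $Q\Subset \Omega_T$ containing $K$, and a cutoff $\eta\in C_0^\infty(\Omega_T)$ with $\eta\equiv 1$ on a neighborhood of $K$ and supported slightly beyond $Q$. For each $k\in\N$, set $u_k := \min\{u,k\}$; since $u$ is supercaloric, $u_k$ is a bounded weak supersolution by standard properties of supercaloric functions. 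Define the competitor
$$
v_k := \eta\cdot\min\Big\{\tfrac{(u-k)_+}{k},1\Big\}=\eta\cdot \tfrac{(u_{2k}-k)_+}{k}.
$$
Then $0\le v_k\le 1$, $v_k\equiv 1$ on $K$, and $v_k$ is compactly supported in $\Omega_T$. Invoking the equivalence of the $C_0^\infty$- and $\mathcal{W}(\Omega_T)$-based variational capacities from Appendix~\ref{appendix} (followed by a standard mollification to obtain $C_0^\infty$ competitors), it suffices to prove $\|v_k\|_{\mathcal{W}(\Omega_T)}\to 0$ as $k\to\infty$.

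The three contributions in $\|\cdot\|_{\mathcal{W}(\Omega_T)}$ are handled separately. Since $0\le v_k\le \chi_{\{u>k\}}$ on $\spt\eta$, Chebyshev's inequality and the assumption $u\in L^s_{\loc}$ give $\esssup_t\int v_k^2\,\dx\lesssim k^{-s}\|u\|_{L^s(\spt\eta)}^s\to 0$ for $s$ large. For the gradient term, $\nabla v_k$ is concentrated on $\{k<u<2k\}$ together with a $\nabla\eta$-contribution on $\{u>k\}$; a Caccioppoli-type estimate for the bounded weak supersolution $u_{2k}$, obtained by testing its equation against $\eta^p(u_{2k}-k)_+/k$, reduces $\iint|\nabla v_k|^p$ to integrals of $|\nabla\eta|^p$ and of $(u_{2k}-k)_+^p/k^p$ over $\{u>k\}\cap\spt\eta$, both of which decay using $|\{u>k\}|\lesssim k^{-s}$. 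The precise threshold for $s$ specified in Assumption~\ref{as:barenblatt} enters quantitatively at this step, since the Caccioppoli inequality also produces time-boundary terms of the form $\int \eta^p(u_{2k}-k)_+^2\,\dx$ that must be reabsorbed.

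The main obstacle is the dual-norm term $\sup_\phi|\iint v_k\,\partial_t\phi\,\dx\dt|^{p'}$. Formally $\partial_t v_k$ contains the expression $k^{-1}\eta\,\chi_{\{u>k\}}\partial_t u_{2k}$, which is only controlled through the weak supersolution inequality $\partial_t u_{2k}\ge \Div(|\nabla u_{2k}|^{p-2}\nabla u_{2k})$ modulo a nonnegative distribution. To make this rigorous, one regularizes via Steklov averages $\mollifytime{u_{2k}}{h}$, splits $\phi=\phi_+-\phi_-$ so as to test against sign-definite functions, and tests the supersolution inequality against $\eta\phi_\pm$ to rewrite the time integral in terms of $k^{-1}\iint |\nabla u_{2k}|^{p-2}\nabla u_{2k}\cdot\nabla(\eta\phi_\pm)$ plus lower-order terms involving $\partial_t\eta$. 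Hölder's inequality, the gradient bound from the previous step, and $\|\nabla\phi\|_{L^p}\le 1$ then estimate this by a quantity that vanishes as $k\to\infty$. Combining the three decay estimates yields $\|v_k\|_{\mathcal{W}(\Omega_T)}\to 0$, hence $\ca_{\mathrm{var}}(K,\Omega_T)=0$, and the aforementioned capacity properties finish the proof for the full set $\{u=\infty\}$.
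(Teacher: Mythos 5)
Your overall strategy—construct explicit competitors $v_k=\eta\min\{(u-k)_+/k,1\}$ and show $\|v_k\|_{\mathcal{W}(\Omega_T)}\to 0$—is genuinely different from the paper's, which never builds test functions for the capacity of the superlevel sets but instead works entirely through balayage and the Riesz-measure representation of the nonlinear parabolic capacity (Lemma~\ref{lem:cap-compact-representation}, Lemma~\ref{lem:supercal-cap-levelset}, Corollary~\ref{cor:signed-supercal-cap-levelset}). Unfortunately, the dual-norm term hides a genuine gap that the sketch does not close.

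Write $w_k:=\min\{(u_{2k}-k)_+/k,1\}$, so $v_k=\eta w_k$. By Lemma~\ref{lem:supercal-prop}, $w_k$ is a bounded weak supersolution, hence it carries a nonnegative Riesz measure $\mu_{w_k}$, and its distributional time derivative is
\begin{equation*}
\partial_t w_k \;=\; \Div\!\left(|\nabla w_k|^{p-2}\nabla w_k\right) \;+\; \mu_{w_k} \;\in\; \mathcal{V}'_{\loc}(\Omega_T)+\mathcal{M}^+_{\loc}(\Omega_T).
\end{equation*}
The measure $\mu_{w_k}$ is in general singular with respect to Lebesgue measure: on the open set $\{u<2k\}$ it picks up $\mu_u$ restricted to $\{k<u<2k\}$ times $1/k$, plus a contribution concentrated near the contact set $\{u_{2k}=2k\}$. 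Consequently $\partial_t v_k=(\partial_t\eta)w_k+\eta\,\partial_t w_k$ has a genuinely measure-valued part, so $v_k$ need not belong to $\mathcal{W}(\Omega_T)$ nor even to $\widetilde{\mathcal{W}}(\Omega_T)$, and Lemma~\ref{lem:petitta} cannot be invoked. While the \emph{total mass} $\mu_{w_k}(\spt\eta)$ can be shown to decay in $k$ (by testing the weak formulation of $w_k$ against a cutoff), that does not control $\|\partial_t v_k\|_{\mathcal{V}'(\Omega_T)}$: a singular measure with small mass need not be small—or even finite—in $W^{-1,p'}$.

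The proposed fix via Steklov averaging and splitting $\phi=\phi_+-\phi_-$ runs into the same obstruction. For nonnegative $\psi$, the supersolution inequality gives only the one-sided bound
\begin{equation*}
\iint_{\Omega_T} w_k\,\partial_t\psi\,\dx\dt \;\le\; \iint_{\Omega_T} |\nabla w_k|^{p-2}\nabla w_k\cdot\nabla\psi\,\dx\dt,
\end{equation*}
but the reverse inequality costs exactly the missing term $\iint\psi\,\d\mu_{w_k}$, and there is no bound of that pairing by $\|\nabla\psi\|_{L^p(\Omega_T)}$ when $\mu_{w_k}$ is singular. Moreover, the test functions $\eta G'(u_{2k}^h)\phi_\pm$ you suggest are not admissible in the weak formulation: $G'$ is a step function, so they are not in $\mathcal{V}(\Omega_T)$. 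This failure of $\mathcal{W}$ to be closed under truncations is exactly the difficulty the paper flags (in the discussion preceding Lemma~\ref{lem:S-closure}), and it is precisely why the paper switches to the potential-theoretic route: Theorem~\ref{thm:capvar-eq-cap} allows one to express $\ca_{\mathrm{var}}(\{u_i\ge\lambda\},\Omega_\infty)$ directly as $\mu_{u_{1,i}}(\Omega_\infty)$, and then Lemmas~\ref{lem:u1-ulambda-measure-estimates}, \ref{lem:Riesz-measure-order} and \ref{lem:truncation-Riesz-measure} turn the decay in $\lambda$ into an estimate on Riesz measures without any explicit competitor. The Poisson modification (Lemma~\ref{lem:blowup}) and the locally finite Riesz measure under Assumption~\ref{as:barenblatt} (Lemmas~\ref{lem:barenblatt-hi}, \ref{lem:supercal-L1}) are then essential inputs that your scheme makes no use of—another sign that the structure needed here is not purely variational. (As a minor additional point: Chebyshev for $u\in L^s_{\loc}$ controls $|\{u>k\}|$ as a space-time set, not $\esssup_t\int v_k^2\,\dx$ slicewise; recovering the sup-in-time term needs the Caccioppoli boundary term as you note, but this should be flagged more explicitly, since it is where the precise integrability threshold is consumed.)
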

For the proof we refer to Section~\ref{sec:polarsets}. 
The main problem in the proof is to obtain a suitable estimate for
capacity of the superlevel sets of the supercaloric function, which is
shown in Lemma~\ref{lem:supercal-cap-levelset} and
Corollary~\ref{cor:signed-supercal-cap-levelset}. The proof of
Lemma~\ref{lem:supercal-cap-levelset} essentially relies on three main
properties. First, we use that the capacity of a compact set can be
represented via a capacitary potential by
Lemma~\ref{lem:cap-compact-representation} (see~\cite{KKKP}) and
Theorem~\ref{thm:capvar-eq-cap}. Second, we use the fact that a
sufficiently integrable supercaloric function has a locally finite
Riesz measure, which is guaranteed in the subcritical case
$1<p\le\frac{2n}{n+1}$ by Lemma~\ref{lem:supercal-L1}. Finally, we
show that a Poisson modification exists for a supercaloric function
satisfying Assumption~\ref{as:barenblatt}, i.e., that it can be
replaced by a locally bounded weak solution in any cylinder. This is
guaranteed by sup-estimates for weak solutions, see
Lemma~\ref{lem:blowup}. Then, the proof of
Lemma~\ref{lem:supercal-cap-levelset} can be concluded with the help of suitable estimates for Riesz measures of supersolutions given in~\cite{KKKP}.

In Lemma~\ref{lem:monster-polarset} we show that in case $p> 2$ the integrability assumption in Theorem~\ref{thm:supercal-polarset} is sharp. Whether the assumption in the case $1<p<2$ can be relaxed is an interesting open problem.

Finally, we show that relatively closed sets of null capacity are
removable, see Section~\ref{sec:removability}.

\begin{theorem} \label{thm:supercal-removability}
Let $1<p<\infty$, $0<T\leq \infty$ and suppose that $\Omega \subset \R^n$ is a bounded open set such that $\R^n \setminus \Omega$ is uniformly $p$-fat according to Definition~\ref{def:p-fat}. Let $E \subset \Omega_T$ be a relatively closed set and $u$ be a supercaloric function in $ \Omega_T \setminus E$. If $\ca_{\mathrm{var}} (E,\Omega_T) = 0$ and $\essliminf_{\Omega_T \setminus E \ni (y,s) \to (x,t)} u(y,s) > - \infty$ for every $(x,t) \in E$, then there exists a supercaloric function $v$ in $\Omega_T$ such that $v = u$ everywhere in $\Omega_T\setminus E$ and $v$ is given by
\begin{equation} \label{eq:supercal-u-essliminfextension}
v(x,t) = \essliminf_{\Omega_T \setminus E \ni (y,s) \to (x,t)} u(x,y).
\end{equation}
\end{theorem}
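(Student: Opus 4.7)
My plan is to verify the three defining properties of a supercaloric function for $v$: lower semicontinuity on $\Omega_T$, finiteness on a dense subset, and the parabolic comparison principle on every subcylinder $Q\subset\subset\Omega_T$.

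First I would identify $v$ with $u$ on $\Omega_T\setminus E$. Since $\Omega_T\setminus E$ is open and $u$ is lower semicontinuous there, $\essliminf_{(y,s)\to(x,t)}u(y,s)=u(x,t)$ for every $(x,t)\notin E$, so $v=u$ off $E$. The essliminf formula is automatically lower semicontinuous wherever it is finite, and the standing hypothesis $\essliminf_{(y,s)\to(x,t)}u(y,s)>-\infty$ for $(x,t)\in E$ rules out $-\infty$ values. Since $\ca_{\mathrm{var}}(E,\Omega_T)=0$ forces $E$ to have Lebesgue measure zero, $v$ inherits a dense set of finiteness from $u$.

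The bulk of the work is the comparison principle. Fix $Q\subset\subset\Omega_T$ and a continuous weak solution $h\in C(\overline Q)$ with $h\leq v$ on the parabolic boundary; I want $h\leq v$ in $Q$. Since $E\cap\overline Q$ is compact and has variational capacity zero, there exist $\phi_k\in C_0^\infty(\Omega\times\R)$ with $\phi_k\geq\chi_{E\cap\overline Q}$ and $\|\phi_k\|_{\mathcal{W}(\Omega_T)}\to 0$. Writing $\eta_k=\min\{\phi_k,1\}$ (and approximating by smooth functions if needed), the cutoffs $1-\eta_k$ vanish on open neighborhoods of $E\cap\overline Q$ and converge to $1$ off $E$, while simultaneously $\nabla\eta_k\to 0$ in $L^p(\Omega_T)$ and $\partial_t\eta_k\to 0$ in the dual norm $L^{p'}(0,T;W^{-1,p'}(\Omega))$ by the very definition of $\|\cdot\|_{\mathcal{W}(\Omega_T)}$.

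To use these, I would reduce to bounded $u$ via $v_M=\min\{v,M\}$: increasing limits of supercaloric functions are supercaloric, so it suffices to treat each $v_M$. For bounded $u$ the classical identification of bounded supercaloric functions with weak supersolutions applies, so $v_M$ satisfies the weak supersolution inequality on $\Omega_T\setminus E$. I would then test this inequality against $\varphi(1-\eta_k)$ for nonnegative $\varphi\in C_0^\infty(Q)$ and pass to $k\to\infty$; the three error terms, namely the contributions involving $v_M\,\partial_t(\varphi\eta_k)$, $|\nabla v_M|^{p-2}\nabla v_M\cdot\nabla(\varphi\eta_k)$, and the cutoff of the supersolution inequality in a shrinking neighborhood of $E$, are each controlled by $\|\eta_k\|_{\mathcal{W}(\Omega_T)}$ together with the local energy bound on $v_M$ (which follows from $L^\infty$-boundedness and standard Caccioppoli estimates). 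This promotes $v_M$ to a weak supersolution on all of $\Omega_T$, and the standard $L^\infty$-comparison for weak supersolutions of~\eqref{eq:parabolic-p-laplace} yields $h\leq v_M$ in $Q$; sending $M\to\infty$ completes the comparison.

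The principal obstacle is this last limit procedure: controlling the time-derivative error needs precisely that the $\mathcal{W}$-norm contains the dual norm of $\partial_t v$ and not just the spatial gradient, which is what makes the variational capacity defined here the correct object for this removability result. A secondary subtlety is guaranteeing, via a standard obstacle-problem argument, that bounded supercaloric functions are indeed weak supersolutions on $\Omega_T\setminus E$ in the full range $1<p<\infty$, and that the local energy of $v_M$ on tubular neighborhoods of $E$ can be bounded uniformly in $k$ so that the cutoff error contributions vanish.
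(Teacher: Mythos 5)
The high-level outline you propose (reduce to the bounded truncation, use capacity to build cutoffs, test the weak-supersolution inequality with the cutoff, then take an increasing limit) is indeed the structure the paper follows. However, there is a genuine gap at the heart of the argument: you claim that $\partial_t\eta_k\to 0$ in $\mathcal{V}'(\Omega_T)=L^{p'}(0,T;W^{-1,p'}(\Omega))$ ``by the very definition of $\|\cdot\|_{\mathcal{W}(\Omega_T)}$,'' where $\eta_k=\min\{\phi_k,1\}$. This is false in general. The $\mathcal{W}$-norm only controls $\|\partial_t\phi_k\|_{\mathcal{V}'}$, and truncation does not preserve membership in $\mathcal{W}(\Omega_T)$: writing $\partial_t\phi_k=-\Div F_k$ with $\|F_k\|_{L^{p'}}\lesssim\|\partial_t\phi_k\|_{\mathcal{V}'}$, a (smooth) truncation $H(\phi_k)$ has
\[
\partial_tH(\phi_k)=-\Div\big(H'(\phi_k)F_k\big)+H''(\phi_k)\,\nabla\phi_k\cdot F_k,
\]
and the second term lives only in $L^1(\Omega_T)$, not in $\mathcal{V}'(\Omega_T)$. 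There is no bound on $\|\partial_t\eta_k\|_{\mathcal{V}'}$ in terms of $\|\phi_k\|_{\mathcal{W}(\Omega_T)}$, so the time-derivative error term does not close with your estimate alone.

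This is precisely why the paper introduces the larger space $\mathcal{S}(\Omega_T)=\{u\in\mathcal{V}(\Omega_T):\partial_tu\in\mathcal{V}'(\Omega_T)+L^1(\Omega_T)\}$ and proves Lemma~\ref{lem:S-closure}: the truncated cutoffs converge in the $\mathcal{S}$-norm, with the two components of $\partial_t$ controlled separately. In the weak-formulation error term one then pairs $v_M$ against the $\mathcal{V}'$-component using $v_M\in\mathcal{V}_{\loc}$, and against the $L^1$-component using the essential boundedness $\|v_M\|_{L^\infty}\le M$. Your argument can be repaired by replacing the $\mathcal{W}$-estimate on $\partial_t\eta_k$ with this $\mathcal{V}'+L^1$ decomposition, which is the substance of the paper's Lemma~\ref{lem:supersol-removability} (together with the Caccioppoli estimate~\eqref{eq:caccioppoli-p'} to get the uniform-in-$k$ local energy bound on $\nabla v_M$, which is also not automatic and requires separate work, especially when $1<p<2$). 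A secondary remark: once $v_M$ is shown to be a weak supersolution on all of $\Omega_T$, the paper concludes by invoking Lemma~\ref{lem:supersol-supercal} (essliminf regularization is supercaloric) and then a theorem on increasing limits of supercaloric functions, rather than by applying the comparison principle directly for each $M$; both routes work, but the paper's version records the supercaloricity of each truncation, which is what you ultimately need.
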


An analogous result with appropriate assumptions also holds for weak
supersolutions and weak solutions, see
Lemma~\ref{lem:supersol-removability} and
Corollary~\ref{cor:weak-sol-removability}. A crucial part of the proof
is to show the removability result for weak supersolutions in
Lemma~\ref{lem:supersol-removability}, which uses ideas in~\cite{AS}. We assume that the weak supersolution is
essentially bounded in a neighborhood of $E$, and show that for any
point in $E$, we can find a cylinder in
which~\eqref{eq:supercal-u-essliminfextension} is a weak supersolution
in the very cylinder, which is proved in
Lemma~\ref{lem:supersol-removability}. The proof relies on
Caccioppoli's inequality for weak supersolutions, as well as an
approximation result which allows to approximate functions
in $C_0^\infty(O)$ by functions in $C_0^\infty(O \setminus E)$ in a
suitable norm, for an open set $O \subset \Omega_T$ and certain sets
$E$ with vanishing capacity.
Since the space $\mathcal{W}(\Omega_T)$ is not closed under
truncations, we need to approximate the functions in a larger space
that contains functions with time derivatives in
$\mathcal{V}'(\Omega_T)+L^1(\Omega_T)$,
see Lemma~\ref{lem:S-closure}. This space is natural in the context of
bounded solutions and has proven to be very useful in several previous
works, see \cite{AS,droniou,Petitta}.

Finally, a comment on the case $1<p \leq \tfrac{2n}{n+2}$ is in order,
especially when the latter inequality is also strict. Our definition
of supercaloric functions exclude the possibility of attaining the
value $-\infty$ in the domain. In other words, supercaloric functions
are locally bounded from below by
Definition~\ref{d.supercal}. Therefore, our arguments do not apply to
solutions attaining $-\infty$ as in~\cite[Chapter 6, Eq. (1.10)]{DGV}.

\medskip

\noindent
{\bf Acknowledgments.} K.~Moring has been supported by the Magnus Ehrnrooth Foundation.

\section{Preliminaries} \label{sec:prelim}

Let $\Omega \subset \R^n$ be a bounded open set. For $T>0$, we denote by $\Omega_T := \Omega \times (0,T)$ a space-time cylinder in $\R^{n+1}$. We denote the lateral boundary of $\Omega_T$ by $S_T:= \partial \Omega \times [0,T)$, and the parabolic boundary by $\partial_p \Omega_T := \left( \Omega \times \{0\} \right) \cup S_T$. We consider the parabolic $p$-Laplace equation written as
\begin{equation}\label{p-lap}
\partial_t u -\Div \left(|\nabla u|^{p-2} \nabla u \right)=0\quad \text{ in } \Omega_T,
\end{equation}  
for $1<p<\infty$.

\subsection{Weak supersolutions and supercaloric functions} \label{subsec:supersol-supercal}

First we define the notion of a weak supersolution.

\begin{definition}
A function $u \in L^{p}_{\loc}(0,T;W^{1,p}_{\loc}(\Omega))$ is called a weak supersolution to~\eqref{p-lap} if 
\begin{equation} \label{eq:weak-super}
\iint_{\Omega_T} - u \partial_t \varphi + |\nabla u|^{p-2} \nabla u \cdot \nabla \varphi \, \d x \d t \geq 0
\end{equation}
for every $\varphi \in C_0^\infty(\Omega_T, \R_{\geq 0})$. If the integral is nonpositive for all such test functions we call $u$ a weak subsolution. A function is called a weak solution if it is both weak super- and subsolution.

For a noncylindrical open set $U\subset\R^{n+1}$, 
we say that $u$ is a weak super(sub)solution in $U$ if it is a weak super(sub)solution in every open subcylinder $V \times (t_1,t_2) \subset U$.
\end{definition}

By application of the Riesz representation theorem~\cite[Section 1.8, Theorem 1]{EG}, we have that for every weak supersolution $u$ in $\Omega_T$ there exists a Radon measure $\mu$ on $\Omega_T$ such that 
\begin{equation} \label{e.weak_measuresol}
\iint_{\Omega_T} - u \partial_t \varphi + |\nabla u|^{p-2} \nabla u \cdot \nabla \varphi \, \d x \d t = \int_{\Omega_T} \varphi \, \d \mu
\end{equation}
holds for every $\varphi \in C_0^\infty(\Omega_T)$. The measure $\mu$
is called the Riesz measure of the weak supersolution $u$ and denoted
by $\mu_u$. 

The following result states that under appropriate assumptions,
pointwise convergence of weak supersolutions implies convergence of the corresponding Riesz measures, see e.g.~\cite{KKKP,GKM,KKP,KL}.

\begin{lemma} \label{lem:bdd-weak-sol-convergence}
Let $1<p<\infty$ and $(u_i)_{i \in \N}$ be a sequence of uniformly
bounded weak supersolutions in $\Omega_T$ and suppose that $u_i\to u$ a.e. in $\Omega_T$ in the limit $i\to\infty$. Then $u$ is a weak
supersolution and the Riesz measures converge $\mu_{u_i}\wto \mu_u$
weakly in the space of Radon measures, as $i\to\infty$.  
\end{lemma}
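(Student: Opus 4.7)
The plan is to combine a Caccioppoli-type energy estimate with a compactness and Minty-monotonicity argument applied to the measure formulation~\eqref{e.weak_measuresol}. First, set $M:=\sup_i\|u_i\|_{L^\infty(\Omega_T)}<\infty$. Testing the supersolution inequality~\eqref{eq:weak-super} (after a standard Steklov-type time mollification to make $\partial_t u_i$ meaningful) with $\varphi=\eta^p(M+1-u_i)\ge 0$, for a cutoff $\eta\in C_0^\infty(\Omega_T)$, yields a Caccioppoli inequality of the form
$$
\iint_{\Omega_T}\eta^p|\nabla u_i|^p\,\d x\d t\le C(M,\eta).
$$
Inserting $\varphi=\eta\ge 0$ into~\eqref{e.weak_measuresol} and using this bound together with the $L^\infty$-bound on $u_i$ then shows $\mu_{u_i}(\spt\eta)\le C(M,\eta)$ uniformly in $i$, so the Riesz measures are locally uniformly bounded on $\Omega_T$.

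Up to extracting a (non-relabelled) subsequence, I would then obtain $\nabla u_i\wto\nabla u$ weakly in $L^p_{\loc}$, $A_i:=|\nabla u_i|^{p-2}\nabla u_i\wto A$ weakly in $L^{p'}_{\loc}$ for some vector field $A$, and $\mu_{u_i}\wto\nu$ weakly as Radon measures, for some nonnegative Radon measure $\nu$ on $\Omega_T$. Since $|u_i|\le M$ and $u_i\to u$ a.e., dominated convergence gives $u_i\to u$ in $L^q_{\loc}(\Omega_T)$ for every $q<\infty$. Passing to the limit in~\eqref{e.weak_measuresol} for each $\varphi\in C_0^\infty(\Omega_T)$ produces
$$
\iint_{\Omega_T}-u\,\partial_t\varphi + A\cdot\nabla\varphi\,\d x\d t=\int_{\Omega_T}\varphi\,\d\nu. \qquad (\ast)
$$

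The crux, and the main obstacle, is to identify $A=|\nabla u|^{p-2}\nabla u$, so that $(\ast)$ is the measure formulation of a weak supersolution. I would establish this by a parabolic Minty-type argument: via a Steklov time mollification one can justify inserting (an approximation of) $\eta(u_i-u)$ into~\eqref{e.weak_measuresol}, and using the strong $L^q$-convergence of $u_i$ together with weak convergence of $\mu_{u_i}$ conclude that
$$
\limsup_{i\to\infty}\iint_{\Omega_T}\eta\,A_i\cdot\nabla u_i\,\d x\d t\le \iint_{\Omega_T}\eta\,A\cdot\nabla u\,\d x\d t
$$
for every nonnegative $\eta\in C_0^\infty(\Omega_T)$. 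Combined with the pointwise monotonicity $(|\xi|^{p-2}\xi-|\zeta|^{p-2}\zeta)\cdot(\xi-\zeta)\ge 0$ applied with $\zeta=\nabla u$, this forces $\eta\,(A_i-|\nabla u|^{p-2}\nabla u)\cdot(\nabla u_i-\nabla u)\to 0$ in $L^1$. The standard strict-monotonicity estimates (treated separately for $p\ge 2$ and $1<p<2$) then promote this to strong convergence $\nabla u_i\to\nabla u$ in $L^p_{\loc}(\Omega_T)$, and continuity of the map $\xi\mapsto|\xi|^{p-2}\xi$ from $L^p$ to $L^{p'}$ yields $A=|\nabla u|^{p-2}\nabla u$.

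With $A$ identified, $(\ast)$ shows that $u$ is a weak supersolution in $\Omega_T$ with Riesz measure $\mu_u=\nu$. Since $\mu_u$ is uniquely determined by $u$ through~\eqref{e.weak_measuresol}, every subsequential limit of $(\mu_{u_i})$ coincides with $\mu_u$, and hence the full sequence $\mu_{u_i}$ converges weakly to $\mu_u$ in the space of Radon measures.
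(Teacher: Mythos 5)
The paper does not supply its own proof of Lemma~\ref{lem:bdd-weak-sol-convergence}; it refers to the literature (\cite{KKKP,GKM,KKP,KL}), so there is no in-paper argument to compare against, and I judge your attempt on its own terms. Your skeleton --- Caccioppoli bound for $\nabla u_i$, local uniform bound on $\mu_{u_i}$, weak compactness, identification of the flux, uniqueness of the Riesz measure --- is the standard one, and the first three steps are correct as sketched.

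The gap is in the Minty step, and it is a real one. Inserting (a Steklov-regularized version of) $\eta(u_i-u)$ into~\eqref{e.weak_measuresol} for $u_i$ and rearranging leaves, besides terms you can control, the quantity
\begin{equation*}
\int_{\Omega_T}\eta\,(u_i-u)\,\d\mu_{u_i}.
\end{equation*}
You claim that ``strong $L^q$-convergence of $u_i$ together with weak convergence of $\mu_{u_i}$'' disposes of this, but neither gives any control here: the Riesz measures $\mu_{u_i}$ may concentrate on sets of Lebesgue measure zero, where the a.e.\ and $L^q_{\loc}$-convergence of $u_i$ to $u$ carry no information, while weak-$*$ convergence of $\mu_{u_i}$ only lets you pass to the limit against a \emph{fixed} continuous test function, not against the $i$-dependent, merely bounded-and-measurable family $\eta(u_i-u)$. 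With no reason for this term to vanish you do not obtain $\limsup_i\iint\eta\,A_i\cdot\nabla u_i\le\iint\eta\,A\cdot\nabla u$, and the Minty identification does not close. (If you instead compare against the limit equation, the analogous term with $\nu$ in place of $\mu_{u_i}$ causes the same trouble.) The device that saves the argument is a Boccardo--Murat type truncation: test with $\eta\,T_k(u_i-u)$, where $T_k(s):=\max\{-k,\min\{k,s\}\}$, so that the measure term is at most $k\,\mu_{u_i}(\spt\eta)$ and is made uniformly small by choosing $k$ small; from the resulting estimate one deduces that $\nabla u_i\to\nabla u$ in measure, hence strongly in $L^q_{\loc}(\Omega_T)$ for every $q<p$, after which $A=|\nabla u|^{p-2}\nabla u$ follows at once and the limit passage in~\eqref{e.weak_measuresol} is routine. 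That truncation step is the essential missing ingredient, and it is exactly what the proofs in the references the paper cites are built around.
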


We recall the following Caccioppoli inequalities for nonnegative weak
supersolutions. The first inequality is analogous to~\cite[Lemma
9]{KuLiPa},~\cite[Lemma 3.2]{GKM}. The second inequality follows by a
slight modification of the proof. 

\begin{lemma} \label{lem:caccioppoli}
Let $u$ be a nonnegative weak supersolution in $\Omega_T$ and $\eps \in (0,1)$. If $1<p<\infty$, there exists $c = c(p) >0$ such that
\begin{align} \label{eq:caccioppoli-sup}
\iint_{\Omega_T} &u^{- 1-\eps } |\nabla u|^p \varphi^p \, \d x \d t + \frac{1}{1-\eps }\esssup_{0<t<T} \int_{\Omega \times \{t\}} u^{1-\eps} \varphi^p \, \d x \nonumber \\
&\leq c \eps^{-p} \iint_{\Omega_T} u^{p-1-\eps} |\nabla \varphi|^p \, \d x\d t + \frac{2}{\eps(1-\eps)} \iint_{\Omega_T} u^{1-\eps} |\partial_t \varphi^p| \, \d x \d t 
\end{align}
holds true for all $\varphi \in C_0^\infty(\Omega_T, \R_{\geq 0})$.

If $1<p<2$, then there exists $c = c(p) > 0$ such that
\begin{align} \label{eq:caccioppoli-p'}
\iint_{\Omega_T} u^{-1-\eps} |\nabla u|^p \varphi^{p'} \, \d x\d t &\leq c \eps^{-p} \iint_{\Omega_T} u^{p-1-\eps} \varphi^{p(p'-2)} |\nabla \varphi|^p\, \d x \d t  \\
&\quad + \frac{2}{\eps(1-\eps)} \left| \iint_{\Omega_T} u^{1-\eps} \partial_t \varphi^{p'} \, \d x\d t  \right| \nonumber
\end{align}
holds true for all $\varphi \in C_0^\infty(\Omega_T, \R_{\geq 0})$.
\end{lemma}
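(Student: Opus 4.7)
The plan is to derive both estimates by a Moser-type testing argument with a test function of the form $\eta = \varphi^k(u+\delta)^{-\eps}$, where $k = p$ for \eqref{eq:caccioppoli-sup} and $k = p'$ for \eqref{eq:caccioppoli-p'}, and $\delta > 0$ is a regularization parameter sent to zero at the end. Since a weak supersolution has no pointwise time derivative, the first step is to mollify \eqref{eq:weak-super} in time via Steklov averaging, so that such a test function becomes admissible in the averaged weak formulation.

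For \eqref{eq:caccioppoli-sup}, inserting $\eta = \varphi^p(u_h+\delta)^{-\eps}$ into the Steklov-averaged weak inequality, the diffusion part splits into two contributions. The good term $-\eps\,(u_h+\delta)^{-\eps-1}\varphi^p|\nabla u_h|^p$ moves to the left-hand side and supplies the first summand of \eqref{eq:caccioppoli-sup} up to the factor $\eps$; the cross term $p\,\varphi^{p-1}(u_h+\delta)^{-\eps}|\nabla u_h|^{p-2}\nabla u_h \cdot \nabla \varphi$ is then absorbed by Young's inequality with exponents $p,p'$ and Young parameter of order $\eps$, which produces the integral $c\eps^{-p}\iint u^{p-1-\eps}|\nabla\varphi|^p$ on the right. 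For the time part, one uses the pointwise identity
\[
(u_h+\delta)\,\partial_t(u_h+\delta)^{-\eps} = -\tfrac{\eps}{1-\eps}\,\partial_t(u_h+\delta)^{1-\eps},
\]
together with the decomposition $u_h = (u_h+\delta) - \delta$ in $\iint u_h\,\partial_t\eta$. After integration by parts in time against a Lipschitz cutoff localising close to the slice where the essential supremum in $t$ is attained, one extracts the sup-term $\tfrac{1}{1-\eps}\esssup_t \int(u_h+\delta)^{1-\eps}\varphi^p\,\dx$ on the left and the $\tfrac{2}{\eps(1-\eps)}\iint u^{1-\eps}|\partial_t \varphi^p|$ contribution on the right, while the $\delta$-residual vanishes in the limit. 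Passing $h \to 0$ and then $\delta \to 0^+$ by monotone/dominated convergence establishes \eqref{eq:caccioppoli-sup}.

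The inequality \eqref{eq:caccioppoli-p'} in the range $1<p<2$ follows from exactly the same scheme with $\varphi^p$ replaced by $\varphi^{p'}$. The cross term now carries $\varphi^{p'-1}$, and the same Young's inequality produces the weight $\varphi^{p(p'-2)}$ on the right, with exponent $p(p'-2) = p(2-p)/(p-1) \geq 0$ precisely because $p \leq 2$, so $\varphi^{p(p'-2)}$ remains a legitimate nonnegative power of $\varphi$. The supremum term is not needed here, so on the left one retains only the gradient integral and, correspondingly, the time integral on the right is controlled by its absolute value, matching the form of \eqref{eq:caccioppoli-p'}.

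The main technical obstacle will be the rigorous coupling of the Steklov regularization with the $(u+\delta)^{-\eps}$ approximation: one must verify uniform integrability of each term as $h \to 0$ and $\delta \to 0^+$, and check that the negative power does not cause issues where $u$ may vanish. The condition $\eps > 0$ and the compact support of $\varphi$ make the limits routine, and the precise factors $\eps^{-p}$ and $(1-\eps)^{-1}$ in the final constants reflect, respectively, the optimisation of the Young parameter and the algebraic identity used to handle the time term.
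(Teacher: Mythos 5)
Your proposal is correct and matches the paper's approach: the paper's proof consists of the one-line remark that one formally tests with $u^{-\eps}\varphi^p$ (resp.\ $u^{-\eps}\varphi^{p'}$), and you spell out precisely this computation, including the Steklov and $(u+\delta)^{-\eps}$ regularizations needed to make the formal test function rigorous. The splitting of the diffusion term, Young's inequality with a parameter of order $\eps$, the algebraic identity for the time term, and the observation that $p(p'-2)\ge0$ for $p\le2$ all reproduce the intended argument and yield the stated constants.
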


\begin{proof}

The first inequality is proved by using formally a test function $u^{-\eps} \varphi^p$, and the second by using a test function $u^{- \eps} \varphi^{p'}$.

\end{proof}

A weak supersolution has a lower semicontinuous representative, provided that it is locally essentially bounded from below, see~\cite{K-lowersc,Naian}. 

\begin{lemma} \label{lem:supersol-lsc}
Let $1<p<\infty$ and $u$ be a weak supersolution to~\eqref{p-lap} in an open set $U \subset \R^{n+1}$ such that $u$ is locally essentially bounded from below in $U$. Then, there exists a lower semicontinuous
function $u_*$ such that $u_*(x,t) = u(x,t)$ for a.e. $(x,t) \in
U$. Moreover, 
$$
u_*(x,t) = \essliminf_{\substack{(y,s) \to (x,t) \\ s<t}} u(y,s),
$$
for every $(x,t) \in U$.
\end{lemma}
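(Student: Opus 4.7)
The plan is to reduce to nonnegative bounded weak supersolutions via truncation, and for those to produce the stated representative by a time-regularization argument that exploits the supersolution sign in the weak formulation. Given any open subcylinder $V\Subset U$ with $M_0:=-\essinf_V u<\infty$, the shifted function $\tilde u:=u+M_0+1$ is a positive weak supersolution on $V$, and the truncations $\tilde u_M:=\min\{\tilde u,M\}$ are bounded nonnegative weak supersolutions (by concavity of $s\mapsto\min\{s,M\}$ together with smooth approximation). Since $\tilde u_M\nearrow\tilde u$ pointwise as $M\to\infty$, and both the past essliminf formula and lower semicontinuity are preserved under monotone pointwise limits, it suffices to prove the lemma for bounded nonnegative weak supersolutions $w$.

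For such $w$, the technical heart is to show that, after modification on a null set, $w$ is pointwise right lower semicontinuous in time in the sense that
\begin{equation}\label{eq:prop-rcl}
 \liminf_{(y,s)\to(x,t),\,s>t}w(y,s)\geq w_*(x,t):=\essliminf_{(y,s)\to(x,t),\,s<t}w(y,s)
\end{equation}
for every $(x,t)\in V$. This is the content of \cite{K-lowersc,Naian}: one tests the Steklov-averaged weak formulation of \eqref{eq:weak-super} by nonnegative spatial cutoffs, controls the resulting flux term via the Caccioppoli inequality of Lemma \ref{lem:caccioppoli}, and exploits the correct sign of the time term to obtain a right-continuity statement for the maps $t\mapsto\int_\Omega w(\cdot,t)\zeta\,\d x$; a Lebesgue-differentiation argument on spatial slices then upgrades this integrated statement to the pointwise bound \eqref{eq:prop-rcl}.

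With \eqref{eq:prop-rcl} in hand, I set $u_*:=w_*$. Lower semicontinuity of $w_*$ along past approaches is immediate from the definition of the essliminf (if $w_*(x,t)>\lambda$ then $w\geq\lambda$ a.e.\ on a past cylinder $B_r(x)\times(t-h,t)$, which also lies in the past cylinders of nearby points with $t'\leq t$), while \eqref{eq:prop-rcl} supplies the lower semicontinuity across the time slice at $t$; hence $w_*$ is lsc on $V$. Agreement $w_*=w$ a.e.\ then follows from a further application of Lebesgue differentiation combined with \eqref{eq:prop-rcl}. The main obstacle is establishing \eqref{eq:prop-rcl} itself, i.e.\ the passage from the one-sided weak formulation to a pointwise right-lower-semicontinuity statement; this is where the parabolic structure of \eqref{p-lap} is used most substantively and is the core technical content of the references cited above.
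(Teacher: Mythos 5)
Your proposal is in line with the paper, which provides no proof of this lemma at all---it is stated as a known result with the citation to \cite{K-lowersc,Naian} placed immediately before it, and your sketch accurately reflects the strategy used in those references. The reduction to bounded nonnegative supersolutions via shifting, truncation and monotone passage to the limit, together with the deferral of the right-lower-semicontinuity estimate to the weak-formulation/Caccioppoli argument in the cited literature, is a faithful expansion of what the citation encapsulates.
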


\begin{remark} \label{rem:supersol-bddbelow}
Observe that in case $p > \tfrac{2n}{n+2}$ a weak supersolution is
locally essentially bounded from below by~\cite[Corollary
3.7]{K-lowersc}. In case $1<p \leq \tfrac{2n}{n+2}$ a sufficient
condition can be expressed for example via the local integrability condition given in Assumption~\ref{as:barenblatt}, see e.g.~\cite[Theorem 2]{Choe}.
\end{remark}

Then, we define the notion of a supercaloric function.

\begin{definition} \label{d.supercal}
  Let $U\subset\R^{n+1}$ be an open set.
  A function $u \colon U \to (-\infty,\infty]$ is called a supercaloric function, if
\begin{itemize}
\item[(i)] $u$ is lower semicontinuous,
\item[(ii)] $u$ is finite in a dense subset of $U$,
\item[(iii)] $u$ satisfies the comparison principle in every subcylinder $Q_{t_1,t_2}=Q \times (t_1,t_2) \Subset U$: if $h\in C(\overline{Q}_{t_1,t_2})$ is a weak solution in $Q_{t_1,t_2}$ and $h \leq u$ on the parabolic boundary of $Q_{t_1,t_2}$, then $ h\leq u$ in $Q_{t_1,t_2}$.
\end{itemize}
\end{definition}

The next result states that in the fast diffusion case $1<p<2$, for a fixed time slice a nonnegative supercaloric function is either positive on a whole slice, or vanishes on the whole slice, provided that the domain is connected. The proof follows as in~\cite{MS} by an application of expansion of positivity, see~\cite[Chapter 4, Proposition 5.1]{DGV}.

\begin{lemma} \label{lem:supercal-slice-alt}
Let $1<p<2$ and assume that $u$ is a nonnegative supercaloric function in $\Omega_T$, where $\Omega \subset \R^n$ is open and connected. Then, for any $t \in (0,T)$ either $u$ is positive on the whole time slice $\Omega \times \{t\}$ or $u$ vanishes on the whole time slice.
\end{lemma}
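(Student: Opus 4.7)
Let me fix $t \in (0,T)$ and consider the time-slice set
$$
A := \{x \in \Omega : u(x,t)>0\}.
$$
My plan is to show that $A$ is simultaneously open and closed in $\Omega$; by connectedness, this forces $A = \emptyset$ or $A = \Omega$, which is precisely the claimed dichotomy. Openness is immediate from the joint lower semicontinuity of $u$ on $\Omega_T$, since restricting to a fixed time yields an lsc function of $x \in \Omega$, so the remaining task is to verify that $A$ is also closed.

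For closedness, I would fix any $x_1 \in A$ and apply the joint lower semicontinuity of $u$ at $(x_1,t)$ to produce constants $k, r, \sigma > 0$ such that $B_{2r}(x_1) \times (t-\sigma, t+\sigma) \Subset \Omega_T$ and $u \geq k$ on this cylinder. After possibly decreasing $k$ and $r$, I would pick the earlier initial time
$$
t_0 := t - \tfrac{3}{4}\delta k^{2-p}(2r)^p,
$$
where $\delta = \delta(n,p) \in (0,1)$ is the structural constant appearing in the singular expansion-of-positivity result~\cite[Chapter~4, Proposition~5.1]{DGV}, arranged so that $t_0 \in (t-\sigma, t)$. Applied at initial time $t_0$ on the ball $B_{2r}(x_1)$ with threshold $M=k$, that proposition then yields a further constant $\eta = \eta(n,p) \in (0,1)$ and the lower bound
$$
u \geq \eta k \quad\text{on } B_{4r}(x_1) \times \bigl(t - \tfrac{1}{4}\delta k^{2-p}(2r)^p,\, t + \tfrac{1}{4}\delta k^{2-p}(2r)^p\bigr],
$$
whose time interval has, by the very choice of $t_0$, been arranged to contain $t$ in its interior. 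I would then iterate this doubling step, at each stage recalibrating the positivity threshold and the shifted starting time so that $t$ continues to lie in the resulting positivity window, and I would combine this with a standard chaining argument along a continuous curve in $\Omega$ joining $x_1$ to any other point $y \in \Omega$. This yields $u(y,t) > 0$ for every $y \in \Omega$, so that $A = \Omega$ whenever $A \neq \emptyset$, and hence $A$ is closed.

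The hard part will be the time-matching step: the expansion-of-positivity proposition delivers positivity on a time interval lying strictly after the initial time $t_0$, whereas the conclusion requires positivity exactly on the slice $\{t\}$. As in~\cite{MS}, this is handled by launching the expansion from a suitably earlier time $t_0$, and by exploiting the monotonicity of $k \mapsto k^{2-p}$ in the singular regime $1 < p < 2$, which supplies enough freedom to shrink the initial threshold $k$ and radius $r$ so that the elapsed time $\delta k^{2-p}(2r)^p$ can be prescribed. Once this calibration is in place for the first step, the iteration and the chaining argument proceed in a routine manner.
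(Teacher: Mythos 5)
Your proposal is correct and takes essentially the same approach the paper points to: the paper's proof consists only of the remark that the argument follows as in~\cite{MS} via the singular expansion of positivity~\cite[Chapter~4, Proposition~5.1]{DGV}, and your outline (openness of the positivity set from lower semicontinuity, then propagation of positivity via expansion of positivity calibrated in time using $2-p>0$, combined with a chaining argument over a connected domain) is precisely that strategy. The only point worth emphasizing is that the calibration of the threshold $k$ must be chosen once and for all to accommodate the finitely many expansion steps in the whole chain, not merely the first step, since after each step the threshold and radius are prescribed by the previous one; this works because the number of steps is finite and $k^{2-p}\to 0$ as $k\to 0$, but it is not quite ``routine'' after the first calibration.
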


For some results related to the boundary regularity, we need the concept of elliptic ($p$-)capacity, which we define as follows.

\begin{definition} \label{def:elliptic-cap}
For a compact set $K \subset \Omega$, we define the elliptic $p$-capacity as 
$$
\ca_\mathrm{e}(K,\Omega) = \inf \left\{ \int_\Omega  |\nabla u|^p \, \d x: u\in C_0^\infty(\Omega), \, u(x) \geq 1 \text{ for every } x\in K \right\}.
$$
For an open set $U \subset \Omega$ we define the elliptic  $p$-capacity by
$$
\ca_{\mathrm{e}}(U,\Omega) = \sup \left \{\ca_{\mathrm{e}}(K,\Omega): K \text{ compact subset of } \Omega, K \subset U  \right\},
$$
and for an arbitrary set $E \subset \Omega$ by
$$
\ca_{\mathrm{e}}(E,\Omega) = \inf \left \{\ca_{\mathrm{e}}(U,\Omega): U \text{ open subset of } \Omega, E \subset U  \right\}.
$$
\end{definition}

\begin{definition} \label{def:p-fat}
We say that a set $E \subset \R^n$ is uniformly $p$-fat, if there exists a constant $\alpha > 0$ such that
$$
\ca_\mathrm{e}\big(E \cap \overline{B_\rho(x)}, B_{2\rho}(x)\big) \geq \alpha \ca_\mathrm{e}\big(\overline{B_\rho(x)}, B_{2\rho}(x)\big) 
$$
holds true for every $x \in E$ and $\rho > 0$.
\end{definition}

Sobolev functions with zero boundary values satisfy Hardy's inequality, provided that the complement of the domain is uniformly $p$-fat, see e.g.~\cite{lewis,KM}.

\begin{lemma}\label{lem:hardy}
Let $1<p<\infty$ and suppose that $\Omega \subset \R^n$ is a bounded open set such that $\R^n \setminus \Omega$ is uniformly $p$-fat. If $u \in W^{1,p}_0(\Omega)$, then there exists $c = c(n,p,\alpha)> 0$ such that
$$
\int_{\Omega} \left( \frac{|u|}{\mathrm{dist}(x,\partial \Omega) } \right)^p \d x \leq c \int_{\Omega} |\nabla u|^p \, \d x.
$$
\end{lemma}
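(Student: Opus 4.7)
By density of $C_0^\infty(\Omega)$ in $W^{1,p}_0(\Omega)$, it suffices to establish the inequality for $u \in C_0^\infty(\Omega)$, extended by zero to $\R^n$. The plan is to perform a Whitney decomposition $\{Q_j\}$ of $\Omega$ into essentially disjoint dyadic cubes with $\operatorname{diam}(Q_j) \approx \operatorname{dist}(Q_j,\partial\Omega) =: \ell_j$, and to handle each cube via a ball that just reaches outside $\Omega$ and captures a quantitatively fat piece of the complement.

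Concretely, for each $Q_j$ I would pick $x_j \in \partial\Omega$ with $\operatorname{dist}(x_j,Q_j)\lesssim \ell_j$ and set $B_j := B(x_j,c_n\ell_j)$, $B_j^*:=2B_j$, with $c_n$ chosen so that $Q_j\subset B_j$. Since $u\equiv 0$ on $\R^n\setminus\Omega$, the uniform $p$-fatness in Definition~\ref{def:p-fat}, combined with the classical estimate $\ca_{\mathrm{e}}(\overline{B_j},B_j^*)\approx\ell_j^{n-p}$, yields
$$
\ca_{\mathrm{e}}\big(\{u=0\}\cap\overline{B_j},\,B_j^*\big) \;\geq\; \alpha\,\ca_{\mathrm{e}}(\overline{B_j},B_j^*) \;\gtrsim\; \alpha\,\ell_j^{n-p}.
$$
I would then invoke the standard capacitary Poincar\'e inequality: for $v\in W^{1,p}(B_j^*)$,
$$
\int_{B_j}|v|^p\,\dx \;\leq\; \frac{C(n,p)\,\ell_j^{n}}{\ca_{\mathrm{e}}(\{v=0\}\cap\overline{B_j},\,B_j^*)}\int_{B_j^*}|\nabla v|^p\,\dx.
$$
Combined with the fatness lower bound, this collapses to $\int_{B_j}|u|^p\,\dx \lesssim \alpha^{-1}\ell_j^p\int_{B_j^*}|\nabla u|^p\,\dx$, and since $\operatorname{dist}(x,\partial\Omega)\gtrsim \ell_j$ on $Q_j \subset B_j$,
$$
\int_{Q_j}\bigg(\frac{|u|}{\operatorname{dist}(x,\partial\Omega)}\bigg)^p\dx \;\leq\; \frac{C}{\alpha}\int_{B_j^*}|\nabla u|^p\,\dx.
$$

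Summing over $j$ and exploiting the uniformly bounded overlap of the enlarged family $\{B_j^*\}$ (a standard feature of Whitney coverings) produces the claimed Hardy inequality with constant depending only on $n$, $p$, and $\alpha$. The main technical ingredient, and hence where I expect the principal obstacle to lie, is the capacitary Poincar\'e inequality invoked above: controlling the $L^p$ norm of a Sobolev function on $B_j$ by its Dirichlet energy in terms of the capacity of its zero set. Its proof rests on a level-set representation of capacity together with a classical Sobolev-Poincar\'e inequality, and uniform $p$-fatness enters exactly at this step by converting the capacity denominator into the purely geometric scale factor $\ell_j^p/\alpha$. This inequality is already contained in \cite{lewis,KM}, so the remainder of the argument is a routine Whitney summation.
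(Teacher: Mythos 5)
The paper does not actually prove this lemma---it quotes it from \cite{lewis,KM}---so the comparison is against what a complete argument requires, and your sketch has a genuine gap in the final summation step: the enlarged balls $\{B_j^*\}$ do \emph{not} have uniformly bounded overlap. Since each $B_j^*$ is centered at a boundary point $x_j\in\partial\Omega$ and has radius $\approx\ell_j$, a fixed $z\in\Omega$ with $d(z):=\mathrm{dist}(z,\partial\Omega)$ lies in $B_j^*$ precisely when $|z-x_j|<2c_n\ell_j$. This forces $\ell_j\gtrsim d(z)$ (because $d(z)\le|z-x_j|<2c_n\ell_j$), but it imposes \emph{no upper bound} on $\ell_j$: a Whitney cube $Q_j$ with $\ell_j\gg d(z)$ sits at distance $\approx\ell_j$ from $\partial\Omega$, and a boundary point $x_j$ near that cube can still satisfy $|z-x_j|\lesssim\ell_j$ when $z$ is close to the boundary. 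Since there are $O(1)$ Whitney cubes of each dyadic scale near $z$, the counting function $\sum_j\chi_{B_j^*}(z)$ grows like $\log\big(\mathrm{diam}(\Omega)/d(z)\big)$. The summation therefore only yields
$$
\int_\Omega\Big(\frac{|u|}{d}\Big)^p\,\d x\;\lesssim\;\int_\Omega|\nabla u|^p\,\log\!\Big(\frac{\mathrm{diam}(\Omega)}{d}\Big)\,\d x,
$$
which does not close. The ``standard bounded overlap of Whitney coverings'' you invoke applies to mild dilations $(1+\epsilon)Q_j$ of the Whitney cubes themselves, not to balls that must be inflated enough to reach $\partial\Omega$.

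The missing idea---and the reason the references \cite{lewis,KM} do not argue as you do---is Lewis's self-improvement of uniform $p$-fatness: if $\R^n\setminus\Omega$ is uniformly $p$-fat, it is uniformly $q$-fat for some $q=q(n,p,\alpha)<p$. With this, one proves the \emph{pointwise} estimate
$$
|u(x)|\;\le\;C\,d(x)\,\big(M_{cd(x)}|\nabla u|^q(x)\big)^{1/q}\qquad\text{for a.e. }x\in\Omega,
$$
where $M_R$ is the restricted Hardy--Littlewood maximal operator; here the capacitary Poincar\'e inequality you cite is used on a single ball of scale $\approx d(x)$, not summed over a Whitney family. Because $p/q>1$, the $L^{p/q}(\R^n)$-boundedness of $M$ then gives the Hardy inequality with a constant depending only on $n$, $p$ and $\alpha$. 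Without the self-improvement step, the exponent in the maximal function estimate would be $p$ itself, and since $M$ is not bounded on $L^1$ the argument collapses---this is the analytic shadow of the same logarithmic loss visible in the Whitney summation.
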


Next, we show that parallel sets inherit the $p$-fatness property.

\begin{lemma}\label{lem:parallel-fatness}
 Let $1<p<\infty$ and 
  assume that $\Omega\subset\R^n$ is a bounded open set such that
  $\R^n\setminus\Omega$ is uniformly $p$-fat for some parameter
  $\alpha>0$. For $\delta>0$, we consider the inner parallel set
  $\Omega_\delta:= \{x\in\Omega\colon
  \mathrm{dist}(x,\R^n\setminus\Omega)>\delta\}$. Then the complement
  $\R^n\setminus\Omega_\delta$ is uniformly $p$-fat for a parameter
  $\alpha_o=\alpha_o(n,p,\alpha)$.   
\end{lemma}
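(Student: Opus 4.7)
The plan is to deduce the uniform $p$-fatness of $\R^n\setminus\Omega_\delta$ from that of $\R^n\setminus\Omega$, exploiting both the inclusion $\R^n\setminus\Omega\subset\R^n\setminus\Omega_\delta$ and the fact that $\R^n\setminus\Omega_\delta=\{z\in\R^n:\mathrm{dist}(z,\R^n\setminus\Omega)\leq\delta\}$ coincides with the closed $\delta$-neighborhood of $\R^n\setminus\Omega$. I fix $x\in\R^n\setminus\Omega_\delta$ and $\rho>0$, set $d:=\mathrm{dist}(x,\R^n\setminus\Omega)\leq\delta$, and choose $y\in\R^n\setminus\Omega$ with $|x-y|=d$. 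The argument then splits into two cases according to whether $d\leq\rho/2$ (so that a ball around $y$ sits inside $\overline{B_\rho(x)}$) or $d>\rho/2$ (which forces $\rho<2\delta$ and makes the thickening structure dominant).

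In the first case $d\leq\rho/2$, we have $\overline{B_{\rho/2}(y)}\subset\overline{B_\rho(x)}$ and $B_\rho(y)\subset B_{2\rho}(x)$. I would apply the uniform $p$-fatness of $\R^n\setminus\Omega$ at $y$ with radius $\rho/2$, giving
\[
\ca_{\mathrm{e}}\big((\R^n\setminus\Omega)\cap\overline{B_{\rho/2}(y)},\,B_\rho(y)\big)\geq\alpha\,\ca_{\mathrm{e}}\big(\overline{B_{\rho/2}(y)},\,B_\rho(y)\big).
\]
Since $(\R^n\setminus\Omega)\cap\overline{B_{\rho/2}(y)}\subset(\R^n\setminus\Omega_\delta)\cap\overline{B_\rho(x)}$, I would then combine monotonicity of $\ca_{\mathrm{e}}$ in the first argument with the standard comparability of the elliptic $p$-capacity across two reference balls of comparable radii both containing the compact set in question (here $B_\rho(y)\subset B_{2\rho}(x)$, radii in ratio $2$), together with the analogous scaling for the right-hand ball capacity, to conclude
\[
\ca_{\mathrm{e}}\big((\R^n\setminus\Omega_\delta)\cap\overline{B_\rho(x)},\,B_{2\rho}(x)\big)\geq c(n,p)\,\alpha\,\ca_{\mathrm{e}}\big(\overline{B_\rho(x)},\,B_{2\rho}(x)\big).
\]

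In the second case $d>\rho/2$, I would build a suitable ball inside $(\R^n\setminus\Omega_\delta)\cap\overline{B_\rho(x)}$ directly from the thickening structure. Setting $z':=x+\tfrac{\rho}{2d}(y-x)$, i.e., the point on the segment from $x$ to $y$ at distance $\rho/2$ from $x$ and $d-\rho/2$ from $y$, an elementary check shows that every $w\in\overline{B_{\rho/4}(z')}$ satisfies $|w-x|\leq 3\rho/4<\rho$ and $|w-y|\leq d-\rho/4\leq\delta-\rho/4<\delta$, hence $w\in(\R^n\setminus\Omega_\delta)\cap\overline{B_\rho(x)}$. Monotonicity and the standard scaling $\ca_{\mathrm{e}}(\overline{B_{\rho/4}(z')},B_{2\rho}(x))\approx\ca_{\mathrm{e}}(\overline{B_\rho(x)},B_{2\rho}(x))$ then yield the fatness bound with a constant depending only on $n$ and $p$.

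The step I expect to be the main technical obstacle is the change-of-reference-ball estimate used in the first case. It amounts to the well-known fact from nonlinear potential theory that the elliptic $p$-capacity of a compact set is stable, up to constants depending only on $n$, $p$ and the ratio of radii, under replacing the reference ball by another reference ball of comparable radius that still contains the set (see, e.g., \cite{HKM}). Once that is in hand, the rest of the argument is a purely geometric and monotonicity-based computation, and the resulting $p$-fatness parameter $\alpha_o$ of $\R^n\setminus\Omega_\delta$ depends only on $n$, $p$ and the original parameter $\alpha$, as claimed.
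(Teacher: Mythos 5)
Your plan is correct and follows essentially the same route as the paper's proof: the same case split (whether $d=\mathrm{dist}(x,\R^n\setminus\Omega)$ is small or large relative to $\rho$), the same geometric construction of a ball of comparable radius inside $(\R^n\setminus\Omega_\delta)\cap\overline{B_\rho(x)}$ in the case $d>\rho/2$, and the same use of $p$-fatness of $\R^n\setminus\Omega$ after a change of reference ball in the case $d\le\rho/2$. The only differences are cosmetic — you take the exact nearest point $y$ where the paper allows any $x_o\in\R^n\setminus\Omega$ with $|x-x_o|\le\delta$, and the numerical radii differ slightly ($\rho/4$ versus $\rho/2$, $B_{3\rho}(z)$ versus $B_{5\rho/2}(z')$, etc.). The step you flag as the main technical obstacle, namely $\ca_{\mathrm{e}}(K,B_{2\rho}(x))\ge c(n,p)\,\ca_{\mathrm{e}}(K,B_\rho(y))$ for $K\subset\overline{B_{\rho/2}(y)}\subset B_\rho(y)\subset B_{2\rho}(x)$, is indeed the only place where real work is needed, and the paper does it precisely as you anticipate: a cutoff $\zeta\in C^\infty_0(B_\rho(y))$ with $\zeta\equiv 1$ on $\overline{B_{\rho/2}(y)}$ and $|\nabla\zeta|\lesssim\rho^{-1}$, together with Poincar\'e's inequality on $B_{2\rho}(x)$ to absorb the lower-order term.
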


\begin{proof}
  We fix a point $x\in
  \R^n\setminus\Omega_\delta$ and $\rho > 0$. By definition of $\Omega_\delta$ we can find a point $x_o\in \R^n\setminus\Omega$ with
  $|x-x_o|\le\delta$. We distinguish between the two cases
  $\rho\le2|x-x_o|$ and $\rho>2|x-x_o|$.

  In the first case, we define
  $z:=x+\frac\rho2\frac{x_o-x}{|x_o-x|}$ and observe that for
      $y\in\overline {B_{\rho/2}(z)}$, we
      have $$\mathrm{dist}(y,\R^n\setminus\Omega)\le|y-x_o|\le
      |y-z|+|z-x_o|\le \frac\rho2+|x-x_o|-\frac\rho2\le\delta,$$
    which means $y\in\R^n\setminus\Omega_\delta$. Thus
  \begin{equation*}
    \overline {B_{\rho/2}(z)}\subset (\R^n\setminus\Omega_\delta)\cap \overline{B_\rho(x)}.
  \end{equation*}
  Using first this and then the inclusion $B_{2\rho}(x)\subset
  B_{3\rho}(z)$, we estimate
  \begin{align*}
    \ca_\mathrm{e}\big((\R^n\setminus\Omega_\delta) \cap \overline{B_\rho(x)}, B_{2\rho}(x)\big)
    &\geq
    \ca_\mathrm{e}\big(\overline {B_{\rho/2}(z)}, B_{2\rho}(x)\big)\\
    &\geq
      \ca_\mathrm{e}\big(\overline {B_{\rho/2}(z)}, B_{3\rho}(z)\big)
    =
      c(n,p)\rho^{n-p}\\
    &=
      c(n,p) \ca_\mathrm{e}\big(\overline{B_\rho(z)}, B_{2\rho}(z)\big) .
  \end{align*}
  The last two identites are well-known facts for the elliptic
  capacity, see e.g. \cite[Eqn. (2.13)]{HKM}. This yields the desired
  estimate in the first case. 

  In the second case $\rho>2|x-x_o|$, we observe
  \begin{align*}
    \R^n\setminus\Omega\subset \R^n\setminus\Omega_\delta
    \qquad\mbox{and}\qquad
    \overline{B_{\rho/2}(x_o)}\subset \overline{B_\rho(x)},
  \end{align*}
  so that
  \begin{align*}
    \ca_\mathrm{e}\big((\R^n\setminus\Omega_\delta) \cap \overline{B_\rho(x)}, B_{2\rho}(x)\big)
    &\geq
    \ca_\mathrm{e}\big((\R^n\setminus\Omega)\cap \overline {B_{\rho/2}(x_o)}, B_{2\rho}(x)\big)\\
    &\geq
      c(p)\ca_\mathrm{e}\big((\R^n\setminus\Omega)\cap \overline
      {B_{\rho/2}(x_o)}, B_{\rho}(x_o)\big).
  \end{align*}
  The last step follows by using a cutoff function $\zeta\in C^\infty_0(B_{2\rho}(x))$ with
  $\spt\zeta\subset B_\rho(x_o)\subset B_{2\rho}(x)$, 
  $\zeta\equiv1$ in $B_{\rho/2}(x_o)$ and $|\nabla\zeta|\le\frac3\rho$
  and applying Poincar\'e's
  inequality. We estimate the right hand side further by using the
  uniform $p$-fatness of $\R^n\setminus\Omega$ and arrive at 
  \begin{align*}
    \ca_\mathrm{e}\big((\R^n\setminus\Omega_\delta) \cap \overline{B_\rho(x)}, B_{2\rho}(x)\big)
    &\geq
    c(p)\alpha \ca_\mathrm{e}\big(\overline{B_{\rho/2}(x_o)},
      B_{\rho}(x_o)\big)
      =
      c(n,p)\alpha\rho^{n-p}\\
      &=
      c(n,p)\alpha \ca_{\mathrm{e}}\big(\overline{B_{\rho}(x)},
      B_{2\rho}(x)\big).
  \end{align*}
  This yields the claim in the remaining case. 
\end{proof}

Then we recall the following result on the obstacle problem, see~\cite{GKM,KKP,kokusi}.

\begin{proposition} \label{prop:obstacle}
Let $1<p<\infty$ and $\Omega \subset \R^n$ be a bounded open set such that $\R^{n} \setminus \Omega$ is uniformly $p$-fat. In addition, suppose that $\psi \in C(\overline{\Omega_T})$. Then there exists a supercaloric function $u \in C(\overline{\Omega_T})$ to~\eqref{p-lap} with the following properties:
	\begin{itemize}
		\item[(i)] $u\geq \psi$ in $\Omega_T$ and $u=\psi$ on $\partial_p \Omega_T$,
		\item[(ii)] $u$ is a weak solution in the set $\{u>\psi\}$,
		\item[(iii)] $u$ is the smallest supercaloric function above $\psi$ in $\Omega_T$, i.e., if $v$ is a supercaloric function satisfying $v \geq \psi$ in $\Omega_T$, then $v \geq u$ in $\Omega_T$.
	\end{itemize} 
\end{proposition}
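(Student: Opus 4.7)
The plan is to construct $u$ as the (unique) solution of a variational obstacle problem with obstacle $\psi$ and boundary data $\psi$ on $\partial_p\Omega_T$, and then verify the three properties. Because direct parabolic variational methods are delicate for the doubly nonlinear energy of \eqref{p-lap}, I would set up the construction by time discretization (Rothe's method): partition $[0,T]$ with step $h = T/N$, and inductively define $u^{(k)}$ as the minimizer of the elliptic obstacle problem
\begin{equation*}
v \mapsto \int_\Omega \tfrac{1}{p}|\nabla v|^p \,\d x + \int_\Omega \tfrac{1}{2h}(v-u^{(k-1)})^2\,\d x,
\quad v\geq\psi(\cdot,kh),\ v-\psi(\cdot,kh)\in W^{1,p}_0(\Omega),
\end{equation*}
starting from $u^{(0)}=\psi(\cdot,0)$. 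Classical elliptic obstacle theory (together with the uniform $p$-fatness of $\R^n\setminus\Omega$) gives existence, continuity up to $\partial\Omega$, and the variational inequality for each $u^{(k)}$. Piecing the $u^{(k)}$ together and letting $h\to0$, using energy bounds and the monotonicity of the $p$-Laplace operator, one obtains a limit $u$ that satisfies the parabolic obstacle variational inequality.

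Next, I would upgrade the limit to a continuous function on $\overline{\Omega_T}$. Interior H\"older continuity for obstacle problem solutions with continuous obstacles is standard via De~Giorgi / Moser iteration; boundary continuity on $S_T$ is the point where the uniform $p$-fatness of $\R^n\setminus\Omega$ enters, combined with Hardy's inequality (Lemma~\ref{lem:hardy}) to produce a barrier. Continuity at $t=0$ uses the attainment of initial data in the Rothe scheme and comparison with the obstacle from below. This gives property~(i) and the boundary regularity.

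For property~(ii), one tests the obstacle variational inequality with perturbations of the form $\phi=\pm\varepsilon\eta$ with $\eta\in C_0^\infty(\{u>\psi\})$; admissibility for small $\varepsilon>0$ follows from the strict inequality $u>\psi$ on $\spt\eta$, and one reads off~\eqref{eq:weak-super} with equality, hence $u$ is a weak solution on $\{u>\psi\}$. To check that $u$ is supercaloric in the sense of Definition~\ref{d.supercal}, lower semicontinuity is already part of the constructed regularity, finiteness on a dense set is immediate from continuity, and the parabolic comparison principle in subcylinders $Q_{t_1,t_2}\Subset\Omega_T$ against continuous weak solutions $h\leq u$ on $\partial_p Q_{t_1,t_2}$ follows from the standard comparison principle for weak sub-/supersolutions, applied after passing from variational to weak form (one checks that $u$ is a weak supersolution on all of $\Omega_T$ since $u\geq\psi$, so pushing $u$ above $\psi$ only enlarges the admissible class of perturbations from above).

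Finally, for the minimality property~(iii), let $v$ be any supercaloric function with $v\geq\psi$ in $\Omega_T$. In each subcylinder $Q_{t_1,t_2}\Subset\Omega_T$ approximate $u$ from below by continuous weak solutions $h_j$ with $h_j\leq u$ on $\partial_p Q_{t_1,t_2}$ (for instance by solving Dirichlet problems with boundary data $\min\{u,j\}$-type truncations, or by using that $u$ itself is continuous and a weak solution on $\{u>\psi\}$); since $h_j\le u\le v$ on $\partial_p Q_{t_1,t_2}$, Definition~\ref{d.supercal}(iii) applied to $v$ gives $h_j\leq v$ inside, and passing to the limit yields $u\leq v$ in every such $Q_{t_1,t_2}$, hence throughout $\Omega_T$. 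The main obstacle I anticipate is the boundary continuity on $S_T$ (and at $t=0$) for the Rothe scheme under only uniform $p$-fatness, where one must construct sufficiently uniform barriers; once that regularity is in hand, the remaining pieces are rather standard consequences of comparison and the variational inequality.
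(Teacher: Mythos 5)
The paper does not prove this proposition itself; it is recalled from the literature with citations to~\cite{GKM,KKP,kokusi}, where existence is obtained via a penalization scheme rather than the Rothe discretization you propose. Your route to (i)--(ii) is a plausible alternative sketch, though I would note that boundary continuity on $S_T$ under uniform $p$-fatness is governed by Wiener-type barrier estimates for $p$-parabolic equations (the references~\cite{GL1,GL2,GLL,kilpelainen-lindqvist} that the paper invokes for exactly this purpose in the proof of Lemma~\ref{lem:balayage-K-properties}), not by Hardy's inequality, which plays a different role in this paper.

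The genuine gap is in the minimality argument for (iii). You take an arbitrary subcylinder $Q_{t_1,t_2}\Subset\Omega_T$, approximate $u$ from below by continuous weak solutions $h_j\le u$ on $\partial_p Q_{t_1,t_2}$, and then write "since $h_j\le u\le v$ on $\partial_p Q_{t_1,t_2}$." But $u\le v$ on $\partial_p Q_{t_1,t_2}$ is precisely what you are trying to prove, and it is not available on the boundary of an arbitrary interior subcylinder; the argument is circular as written. A secondary defect: if $Q_{t_1,t_2}$ is not contained in $\{u>\psi\}$, the functions $h_j$ are Poisson modifications of a strict supersolution and in general remain strictly below $u$ inside, so "passing to the limit yields $u\le v$" does not follow even if the comparison could be applied. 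Closing the gap requires a genuinely different ingredient: either a non-cylindrical comparison principle, such as~\cite[Theorem~2.4]{BBGP}, applied on the open set $\{u>\psi\}$ whose relative parabolic boundary lies in $\{u=\psi\}\cup\partial_p\Omega_T$ where $u=\psi\le v$; or, as in~\cite{KKP,kokusi}, the observation that $\min\{u,v\}$ is a bounded supercaloric function (hence a weak supersolution) above $\psi$ with the same parabolic boundary data as $u$, combined with the variational characterization of $u$ as the smallest such supersolution. Your sketch contains neither of these steps.
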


The following lemma states that pointwise values of a supercaloric function are recovered by the essliminf-regularization. Furthermore, sufficient conditions to guarantee that a supercaloric function is a weak supersolution are given.

\begin{lemma} \label{lem:supercal-prop}
Let $1<p<\infty$ and $u$ be a supercaloric function in an open set $U \subset \R^{n+1}$. Then $u_* = u$ everywhere in $U$, where $u_*$ is defined as in Lemma~\ref{lem:supersol-lsc}. Furthermore, if $u \in L^\infty_{\loc}(U)$ or $u \in L^p(t_1,t_2;W^{1,p}(V))$ whenever $V_{t_1,t_2} \Subset U$, then $u$ is a weak supersolution in $U$.
\end{lemma}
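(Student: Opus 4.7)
The identity $u_*=u$ pointwise is established by showing both inequalities. The direction $u_*(x_o,t_o)\ge u(x_o,t_o)$ is immediate from lower semicontinuity: for every $\epsilon>0$, lsc at $(x_o,t_o)$ yields a neighborhood on which $u\ge u(x_o,t_o)-\epsilon$, whence the essential infimum over small past sets satisfies the same lower bound, and the claim follows by letting $\epsilon\to 0$. For the reverse inequality $u_*(x_o,t_o)\le u(x_o,t_o)$, I would argue by contradiction: if $u(x_o,t_o)<c<u_*(x_o,t_o)$ for some $c$, then $\{u\le c\}$ is closed (by lsc) and has zero Lebesgue measure in a small past cylinder $Q^-=B_\rho(x_o)\times(t_o-\rho^p,t_o)$. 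The aim is to apply Definition~\ref{d.supercal}(iii) to a subcylinder $Q'$ whose parabolic boundary lies in the open set $\{u>c-\eta\}$ for some $\eta>0$, which can be arranged by a suitable choice of the radius and base time slice, exploiting the closedness and zero measure of $\{u\le c-\eta\}$ and the compactness of $\partial_p Q'$. Comparison with the constant weak solution $c-\eta/2$ then propagates the lower bound $u\ge c-\eta/2$ into $Q'$, and a limiting argument along a nested sequence of such subcylinders yields $u(x_o,t_o)\ge c-\eta/2$, contradicting $u(x_o,t_o)<c$ for $\eta$ small enough.

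For the second claim that $u$ is a weak supersolution, I would first handle the locally bounded case. The truncations $u_k:=\min(u,k)$ are bounded supercaloric functions, since the minimum of two supercaloric functions is supercaloric and constants are weak solutions. On any subcylinder $Q\Subset U$, Proposition~\ref{prop:obstacle} applied with continuous obstacles $\psi_j\nearrow u_k$ produces continuous supercaloric functions $H_{k,j}\le u_k$, which are weak supersolutions in $Q$ by the standard theory of parabolic obstacle problems and converge monotonically to $u_k$ as $j\to\infty$. Uniform boundedness together with Lemma~\ref{lem:bdd-weak-sol-convergence} then give that $u_k$ is a weak supersolution in $Q$. Sending $k\to\infty$ with another application of Lemma~\ref{lem:bdd-weak-sol-convergence} locally concludes the locally bounded case. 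For the case $u\in L^p(t_1,t_2;W^{1,p}(V))$ whenever $V_{t_1,t_2}\Subset U$, the same truncation scheme applies, and the passage to the limit in the gradient term of the weak supersolution inequality is justified by dominated convergence thanks to the Sobolev integrability.

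The main obstacle is the hard direction of the pointwise identity, namely the conversion of the almost-everywhere lower bound ``$u\ge c$ in $Q^-$'' into a pointwise lower bound suitable for the comparison principle on the parabolic boundary of a subcylinder. The supercaloric hypothesis is indispensable here: the lower semicontinuous function equal to $1$ on a punctured neighborhood of $(x_o,t_o)$ with value $0$ at $(x_o,t_o)$ would be a counterexample to $u=u_*$ pointwise, but it fails the comparison principle since a continuous weak solution with value $\tfrac{1}{2}$ at $(x_o,t_o)$ and boundary data $\le 1$ violates $h\le u=0$ at $(x_o,t_o)$, highlighting why Definition~\ref{d.supercal}(iii) enters crucially in the argument.
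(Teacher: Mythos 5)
Your treatment of the second and third assertions (that $u$ is a weak supersolution under local boundedness, respectively local Sobolev integrability) is sound and closely parallels what the paper does: the bounded case rests on approximation by continuous supercaloric functions via Proposition~\ref{prop:obstacle} together with Lemma~\ref{lem:bdd-weak-sol-convergence} (the paper cites \cite{GKM,KL} for exactly this), and the Sobolev case uses truncation and dominated convergence, just as in the paper's proof.

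The gap is in the hard direction $u_*\le u$ of the pointwise identity. Your plan compares $u$ with the constant weak solution $c-\eta/2$ on a sub-cylinder $Q'$ whose parabolic boundary lies in the open set $\{u>c-\eta\}$. This has two problems. First, Definition~\ref{d.supercal}(iii) requires $h\le u$ \emph{everywhere} on $\partial_p Q'$, so you must arrange that the closed set $F:=\{u\le c-\eta\}$ is \emph{disjoint} from $\partial_p Q'$. All you know is that $F$ has zero $(n+1)$-dimensional Lebesgue measure in a past cylinder. The coarea/Fubini argument only yields that for a.e.\ choice of the radius $r$ and the bottom time $t_1$ the intersection $F\cap\partial_p Q'$ is $\mathcal{H}^n$-null, not empty; and since $u$ is merely lower semicontinuous, $h\le u$ $\mathcal{H}^n$-a.e.\ on $\partial_p Q'$ does not upgrade to $h\le u$ pointwise there. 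A closed Lebesgue-null set can easily contain an entire sphere $\partial B_r(x_o)\times\{s\}$ or a ``moving sphere'' $\{(x,t):|x-x_o|=g(t)\}$, so the asserted avoidance cannot be obtained from nullity and compactness alone. Second, even granting such a sub-cylinder, the comparison principle only gives $u\ge c-\eta/2$ in the \emph{interior} of $Q'$. To obtain a bound at $(x_o,t_o)$ you would need $(x_o,t_o)\in Q'$, forcing $t_2>t_o$; but the hypothesis $u_*(x_o,t_o)>c$ controls $u$ only for $s<t_o$, and you have no information on $\partial B_r(x_o)\times[t_o,t_2]$. Choosing $t_2\le t_o$ instead yields only $u\ge c-\eta/2$ in the past cylinder, i.e.\ a lower bound on $u_*(x_o,t_o)$ that you already had, not on $u(x_o,t_o)$.

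The paper defers this direction to \cite[Theorem 5.1]{KL}, and the argument there goes through the obstacle problem rather than constant barriers: take a cylinder $Q'\ni(x_o,t_o)$, choose continuous $\psi_j\nearrow u$ on $\overline{Q'}$, let $v_j$ be the continuous supercaloric solution of the obstacle problem (Proposition~\ref{prop:obstacle}) with obstacle and boundary data $\psi_j$, and use minimality to get $\psi_j\le v_j\le u$. Since each $v_j$ is continuous it satisfies $(v_j)_*=v_j\le u_*$ pointwise, while $v_j(x_o,t_o)\ge\psi_j(x_o,t_o)\to u(x_o,t_o)$, which sandwiches $u(x_o,t_o)\le u_*(x_o,t_o)$ without ever needing pointwise control of $u$ on a fixed boundary surface.
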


\begin{proof}
The fact that $u_* = u$ is proved in~\cite[Theorem 5.1]{KL} in case $p
\geq 2$, but the proof works also in the case $1<p < 2$ by applying
Proposition~\ref{prop:obstacle} and~\cite[Theorem 2.4]{BBGP}. The fact that $u$ is a weak
supersolution provided that $u \in L^\infty_{\loc}(U)$ follows
from~\cite{GKM,KL}. The claim with $u \in L^p(t_1,t_2;W^{1,p}(V))$ for
arbitrary $V_{t_1,t_2} \Subset U$ follows by considering truncations
$\min \{u,k\}$ for $k = 1,2,...$ and using the previous result
together with the dominated convergence theorem.
\end{proof}

Conversely, weak supersolutions have supercaloric representatives, provided they are
essentially bounded from below.
\begin{lemma}\label{lem:supersol-supercal}
Let $1<p<\infty$ and $u$ be a weak supersolution in an open set $U \subset \R^{n+1}$, such that $u$ is locally essentially bounded from below. Then $u_*$ defined in Lemma~\ref{lem:supersol-lsc} is a supercaloric function in $U$.
\end{lemma}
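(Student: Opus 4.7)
Properties~(i) and~(ii) are direct: Lemma~\ref{lem:supersol-lsc} already provides the lower semicontinuity of $u_*$, and since $u$ is a.e.\ finite (being locally in $L^p$) and agrees with $u_*$ a.e.\ in $U$, the set where $u_*$ is finite is dense (indeed of full measure).

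The substance lies in verifying the comparison principle~(iii). Fix a subcylinder $Q_{t_1,t_2}=Q\times(t_1,t_2)\Subset U$ and $h\in C(\overline{Q_{t_1,t_2}})$ a weak solution with $h\le u_*$ on $\partial_p Q_{t_1,t_2}$. My first step is a reduction to the locally bounded case. Setting $u^{(k)}:=\min\{u,k\}$, one checks that $u^{(k)}$ remains a weak supersolution (truncation from above by a constant preserves the supersolution property), that $u^{(k)}\in L^\infty_{\loc}(U)$ because $u$ is locally essentially bounded from below, and that the essliminf regularizations satisfy $(u^{(k)})_*=\min\{u_*,k\}$. Since $h$ is bounded on $\overline{Q_{t_1,t_2}}$, for $k\ge \sup_{\overline{Q_{t_1,t_2}}}h$ the boundary inequality persists as $h\le (u^{(k)})_*$ on $\partial_p Q_{t_1,t_2}$; as $(u^{(k)})_*\nearrow u_*$ in $k$, it is enough to prove $h\le (u^{(k)})_*$ in $Q_{t_1,t_2}$ for each such $k$.

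For the bounded case I would apply the classical parabolic comparison argument. Subtract the weak formulations satisfied by $h$ (equality) and $u^{(k)}$ (inequality) after passing to Steklov averages $[\,\cdot\,]_\tau$ in time. For $\epsilon>0$, $\tau_0\in(t_1,t_2)$ and a smooth nonnegative temporal cutoff $\zeta$ approximating $\chi_{(t_1,\tau_0)}$, test the resulting identity with
\begin{equation*}
\phi=\bigl([h]_\tau-[u^{(k)}]_\tau-\epsilon\bigr)_+\,\zeta(t).
\end{equation*}
The admissibility of $\phi$ is guaranteed by the fact that the open set $\{h>u_*+\epsilon\}$ is compactly contained in $Q_{t_1,t_2}$ away from the lateral and initial boundaries: this follows from the lower semicontinuity of $u_*$, the continuity of $h$ and the boundary inequality $h\le u_*$ on $\partial_p Q_{t_1,t_2}$. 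Monotonicity of $\xi\mapsto|\xi|^{p-2}\xi$ renders the diffusion term nonnegative, while the time term yields, after first letting $\tau\to 0$ and then $\zeta\nearrow\chi_{(t_1,\tau_0)}$,
\begin{equation*}
\tfrac{1}{2}\int_{Q\times\{\tau_0\}}(h-u^{(k)}-\epsilon)_+^2\,\d x\le 0.
\end{equation*}
Sending $\epsilon\to 0$ gives $h\le u^{(k)}$ a.e.\ in $Q_{t_1,t_2}$. Lower semicontinuity of $(u^{(k)})_*$ and continuity of $h$ then promote this to the pointwise inequality $h\le (u^{(k)})_*$, and passing to $k\to\infty$ completes the verification of~(iii).

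The main technical obstacle is the rigorous handling of the time derivative of $u$, which is not available in the strong sense; Steklov averaging is the standard remedy, together with a careful bookkeeping of the limits $\tau\to 0$, $\zeta\nearrow\chi$ and $\epsilon\to 0$, and of the commutator with the nonlinearity. All of this is routine once the admissibility of the test function $\phi$ has been justified by the compactness argument based on the combined use of lower semicontinuity of $u_*$ and continuity of $h$ on the parabolic boundary.
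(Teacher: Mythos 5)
Your argument is correct and follows essentially the same strategy as the paper: properties (i) and (ii) are automatic, and for the comparison principle one truncates from above to obtain a bounded, lower semicontinuous weak supersolution, checks the parabolic-boundary ordering for the truncation, and invokes a comparison principle with the continuous solution $h$. The difference is one of self-containedness: the paper delegates both steps to cited results --- Lemma~3.2 in [KKP] for the stability of the weak supersolution property under truncation by a constant, and Lemma~3.5 in [KKP] for the comparison with a continuous weak solution --- whereas you reprove the comparison step inline via Steklov averaging and the positive-part test function. Your justification for the admissibility of $\phi=([h]_\tau-[u^{(k)}]_\tau-\epsilon)_+\,\zeta$, namely that the closure of $\{h>(u^{(k)})_*+\epsilon\}$ does not meet $\partial_p Q_{t_1,t_2}$ by combining continuity of $h$, lower semicontinuity of $(u^{(k)})_*$, and the boundary inequality, is exactly the mechanism that makes the cited comparison lemma work, so the two proofs are the same in substance. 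One cosmetic simplification you could make, and which the paper adopts: since $\min\{u_*,k\}\le u_*$, a single truncation at $k=\sup_{\overline{Q_{t_1,t_2}}}h$ already yields $h\le\min\{u_*,k\}\le u_*$ pointwise in $Q_{t_1,t_2}$, so the limit $k\to\infty$ is not needed; this is precisely the choice $\tilde u=\min\{u_*,\sup_Q h\}$ in the paper.
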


\begin{proof}
Clearly $u_*$ is lower semicontinuous and finite in a dense subset of $U$ since $u$ is. For comparison, let $Q \Subset U$ be a cylinder and let $h \in C(\overline{Q})$ be a weak solution in $Q$ such that $h \leq u_*$ on $\partial_p Q$. Denote $\tilde u = \min \left\{u_*, \sup_{Q} h\right\}$ which is a bounded, lower semicontinuous weak supersolution by~\cite[Lemma 3.2]{KKP} and satisfies $h \leq \tilde u$ on $\partial_p Q$. By using~\cite[Lemma 3.5]{KKP} it follows that $u_* \geq \tilde u \geq h$ a.e. in $Q$. By continuity of $h$ and the fact that $(u_*)_* = u_*$, it follows that $u_* \geq h$ everywhere in $Q$.
\end{proof}

\subsection{Balayage}

The notion of balayage is a central concept in classical potential
theory. We use the following definition, which is adapted to the
nonlinear parabolic setting.

\begin{definition}
Let $\psi: \Omega_T \to [-\infty,\infty]$ be a function, and denote
$$
\mathcal U_\psi := \{ v \text{ is a supercaloric function in } \Omega_T : v(x,t) \geq \psi(x,t)\, \text{ for every } (x,t) \in \Omega_T \}.
$$
We define the r\'eduite of $\psi$ as 
$$
R_\psi(x,t) = \inf \{ v(x,t): v \in \mathcal U_\psi \},
$$
and the balayage of $\psi$ as its lower semicontinuous regularization
$$
\widehat{R}_\psi(x,t) := \lim_{\rho \to 0} \left( \inf_{B_\rho(x) \times (t-\rho^p,t+\rho^p)} R_\psi \right), 
$$
in case $\mathcal U_\psi \neq \varnothing$. If $\mathcal U_\psi = \varnothing$ we interpret $R_\psi = \widehat R_\psi \equiv \infty$.

If $\psi=\chi_K$ is a characteristic function of a compact set $K
\subset \Omega_T$, we use the abbreviation $\widehat R_K := \widehat R_{\chi_K}$.
\end{definition}

We summarize some elementary properties of the balayage in the
following lemma. 
\begin{lemma} \label{lem:balayage-psi-prop}
Let $1<p<\infty	$.
 If $\psi$ is a bounded function, then $\widehat R_\psi$ is a
  bounded supercaloric function in $\Omega_T$ that satisfies
  $\widehat R_\psi = (R_\psi)_*$ everywhere in $\Omega_T$ and
  $\widehat R_\psi = R_\psi$ a.e. in $\Omega_T$.  

  If
  $\psi \in C(\overline{\Omega_T})$, then
  $\widehat R_\psi \in C(\overline{\Omega_T})$, $\widehat R_\psi$ is a
  weak solution in the set $\{u>\psi\}$ and $\widehat R_\psi$ is the
  smallest supercaloric function above $\psi$ in $\Omega_T$.
\end{lemma}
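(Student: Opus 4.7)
The plan is to establish part (1) via a Choquet-type approximation argument combined with the convergence and regularity results of Lemmas~\ref{lem:supercal-prop}, \ref{lem:bdd-weak-sol-convergence}, and \ref{lem:supersol-supercal}, and to deduce part (2) directly from the obstacle problem in Proposition~\ref{prop:obstacle}.

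For part (1), I would first fix $M$ with $|\psi|\le M$ and observe that the constant function $M$ is a supercaloric element of $\mathcal{U}_\psi$, so $\mathcal{U}_\psi\neq\varnothing$ and $R_\psi\le M$. Since every $v\in\mathcal{U}_\psi$ obeys $v\ge\psi\ge -M$, we also have $R_\psi\ge -M$, and both bounds transfer to $\widehat R_\psi$. I would then invoke Choquet's topological lemma to extract a countable subfamily $(v_k)\subset\mathcal{U}_\psi$ whose pointwise infimum $w:=\inf_k v_k$ has parabolic lower semicontinuous regularization equal to $\widehat R_\psi$. The pointwise minimum of two supercaloric functions is again supercaloric -- a direct consequence of lower semicontinuity and the comparison principle in Definition~\ref{d.supercal} -- so passing to the finite minima $u_k:=\min\{v_1,\ldots,v_k\}$ yields a decreasing sequence in $\mathcal{U}_\psi$ with the same infimum $w$. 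Each $u_k$ is bounded by $M$, hence by Lemma~\ref{lem:supercal-prop} it is a weak supersolution, and Lemma~\ref{lem:bdd-weak-sol-convergence} then shows that $w$ is a weak supersolution as well. Lemma~\ref{lem:supersol-supercal} finally yields that the backward essliminf-regularization $w_*$ from Lemma~\ref{lem:supersol-lsc} is supercaloric in $\Omega_T$. Combining the standard semicontinuity identities $w=w_*$ a.e. and $w_*=\widehat w$ everywhere with the identification $\widehat w=\widehat R_\psi$ from Choquet's lemma and the invariance of the essliminf regularization under modifications on null sets (which forces $(\widehat R_\psi)_*=(R_\psi)_*$), the three assertions in (1) follow at once.

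For part (2), Proposition~\ref{prop:obstacle} applied with the continuous obstacle $\psi$ produces $u\in C(\overline{\Omega_T})$ which is supercaloric in $\Omega_T$, majorizes $\psi$, is a weak solution in $\{u>\psi\}$, and is the smallest supercaloric majorant of $\psi$. The minimality means $u\le v$ for every $v\in\mathcal{U}_\psi$, while $u$ itself belongs to $\mathcal{U}_\psi$; hence $R_\psi=u$ pointwise, and continuity of $u$ forces $\widehat R_\psi=R_\psi=u$, from which all three claims of (2) follow directly.

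The main obstacle is the Choquet step in (1) together with the subsequent reconciliation of the two regularizations: the balayage is defined via parabolic cylinders symmetric in time, whereas the supercaloric representative of a weak supersolution provided by Lemma~\ref{lem:supersol-lsc} is obtained through a backward essliminf. Passing from one to the other requires carefully exploiting that $w_*$ is already lower semicontinuous and supercaloric, so that $\widehat w=w_*$ holds and the identification with $\widehat R_\psi$ can be made. Once these regularization identities are in place, the remaining steps are routine consequences of the tools cited above.
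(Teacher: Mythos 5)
Your argument is correct and follows essentially the route the paper outsources to \cite[Lemma 2.7, Theorem 2.8]{LP}: a Choquet-style extraction of a countable decreasing subfamily, whose pointwise limit is identified as a weak supersolution by Lemma~\ref{lem:bdd-weak-sol-convergence}, after which Lemma~\ref{lem:supersol-supercal} furnishes the supercaloric representative and the two regularizations are reconciled; part (2) is exactly the paper's direct deduction from Proposition~\ref{prop:obstacle}. You correctly flag the one genuinely delicate step — matching the symmetric-cylinder pointwise regularization $\widehat{w}$ with the backward essential liminf $w_*$ — and while you don't write out the inequality $w\ge w_*$ pointwise (which follows because each finite minimum $u_k$ of the Choquet family is supercaloric, hence $(u_k)_*=u_k$, so $u_k\ge w_*$ and thus $w\ge w_*$), the reasoning you indicate does close this gap, after which $\widehat{w}\ge\widehat{w_*}=w_*$ and the reverse inequality come for free and yield all three claims of part (1).
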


\begin{proof}

Since constants are supercaloric functions, we have $\widehat{R}_\psi
\leq \sup_{\Omega_T} \psi < \infty$. Furthermore, $-\infty <
\inf_{\Omega_T} \psi \leq \widehat{\psi} \leq \widehat{R}_\psi$ in
$\Omega_T$, which shows boundedness of
$\widehat{R}_\psi$. $\widehat{R}_\psi$ is clearly a supercaloric
function, and the further two properties follow from arguments
in~\cite[Lemma 2.7, Theorem 2.8]{LP} in connection with Lemmas~\ref{lem:supersol-lsc} and~\ref{lem:supercal-prop}. The properties for $\psi \in  C(\overline{\Omega_T})$ follow from Proposition~\ref{prop:obstacle} since $\widehat R_\psi$ is the smallest supercaloric function above $\psi$ and thus coincides with the solution given by Proposition~\ref{prop:obstacle}.

\end{proof}

The next lemma is concerned with the boundary behaviour of the
balayage in the case that $\psi$ is compactly supported. 

\begin{lemma} \label{lem:balayage-K-properties}
  Let $1<p<\infty$ and $\Omega \subset \R^n$ be a bounded open set. Suppose that $\psi \colon \Omega_T \to  [0, \infty)$ is a bounded function with compact support in $\Omega_T$.
  Then, there exists $\delta
  > 0$ such that $\widehat R_\psi (\cdot,t) \equiv 0$ for every $t \in
  (0,\delta]$. Furthermore, $\widehat R_\psi$ is bounded in $\Omega_T$ and a continuous weak solution in $\Omega_T \setminus \spt(\psi)$ with $\widehat R_\psi  \in L^p(0,T;W^{1,p}_0(\Omega))$. Moreover, $\widehat R_\psi$ is continuous up to $\partial_p \Omega_T$ if $\R^n \setminus \Omega$ is uniformly $p$-fat.
\end{lemma}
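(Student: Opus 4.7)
Since $\spt(\psi)\Subset\Omega_T$, fix $\delta>0$ with $\spt(\psi)\subset\Omega\times[2\delta,T)$ and set $M:=\sup\psi$. For the initial-slab claim $\widehat R_\psi(\cdot,t)\equiv 0$ on $(0,\delta]$, I construct an explicit supercaloric competitor: let $\eta\colon[0,T)\to[0,M]$ be smooth and nondecreasing with $\eta\equiv 0$ on $[0,\delta]$ and $\eta\equiv M$ on $[2\delta,T)$. Viewed as a function on $\Omega_T$ independent of $x$, $\eta$ satisfies $\partial_t\eta\ge 0$ and $\nabla\eta=0$, so it is a nonnegative bounded weak supersolution and hence supercaloric by Lemma~\ref{lem:supersol-supercal}. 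Since $\eta\ge\psi$, one has $R_\psi\le\eta$, and combined with $R_\psi\ge\psi\ge 0$ this yields $R_\psi(\cdot,t)\equiv 0$ on $(0,\delta]$, whence $\widehat R_\psi(\cdot,t)\equiv 0$ there by the definition of the lower semicontinuous regularisation. The boundedness $\widehat R_\psi\le M$ in $\Omega_T$ follows analogously with the constant $M$ in place of $\eta$.

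For the remaining properties I would approximate $\psi$ from above by continuous obstacles: pick $\psi_k\in C(\overline{\Omega_T})$ with $\spt\psi_k\Subset\Omega_T$, $0\le\psi_{k+1}\le\psi_k\le M$ and $\psi_k\downarrow\psi$ pointwise, so in particular $\psi_k=0$ on $\partial_p\Omega_T$. Proposition~\ref{prop:obstacle} then supplies smallest supercaloric majorants $u_k\in C(\overline{\Omega_T})$, solving~\eqref{p-lap} weakly in $\{u_k>\psi_k\}$ and vanishing continuously on $\partial_p\Omega_T$; by Lemma~\ref{lem:balayage-psi-prop} each $u_k=\widehat R_{\psi_k}$. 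Monotonicity of the obstacle problem in the obstacle gives $u_{k+1}\le u_k\le M$, and since each $u_k\in\mathcal U_\psi$ one has $\widehat R_\psi\le u_k$ in $\Omega_T$. The pointwise limit $u_\infty:=\lim_k u_k$ is a bounded weak supersolution by Lemma~\ref{lem:bdd-weak-sol-convergence}, and its lower semicontinuous regularisation (Lemma~\ref{lem:supersol-supercal}) is a supercaloric majorant of $\psi$, which by the minimality characterising $R_\psi$ must coincide a.e.\ with $\widehat R_\psi$. Properties shared by all $u_k$ and stable under such monotone limits therefore transfer to $\widehat R_\psi$.

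With the sandwich $0\le\widehat R_\psi\le u_k$ at hand, the remaining conclusions follow. Continuity up to $\partial_p\Omega_T$ under uniform $p$-fatness is immediate: at any boundary point $(x_o,t_o)$, $\limsup_{(x,t)\to(x_o,t_o)}\widehat R_\psi\le u_k(x_o,t_o)=0$, so $\widehat R_\psi$ extends continuously by zero. For the weak-solution property in $\Omega_T\setminus\spt(\psi)$ I use a Poisson-modification argument: on any sub-cylinder $Q\Subset\Omega_T\setminus\spt(\psi)$, let $h$ solve the Dirichlet problem with boundary data $\widehat R_\psi|_{\partial_p Q}$; comparison gives $h\le\widehat R_\psi$ in $Q$, while the function equal to $h$ inside $Q$ and to $\widehat R_\psi$ outside is again in $\mathcal U_\psi$ (because $\psi\equiv 0$ on $Q$), forcing $h=\widehat R_\psi$ on $Q$. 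Interior H\"older regularity of bounded weak solutions then supplies continuity on $\Omega_T\setminus\spt(\psi)$. For $\widehat R_\psi\in L^p(0,T;W^{1,p}_0(\Omega))$, I would derive a $k$-uniform bound on $\|\nabla u_k\|_{L^p(\Omega_T)}$ by testing the equation for $u_k$ against a fixed admissible competitor above $\sup_k\psi_k$ (for instance a tensor product $\zeta(x)\eta(t)$ with $\zeta\in C^\infty_0(\Omega)$ equal to $1$ on the spatial projection of $\bigcup_k\spt\psi_k$), using that the Riesz measure $\mu_{u_k}$ is concentrated on $\{u_k=\psi_k\}\subset\spt\psi_k$ to control the right-hand side by $M\,\mu_{u_k}(\Omega_T)$, and bounding this total mass uniformly via a standard choice of compactly supported test function in the equation for $u_k$. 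Together with $u_k\in L^p(0,T;W^{1,p}_0(\Omega))$ from Proposition~\ref{prop:obstacle}, weak compactness then transfers the Sobolev regularity to the monotone limit.

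The most delicate parts I anticipate are the uniform energy estimate for the approximating obstacle solutions---the time derivative has to be handled via Steklov averages to justify testing with solution-dependent functions, and the total Riesz mass $\mu_{u_k}(\Omega_T)$ must be controlled independently of $k$, ultimately thanks to the compact support of $\psi$ away from $\partial_p\Omega_T$---and the verification that the Poisson-modified function is genuinely supercaloric across $\partial Q$, which is classical in the linear theory but requires invoking the supercaloric comparison principle together with lower semicontinuity in the nonlinear parabolic setting.
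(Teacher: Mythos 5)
Your arguments for the initial vanishing slab (via an explicit time‑dependent competitor, an alternative to the paper's "redefine a nonnegative supercaloric function by zero in the past"), for boundedness via the constant competitor, for the weak‑solution property via Poisson modification (which is precisely the paper's route), and for boundary continuity via domination by $u_1=\widehat{R}_{\psi_1}$ for a single continuous majorant $\psi_1\ge\psi$ are all sound. The gap lies in the deduction $\widehat{R}_\psi\in L^p(0,T;W^{1,p}_0(\Omega))$, which you base on transferring a uniform energy bound from the obstacle solutions $u_k$ to the monotone limit $u_\infty$ and then identifying $u_\infty$ with $\widehat{R}_\psi$.

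That identification is not established, for two independent reasons. Logically, both facts you invoke --- $\widehat{R}_\psi\le u_k$ for every $k$, and "$(u_\infty)_*$ is a supercaloric majorant of $\psi$, hence $R_\psi\le(u_\infty)_*$ by minimality" --- yield the \emph{same} inequality $\widehat{R}_\psi\le(u_\infty)_*$; your argument never produces the converse $(u_\infty)_*\le\widehat{R}_\psi$. Structurally, a decreasing sequence of continuous functions $\psi_k\ge\psi$ has an upper semicontinuous pointwise limit, so it converges at best to the usc envelope $\psi^*$ of $\psi$, not to $\psi$ itself; since the lemma only assumes $\psi$ bounded, your $u_\infty$ computes $\widehat{R}_{\psi^*}$, which can be strictly larger than $\widehat{R}_\psi$. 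Consequently the energy bound on $u_\infty$ does not control $\widehat{R}_\psi$. The paper avoids any monotone obstacle approximation here: it reads the Sobolev regularity directly from the Poisson modification in the noncylindrical set $\Omega_T\setminus\spt\psi$ (which contains a full neighborhood of the lateral boundary), where the approximating solutions are uniformly bounded, vanish on $\partial\Omega$ in the Sobolev sense and attain the boundary values continuously, using the comparison principle on noncylindrical sets from \cite[Theorem 2.4]{BBGP}.
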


\begin{proof}

The first property is clear by definition and since a nonnegative supercaloric function can be  redefined by zero in the past. 

$\widehat R_\psi$ is a weak solution in $\Omega_T \setminus
\spt(\psi)$, since it coincides with its Poisson modification in any
cylinder $Q \Subset \Omega_T \setminus \spt (\psi)$, and continuity
follows from boundedness of the Poisson modification (which follows
from boundedness of $\widehat R_\psi$). Finally, we have $\widehat R_\psi  \in
L^p(0,T;W^{1,p}_0(\Omega))$ since approximants in the Poisson
modification are zero on $\partial \Omega$ in the Sobolev sense (and
uniformly bounded) and take the boundary values continuously. For this
argument, we also make use of the comparison principle on
noncylindrical sets, see~\cite[Theorem 2.4]{BBGP}.

For continuity up to the boundary of (bounded) weak solutions when $\R^n \setminus \Omega$ is uniformly $p$-fat, we refer to~\cite{GL1,GL2,GLL,kilpelainen-lindqvist}.

\end{proof}

\subsection{Parabolic function spaces}
\subsubsection{Definitions}

We abbreviate
$$\mathcal V(\Omega_T) = L^p(0,T;W^{1,p}_0(\Omega))\quad \text{ and }\quad\mathcal V'(\Omega_T) = \big( L^{p}(0,T;W^{1,p}_0(\Omega)) \big)'.$$
For the definition of variational parabolic capacity, we use the
function space 
\begin{equation}
\mathcal W(\Omega_T) = \left\{ v \in \mathcal V(\Omega_T) \cap
  L^\infty(0,T;L^2(\Omega)) : \partial_t v \in \mathcal V'(\Omega_T)
\right\},\label{def-W}
\end{equation}
 together with
\begin{align}\label{W-norm}
\|v\|_{\mathcal{W}(\Omega_T)} &= \|v\|_{\mathcal{V}(\Omega_T)}^p + \|\partial_t v\|_{\mathcal{V}'(\Omega_T)}^{p'} + \|v\|_{L^\infty(0,T;L^2(\Omega))}^2  \\\nonumber
&=  \iint_{\Omega_T} |\nabla v|^{p} \, \d x \d t  + \sup_{\substack{ \phi \in C^\infty_0(\Omega_T) \\ \|\phi\|_{\mathcal{V}(\Omega_T)} \leq 1}} \left| \iint_{\Omega_T} v \partial_t \phi \, \d x \d t \right|^{p'} \\\nonumber
&\quad + \sup_{0<t<T} \int_\Omega v(x,t)^2 \, \d x.
\end{align}

\begin{remark} \label{rem:C-L2}
  In~\cite{droniou}, the authors consider the space
  $$
  \widehat{\mathcal{W}}(\Omega_T) = \big\{ v \in L^p(0,T;V):
  \partial_t v \in L^{p'}(0,T;V') \big\}
  $$
  for $V = W^{1,p}_0(\Omega) \cap L^2(\Omega)$. Observe that
  $\mathcal{W}(\Omega_T)$ is continuously embedded in
  $\widehat{\mathcal{W}}(\Omega_T)$ and, furthermore,
  $\widehat{\mathcal{W}}(\Omega_T)$ is continuously embedded in
    $L^\infty(0,T;L^2(\Omega))$.

\end{remark}

In the case $p\ge\frac{2n}{n+2}$, the condition $v\in
L^\infty(0, \infty;L^2(\Omega))$ in the definition of
$\mathcal{W}(\Omega_{\infty})$ could be omitted. This is a consequence of the
following result.  
\begin{lemma}\label{lem:parabolic-embedding}
  Let $\Omega\subset\R^n$ be a bounded domain, $T\in(0,\infty]$ and
  $p\in(1,\infty)$ an exponent with $p\ge\frac{2n}{n+2}$. Then, for every $v\in\mathcal{V}(\Omega_T)$ with
  $\partial_tv\in\mathcal{V}'(\Omega_T)$, we have the estimate
  \begin{equation*}
    \|v\|_{L^\infty(0,T;L^2(\Omega))}^2
    \le
    2\|\partial_tv\|_{\mathcal{V}'(\Omega_T)}\|v\|_{\mathcal{V}(\Omega_T)}
    +
    \frac{c}{T^{\frac{2}{p}}}\|v\|^2_{\mathcal{V}(\Omega_T)}
  \end{equation*}
  with a constant $c=c(n,p,|\Omega|)$. In the case $T=\infty$, the
  last term can be omitted. 
\end{lemma}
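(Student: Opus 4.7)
My plan is to invoke the standard chain rule for Hilbert-space-valued evolutions, which is made available by the Sobolev embedding that the hypothesis $p\ge\frac{2n}{n+2}$ provides. Specifically, under this assumption, the bounded domain $\Omega$ admits the continuous Sobolev embedding $W^{1,p}_0(\Omega)\hookrightarrow L^2(\Omega)$ with a constant $c_S=c_S(n,p,|\Omega|)$, so that $(W^{1,p}_0(\Omega), L^2(\Omega), W^{-1,p'}(\Omega))$ forms an evolution triple in the sense of Lions/Showalter. Classical theory then ensures that every $v\in\mathcal{V}(\Omega_T)$ with $\partial_t v\in\mathcal{V}'(\Omega_T)$ admits a representative $v\in C([0,T];L^2(\Omega))$ (with continuity on every compact subinterval of $[0,\infty)$ when $T=\infty$) and the chain-rule identity
\begin{equation*}
  \|v(t)\|_{L^2(\Omega)}^2 - \|v(s)\|_{L^2(\Omega)}^2 = 2\int_s^t\langle\partial_\tau v,v\rangle\,\mathrm{d}\tau
\end{equation*}
holds for all $s,t\in[0,T]$.

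Applying the duality estimate to the right-hand side and using that the $\mathcal{V}$- and $\mathcal{V}'$-norms over $(s,t)$ are dominated by those over $(0,T)$ yields $\bigl|\|v(t)\|_{L^2}^2-\|v(s)\|_{L^2}^2\bigr|\le 2\|\partial_tv\|_{\mathcal{V}'(\Omega_T)}\|v\|_{\mathcal{V}(\Omega_T)}$ for all $s,t$. Taking the supremum in $t$ and the infimum in $s$ gives
\begin{equation*}
  \|v\|_{L^\infty(0,T;L^2(\Omega))}^2 \le \inf_{s\in(0,T)}\|v(s)\|_{L^2(\Omega)}^2 + 2\|\partial_tv\|_{\mathcal{V}'(\Omega_T)}\|v\|_{\mathcal{V}(\Omega_T)},
\end{equation*}
so the task is reduced to controlling the infimum term.

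In the case $T<\infty$, I would combine the pointwise Sobolev estimate $\|v(s)\|_{L^2}\le c_S\|\nabla v(s)\|_{L^p(\Omega)}$ with the elementary bound $\inf_{s}f(s)^p\le\frac{1}{T}\int_0^T f^p\,\mathrm{d}s$ applied to $f(s)=\|\nabla v(s)\|_{L^p}$ to obtain
\begin{equation*}
  \inf_{s\in(0,T)}\|v(s)\|_{L^2}^2 \le c_S^2\left(\frac{1}{T}\int_0^T\|\nabla v(s)\|_{L^p}^p\,\mathrm{d}s\right)^{\!2/p} = \frac{c_S^2}{T^{2/p}}\,\|v\|_{\mathcal{V}(\Omega_T)}^2,
\end{equation*}
which yields the claimed inequality with $c=c_S^2$. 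When $T=\infty$, the integrability of $\|\nabla v(\cdot)\|_{L^p}^p$ on $(0,\infty)$ forces $\liminf_{s\to\infty}\|\nabla v(s)\|_{L^p}=0$, so the Sobolev bound produces a sequence $s_n\to\infty$ with $\|v(s_n)\|_{L^2}\to 0$; the infimum then vanishes and the last term may be dropped. The only step requiring care is the justification of the chain-rule identity in the precise function-space setting, but this is exactly the payoff of the hypothesis $p\ge\frac{2n}{n+2}$, which makes the evolution-triple framework applicable; no essential new difficulty arises.
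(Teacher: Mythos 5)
Your proof is correct and follows essentially the same route as the paper's: both use the Lions/Showalter evolution-triple framework to obtain continuity in $L^2$ and the chain-rule identity, both invoke the Sobolev embedding $W^{1,p}_0(\Omega)\hookrightarrow L^2(\Omega)$ available when $p\ge\frac{2n}{n+2}$, both select a good time slice via an averaging argument for $T<\infty$, and both exploit the global integrability of $\|\nabla v(\cdot)\|_{L^p}^p$ to dispose of the infimum term when $T=\infty$. The only cosmetic difference is that you take $\inf_s\|v(s)\|_{L^2}^2$ explicitly whereas the paper fixes a single convenient $\tau$; the substance is identical.
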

\begin{proof}
  From \cite[Chapter III.1, Proposition 1.2]{Showalter} we know that
  $u\in C([0,T);L^2(\Omega))$ and that for a.e. $t,\tau\in(0,T)$ we
  have
  \begin{align}\label{bound-L2-norm}
    \|v(t)\|_{L^2(\Omega)}^2
    &\le
      \|v(\tau)\|_{L^2(\Omega)}^2
      +
      \bigg|\int_\tau^t\frac{d}{ds}\|v(s)\|_{L^2(\Omega)}^2\,\d s\bigg|\\\nonumber
    &=
      \|v(\tau)\|_{L^2(\Omega)}^2
      +
      2\bigg|\int_\tau^t\langle\partial_tv,v\rangle_{\mathcal
      V',\mathcal V}\,\d s\bigg|\\\nonumber
    &\le
      c\|v(\tau)\|_{W^{1,p}_0(\Omega)}^2
      +
      2\|\partial_tv\|_{\mathcal{V}'(\Omega_T)}\|v\|_{\mathcal{V}(\Omega_T)}.
  \end{align}
  In the last line we used H\"older's inequality and
  Sobolev's embedding, which is possible
  in the stated form since $p\ge\frac{2n}{n+2}$.
  The notation $\langle\cdot,\cdot\rangle_{\mathcal{V}',\mathcal{V}}$
  stands for the duality pairing between $\mathcal{V}'(\Omega_T)$ and
  $\mathcal{V}(\Omega_T)$. 
  The constant in the preceding estimate depends on $n,p,$ and $|\Omega|$. 
  In the case $T<\infty$, we can choose $\tau\in(0,T)$ so that 
  \begin{align*}
    \|v(\tau)\|_{W^{1,p}_0(\Omega)}^p
    \le
    \bint_0^T\|v(s)\|_{W^{1,p}_0(\Omega)}^p\, \d s
    =
    \frac1T\|v\|_{\mathcal{V}(\Omega_T)}^p.
  \end{align*}
  With this choice of $\tau$, estimate~\eqref{bound-L2-norm} implies
  the claim in the case $T<\infty$ after taking the
  supremum over $t\in(0,T)$.
  In the case $T=\infty$, for any given $\varepsilon>0$ we
  can find a time $\tau\in(0,\infty)$ with
  \begin{align*}
    \|v(\tau)\|_{W^{1,p}_0(\Omega)}<\eps,
  \end{align*}
  since otherwise, $\|v\|_{\mathcal{V}(\Omega_T)}=\infty$. Using this
  estimate 
  in~\eqref{bound-L2-norm}, taking the supremum over $t\in(0,\infty)$ and
  letting $\varepsilon\downarrow0$, we obtain the assertion also in
  this case. 
\end{proof}

\subsubsection{Constructions in parabolic function spaces}
The following lemma deals with a cutoff construction in time for a
function in $\mathcal{W}(\Omega_T)$, which provides us with
a function in $\mathcal{W}(\Omega_\infty)$ that vanishes at times $t>T$. 

\begin{lemma} \label{lem:WT-Winfty-supercritical}
  Let $\frac{2n}{n+2}<p<\infty$, $0<T<\infty$
  and $v\in\mathcal{W}(\Omega_T)$. For
   any cutoff function $\xi \in C_0^\infty((-\infty,T),[0,1])$ with
   $\xi =1$ in $[0,\frac T2]$ and $|\xi'| \leq \frac3T$, we have 
   \begin{align*}
     \|\xi v\|_{\mathcal{W}(\Omega_\infty)}
     &\le
     \|v\|_{\mathcal{W}(\Omega_T)} +
     \Big[\big(1+cT^{-\frac1p}\big)^{p'-1}-1\Big]\|\partial_t
     v\|_{\mathcal{V}'(\Omega_T)}^{p'}\\
     &\qquad+
     cT^{-\frac1p}\big(1+cT^{-\frac1p}\big)^{p'-1}\|v\|_{L^\infty(0,T;L^2(\Omega))}^{p'},
   \end{align*}
   with a constant $c$ depending only on $n,p$ and $|\Omega|$. 
 \end{lemma}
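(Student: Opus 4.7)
The norm on the left-hand side decomposes as $\|\xi v\|_{\mathcal{W}(\Omega_\infty)}=\|\nabla(\xi v)\|_{L^p(\Omega_\infty)}^p+\|\partial_t(\xi v)\|_{\mathcal{V}'(\Omega_\infty)}^{p'}+\|\xi v\|_{L^\infty(0,\infty;L^2(\Omega))}^2$. Since $\xi$ depends only on $t$ and $0\le\xi\le1$, the spatial gradient satisfies $\nabla(\xi v)=\xi\nabla v$ pointwise, which immediately yields $\|\nabla(\xi v)\|_{L^p(\Omega_\infty)}^p\le\|v\|_{\mathcal{V}(\Omega_T)}^p$. Similarly $\|\xi v\|_{L^\infty(L^2)}^2\le\|v\|_{L^\infty(0,T;L^2(\Omega))}^2$. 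All the work therefore lies in estimating the dual norm of $\partial_t(\xi v)$, which is the only term producing the extra contributions in the claim.

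For $\phi\in C^\infty_0(\Omega_\infty)$ with $\|\phi\|_{\mathcal{V}(\Omega_\infty)}\le1$, the product rule gives $\iint_{\Omega_\infty}(\xi v)\partial_t\phi\,\d x\d t=\iint_{\Omega_T}v\,\partial_t(\xi\phi)\,\d x\d t-\iint_{\Omega_T}v\,\xi'\phi\,\d x\d t$, where I use that $\xi\phi$ is compactly supported in $\Omega_T$ since $\xi$ vanishes on $[T,\infty)$ and $\phi$ on $\{t\le0\}$. The first term is bounded by $\|\partial_tv\|_{\mathcal{V}'(\Omega_T)}$ because $\|\xi\phi\|_{\mathcal{V}(\Omega_T)}\le\|\phi\|_{\mathcal{V}(\Omega_\infty)}\le1$. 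For the second term, using that $\xi'$ is supported in $[\tfrac T2,T]$ with $|\xi'|\le\tfrac3T$, I apply Hölder in space and then the Sobolev embedding $W^{1,p}_0(\Omega)\hookrightarrow L^2(\Omega)$ (which requires $p\ge\tfrac{2n}{n+2}$ and produces the constant $c(n,p,|\Omega|)$), followed by Hölder in time with exponents $(p',p)$:
\begin{align*}
  \left|\iint_{\Omega_T}v\,\xi'\phi\,\d x\d t\right|
  &\le \|v\|_{L^\infty(0,T;L^2(\Omega))}\int_{T/2}^T|\xi'(t)|\,\|\phi(t)\|_{L^2(\Omega)}\,\d t\\
  &\le c\,\|v\|_{L^\infty(0,T;L^2(\Omega))}\|\xi'\|_{L^{p'}(T/2,T)}\|\nabla\phi\|_{L^p(\Omega_\infty)}\\
  &\le c\,T^{-1/p}\|v\|_{L^\infty(0,T;L^2(\Omega))}.
\end{align*}
Taking the supremum over admissible $\phi$ gives $\|\partial_t(\xi v)\|_{\mathcal{V}'(\Omega_\infty)}\le\|\partial_tv\|_{\mathcal{V}'(\Omega_T)}+cT^{-1/p}\|v\|_{L^\infty(0,T;L^2(\Omega))}$.

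The remaining task is to raise this sum to the power $p'$ in a way that reproduces the exact form in the claim. Writing $a=\|\partial_tv\|_{\mathcal{V}'(\Omega_T)}$, $B=\|v\|_{L^\infty(0,T;L^2(\Omega))}$ and $\gamma=cT^{-1/p}$, I apply the convexity of $s\mapsto s^{p'}$ with weights $1-\lambda=1/(1+\gamma)$ and $\lambda=\gamma/(1+\gamma)$, which yields $(a+\gamma B)^{p'}\le(1+\gamma)^{p'-1}a^{p'}+\gamma(1+\gamma)^{p'-1}B^{p'}$. Adding this bound to the trivial estimates for the other two parts of the norm and rewriting $(1+\gamma)^{p'-1}a^{p'}=a^{p'}+\big[(1+\gamma)^{p'-1}-1\big]a^{p'}$ allows one to absorb $a^{p'}$ together with the gradient and $L^2$ terms into $\|v\|_{\mathcal{W}(\Omega_T)}$, producing exactly the asserted inequality. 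The only mildly delicate point is keeping track of the correct duality pairing when splitting $\partial_t(\xi v)$; once that is set up, the Sobolev embedding and the convexity identity supply the bookkeeping without any further subtleties.
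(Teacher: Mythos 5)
Your proof is correct and follows essentially the same route as the paper's: the same decomposition of $\partial_t(\xi v)$, the same bound $\|\partial_t(\xi v)\|_{\mathcal{V}'(\Omega_\infty)}\le\|\partial_t v\|_{\mathcal{V}'(\Omega_T)}+cT^{-1/p}\|v\|_{L^\infty(0,T;L^2(\Omega))}$ via the Sobolev embedding $W^{1,p}_0(\Omega)\hookrightarrow L^2(\Omega)$ and Hölder in time, and the same weighted-convexity (Jensen) step to split $(a+\gamma B)^{p'}$. The only cosmetic difference is that you apply the time-Hölder to $\xi'$ rather than to $\phi$; both give the factor $cT^{-1/p}$.
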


 \begin{proof}
   Let $\xi$ be a cutoff function as in the statement of the lemma.
   For every $\phi\in C^\infty_0(\Omega_\infty)$ we have 
   \begin{align*}
     \bigg|\iint_{\Omega_T}\xi v\partial_t\phi\,\dx\dt\bigg|
     &\le
     \bigg|\iint_{\Omega_T}v\partial_t(\xi\phi)\,\dx\dt\bigg|
     +
     \bigg|\iint_{\Omega_T} v\phi \xi'\,\dx\dt\bigg|\\
     &\leq \|\partial_t v\|_{\mathcal{V}'(\Omega_{T})} \|\phi \|_{\mathcal{V}(\Omega_\infty)} + \frac{3}{T} \|v \phi \|_{L^1(\Omega_{T})}.
\end{align*}
The last term can be estimated by 
\begin{align*}
\|v \phi \|_{L^1(\Omega_{T})} &\leq \|v \|_{L^\infty(0,T;L^2(\Omega))} \|\phi \|_{L^1(0,T;L^2(\Omega))} \\
&\leq \|v \|_{L^\infty(0,T;L^2(\Omega))}c{T}^{1- \frac{1}{p}} \|\phi\|_{\mathcal{V}(\Omega_\infty)},
\end{align*}
with $c=c(n,p,|\Omega|)$. 
In the last step we used the embedding $W^{1,p}_0(\Omega)\subset
L^2(\Omega)$ and H\"older's inequality in the time integral. 
The two preceding estimates imply 
\begin{align*}
  &\big\|\partial_t(\xi v)\big\|_{\mathcal{V}'(\Omega_\infty)}^{p'}\\
  &\qquad\leq
  \Big(\| \partial_t v\|_{\mathcal{V}'(\Omega_{T})} +
  cT^{-\frac{1}{p}} \|v\|_{L^\infty(0,T;L^2(\Omega))}\Big)^{p'}\\
  &\qquad=
    \big(1+cT^{-\frac1p}\big)^{p'}\bigg(\frac{1}{1+cT^{-\frac1p}}\| \partial_t v\|_{\mathcal{V}'(\Omega_{T})} +
    \frac{cT^{-\frac{1}{p}}}{1+cT^{-\frac1p}}
    \|v\|_{L^\infty(0,T;L^2(\Omega))}\Big)^{p'}\\
  &\qquad\le
    \big(1+cT^{-\frac1p}\big)^{p'-1}\| \partial_t v\|_{\mathcal{V}'(\Omega_{T})}^{p'}
    +
    cT^{-\frac1p}\big(1+cT^{-\frac1p}\big)^{p'-1}\|v\|_{L^\infty(0,T;L^2(\Omega))}^{p'},
\end{align*}
where we applied Jensen's inequality in the last step. Finally, it is
straightforward to show 
\begin{align*}
\|\xi v\|_{\mathcal{V}(\Omega_\infty)} \le  \|v\|_{\mathcal{V}(\Omega_T)}
  \qquad\mbox{and}\qquad
\|\xi v\|_{L^\infty(0,\infty;L^2(\Omega))} \le  \|v\|_{L^\infty(0,T;L^2(\Omega))}.
\end{align*}
Combining the preceding estimates, we deduce the asserted bound. 
\end{proof}

The next result will enable us to extend a given function in
$\mathcal{W}(\Omega_{T_o})$ to a larger time interval. 

\begin{lemma} \label{lem:WT-Winfty}
  For times $0<T_o<T\le\infty$, let $v_1\in
  \mathcal{W}(\Omega_{T_o})$ and  $v_2\in
  \mathcal{W}(\Omega\times(T_o,T))$ be two functions with
  \begin{equation}\label{final=initial}
    v_1(\cdot,T_o)
    :=
    \lim_{t\uparrow T_o}v_1(\cdot,t)
    =
    \lim_{t\downarrow T_o}v_2(\cdot,t)
    =:v_2(\cdot,T_o),
  \end{equation}
  where the limits are to be understood with respect to the $L^2(\Omega)$-norm. 
  Then the function
  \begin{equation*}
    \hat v(\cdot,t):=\left\{
      \begin{array}{ll}
        v_1(\cdot,t)&\mbox{if }t\in(0,T_o],\\[0.8ex]
        v_2(\cdot,t)&\mbox{if }t\in(T_o,T),
      \end{array}
    \right.
  \end{equation*}
  satisfies $\hat v\in \mathcal{W}(\Omega_T)$ with
  \begin{equation} \label{eq:vhat-v1-v2-estimate}
    \|\hat v\|_{\mathcal{W}(\Omega_T)}
    \le
    \|v_1\|_{\mathcal{W}(\Omega_{T_o})}+\|v_2\|_{\mathcal{W}(\Omega\times(T_o,T))}.
  \end{equation}
In particular, if $v_2\in
  \mathcal{W}(\Omega\times(T_o,T))$ is a solution to
  \begin{align} \label{eq:initial-problem}
\left\{
\begin{array}{rl}
\partial_t v_2 - \Delta_p  v_2 = 0 \quad &\text{ in } \Omega \times (T_o,T), \\[0.5ex]
v_2 = v_1 \quad &\text{ on } \Omega \times \{T_o\},
\end{array}
\right. 
\end{align}
then 
\begin{equation} \label{eq:vhat-v1-estimate}
    \|\hat v\|_{\mathcal{W}(\Omega_T)}
    \le
    3\|v_1\|_{\mathcal{W}(\Omega_{T_o})}.
\end{equation}
\end{lemma}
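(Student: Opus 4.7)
The plan is to bound each of the three ingredients of the $\mathcal{W}$-norm separately. For the $\mathcal{V}$-norm part the estimate is immediate by splitting the gradient integral at $t=T_o$, yielding $\|\hat v\|_{\mathcal{V}(\Omega_T)}^p=\|v_1\|_{\mathcal{V}(\Omega_{T_o})}^p+\|v_2\|_{\mathcal{V}(\Omega\times(T_o,T))}^p$. For the $L^\infty L^2$-part one has $\|\hat v\|_{L^\infty(0,T;L^2(\Omega))}^2=\max\bigl\{\|v_1\|_{L^\infty(0,T_o;L^2(\Omega))}^2,\|v_2\|_{L^\infty(T_o,T;L^2(\Omega))}^2\bigr\}$, which is bounded by the sum. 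The heart of the proof is the estimate of $\|\partial_t\hat v\|_{\mathcal{V}'(\Omega_T)}$.

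For any $\phi\in C_0^\infty(\Omega_T)$, I would split $\iint_{\Omega_T}\hat v\,\partial_t\phi$ at $t=T_o$ and apply on each subinterval the Lions--Showalter integration-by-parts identity, which is available since the conditions $v_i\in\mathcal V$ and $\partial_t v_i\in\mathcal V'$ produce a continuous $L^2(\Omega)$-valued representative on the closed time interval (cf.~\cite{Showalter}). This yields
\begin{equation*}
  \iint_{\Omega_T}\hat v\,\partial_t\phi\,\dx\dt
  =
  -\!\int_0^{T_o}\!\langle\partial_tv_1,\phi\rangle\,\dt
  -\!\int_{T_o}^T\!\langle\partial_tv_2,\phi\rangle\,\dt
  +\int_\Omega\bigl(v_1(\cdot,T_o)-v_2(\cdot,T_o)\bigr)\phi(\cdot,T_o)\,\dx,
\end{equation*}
where the boundary terms at $t=0$ and $t=T$ vanish because $\phi\in C_0^\infty(\Omega_T)$. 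The decisive point is that the remaining two interface terms at $t=T_o$ cancel exactly by the matching condition~\eqref{final=initial}. Applying the definition of the $\mathcal V'$-norm to each remaining duality pairing and using the discrete Hölder inequality $a_1b_1+a_2b_2\le(a_1^{p'}+a_2^{p'})^{1/p'}(b_1^p+b_2^p)^{1/p}$, then taking the supremum over $\phi$ with $\|\phi\|_{\mathcal V(\Omega_T)}\le1$ and raising to the power $p'$, I obtain
\begin{equation*}
  \|\partial_t\hat v\|_{\mathcal V'(\Omega_T)}^{p'}
  \le
  \|\partial_tv_1\|_{\mathcal V'(\Omega_{T_o})}^{p'}
  +\|\partial_tv_2\|_{\mathcal V'(\Omega\times(T_o,T))}^{p'}.
\end{equation*}
Summing the three estimates gives~\eqref{eq:vhat-v1-v2-estimate}.

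For the strengthened bound~\eqref{eq:vhat-v1-estimate} under the assumption that $v_2$ solves~\eqref{eq:initial-problem}, I would appeal to the standard energy identity obtained by testing with $v_2$ itself (justified via a Steklov averaging or time mollification to handle the chain rule): $\tfrac12\|v_2(\cdot,t)\|_{L^2(\Omega)}^2+\int_{T_o}^t\!\!\int_\Omega|\nabla v_2|^p=\tfrac12\|v_1(\cdot,T_o)\|_{L^2(\Omega)}^2$. This controls both $\|v_2\|_{L^\infty(T_o,T;L^2(\Omega))}^2$ and $\|v_2\|_{\mathcal V(\Omega\times(T_o,T))}^p$ by $\|v_1\|_{L^\infty(0,T_o;L^2(\Omega))}^2\le\|v_1\|_{\mathcal W(\Omega_{T_o})}$. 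Testing the weak form of the equation with arbitrary $\phi\in C_0^\infty$ and applying Hölder gives $\|\partial_tv_2\|_{\mathcal V'}\le\|v_2\|_{\mathcal V}^{p-1}$, hence $\|\partial_tv_2\|_{\mathcal V'}^{p'}\le\|v_2\|_{\mathcal V}^{p}$ via the identity $(p-1)p'=p$. Adding these three contributions yields $\|v_2\|_{\mathcal W(\Omega\times(T_o,T))}\le 2\|v_1\|_{\mathcal W(\Omega_{T_o})}$, which combined with~\eqref{eq:vhat-v1-v2-estimate} gives~\eqref{eq:vhat-v1-estimate}.

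The main technical obstacle is justifying the integration-by-parts identity on each subinterval with the correct $L^2$-trace values at $t=T_o$; once this is in place, it is precisely the $L^2$-matching hypothesis~\eqref{final=initial} that kills the would-be distributional concentration of $\partial_t\hat v$ at the interface. Everything else is routine from the definitions and classical energy methods.
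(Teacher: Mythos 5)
Your proof is correct and follows essentially the same approach as the paper: the paper derives the cancellation of the interface contribution at $t=T_o$ via piecewise affine cutoffs $\xi_h(t)\to\chi_{(0,T)\setminus\{T_o\}}$ and passes to the limit, whereas you invoke the Lions--Showalter integration-by-parts identity directly to exhibit the two boundary terms $\pm\int_\Omega v_i(\cdot,T_o)\phi(\cdot,T_o)\,\dx$ and cancel them by \eqref{final=initial}; these are the same argument in two disguises, and the rest (discrete H\"older for the $\mathcal V'$-bound, energy identity by testing with $v_2$, and $\|\partial_t v_2\|_{\mathcal V'}^{p'}\le\|v_2\|_{\mathcal V}^p$) matches the paper line by line.
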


\begin{proof}
  Clearly, we have $\hat v\in L^p(0,T;W^{1,p}_0(\Omega))\cap
  L^\infty(0,T;L^2(\Omega))$ with 
  \begin{equation}\label{bound-W1}
    \|\hat v\|_{\mathcal{V}(\Omega_T)}^p
    =
    \|v_1\|_{\mathcal{V}(\Omega_{T_o})}^p
    +
    \|v_2\|_{\mathcal{V}(\Omega\times(T_o,T))}^p
  \end{equation}
  and 
  \begin{equation}\label{bound-W2}
    \|\hat v\|_{L^\infty(0,T;L^2(\Omega))}^2
    \le
    \|v_1\|_{L^\infty(0,T_o;L^2(\Omega))}^2
    +
    \|v_2\|_{L^\infty(T_o,T;L^2(\Omega))}^2.
  \end{equation}
  For the analysis of the time derivative, we introduce piecewise
  affine cutoff functions $\xi_h(t)$ which converge monotonically to
  $\chi_{(0,T)\setminus\{T_o\}}$ as $h\to0$. 
  For an arbitrary $\phi \in C_0^\infty(\Omega_T)$, we compute
\begin{align*}\label{dual-norm-extension}
  \left|  \iint_{\Omega_T} \hat v \partial_t \phi \, \d x \d t
  \right|
  &=
    \lim_{h \to 0}\left|  \iint_{\Omega_T} \xi_h \hat v \partial_t \phi \, \d x \d t \right| \\\nonumber
&= \lim_{h \to 0}\bigg|  \iint_{\Omega_{T_o}}  v_1 \partial_t (\xi_h \phi) \, \d x \d t -  \iint_{\Omega_{T_o}}  v_1 \phi \xi_h'  \, \d x \d t \\\nonumber
&\quad\qquad + \iint_{\Omega \times (T_o,T)}  v_2 \partial_t (\xi_h \phi) \, \d x \d t -  \iint_{\Omega \times (T_o,T)}  v_2 \phi \xi_h'  \, \d x \d t \bigg| \\\nonumber
&= \lim_{h \to 0}  \bigg| \iint_{\Omega_{T_o}}  v_1 \partial_t (\xi_h \phi) \, \d x \d t + \iint_{\Omega \times (T_o,T)} v_2 \partial_t (\xi_h \phi) \, \d x \d t \bigg| \\\nonumber
&\leq \left( \| \partial_t v_1 \|_{\mathcal{V}'(\Omega_{T_o})}^{p'} + \| \partial_t v_2 \|_{\mathcal{V}'(\Omega \times (T_o,T))}^{p'} \right)^{\frac{1}{p'}} \| \phi \|_{\mathcal{V}(\Omega_T)}.
\end{align*}
In the second last step we used assumption~\eqref{final=initial} and
in the final step H\"older's inequality for sums. 
The preceding estimate implies $\partial_t\hat v\in \mathcal{V}'(\Omega_T)$ and
\begin{equation}
\label{bound-W3}
\| \partial_t \hat v \|_{\mathcal{V}'(\Omega_T)}^{p'} \leq \|
\partial_t v_1 \|_{\mathcal{V}'(\Omega_{T_o})}^{p'} + \| \partial_t
v_2 \|_{\mathcal{V}'(\Omega \times (T_o,T))}^{p'}.
\end{equation}
Altogether, we have shown $\hat v\in \mathcal{W}(\Omega_T)$ and the
combination of~\eqref{bound-W1}, \eqref{bound-W2} and \eqref{bound-W3}
yields the claimed estimate~\eqref{eq:vhat-v1-v2-estimate}. 

In case $v_2$ is a solution to~\eqref{eq:initial-problem}, we test the
weak formulation of equation~\eqref{eq:initial-problem}$_1$ with the test
function $( (v_2)_\eps \chi_{h,\tau})_\eps$, where $(\cdot)_\eps$
denotes a standard mollification in time direction with a small parameter $\eps > 0$, and $\chi_{h,\tau}(t)$ is a piecewise affine approximation of $\chi_{(T_o,\tau)}(t)$ for $\tau \in (T_o+2\eps, T)$. With standard computations, after passing to the limit this yields
\begin{equation*}
\frac12 \int_\Omega v_2(x,\tau)^2 \, \d x + \int_{T_o}^{\tau} \int_\Omega |\nabla v_2|^p \, \d x \d t  = \frac12 \int_{\Omega} v_1(x,T_o)^2 \, \d x
\end{equation*}
for every $\tau \in (T_o,T)$, which implies
\begin{equation} \label{eq:v2energy-v1}
\max \left\{ \tfrac12 \|v_2\|_{L^\infty(T_o,T;L^2(\Omega))}^2, \|v_2\|_{\mathcal{V}(\Omega \times (T_o,T))}^p \right\} \leq \tfrac12 \|v_1\|_{L^\infty(0,T_o;L^2(\Omega))}^2.
\end{equation}
From the weak formulation of equation~\eqref{eq:initial-problem}$_1$ we further deduce that 
$$
\| \partial_t v_2 \|_{\mathcal{V}'(\Omega \times (T_o,T))}^{p'} \leq \|v_2\|_{\mathcal{V}(\Omega \times (T_o,T))}^{p},
$$
which together with~\eqref{eq:v2energy-v1} and~\eqref{eq:vhat-v1-v2-estimate} implies estimate~\eqref{eq:vhat-v1-estimate}.
\end{proof}

\subsubsection{An approximation result}

The following lemma can be seen as an approximation result for functions
$v\in\mathcal{W}(\Omega_T)$ by smooth functions. For the applications
to variational capacity that we have in mind,
it will be crucial that an obstacle condition of the form
$v\ge\chi_K$ is preserved under the approximation.  

\begin{lemma} \label{lem:W-approx}
  Let $1<p < \infty$ and $\Omega\subset\R^n$ be an open set whose complement
  $\R^n\setminus\Omega$ is uniformly $p$-fat and $0<T\leq \infty$.
  Moreover, assume that
  $K\subset\Omega_T$ is a compact set consisting of a finite union of space-time
  cylinders, whose bases are balls. Let $v\in \mathcal{W}(\Omega_T)$ be given with
  $v\ge\chi_K$ a.e. in $\Omega_T$. If $1<p \leq \tfrac{2n}{n+2}$ and $T = \infty$ suppose in addition that $v \in L^\infty(\Omega_\infty)$. Then, there exists a function $w\in
  C^\infty_0(\Omega\times\R)$ that satisfies $w\ge\chi_K$ as well and, moreover,
  \begin{equation*}
    \|w\|_{\mathcal{W}(\Omega_T)}\le c\|v\|_{\mathcal{W}(\Omega_T)}
  \end{equation*}
  with a constant $c=c(n,p,\alpha) > 0$. 
\end{lemma}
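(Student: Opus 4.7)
Plan: Construct $w$ in four steps: spatial cutoff of $v$ via Hardy's inequality, temporal cutoff, space-time mollification, and rescaling to recover the obstacle condition.

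First, I would choose a spatial cutoff $\eta\in C_c^\infty(\Omega,[0,1])$ equal to $1$ on a neighborhood of the spatial projection of $K$ and satisfying $|\nabla\eta(x)|\le c/\mathrm{dist}(x,\partial\Omega)$. Hardy's inequality (Lemma~\ref{lem:hardy}), available thanks to the uniform $p$-fatness of $\R^n\setminus\Omega$, yields $\|\eta v\|_{\mathcal{V}(\Omega_T)}\le c(n,p,\alpha)\|v\|_{\mathcal{V}(\Omega_T)}$, while the $L^\infty(0,T;L^2(\Omega))$-norm is trivially non-increasing, and for the time derivative I write $\partial_t(\eta v)=\eta\partial_t v$ and use $\|\eta\phi\|_{\mathcal{V}}\le c\|\phi\|_{\mathcal{V}}$ for test functions (again by Hardy applied to $\phi$). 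For the temporal cutoff, I multiply by $\zeta\in C_c^\infty(\R,[0,1])$ equal to $1$ on a neighborhood of the time-projection of $K$ and supported compactly in $(0,T)$ (inside a bounded subinterval when $T=\infty$). The product rule gives $\partial_t(\zeta\eta v)=\zeta\,\partial_t(\eta v)+\zeta'\eta v$; the first summand is controlled as before, whereas the new term $\zeta'\eta v$ is estimated by H\"older combined with the Sobolev embedding $W^{1,p}_0(\Omega)\hookrightarrow L^2(\Omega)$ in the range $p>\frac{2n}{n+2}$, analogously to the computation behind Lemma~\ref{lem:WT-Winfty-supercritical}, and with the extra hypothesis $v\in L^\infty(\Omega_\infty)$ in the subcritical case $1<p\le\frac{2n}{n+2}$ with $T=\infty$.

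After these two steps, $\tilde v:=\zeta\eta v\in\mathcal{W}(\Omega_\infty)$ has compact support in $\Omega\times\R$, still satisfies $\tilde v\ge\chi_K$ a.e., and $\|\tilde v\|_{\mathcal{W}(\Omega_T)}\le c\|v\|_{\mathcal{W}(\Omega_T)}$. I extend $\tilde v$ by zero and convolve with a standard nonnegative space-time mollifier $\rho_\varepsilon$ of mass one to obtain $w_\varepsilon:=\tilde v*\rho_\varepsilon\in C_0^\infty(\Omega\times\R)$ once $\varepsilon$ is small enough. Convolution with $\rho_\varepsilon$ is norm non-increasing on each of the three components of $\|\cdot\|_{\mathcal{W}(\Omega_T)}$, and pointwise $w_\varepsilon\ge\chi_K*\rho_\varepsilon$ since $\tilde v\ge\chi_K$ a.e.

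The final step exploits the specific structure of $K$ as a finite union of cylinders with ball bases: there exists a universal constant $c_0=c_0(n)>0$ (e.g.\ $c_0=\tfrac14$) such that, provided $\varepsilon$ is smaller than the minimal spatial radii, temporal extents and cylinder separations within $K$, one has $\chi_K*\rho_\varepsilon(z)\ge c_0$ for every $z\in K$; the worst case occurs at corner points where a lateral face meets a top or bottom face of one of the cylinders, where the density of $K$ in a small Euclidean ball drops to roughly a quarter. Setting $w:=w_\varepsilon/c_0$ then produces $w\in C_0^\infty(\Omega\times\R)$ with $w\ge\chi_K$ pointwise and $\|w\|_{\mathcal{W}(\Omega_T)}\le c_0^{-\max\{p,p',2\}}\|\tilde v\|_{\mathcal{W}(\Omega_T)}\le c(n,p,\alpha)\|v\|_{\mathcal{W}(\Omega_T)}$. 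I expect the main obstacle to be the temporal cutoff in the second step, since the time derivative $\zeta'$ is not controlled by the geometry of $\Omega$ alone; this is precisely what forces the case-split at $p=\frac{2n}{n+2}$ and the additional boundedness assumption in the subcritical case with $T=\infty$.
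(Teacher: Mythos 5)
Your temporal cutoff step does not give a uniform constant when $T<\infty$. You take $\zeta \in C^\infty_0((0,T),[0,1])$ equal to $1$ near the time-projection of $K$, which forces $\|\zeta'\|_\infty \gtrsim 1/d$ with $d$ the distance from $K$ to the slices $\{t=0\}$ and $\{t=T\}$. The new term $\zeta'\eta v$ is then estimated, as in Lemma~\ref{lem:WT-Winfty-supercritical}, with a constant proportional to $\|\zeta'\|_\infty$, so your final bound depends on the geometry of $K$ and not only on $n,p,\alpha$ as the lemma requires. In addition, that estimate relies on the Sobolev embedding $W^{1,p}_0(\Omega)\hookrightarrow L^2(\Omega)$, which fails for $p<\tfrac{2n}{n+2}$, and in the subcritical range with $T<\infty$ the statement provides no $L^\infty$-hypothesis on $v$ to fall back on, so there is no way to control this term at all.

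The paper sidesteps both issues by \emph{not} cutting off in time when $T<\infty$. Instead it extends $v$ by even reflection across $t=0$ and $t=T$ to obtain $\tilde v\in\mathcal{W}(\Omega\times(-T,2T))$ with $\|\tilde v\|_{\mathcal{W}(\Omega\times(-T,2T))}\le 3\|v\|_{\mathcal{W}(\Omega_T)}$, via Lemma~\ref{lem:WT-Winfty}. Because $\tilde v$ lives on a time interval strictly containing $[0,T]$, the mollified function $5(\zeta_\eps\tilde v)*\eta_\delta$ is compactly supported in $\Omega\times\R$ for $\delta<T$ without any temporal cutoff, and the time-derivative estimates reduce to those for $\partial_t\tilde v$ alone. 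A temporal cutoff is used only for $T=\infty$ and $p>\tfrac{2n}{n+2}$, where the slope can be taken of order $1/T_o$ with $T_o$ arbitrarily large, so the error vanishes; the subcritical case with $T=\infty$ is handled by solving the Cauchy problem forward from some time $T_o$ and invoking finite extinction, which is where the $L^\infty$-hypothesis enters. Your remaining steps, the Hardy-inequality spatial cutoff, the mollification, and the rescaling argument based on $\chi_K*\rho_\eps\ge c_0$ on $K$ for finite unions of cylinders, coincide with the paper's construction up to the choice of normalizing constant ($1/c_0$ versus a pre-applied factor of $5$).
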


\begin{proof}
First suppose that $T < \infty$. We extend $v$ by using
reflection to $\Omega \times [-T,2T]$ by defining $\tilde v (\cdot, t) =
v(\cdot,-t)$ for $t\in [-T,0)$ and $\tilde v(\cdot,t) = v(\cdot,2T-t)$
for $t\in (T,2T]$.
Because of the embedding $\mathcal{W}(\Omega_T)\subset
C([0,T];L^2(\Omega))$, we can use Lemma~\ref{lem:WT-Winfty}
to show $\tilde v\in \mathcal{W}(\Omega\times(-T,2T))$ with
\begin{equation}
  \label{dual-norm-reflection}
  \|\tilde v\|_{\mathcal{W}(\Omega\times(-T,2T))}
  \le
  3\|v\|_{\mathcal{W}(\Omega_T)}.
\end{equation}
We consider a cutoff function in space $\zeta_\varepsilon\in
  C^\infty_0(\Omega,[0,1])$ with $\zeta_\varepsilon\equiv1$ on 
  $\Omega_{2\varepsilon}:=\{x\in\Omega\colon
    \mathrm{dist}(x,\partial\Omega)>2\varepsilon\}$,
  $\zeta\equiv0$ on $\Omega\setminus\Omega_{\eps}$ 
  and $|\nabla\zeta_\eps|\le\frac{c_o}{\varepsilon}$.
  By choosing $\varepsilon>0$ small enough, we can assume
  $K\subset\Omega_{2\varepsilon}\times(0,T)$. For standard mollifiers
  $\varphi^x_\delta \in C^\infty_0(\R^{n})$ and $\varphi^t_\delta \in
  C^\infty_0(\R)$ and a positive parameter $\delta<\min\{\eps,T\}$ we let $\eta_\delta = \varphi^x_\delta  \varphi^t_\delta \in
  C^\infty_0(\R^{n+1})$ and define
  \begin{equation*}
    w:= 5(\zeta_\varepsilon \tilde v)\ast\eta_\delta
    \qquad\mbox{in }\Omega\times\R,
  \end{equation*}
  where we extended $\tilde v$ by zero outside of $\Omega\times [-T,2T]$ for the definition. 
  Because of $\delta<\eps$, the support of $w$ is
  compactly contained in $\Omega\times\R$. Since $\R^n\setminus\Omega$
  is uniformly $p$-fat and $v\in L^p(0,T;W^{1,p}_0(\Omega))$, Young's
  theorem for convolutions and Hardy's inequality from Lemma~\ref{lem:hardy} imply
  \begin{align}\label{V-est-w}
    \iint_{\Omega_T}|\nabla w|^p\dx\dt
    &\le
      5^p\iint_{\Omega\times(-\delta,T+\delta)}\big|\nabla(\zeta_\eps\tilde
      v)\big|^p\,\dx\dt\\\nonumber
    &\le
    c\iint_{\Omega_T}|\nabla v|^p+|\nabla\zeta_\varepsilon|^p|v|^p\,\dx\dt\\\nonumber
    &\le
    c\iint_{\Omega_T}|\nabla v|^p+\Big(\frac{|v|}{\mathrm{dist}(x,\partial\Omega)}\Big)^p\dx\dt
    \le
    c\iint_{\Omega_T}|\nabla v|^p\dx\dt,
  \end{align}
  for $c= (n,p,\alpha)>0$, and we have
  \begin{equation}
  \label{CL2-est-w}
    \| w \|_{L^\infty(0,T;L^2(\Omega))} \leq 5\|v\|_{L^\infty(0,T;L^2(\Omega))}.
  \end{equation}
  Furthermore, for any $\phi \in C_0^\infty(\Omega_T)$ we can estimate
  \begin{align*}
    &\left| \iint_{\Omega_T} w \partial_t \phi \, \d x\d t \right|\\
    &\qquad=
      5 \left| \iint_{\Omega\times(-\delta,T+\delta)} \tilde v \partial_t \left[ \zeta_\eps (\phi * \eta_\delta) \right]  \d x\d t \right| \\
  &\qquad\leq 5 \|\partial_t \tilde v
    \|_{\mathcal{V}'(\Omega\times(-\delta,T+\delta))}
    \|\zeta_\eps (\phi * \eta_\delta)\|_{\mathcal{V}(\Omega\times(-\delta,T+\delta))} \\
  &\qquad\leq c \|\partial_t v\|_{\mathcal{V}'(\Omega_T)} \big(\|\nabla(\phi\ast\eta_\delta)\|_{L^p(\Omega\times(-\delta,T+\delta))}+\|\nabla\zeta_\eps(\phi\ast\eta_\delta)\|_{L^p(\Omega\times(-\delta,T+\delta))}\big).
  \end{align*}
  For the last estimate we used~\eqref{dual-norm-reflection}.
  Using the properties of $\zeta_\eps$, Young's inequality for convolutions and finally Hardy's inequality for
  $\phi\in C^\infty_0(\Omega_T)$, we estimate the last term by
  \begin{align*}
    \|\nabla\zeta_\eps(\phi\ast\eta_\delta)\|_{L^p(\Omega\times(-\delta,T+\delta))}^p
    &\le
    \frac{c}{\eps^p}\iint_{(\Omega_{\eps}\setminus\Omega_{2\eps})\times(-\delta,T+\delta)}|\phi\ast\eta_\delta|^p\dx\dt\\
    &\le
    \frac{c}{\eps^p}\iint_{(\Omega\setminus\Omega_{3\eps})\times(0,T)}|\phi|^p\dx\dt\\
    &\le
      c\iint_{\Omega_T}\Big(\frac{|\phi|}{\mathrm{dist}(x,\partial\Omega)}\Big)^p\dx\dt
    \le c\iint_{\Omega_T}|\nabla\phi|^p\dx\dt,
  \end{align*}
  where $c=c(n,p,\alpha)>0$. Inserting this above, we deduce
  \begin{align*}
    \left| \iint_{\Omega_T} w \partial_t \phi \, \d x\d t \right|
    \le
     c \|\partial_t v\|_{\mathcal{V}'(\Omega_T)}\|\nabla\phi\|_{L^p(\Omega_T)}    
  \end{align*}
  for every $\phi\in C^\infty_0(\Omega_T)$.
  Thus $\|\partial_t w\|_{\mathcal{V}'(\Omega_T)} \leq c \|\partial_t
  v\|_{\mathcal{V}'(\Omega_T)}$, and together with~\eqref{V-est-w} and
  \eqref{CL2-est-w} we obtain
  \begin{equation*}
    \|w\|_{\mathcal{W}(\Omega_T)}\le c(n,p,\alpha) \|v\|_{\mathcal{W}(\Omega_T)}.
  \end{equation*}
  Moreover, since $\zeta_\eps \tilde v\ge\chi_K$ and
  $K$ is a finite union of space-time cylinders, we have
  \begin{equation*}
    w\ge 5\chi_K\ast\eta_\delta\ge 1 \qquad\mbox{ on $K$},
  \end{equation*}
  provided we choose the mollification parameter $\delta>0$ small
  enough. This completes the proof in the case $T<\infty$.  

If $T = \infty$ and $p > \tfrac{2n}{n+2}$, we use a piecewise affine
cutoff function in time $\xi$ with $\xi \equiv 1$ in $(-\infty,
T_o/2)$ and $\xi \equiv 0$ in
$[T_o,\infty)$. Lemma~\ref{lem:WT-Winfty-supercritical} guarantees
that we can choose $T_o \in (0,\infty)$ large enough, possibly
depending on $v$, to achieve
\begin{equation*}
  \|\xi v\|_{\mathcal{W}(\Omega_\infty)}
  \le
  2 \|v\|_{\mathcal{W}(\Omega_{T_o})}
  \le
  2 \|v\|_{\mathcal{W}(\Omega_\infty)}.
\end{equation*}
Repeating the argument above with $\xi v$ instead of $v$, we deduce
that $w = 5(\zeta_\varepsilon \xi v)\ast\eta_\delta$ satisfies the
desired properties. 

Then we consider the case $1<p\leq \tfrac{2n}{n+2}$ and $T = \infty$. Recall from Remark~\ref{rem:C-L2} that we may assume $v \in C([0,\infty);L^2(\Omega))$. Suppose $T_o \in (0,\infty)$ is large enough such that $K \subset \Omega_{T_o}$, and let $\tilde v \in \mathcal{V}(\Omega \times (T_o,\infty))$ be a solution to
\begin{align*}
\left\{
\begin{array}{rl}
\partial_t \tilde v - \Delta_p  \tilde v = 0 \quad &\text{ in } \Omega \times (T_o,\infty), \\
\tilde v = v\quad &\text{ on } \Omega \times \{T_o\}.
\end{array}
\right. 
\end{align*}
Then we define $\hat v = v$ for $t \leq T_o$ and $\hat v = \tilde v$
for $t > T_o$. According to Lemma~\ref{lem:WT-Winfty}, we have $\hat
v\in\mathcal{W}(\Omega_\infty)$ with 
$$
\| \hat v \|_{\mathcal{W}(\Omega_\infty)} \leq 3 \| v \|_{\mathcal{W}(\Omega_{T_o})} \leq 3 \| v \|_{\mathcal{W}(\Omega_{\infty})}.
$$
Observe that by the assumption $v \in L^\infty(\Omega_\infty)$ together with~\cite[Chapter VII, Proposition 2.1]{Di} there exists $T_1 \in [T_o, \infty)$ such that $\tilde v (\cdot,t) \equiv 0$ for all $t \in [T_1,\infty)$. Thus, we can choose $w = 5 (\zeta_\eps \hat v) * \eta_\delta \in C_0^\infty(\Omega \times \R^n)$ as in the case $T < \infty$, and conclude the result similarly.
  
\end{proof}

\subsubsection{A parabolic function space consisting of bounded functions}

When dealing with bounded solutions to parabolic problems, it is
natural to work with the function space 
$$
\widetilde{\mathcal{W}}(\Omega_T) = \left\{ u \in \mathcal{V}(\Omega_T) \cap L^\infty(\Omega_T): \partial_t u\in \mathcal{V}'(\Omega_T) + L^1(\Omega_T) \right\}.
$$
We recall a useful result from~\cite[Sect. 5, Lemma 1]{Petitta}, see
also~\cite[Lemma 2.17]{droniou}, which relates the function space
$\widetilde{\mathcal{W}}(\Omega_T)$ to the function space
$\mathcal{W}(\Omega_T)$.

\begin{lemma} \label{lem:petitta}
If $v \in \widetilde{\mathcal{W}}(\Omega_T)$, then there exists $u \in \mathcal{W}(\Omega_T)$ such that $|v| \leq u$ a.e. in $\Omega_T$ and
\begin{align*}
\|u\|_{\mathcal{W}(\Omega_T)} \leq c \bigg( &\|v\|_{\mathcal{V}(\Omega_T)}^p + \|[\partial_t v]_a\|_{\mathcal{V}'(\Omega_T)}^{p'} \\
&\quad+ \|v\|_{L^\infty(\Omega_T)} \|[\partial_t v]_b\|_{L^1(\Omega_T)} + \|v\|_{L^\infty(0,T;L^2(\Omega))}^2 \bigg),
\end{align*} 
where $c = c(p) > 0$, and $[\partial_t v]_a \in \mathcal{V}'(\Omega_T)$, $[\partial_t v]_b \in L^1(\Omega_T)$ is any decomposition of $\partial_t v$, that is $\partial_t v = [\partial_t v]_a + [\partial_t v]_b$.
\end{lemma}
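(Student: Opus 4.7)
The plan is to build $u$ as the sum $|v|+w$, where $w\in\mathcal{W}(\Omega_T)$ is an auxiliary nonnegative function designed to absorb the failure of $\partial_t|v|$ to lie in $\mathcal{V}'(\Omega_T)$. Write the given decomposition as $\partial_tv = f + g$ with $f := [\partial_tv]_a\in\mathcal{V}'(\Omega_T)$ and $g := [\partial_tv]_b\in L^1(\Omega_T)$. Since direct manipulations with $|v|$ require a chain rule that is not valid for merely $L^1$-time-derivatives, I would first regularize $v$ by Steklov averages $v_h$ in time and carry out all identities for $v_h$, passing $h\downarrow0$ only at the end.

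The key auxiliary object is the unique weak solution $w$ of the linear Cauchy--Dirichlet problem
\begin{equation*}
  \partial_tw-\Delta w = |g|\quad\mbox{in }\Omega_T,\qquad w=0\quad\mbox{on }\partial_p\Omega_T.
\end{equation*}
Because $|g|\in L^1(\Omega_T)$ and, crucially, $v\in L^\infty(\Omega_T)$, a Stampacchia-type truncation argument applied to this problem yields the $L^\infty$-bound $0\le w\le c\,\|v\|_{L^\infty(\Omega_T)}$. Testing the equation with $w$ itself and invoking this $L^\infty$-bound on the right-hand side gives the energy estimate
\begin{equation*}
  \|\nabla w\|_{L^2(\Omega_T)}^2+\|w\|_{L^\infty(0,T;L^2(\Omega))}^2\le c\,\|v\|_{L^\infty(\Omega_T)}\|g\|_{L^1(\Omega_T)},
\end{equation*}
which by H\"older (passing from $L^2$-gradient control to $L^p$) together with the equation itself controls $\|\partial_tw\|_{\mathcal{V}'(\Omega_T)}^{p'}$ by the same right-hand side. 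Thus $w\in\mathcal{W}(\Omega_T)$ with norm bounded by a constant times $\|v\|_{L^\infty}\|g\|_{L^1}$.

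Set $u:=|v|+w$; the bound $u\ge|v|$ is immediate. For the regularity, apply the chain rule (justified on the Steklov level) to get $\partial_t|v|=(\operatorname{sgn}v)(f+g)$ in $\mathcal{V}'+L^1$, so that
\begin{equation*}
  \partial_tu=(\operatorname{sgn}v)\,f+\bigl[\,|g|+(\operatorname{sgn}v)\,g\,\bigr]+(\text{boundary/Laplacian terms from }w).
\end{equation*}
The first contribution lies in $\mathcal{V}'(\Omega_T)$ with norm bounded by $\|f\|_{\mathcal{V}'(\Omega_T)}$; the bracket is nonnegative and is precisely what the construction of $w$ is designed to dominate. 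The remaining contribution coming from $\partial_tw$ has already been controlled via the energy estimate. Combining these with the bounds $\|\,|v|\,\|_{\mathcal{V}(\Omega_T)}=\|v\|_{\mathcal{V}(\Omega_T)}$ and $\|\,|v|\,\|_{L^\infty(0,T;L^2(\Omega))}=\|v\|_{L^\infty(0,T;L^2(\Omega))}$ yields the claimed estimate after taking $h\downarrow0$ and using weak lower semicontinuity of the $\mathcal{V}$- and $\mathcal{V}'$-norms.

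The main obstacle is the rigorous justification that $\partial_tu$ indeed lies in $\mathcal{V}'(\Omega_T)$ (rather than only in $\mathcal{V}'+L^1$). This relies on the cancellation $|g|+(\operatorname{sgn}v)g\ge 0$ together with a duality argument: for a test function $\varphi\in\mathcal{V}(\Omega_T)$ one integrates by parts against $w$ using its defining equation to replace the $L^1$-quantity $\int|g|\varphi$ by $\int\nabla w\cdot\nabla\varphi-\int\partial_tw\,\varphi$, each of which is controlled through $\|\varphi\|_{\mathcal{V}}$. The Steklov approximation and the passage to the limit must be handled with care here, but the estimate itself is dimensionally consistent with the right-hand side of the statement.
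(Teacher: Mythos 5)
The paper does not actually prove this statement---it is a recalled result, cited from Petitta [Sect.~5, Lemma~1] and Droniou--Porretta--Prignet [Lemma~2.17]---so your attempt must stand on its own merits. Unfortunately, it has two genuine gaps.

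First, the claimed $L^\infty$-bound $0\le w\le c\,\|v\|_{L^\infty(\Omega_T)}$ for the solution $w$ of $\partial_tw-\Delta w=|g|$ with zero initial and lateral data is unjustified and in general false. With only $|g|\in L^1(\Omega_T)$, the Cauchy--Dirichlet problem for the heat equation does not produce bounded solutions when $n\ge2$; a Stampacchia truncation argument would require the source in $L^q$ with $q>\frac{n+2}{2}$. The boundedness of $v$ is irrelevant here: $w$ is a completely independent object, and there is no comparison principle linking $w$ to $\|v\|_{L^\infty}$, since the constant $c\|v\|_{L^\infty}$ is \emph{not} a supersolution of the problem for $w$ (the source $|g|$ is nonnegative). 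Everything downstream---the energy estimate, the control of $\|\partial_t w\|_{\mathcal{V}'(\Omega_T)}^{p'}$---is therefore unjustified. Even granting boundedness, the heat equation gives $\nabla w\in L^2$, which does not embed into $L^p$ when $p>2$, so at the very least the auxiliary problem should involve the $p$-Laplacian.

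Second, and more structurally, the $L^1$-part of $\partial_tu$ does not cancel. Writing formally $\partial_t|v|=\operatorname{sgn}(v)(f+g)$ and $\partial_tw=\Delta w+|g|$, you obtain $\partial_tu=\operatorname{sgn}(v)\,f+\Delta w+\bigl[\operatorname{sgn}(v)\,g+|g|\bigr]$. The bracket is a nonnegative $L^1$-function, but nonnegativity does not put it in $\mathcal{V}'(\Omega_T)$; it simply \emph{adds} to the $L^1$-piece you started with. Your closing ``duality argument'' does not resolve this: substituting $|g|=\partial_tw-\Delta w$ in $\iint|g|\varphi$ reproduces exactly the term $\iint\partial_tw\,\varphi$ you are trying to control, and the quantity $\iint\bigl(\operatorname{sgn}(v)g+|g|\bigr)\varphi$ remains, which requires control of $\|\varphi\|_{L^\infty}$ rather than of $\|\varphi\|_{\mathcal{V}(\Omega_T)}$. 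There is also the subsidiary issue that $\operatorname{sgn}(v)\,f$ is not obviously in $\mathcal{V}'(\Omega_T)$: multiplication by a bounded, non-Lipschitz coefficient does not preserve $W^{-1,p'}(\Omega)$. The proofs in the cited references avoid these pitfalls by a more careful construction---approximating the $L^1$-part by bounded truncations, solving auxiliary $p$-parabolic problems that genuinely lie in $\mathcal{W}(\Omega_T)$, deriving bounds that use $\|v\|_{L^\infty(\Omega_T)}$ through a truncation at that level, and only then passing to the limit.
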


\section{Notions of capacity for the parabolic $p$-Laplace equation}

From now on we assume that $\Omega \subset \R^n$ is a bounded open set such that $\R^n \setminus \Omega$ is uniformly $p$-fat according to Definition~\ref{def:p-fat}. We begin by defining the nonlinear parabolic capacity introduced in our setting in~\cite{KKKP}.

\begin{definition} \label{def:nonlinear-capacity-compact}
We define a nonlinear parabolic capacity for a compact set $K \subset \Omega_\infty$ as
$$
\ca (K) = \sup \left\{ \mu(\Omega_\infty) : 0\leq u_\mu \leq 1,\, \spt(\mu) \subset K \right\},
$$
where $\mu$ is a nonnegative Radon measure and $u_\mu$ is a weak solution to the measure data problem
$$
\partial_t u_\mu - \Div \left(|\nabla u_\mu|^{p-2} \nabla u_\mu \right) = \mu\quad \text{ in } \Omega_\infty
$$
in the sense of~\eqref{e.weak_measuresol}, which satisfies $u_\mu \in
L^p(0,T; W^{1,p}_0(\Omega))$ for every $T > 0$ and $u_\mu$ attains
initial values 0 on $\Omega \times \{0\}$ in the $L^2$-sense.

\end{definition}

We recall useful results from~\cite[Theorem 5.7 \& Lemma 5.8]{KKKP},
which hold true also in the case $1 < p \leq \frac{2n}{n+2}$
by~\cite[Chapter VII, Proposition 2.1]{Di}, the results on obstacle
problems in Proposition~\ref{prop:obstacle} and
Lemma~\ref{lem:balayage-K-properties} together with the comparison principle~\cite[Theorem 2.4]{BBGP}.
 
\begin{lemma} \label{lem:cap-compact-representation}
Let $1<p<\infty$ and $K\subset \Omega_\infty$ be a compact set. Then
\begin{equation} \label{eq:cap-potential-representation}
 \ca (K,\Omega_\infty) = \mu_{\widehat{R}_K} (K) = \mu_{\widehat{R}_K} (\Omega_\infty),
\end{equation}
where $\mu_{\widehat{R}_K}$ is the Riesz measure of $\widehat{R}_K$.
\end{lemma}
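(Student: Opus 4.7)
The plan is to identify $\widehat R_K$ as a capacitary potential whose Riesz measure $\mu_{\widehat R_K}$ attains the supremum in Definition~\ref{def:nonlinear-capacity-compact}. The case $\tfrac{2n}{n+2}<p<\infty$ is exactly~\cite[Theorem~5.7 and Lemma~5.8]{KKKP}, so the task is to verify that the same strategy extends to the full range $1<p<\infty$ by invoking the auxiliary results cited in the statement.

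First, I would collect the structural properties of $\widehat R_K$ needed for both identities. Since $\chi_K$ is bounded, nonnegative, and compactly supported in $\Omega_\infty$, Lemmas~\ref{lem:balayage-psi-prop} and~\ref{lem:balayage-K-properties} imply that $\widehat R_K$ is a bounded supercaloric function with $0\le \widehat R_K\le 1$, continuous in $\Omega_\infty\setminus\spt(\chi_K)$, a weak solution there, contained in $L^p(0,T;W^{1,p}_0(\Omega))$ for every $T>0$, continuous up to the parabolic boundary by the uniform $p$-fatness of $\R^n\setminus\Omega$, and attaining the initial datum $0$ in the $L^2$-sense. Because $\widehat R_K$ is a weak solution in $\Omega_\infty\setminus K$, its Riesz measure $\mu_{\widehat R_K}$ is concentrated in $K$, which immediately yields the second identity $\mu_{\widehat R_K}(K)=\mu_{\widehat R_K}(\Omega_\infty)$.

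Next, to establish $\mu_{\widehat R_K}(\Omega_\infty)\le \ca(K,\Omega_\infty)$, I would use $\mu_{\widehat R_K}$ itself as a trial measure: by uniqueness of weak solutions to the measure data problem with zero lateral and initial data, one has $u_{\mu_{\widehat R_K}}=\widehat R_K$, and the bound $0\le \widehat R_K\le 1$ makes the pair admissible. For the reverse inequality, given any admissible pair $(\mu,u_\mu)$ with $\spt\mu\subset K$ and $0\le u_\mu\le 1$, Lemma~\ref{lem:supersol-supercal} furnishes a supercaloric representative of $u_\mu$. Since $u_\mu\le 1\le \widehat R_K$ on $K$ and both $u_\mu$ and $\widehat R_K$ are weak solutions with vanishing boundary and initial data on the noncylindrical set $\Omega_\infty\setminus K$, the comparison principle~\cite[Theorem~2.4]{BBGP} yields $u_\mu\le \widehat R_K$ everywhere. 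Testing the weak formulations of the equations satisfied by $u_\mu$ and $\widehat R_K$ against a common smooth cut-off that equals $1$ on $K$ and combining the resulting identities with this pointwise comparison, one arrives at $\mu(\Omega_\infty)\le \mu_{\widehat R_K}(\Omega_\infty)$ as in the proof of~\cite[Theorem~5.7]{KKKP}.

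The main obstacle I expect is the subcritical range $1<p\le\tfrac{2n}{n+2}$, where the absence of universal sup-bounds for general supersolutions and the extinction in finite time preclude a direct transcription of~\cite{KKKP}. These issues are addressed by combining~\cite[Chapter~VII, Proposition~2.1]{Di} (to control the large-time behaviour and ensure that the initial datum is attained in the $L^2$-sense), Proposition~\ref{prop:obstacle} (to supply continuous supercaloric solutions of the obstacle problem), Lemma~\ref{lem:balayage-K-properties} (to guarantee the zero boundary values and the regularity of $\widehat R_K$), and the comparison principle~\cite[Theorem~2.4]{BBGP} on noncylindrical domains (to carry out the maximum-principle step), so that every approximation and comparison argument of~\cite{KKKP} remains valid in the full range $1<p<\infty$.
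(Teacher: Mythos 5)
Your proposal follows essentially the same route as the paper, which proves this lemma simply by citing \cite[Theorem~5.7 \& Lemma~5.8]{KKKP} and observing that the argument extends to $1<p\le\tfrac{2n}{n+2}$ through precisely the ingredients you list: extinction and $L^\infty$-bounds from \cite[Chapter~VII, Prop.~2.1]{Di}, the obstacle-problem result in Proposition~\ref{prop:obstacle}, the balayage properties in Lemma~\ref{lem:balayage-K-properties}, and the noncylindrical comparison principle from \cite[Theorem~2.4]{BBGP}. One caveat worth fixing: you appeal to ``uniqueness of weak solutions to the measure data problem'' to identify $u_{\mu_{\widehat R_K}}$ with $\widehat R_K$, but such uniqueness is neither available for nonlinear parabolic measure-data problems in this generality nor needed here — Definition~\ref{def:nonlinear-capacity-compact} only asks that \emph{some} weak solution $u_\mu$ be paired with $\mu$, and by the very construction of the Riesz measure $\widehat R_K$ is a weak solution for $\mu_{\widehat R_K}$, with the required global integrability, initial and boundary behaviour supplied by Lemma~\ref{lem:balayage-K-properties}.
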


\begin{lemma} \label{lem:cap-decreasing-limit}
Let $1<p<\infty$ and $K_i \subset \Omega_\infty$ be compact sets such that $K_i \supset K_{i+1}$ for every $i \in \N$ and denote $K = \bigcap_{i=1}^\infty K_i$. Then
$$
\lim_{i \to \infty} \ca (K_i, \Omega_\infty) = \ca (K ,\Omega_\infty). 
$$
\end{lemma}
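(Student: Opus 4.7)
The plan is to exploit the capacitary potential representation from Lemma~\ref{lem:cap-compact-representation} and pass to the limit along the decreasing sequence of balayages. One direction is immediate: since $K\subset K_{i+1}\subset K_i$, every measure admissible for $K$ (respectively for $K_{i+1}$) in the sense of Definition~\ref{def:nonlinear-capacity-compact} is admissible for $K_i$, so the sequence $\{\ca(K_i,\Omega_\infty)\}$ is decreasing and bounded below by $\ca(K,\Omega_\infty)$. Its limit therefore exists and satisfies $\lim_{i\to\infty}\ca(K_i,\Omega_\infty)\ge\ca(K,\Omega_\infty)$.

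For the reverse inequality, I would set $u_i:=\widehat R_{K_i}$, so that by Lemma~\ref{lem:cap-compact-representation} we have $\ca(K_i,\Omega_\infty)=\mu_{u_i}(\Omega_\infty)$. Since $\chi_{K_{i+1}}\le\chi_{K_i}$, the definition of balayage yields $u_{i+1}\le u_i$, and Lemma~\ref{lem:balayage-psi-prop} forces $0\le u_i\le 1$; consequently $u_i\downarrow u$ pointwise for some $u$ with $0\le u\le 1$. By Lemma~\ref{lem:bdd-weak-sol-convergence}, $u$ is a weak supersolution in $\Omega_\infty$ and $\mu_{u_i}\wto\mu_u$ weakly as Radon measures. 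Lemma~\ref{lem:balayage-K-properties} tells us each $u_i$ is a weak solution in $\Omega_\infty\setminus K_i$, so $\spt\mu_{u_i}\subset K_i\subset K_1$. Given any $z\notin K$, I would pick $i_o$ with $z\notin K_{i_o}$; then $V:=\Omega_\infty\setminus K_{i_o}$ is an open neighborhood of $z$ on which every $\mu_{u_i}$ with $i\ge i_o$ vanishes, and weak convergence gives $\mu_u(V)=0$, which proves $\spt\mu_u\subset K$. Choosing a continuous, compactly supported $\phi$ on $\Omega_\infty$ with $0\le\phi\le 1$ and $\phi\equiv 1$ on $K_1$, these support conditions give $\mu_{u_i}(\Omega_\infty)=\int\phi\,\d\mu_{u_i}$ and $\mu_u(\Omega_\infty)=\int\phi\,\d\mu_u$, so the weak convergence yields $\ca(K_i,\Omega_\infty)\to\mu_u(\Omega_\infty)$.

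It remains to verify that $u$ is admissible for $\ca(K,\Omega_\infty)$, which yields $\mu_u(\Omega_\infty)\le\ca(K,\Omega_\infty)$ and closes the argument. The bounds $0\le u\le 1$, the inclusion $\spt\mu_u\subset K$, and the fact that $u$ solves the measure data equation driven by $\mu_u$ (as its Riesz measure in the sense of \eqref{e.weak_measuresol}) are already in hand. Since $K_1$ is compact in $\Omega_\infty$, there exists $\delta>0$ with $K_1\subset\Omega\times[\delta,\infty)$, and Lemma~\ref{lem:balayage-K-properties} then forces $u_i\equiv 0$ on $\Omega\times(0,\delta]$ uniformly in $i$, which transfers the vanishing initial datum to $u$ in the $L^2$ sense. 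For the condition $u\in L^p(0,T;W^{1,p}_0(\Omega))$ for every $T>0$, I would derive a uniform energy bound on $u_i$ by formally testing the equation with $u_i$, exploiting $0\le u_i\le 1$ together with $\mu_{u_i}(\Omega_\infty)\le\ca(K_1,\Omega_\infty)<\infty$, and then identify the resulting weak limit in $L^p(0,T;W^{1,p}_0(\Omega))$ with the pointwise limit $u$. The main technical obstacle is precisely this last passage to the limit in the Sobolev trace; the remaining ingredients are qualitative consequences of monotonicity of the balayages and of weak convergence of their Riesz measures.
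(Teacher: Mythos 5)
The paper itself does not prove this lemma; it simply quotes it from \cite{KKKP} (Lemma~5.8 there), together with the remark that the result extends to $1<p\le\tfrac{2n}{n+2}$ via the ingredients listed just before Lemma~\ref{lem:cap-compact-representation}. So there is no ``paper's own proof'' to compare against, only a citation. That said, your reconstruction is essentially the argument one expects behind the cited result, and it is correct.

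Your easy direction (monotonicity of the admissible class) is fine. In the hard direction, the chain ``$u_i=\widehat R_{K_i}$ decreases, $\mu_{u_i}\wto\mu_u$ by Lemma~\ref{lem:bdd-weak-sol-convergence}, $\spt\mu_u\subset K$ by the Portmanteau argument on $V=\Omega_\infty\setminus K_{i_o}$, total masses converge because all measures live in the fixed compact $K_1$'' is sound; note you should state that you invoke $\mu_u(V)\le\liminf\mu_{u_i}(V)$ for open $V$, which is what makes the support argument clean. Verifying admissibility of $(\mu_u,u)$ in the sense of Definition~\ref{def:nonlinear-capacity-compact} then finishes the proof. One remark: the step you flag as ``the main technical obstacle,'' namely propagating $u\in L^p(0,T;W^{1,p}_0(\Omega))$ and the zero initial datum to the limit, is in fact routine once you have the uniform energy bound $\|\nabla u_i\|_{L^p(\Omega_T)}^p\le \mu_{u_i}(\Omega_\infty)\le\ca(K_1,\Omega_\infty)<\infty$: $u_i\to u$ in $L^p(\Omega_T)$ by dominated convergence, $\nabla u_i\wto\nabla u$ along a subsequence, and $L^p(0,T;W^{1,p}_0(\Omega))$ is weakly closed, so $u$ inherits the zero lateral boundary values; the uniform vanishing on $\Omega\times(0,\delta]$ gives the initial condition. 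You may also wish to record explicitly that $u$ solves the measure-data problem for $\mu_u$ because, being a weak supersolution, its Riesz measure $\mu_u$ is defined precisely by the distributional identity $\partial_t u-\Delta_pu=\mu_u$ of~\eqref{e.weak_measuresol}.
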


\begin{remark}
In principle, one could define the capacity in
Definition~\ref{def:nonlinear-capacity-compact} for arbitrary
sets. However, in view of~\cite{KKKP}, to obtain results such as
countable subadditivity and upwards monotone convergence, it seems to
be required that the class of admissible measures in the definition
are closed under taking restrictions to the subsets considered. This
appears to be a delicate issue especially when $p$ is close to $1$.
In the arguments that follow,
we therefore only use the properties of $\ca$ that are stated in
Lemmas~\ref{lem:cap-compact-representation}
and~\ref{lem:cap-decreasing-limit}. This means that one could
alternatively take~\eqref{eq:cap-potential-representation} as
definition of the capacity of a compact set.

Furthermore, whether $T$ in the reference set is finite or not does not play an important role in this case. However, for the sake of consistency with~\cite{KKKP}, we set $T = \infty$ in Definition~\ref{def:nonlinear-capacity-compact}.
\end{remark}

Then we define a variational capacity related to the parabolic $p$-Laplace equation, cf.~\cite{AKP,droniou}.

\begin{definition} \label{def:varcap}
For a compact set $K \subset \Omega_T$, $0<T\le\infty$, we define the variational capacity as
\begin{align*}
\ca_{\text{var}} (K,\Omega_T)= \inf \left\{ \|v\|_{\mathcal W(\Omega_T)}: v \in C_0^\infty(\Omega \times \R), v \geq \chi_K  \right\},
\end{align*}
with
\begin{equation*}
  \|v\|_{\mathcal{W}(\Omega_T)} = \|v\|_{\mathcal{V}(\Omega_T)}^p + \|\partial_t v\|_{\mathcal{V}'(\Omega_T)}^{p'} + \|v\|_{L^\infty(0,T;L^2(\Omega))}^2, 
\end{equation*}
cf. \eqref{W-norm}.
For an open set $U \subset \Omega_T$ we define the variational capacity by
\begin{equation} \label{eq:capvar-openset}
\ca_{\mathrm{var}}(U,\Omega_T) = \sup \left \{\ca_{\mathrm{var}}(K,\Omega_T): K \text{ compact subset of } \Omega_T, K \subset U  \right\},
\end{equation}
and for an arbitrary set $E \subset \Omega_T$ by
\begin{equation} \label{eq:capvar-anyset}
\ca_{\mathrm{var}}(E,\Omega_T) = \inf \left \{\ca_{\mathrm{var}}(U,\Omega_T): U \text{ open subset of } \Omega_T, E \subset U  \right\}.
\end{equation}
\end{definition}

We note that $\|\cdot\|_{\mathcal{W}(\Omega_T)}$ is not a norm because it is
not homogeneous. In this point our definition of parabolic capacity differs from the one
in \cite{droniou}. However, our definition is analogous to the
standard definition in the elliptic case, see Definition~\ref{def:elliptic-cap}.

\begin{remark} \label{rem:W-to-Linfty-L2}
Observe that in case $\frac{2n}{n+2}\le p < \infty$,
Lemma~\ref{lem:parabolic-embedding} and Young's inequality imply
\begin{equation}\label{eq:para-embed}
  \|v\|_{L^\infty(0,T;L^2(\Omega))}^2
  \le
    \frac{2}{p}\|v\|_{\mathcal{V}(\Omega_T)}^p
    +
    \frac{2}{p'}\|\partial_tv\|_{\mathcal{V}'(\Omega_T)}^{p'}
    +
    \frac{c}{T^{\frac{2}{p}}}\|v\|^2_{\mathcal{V}(\Omega_T)}
  \end{equation}
  for every $v\in\mathcal{W}(\Omega_T)$, with a constant
  $c=c(n,p,|\Omega|)$. In the case $T=\infty$, the last term can be
  interpreted as zero and therefore, 
  in the case $p\ge\frac{2n}{n+2}$ and $T=\infty$
  we may neglect the term $\|v\|_{L^\infty(0,\infty;L^2(\Omega))}^2$
  in Definition~\ref{def:varcap} of the variational capacity.
  However, for finite times $T<\infty$
  it seems to be unavoidable to get some quadratic term on the
  right hand side of~\eqref{eq:para-embed}. In order to deal with the
  resulting inhomogeneous form of this estimate, in \cite{AKP} the
  authors introduced a variational parabolic capacity on $\Omega_T$
  that relies on an intrinsic scaling in the time variable. However, their
  approach seems to be limited to exponents $p>2$. In the present
  work, we choose to avoid the use of an intrinsic scaling and instead include
  the term $\|v\|_{L^\infty(0,T;L^2(\Omega))}^2$ in the definition of
  the variational capacity. Anyway, this term is needed in the subcritical
  case $1<p<\frac{2n}{n+2}$, because for such exponents, it can not be
  controlled by the norms $\|v\|_{\mathcal{V}(\Omega_T)}$ and
  $\|\partial_t v\|_{\mathcal{V}'(\Omega_T)}$ due to the lack of a suitable
  Sobolev embedding.
\end{remark}

Finally, as an auxiliary notion we define the energy capacity related to equation~\eqref{p-lap}, see~\cite{AKP}.

\begin{definition} \label{def:encap}
For a compact set $K \subset \Omega_T$, $0<T\le\infty$, we define the energy capacity as
\begin{align*}
\ca_{\text{en}} &(K,\Omega_T) \\
&= \inf \left\{ \|v\|_{\text{en},\Omega_T} : v \in \mathcal{V}(\Omega_T),\, v \text{ is a supercaloric function in } \Omega_T,\, v \geq \chi_K  \right\},
\end{align*}
where
$$
\|v\|_{\text{en},\Omega_T} := \sup_{0<t<T} \frac12 \int_{\Omega} v^2(x,t) \, \d x + \iint_{\Omega_T} |\nabla v|^p \, \d x \d t.
$$
\end{definition}

\subsection{Some properties of variational capacity}

First we collect immediate properties for the variational capacity.

\begin{lemma} \label{lem:capvar-prop}
For variational capacity the following properties hold true.
\begin{enumerate}[(i)]
\item If $K \subset \Omega_T$ is a compact set, then $\ca_{\mathrm{var}} (K,\Omega_T) < \infty$.
\item If $E_1 \subset E_2 \subset \Omega_T$ are arbitrary sets, then $\ca_{\mathrm{var}} (E_1,\Omega_T) \leq \ca_{\mathrm{var}} (E_2,\Omega_T)$.\item If $0<T' \leq T \leq \infty$ and $E \subset \Omega_{T'}$ is a set, then $\ca_{\mathrm{var}} (E,\Omega_{T'}) \leq \ca_{\mathrm{var}} (E,\Omega_{T})$. 
\end{enumerate}
\end{lemma}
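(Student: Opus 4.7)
The plan is to treat each of the three items in turn; none of them requires more than the definitions and very simple test-function arguments, and no new analytical tool beyond Poincar\'e's inequality for functions in $C^\infty_0(\Omega_T)$ is needed.

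For item (i), since $K$ is compactly contained in $\Omega_T$, I would fix $\eps>0$ small enough so that the $\eps$-neighborhood of $K$ is still compactly contained in $\Omega\times(0,T)$, and then select $v\in C^\infty_0(\Omega\times\R)$ with $0\le v\le 1$, $v\equiv1$ on $K$, and $\spt v$ contained in this neighborhood. Clearly $v\ge\chi_K$. Both $\iint_{\Omega_T}|\nabla v|^p\,\dx\dt$ and $\esssup_{0<t<T}\int_\Omega v(x,t)^2\,\dx$ are finite since $v$ is smooth with compact support. For the dual-norm contribution, an integration by parts together with H\"older's inequality and Poincar\'e's inequality (applied to $\phi\in C^\infty_0(\Omega_T)$) gives
\begin{align*}
  \bigg|\iint_{\Omega_T}v\partial_t\phi\,\dx\dt\bigg|
  &=\bigg|\iint_{\Omega_T}\partial_t v\cdot\phi\,\dx\dt\bigg|
  \le
  \|\partial_tv\|_{L^{p'}(\Omega_T)}\|\phi\|_{L^p(\Omega_T)}\\
  &\le
  c(\Omega)\|\partial_tv\|_{L^{p'}(\Omega_T)}\|\nabla\phi\|_{L^p(\Omega_T)},
\end{align*}
so $\|\partial_tv\|_{\mathcal{V}'(\Omega_T)}<\infty$. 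Consequently $v$ is admissible with finite $\mathcal{W}$-norm, proving (i).

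For item (ii), I would argue case by case, propagating the inequality through the three layers of Definition~\ref{def:varcap}. If $E_1$ and $E_2$ are both compact, any admissible $v$ for $E_2$ satisfies $v\ge\chi_{E_2}\ge\chi_{E_1}$ and so is also admissible for $E_1$; taking the infimum on the right yields the claim. If both are open, the family of compact subsets of $E_1$ is contained in that of $E_2$, so the sup in~\eqref{eq:capvar-openset} is monotone. If both are arbitrary, the family of open supersets of $E_2$ is contained in that of $E_1$, so the inf in~\eqref{eq:capvar-anyset} is monotone in the correct direction. For the mixed cases one reduces to one of the previous three; for example, if $E_1$ is compact and $E_2$ is arbitrary, every open superset $U$ of $E_2$ is an open superset of $E_1$, and the inequality $\ca_{\mathrm{var}}(E_1,\Omega_T)\le\ca_{\mathrm{var}}(U,\Omega_T)$ follows because $E_1$ belongs to the family of compact subsets of $U$ entering the sup in~\eqref{eq:capvar-openset}; taking the infimum over $U\supset E_2$ gives the result.

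For item (iii), the key observation is that each of the three ingredients of $\|v\|_{\mathcal{W}(\Omega_T)}$ is nondecreasing in $T$ for a fixed $v\in C^\infty_0(\Omega\times\R)$. The $\mathcal{V}$-norm and the $L^\infty(0,T;L^2(\Omega))$-norm are obviously monotone. For the dual-norm term, every $\phi\in C^\infty_0(\Omega_{T'})$ extends by zero to a function in $C^\infty_0(\Omega_T)$ with the same $\mathcal{V}$-norm, so the supremum defining $\|\partial_tv\|_{\mathcal{V}'(\Omega_T)}$ is taken over a larger family than the one defining $\|\partial_tv\|_{\mathcal{V}'(\Omega_{T'})}$, and therefore the former dominates the latter. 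This yields $\|v\|_{\mathcal{W}(\Omega_{T'})}\le\|v\|_{\mathcal{W}(\Omega_T)}$ for every admissible $v$; taking the infimum over admissible $v$ proves the inequality for compact $E$. The cases of open and arbitrary $E$ then follow by propagation through the sup in~\eqref{eq:capvar-openset} and the inf in~\eqref{eq:capvar-anyset} exactly as in (ii), using the fact that every compact or open subset of $\Omega_{T'}$ is also a compact or open subset of $\Omega_T$. There is no genuine obstacle in this proof; the only point requiring a small amount of care is bookkeeping the case distinctions in (ii) and (iii) to make sure the three-level definition is handled consistently.
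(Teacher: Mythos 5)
Your overall route---unpacking the three-level Definition~\ref{def:varcap} and verifying each property layer by layer---is the natural one, and items (i) and (iii) are sound. In (i), choosing a smooth cutoff $v\ge\chi_K$ with compact support and then estimating the dual-norm contribution by integrating by parts in $t$ and applying H\"older and Poincar\'e (valid slicewise for $\phi(\cdot,t)\in C^\infty_0(\Omega)$) correctly shows that any such $v$ has finite $\mathcal{W}$-norm. In (iii), the zero-extension of $\phi\in C^\infty_0(\Omega_{T'})$ to $C^\infty_0(\Omega_T)$ is exactly the right observation for the dual-norm monotonicity, and the promotion from compact to open to arbitrary sets works; for arbitrary $E$ one should, to be safe, intersect a near-optimal open superset $U\subset\Omega_T$ of $E$ with $\Omega_{T'}$ to obtain an admissible competitor for the $\Omega_{T'}$-capacity, but this is exactly the ``same family'' argument you invoke.

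The one place where your bookkeeping for (ii) does not close is the mixed case in which $E_1$ is an arbitrary set (neither compact nor open) and $E_2$ is compact, which you dismiss with ``one reduces to one of the previous three.'' It does not: there $\ca_{\mathrm{var}}(E_1,\Omega_T)$ is computed by the infimum over open supersets in~\eqref{eq:capvar-anyset}, while $\ca_{\mathrm{var}}(E_2,\Omega_T)$ is the direct infimum over admissible $v\in C^\infty_0(\Omega\times\R)$, and comparing them requires the consistency fact that for a compact set the two formulas agree. That is a genuine step, proved by the scaling trick that appears in the paper's proof of Lemma~\ref{lem:capvar-limit-compact}: take an admissible $v$ for $K=E_2$ with $\|v\|_{\mathcal{W}(\Omega_T)}\le\ca_{\mathrm{var}}(K,\Omega_T)+\eps$; by continuity, $(1-\gamma)^{-1}v\ge1$ on an open set $U_\gamma\supset K\supset E_1$, so every compact subset of $U_\gamma$ has capacity at most $(1-\gamma)^{-\max\{p,p'\}}\|v\|_{\mathcal{W}(\Omega_T)}$, hence $\ca_{\mathrm{var}}(E_1,\Omega_T)\le\ca_{\mathrm{var}}(U_\gamma,\Omega_T)\le(1-\gamma)^{-\max\{p,p'\}}\big(\ca_{\mathrm{var}}(K,\Omega_T)+\eps\big)$, and one lets $\gamma,\eps\downarrow0$. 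Adding this one argument makes your case analysis complete.
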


For the elliptic capacity, the definition immediately implies that for open sets $\Omega' \subset \Omega \subset \R^n$ and any set $E \subset \Omega'$, one has $\ca_{\mathrm{e}}(E, \Omega) \leq \ca_{\mathrm{e}}(E, \Omega')$. In the parabolic case, the analogous result is not that clear due to the dual norm of the time derivative. However, we are able to prove a weaker form of the inequality, which will be useful in the proof of Theorem~\ref{thm:supercal-polarset}.

\begin{lemma} \label{lem:capvar-omega-omega'}
Let $1<p<\infty$, $0 < T \leq \infty$ and $E \subset \Omega_T$ be a set such that $\mathrm{dist} (E, S_T) > 0$. Then there exists an open set $\Omega' \Subset \Omega$ with $E \subset \Omega'_T$ and
$$
\ca_{\mathrm{var}} (E,\Omega_T) \leq c \ca_{\mathrm{var}} (E, \Omega'_T)
$$
for a constant  $c = c(n,p,\alpha)  > 0$. More precisely, $\Omega'$
can be chosen as $\Omega' = \{x\in \Omega : \mathrm{dist}
(x,\R^n \setminus \Omega) > \delta\}$ for any sufficiently small $\delta>0$.
\end{lemma}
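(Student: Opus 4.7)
The plan is to reduce the assertion to the compact case and then pass to arbitrary $E$ using the definitions~\eqref{eq:capvar-openset}--\eqref{eq:capvar-anyset}. For $\sigma>0$ denote by $\Omega^{(\sigma)}:=\{x\in\Omega:\mathrm{dist}(x,\R^n\setminus\Omega)>\sigma\}$ the inner parallel set, and choose $\delta>0$ small enough that $\mathrm{dist}(E,S_T)>3\delta$; with this choice $\Omega':=\Omega^{(\delta)}\Subset\Omega$ and $E\subset\Omega^{(3\delta)}\times(0,T)\subset\Omega'_T$. By Lemma~\ref{lem:parallel-fatness}, $\R^n\setminus\Omega'$ is uniformly $p$-fat with a parameter $\alpha_o=\alpha_o(n,p,\alpha)$, so Hardy's inequality from Lemma~\ref{lem:hardy} holds in both $\Omega$ and $\Omega'$ with constants depending only on $n,p,\alpha$. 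Once the inequality is established for compact sets $K\subset\Omega^{(3\delta)}\times(0,T)$, the general case follows by monotonicity: any open neighbourhood $U\subset\Omega'_T$ of $E$ may be replaced by the open set $U\cap(\Omega^{(3\delta)}\times(0,T))\supset E$, which has no larger $\Omega'_T$-capacity, and~\eqref{eq:capvar-openset}--\eqref{eq:capvar-anyset} then yield the claim for arbitrary $E$.

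For a compact $K\subset\Omega^{(3\delta)}\times(0,T)$, I would take a near-minimizing $v\in C_0^\infty(\Omega'\times\R)$ with $v\ge\chi_K$ and introduce a time-independent spatial cutoff $\zeta\in C_0^\infty(\Omega)$ satisfying $\zeta\equiv 1$ on $\Omega^{(3\delta)}$, $\spt\zeta\subset\overline{\Omega^{(2\delta)}}\Subset\Omega'$, and $|\nabla\zeta|\le C/\delta$. Then $w:=\zeta v\in C_0^\infty(\Omega\times\R)$ coincides with $v$ on $K$, so $w\ge\chi_K$ and $w$ is admissible for $\ca_{\mathrm{var}}(K,\Omega_T)$. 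Everything reduces to proving the norm bound $\|w\|_{\mathcal{W}(\Omega_T)}\le c(n,p,\alpha)\|v\|_{\mathcal{W}(\Omega'_T)}$.

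The $L^\infty(0,T;L^2(\Omega))$-contribution is immediate from $|\zeta|\le 1$, and the gradient part is controlled by expanding $|\nabla(\zeta v)|^p\le c(|\nabla v|^p+|\nabla\zeta|^p|v|^p)$ and observing that $|\nabla\zeta|^p\le c/\mathrm{dist}(x,\partial\Omega')^p$ on $\spt\nabla\zeta$ (since $\mathrm{dist}(x,\partial\Omega')\le 2\delta$ there), so that Hardy's inequality in $\Omega'$ applied slice-wise to $v(\cdot,t)\in W^{1,p}_0(\Omega')$ absorbs the second term into $\|\nabla v\|_{L^p(\Omega'_T)}^p$. The delicate step is the dual norm of $\partial_t w$, because by~\eqref{W-norm} it is a supremum over the strictly larger class $C_0^\infty(\Omega_T)$ rather than $C_0^\infty(\Omega'_T)$. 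The crucial observation is that, since $\zeta$ does not depend on $t$, every $\phi\in C_0^\infty(\Omega_T)$ satisfies
\begin{equation*}
  \iint_{\Omega_T}w\,\partial_t\phi\,\dx\dt=\iint_{\Omega_T}v\,\partial_t(\zeta\phi)\,\dx\dt,
\end{equation*}
and the containment $\spt\zeta\Subset\Omega'$ ensures that $\zeta\phi\in C_0^\infty(\Omega'_T)$. Thus the left hand side is bounded by $\|\partial_tv\|_{\mathcal{V}'(\Omega'_T)}\|\zeta\phi\|_{\mathcal{V}(\Omega'_T)}$, and a Hardy estimate in $\Omega$ (using the assumed $p$-fatness of $\R^n\setminus\Omega$) performed exactly as for $v$ yields $\|\zeta\phi\|_{\mathcal{V}(\Omega'_T)}\le c(n,p,\alpha)\|\phi\|_{\mathcal{V}(\Omega_T)}$.

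The principal obstacle is precisely this dual-norm bookkeeping: the naive inclusion goes in the wrong direction, so one cannot simply estimate $\|\partial_tv\|_{\mathcal{V}'(\Omega_T)}$ by $\|\partial_tv\|_{\mathcal{V}'(\Omega'_T)}$. It is the interplay between the time-independent cutoff (which commutes with $\partial_t$ and permits the rewriting above) and the uniform availability of Hardy's inequality on both $\Omega$ and $\Omega'$ that makes the argument succeed with a constant depending only on $n,p,\alpha$. Taking the supremum in $\phi$, the infimum in $v$, and finally passing from compact to open to arbitrary sets as in the first paragraph completes the proof.
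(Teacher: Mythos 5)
Your proof is correct and follows essentially the same route as the paper's: introduce a time-independent spatial cutoff supported in $\Omega'$ and equal to $1$ on a neighborhood of $E$, use that it commutes with $\partial_t$ to reduce the dual norm estimate to $\zeta\phi\in C_0^\infty(\Omega'_T)$, and control the cutoff error terms via Hardy's inequality (the paper applies Hardy on $\Omega$ for both $v$ and $\phi$, while you apply it on $\Omega'$ for $v$ using Lemma~\ref{lem:parallel-fatness}; both give constants depending only on $n,p,\alpha$). The reduction from arbitrary sets to compact sets contained in $\Omega^{(3\delta)}\times(0,T)$ is also the intended one.
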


\begin{proof}

Let $\delta > 0$ be such that $\overline{E} \subset \Omega_{2 \delta}
\times [0,T]$, where $\Omega_{\delta} = \{x\in \Omega : \mathrm{dist}
(x,\R^n \setminus \Omega) > \delta\}$. Let $U$ be any open set with
$E\subset U\subset\Omega_{2\delta} \times (0,T)$ and let $K \subset U$ be compact. Let us denote $\Omega' := \Omega_{\delta}$. For every $\eps > 0$ there exists $v \in C_0^\infty(\Omega' \times \R)$ with $v \geq \chi_{K}$ such that
\begin{equation} \label{eq:cap-omega'}
\|v\|_{\mathcal{W}(\Omega'_T)} \leq \ca_{\mathrm{var}} (K, \Omega'_T) + \eps.
\end{equation}
We choose $\zeta_\delta \in C_0^\infty(\Omega_{\delta},[0,1])$ such
that $\zeta_{\delta} \equiv 1$ in $\Omega_{2 \delta}$ and $|\nabla
\zeta_{\delta}| \leq \tfrac{c}{\delta}$. Observe that $\|\zeta_\delta
v\|_{L^\infty(0,T;L^2(\Omega))} \leq
\|v\|_{L^\infty(0,T;L^2(\Omega'))}$. By Hardy's inequality,
Lemma~\ref{lem:hardy}, it follows that  
\begin{align*}
\int_0^T \int_{\Omega} |v|^p|\nabla \zeta_{\delta}|^p \, \d x\d t &\leq c\int_0^T \int_{\Omega_\delta \setminus \Omega_{2\delta}} \frac{|v|^p}{\delta^p} \, \d x\d t \\
&\leq c \int_0^T \int_{\Omega} \frac{|v|^p}{\mathrm{dist}(x, \partial \Omega)^p} \, \d x\d t \\
&\leq c \int_0^T \int_{\Omega} |\nabla v|^p \, \d x\d t
\end{align*}
for $c = c(n,p,\alpha) > 0$, which implies that $\|\zeta_\delta v\|_{\mathcal{V}(\Omega_T)} \leq c(n,p,\alpha) \|v\|_{\mathcal{V}(\Omega'_T)}$. Finally, for $\phi \in C_0^\infty(\Omega_T)$ we have
\begin{align*}
\left| \iint_{\Omega_T} \zeta_\delta v \partial_t \phi \, \d x \d t \right| &= \left| \iint_{\Omega'_T} v \partial_t(\zeta_\delta  \phi) \, \d x \d t \right| \\
&\leq \|\partial_t v\|_{\mathcal{V}'(\Omega'_T)} \|\zeta_\delta \phi \|_{\mathcal{V}(\Omega_T)} \\
&\leq c(n,p,\alpha) \|\partial_t v\|_{\mathcal{V}'(\Omega'_T)} \|\phi \|_{\mathcal{V}(\Omega_T)}
\end{align*}
by using again Hardy's inequality on the last line. Thus, we obtain
$\|\partial_t (\zeta_\delta v)\|_{\mathcal{W}(\Omega_T)} \leq c
\|\partial_t v\|_{\mathcal{V}'(\Omega'_T)}$ and further
$\|\zeta_\delta v \|_{\mathcal{W}(\Omega_T)} \leq c \| v
\|_{\mathcal{W}(\Omega'_T)}$. By using~\eqref{eq:cap-omega'} and
observing that $\zeta_\delta v$ is admissible for the variational capacity in $\Omega_T$, we obtain
$$
\ca_{\mathrm{var}}(K, \Omega_T) \leq \|\zeta_\delta v \|_{\mathcal{W}(\Omega_T)} \leq c \| v \|_{\mathcal{W}(\Omega'_T)} \leq c ( \ca_{\mathrm{var}} (K, \Omega'_T) + \eps ).
$$
Since $\eps  >0$ was arbitrary, we conclude the result.

\end{proof}

The variational capacity satisfies the decreasing monotone convergence property for compact sets, cf. Lemma~\ref{lem:cap-decreasing-limit}.

\begin{lemma} \label{lem:capvar-limit-compact}
Let $1<p<\infty$, $0< T\leq \infty$ and $(K_i)_{i\in \N}$ be a sequence of compact sets contained in $\Omega_T$ and satisfying $K_i \supset K_{i+1}$ for every $i \in \N$. Then
$$
\lim_{i \to \infty} \ca_{\mathrm{var}} (K_i, \Omega_T) =  \ca_{\mathrm{var}} (\cap_{i=1}^\infty K_i, \Omega_T).
$$
\end{lemma}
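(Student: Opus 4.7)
The plan is to establish two inequalities. The easy direction, namely
\[
\ca_{\mathrm{var}}(\cap_i K_i, \Omega_T) \le \liminf_{i\to\infty} \ca_{\mathrm{var}}(K_i, \Omega_T),
\]
is immediate from the monotonicity stated in Lemma~\ref{lem:capvar-prop}(ii), using $\cap_i K_i \subset K_i$ for every $i$. It also shows that $(\ca_{\mathrm{var}}(K_i,\Omega_T))_i$ is nonincreasing, hence its limit exists.

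For the reverse inequality, set $K:=\cap_i K_i$. Fix $\eps>0$ and pick an admissible $v\in C_0^\infty(\Omega\times\R)$ with $v\ge\chi_K$ satisfying
\[
\|v\|_{\mathcal{W}(\Omega_T)}\le \ca_{\mathrm{var}}(K,\Omega_T)+\eps.
\]
For any small $\delta\in(0,1)$, the open set $U_\delta:=\{v>1-\delta\}\subset\Omega\times\R$ is an open neighbourhood of $K$ since $v$ is continuous and $v\ge1$ on $K$. The first key step is a compactness argument: I would claim that $K_i\subset U_\delta$ for all sufficiently large $i$. Indeed, $K_1$ is compact, so if the contrary held, we could extract points $x_{i_k}\in K_{i_k}\setminus U_\delta$ converging to some $x\in K_1$. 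Since $(K_i)$ is decreasing, $x\in\cap_i K_i=K\subset U_\delta$, contradicting $x\in(\Omega\times\R)\setminus U_\delta$, which is closed.

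The second step is to rescale: for $i$ large, the function $w:=\tfrac{v}{1-\delta}\in C_0^\infty(\Omega\times\R)$ satisfies $w\ge\chi_{K_i}$, and hence is admissible for $\ca_{\mathrm{var}}(K_i,\Omega_T)$. Here I expect the only technical nuisance to be the inhomogeneity of $\|\cdot\|_{\mathcal{W}(\Omega_T)}$: the three summands in \eqref{W-norm} scale like $(1-\delta)^{-p}$, $(1-\delta)^{-p'}$ and $(1-\delta)^{-2}$ respectively. Writing $C(\delta):=\max\{(1-\delta)^{-p},(1-\delta)^{-p'},(1-\delta)^{-2}\}$ (so $C(\delta)\to1$ as $\delta\downarrow0$), we have
\[
\ca_{\mathrm{var}}(K_i,\Omega_T)\le \|w\|_{\mathcal{W}(\Omega_T)}\le C(\delta)\|v\|_{\mathcal{W}(\Omega_T)}\le C(\delta)\bigl(\ca_{\mathrm{var}}(K,\Omega_T)+\eps\bigr).
\]

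Taking $\limsup_{i\to\infty}$, then letting $\delta\downarrow0$ and finally $\eps\downarrow0$ yields $\limsup_{i\to\infty}\ca_{\mathrm{var}}(K_i,\Omega_T)\le \ca_{\mathrm{var}}(K,\Omega_T)$, which together with the easy direction completes the proof. The only conceptually delicate point is the rescaling in the presence of the non-homogeneous norm; this is precisely why the definition uses a supremum over a nested family of compact sublevel sets rather than trying to work with $v\ge\chi_{K_i}$ directly.
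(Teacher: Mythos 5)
Your proof is correct and takes essentially the same approach as the paper's: both single out an admissible $v$ for $K$, use compactness to place all but finitely many $K_i$ inside a superlevel set of $v$, rescale $v$ by $(1-\delta)^{-1}$, and track the inhomogeneous scaling of $\|\cdot\|_{\mathcal{W}(\Omega_T)}$ via the exponent $\max\{p,p',2\}=\max\{p,p'\}$. You spell out the compactness step that the paper compresses into ``by smoothness of $v$'', which is a welcome clarification; your closing remark about ``a supremum over a nested family of compact sublevel sets'' does not accurately describe Definition~\ref{def:varcap} for compact sets and could be dropped, but it does not affect the validity of the argument.
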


\begin{proof}

Observe that "$\geq$" is clear by monotonicity. For the reverse inequality, denote $K := \cap_{i=1}^\infty K_i$ and suppose that the right hand side is finite. For every $\eps > 0$ there exists $v \in C_0^\infty(\Omega \times \R)$ with $v \geq \chi_K$ such that
$$
\|v\|_{\mathcal{W}(\Omega_T)} \leq \ca_{\mathrm{var}} (K, \Omega_T) + \eps.
$$
Furthermore, by smoothness of $v$ for every $\gamma >0$ there exists $i_o = i_o(\gamma) \in \N$ such that
$$
v_\gamma = (1-\gamma)^{-1} v \geq \chi_{K_i}
$$
for every $i \in \N_{\geq i_o}$. Observe that $\ca_{\mathrm{var}} (K_i, \Omega_T) \leq \|v_\gamma\|_{\mathcal{W}(\Omega_T)} \leq (1-\gamma)^{-s} \|v\|_{\mathcal{W}(\Omega_T)}$, where $s = \max\{p,p'\}$. Thus,
$$
\ca_{\mathrm{var}} (K_i, \Omega_T) \leq (1-\gamma)^{-s} \left( \ca_{\mathrm{var}} (K, \Omega_T) + \eps \right)
$$
for every $i \in \N_{\geq i_o}$. By letting $i \to \infty$, $\gamma \to 0$ and $\eps \to 0$, respectively, the claim follows.
\end{proof}

Then we have the following subadditivity property for a certain power of the variational capacity. 

\begin{lemma} \label{lem:subadditivity}

Let $1<p<\infty$ and $0<T\leq \infty$. Then, $\ca_{\mathrm{var}}^s :=
\left( \ca_{\mathrm{var}}\right)^s$ satisfies the countable subadditivity property, where $s = (\max\{p,p'\})^{-1}$. That is, if $E_i$ is a subset of $\Omega_T$ for every $i \in \N$, then
$$
\ca_{\mathrm{var}}^s \left(\bigcup_{i=1}^\infty E_i , \Omega_T\right) \leq \sum_{i =1}^\infty \ca_{\mathrm{var}}^s ( E_i , \Omega_T).
$$
\end{lemma}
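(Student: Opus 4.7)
The plan is a standard three-step reduction: I first establish subadditivity for finite unions of compact sets, then lift it to open sets via the inner-regularity definition~\eqref{eq:capvar-openset}, and finally pass to arbitrary sets via the outer-regularity definition~\eqref{eq:capvar-anyset}. For the base case, given compact $K_1,\dots,K_N\subset\Omega_T$ and $\eps>0$, I would pick $v_j\in C_0^\infty(\Omega\times\R)$ with $v_j\ge\chi_{K_j}$ and $\|v_j\|_{\mathcal{W}(\Omega_T)}^s\le\ca_{\mathrm{var}}^s(K_j,\Omega_T)+2^{-j}\eps$; note that the admissibility condition $v_j\ge\chi_{K_j}$ forces $v_j\ge 0$ everywhere. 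Then $v:=v_1+\cdots+v_N\in C_0^\infty(\Omega\times\R)$ dominates $\chi_{\bigcup_j K_j}$, so the entire argument reduces to the constant-free Minkowski-type inequality
\begin{equation*}
  \|v_1+\cdots+v_N\|_{\mathcal{W}(\Omega_T)}^s\le\sum_{j=1}^N\|v_j\|_{\mathcal{W}(\Omega_T)}^s,\qquad s=\tfrac{1}{\max\{p,p'\}}.
\end{equation*}

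To prove this key inequality, I would rewrite $\|\cdot\|_{\mathcal{W}(\Omega_T)}^s$ as an $\ell^{1/s}$-norm on $\R^3$. Take first $p\ge 2$, so that $s=1/p$: denoting $A_j:=\|v_j\|_{\mathcal{V}(\Omega_T)}$, $B_j:=\|\partial_t v_j\|_{\mathcal{V}'(\Omega_T)}$, $C_j:=\|v_j\|_{L^\infty(0,T;L^2(\Omega))}$ and $A,B,C$ analogously for $v$, every $w\in\mathcal{W}(\Omega_T)$ satisfies
\begin{equation*}
  \|w\|_{\mathcal{W}(\Omega_T)}^{1/p}=\bigl\|\bigl(\|w\|_{\mathcal{V}},\|\partial_t w\|_{\mathcal{V}'}^{p'/p},\|w\|_{L^\infty(0,T;L^2(\Omega))}^{2/p}\bigr)\bigr\|_{\ell^p(\R^3)}.
\end{equation*}
Applying Minkowski's inequality in each of the three norms $\mathcal{V}(\Omega_T)$, $\mathcal{V}'(\Omega_T)$, $L^\infty(0,T;L^2(\Omega))$, and the concavity bound $(x+y)^\beta\le x^\beta+y^\beta$ for $\beta\in(0,1]$ with the exponents $\beta=p'/p$ and $\beta=2/p$ (both admissible since $p\ge 2$), yields the componentwise estimate $(A,B^{p'/p},C^{2/p})\le\sum_{j=1}^N(A_j,B_j^{p'/p},C_j^{2/p})$. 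Combining monotonicity of the $\ell^p$-norm with Minkowski's inequality in $\ell^p(\R^3)$ then gives the desired bound. In the case $1<p<2$ the same argument runs with $s=1/p'$ and the reparametrisation $(\|w\|_{\mathcal{V}}^{p/p'},\|\partial_t w\|_{\mathcal{V}'},\|w\|_{L^\infty(0,T;L^2(\Omega))}^{2/p'})$; the key exponents $p/p'$ and $2/p'$ lie in $(0,1]$ because $p'\ge 2$.

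For the reduction from open to compact sets, given open $U_i\subset\Omega_T$, $U:=\bigcup_i U_i$ and a compact $K\subset U$, I would choose a finite subcover $U_{i_1},\dots,U_{i_N}$ of $K$, a subordinate partition of unity $\{\zeta_j\}_{j=1}^N$ with compact supports, and set $K_j:=K\cap\spt\zeta_j\Subset U_{i_j}$. Since $K=\bigcup_{j=1}^N K_j$, the base case together with the monotonicity $\ca_{\mathrm{var}}(K_j,\Omega_T)\le\ca_{\mathrm{var}}(U_{i_j},\Omega_T)$ yields $\ca_{\mathrm{var}}^s(K,\Omega_T)\le\sum_i\ca_{\mathrm{var}}^s(U_i,\Omega_T)$, and taking the supremum over $K\subset U$ via~\eqref{eq:capvar-openset} gives the open-set version. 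Finally, for arbitrary $E_i\subset\Omega_T$ and $\eps>0$, I would pick open $U_i\supset E_i$ with $\ca_{\mathrm{var}}^s(U_i,\Omega_T)\le\ca_{\mathrm{var}}^s(E_i,\Omega_T)+2^{-i}\eps$ using~\eqref{eq:capvar-anyset}, apply the open-set estimate to $\bigcup_i E_i\subset\bigcup_i U_i$, and let $\eps\downarrow 0$.

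The principal obstacle is the Minkowski-type inequality itself: because $\|\cdot\|_{\mathcal{W}(\Omega_T)}$ aggregates three terms of distinct homogeneity degrees ($p$, $p'$ and $2$), a naive triangle-inequality approach in each summand only delivers subadditivity with a multiplicative constant of order $2^{\max\{p,p'\}-1}$. The exponent $s=1/\max\{p,p'\}$ is precisely the largest one for which the reparametrised vector above has all component exponents in $(0,1]$; this relies on the trivial but crucial identity $\max\{p,p'\}\ge 2$, which in particular absorbs the $L^\infty(0,T;L^2(\Omega))$-contribution into the estimate without inflating the constant.
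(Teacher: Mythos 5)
Your proof is correct and follows essentially the same route as the paper: the core is the same reparametrisation of $\|\cdot\|_{\mathcal{W}(\Omega_T)}^s$ as an $\ell^{\max\{p,p'\}}$-norm on $\R^3$ (the paper writes the exponent as $p-1=p/p'$ but the computation is identical), followed by the concavity bound $(\sum x_j)^\beta\le\sum x_j^\beta$ for $\beta\in(0,1]$ and the $\ell^{\max\{p,p'\}}$-triangle inequality, and then the lift to open and arbitrary sets via inner/outer regularity and a finite subcover. The only cosmetic difference is that you prove the finite open-set subadditivity directly with a partition of unity, whereas the paper defers it to~\cite[Proposition~2.13]{droniou}; both are fine.
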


\begin{proof}

We show the claim in the case $1<p<2$, in which $s=\frac{1}{p'}$. For compact sets $K_1,K_2 \subset \Omega_T$ and any given $\eps > 0$ we find $u_i \in C_0^\infty(\Omega \times \R)$ such that $u_i \geq \chi_{K_i}$ and $\|u_i\|_{\mathcal{W}(\Omega_T)} < \ca_{\mathrm{var}}(K_i,\Omega_T) + (\eps/2)^{p'}$ for $i\in \{1,2\}$. Clearly the sum $u_1 + u_2$ is admissible in the definition of variational capacity of the union $K_1 \cup K_2$. Thus
\begin{align*}
&\ca_{\mathrm{var}}^\frac{1}{p'}(K_1 \cup K_2,\Omega_T) \leq \|u_1 + u_2\|_{\mathcal{W}(\Omega_T)}^\frac{1}{p'} \\
&\quad\leq \left[ \left(\sum_{i=1}^2 \|u_i\|_{\mathcal V (\Omega_T)}\right)^p + \left(\sum_{i=1}^2 \|u_i\|_{L^\infty(0,T;L^2(\Omega))}\right)^2 +\left(\sum_{i=1}^2 \|\partial_t u_i\|_{\mathcal V'(\Omega_T)}\right)^{p'} \right]^\frac{1}{p'} \\
&\quad \leq \left[ \left(\sum_{i=1}^2 \|u_i\|_{\mathcal V(\Omega_T)}^{p-1}\right)^{p'} + \left(\sum_{i=1}^2 \|u_i\|_{L^\infty(0,T;L^2(\Omega))}^\frac{2}{p'}\right)^{p'} +\left(\sum_{i=1}^2 \|\partial_t u_i\|_{\mathcal V'(\Omega_T)}\right)^{p'} \right]^\frac{1}{p'} \\
&\quad \leq \sum_{i=1}^2 \left[ \|u_i\|_{\mathcal V(\Omega_T)}^{p} + \|u_i\|_{L^\infty(0,T;L^2(\Omega))}^{2} +\|\partial_t u_i\|_{\mathcal V'(\Omega_T)}^{p'} \right]^\frac{1}{p'} \\
&\quad \leq \sum_{i=1}^2 \left[ \ca_{\mathrm{var}}(K_i,\Omega_T) + (\eps/2)^{p'}\right]^\frac{1}{p'} \\
&\quad\leq \sum_{i=1}^2 \ca_{\mathrm{var}}^\frac{1}{p'}(K_i,\Omega_T) + \eps,
\end{align*}
where we used the facts $0< p -1 < 1$ and $ 2/p' < 1$, together with
the triangle inequality for the $p'$-norm in $\R^3$. Since $\eps > 0$ was arbitrary, we have
$$
\ca_{\mathrm{var}}^\frac{1}{p'}(K_1 \cup K_2,\Omega_T) \leq \ca_{\mathrm{var}}^\frac{1}{p'}(K_1,\Omega_T) + \ca_{\mathrm{var}}^\frac{1}{p'}(K_2,\Omega_T),
$$
which implies finite subadditivity for compact sets. The proof in case $p > 2$ is analogous. By given definition, this implies also finite subadditivity for open sets as in~\cite[Proposition 2.13]{droniou}.

To conclude the proof, w.l.o.g. we assume that $\sum_{i = 1}^\infty \ca_{\mathrm{var}}^s (E_i, \Omega_T) < \infty$. Let $U_i \supset E_i$ be an open set for each $i \in \N$ such that $\ca_{\mathrm{var}}^s (U_i, \Omega_T) \leq \ca_{\mathrm{var}}^s (E_i, \Omega_T) + \eps/2^i$. Thus $\sum_{i = 1}^\infty \ca_{\mathrm{var}}^s (U_i, \Omega_T) \leq \sum_{i = 1}^\infty \ca_{\mathrm{var}}^s (E_i, \Omega_T) + \eps$. Let $U = \bigcup_{i=1}^\infty U_i$ and $K \subset U$ be a compact set. Since $\{U_i\}_{i\in \N}$ is an open cover of $K$, there exists a finite subcover $\{U_i\}_{i\in \{1,2,...,k\}}$. Now finite subadditivity for open sets imply
\begin{align*}
&\ca_{\mathrm{var}}^s(K,\Omega_T) \\
&\phantom{+}\leq \ca_{\mathrm{var}}^s\left(\bigcup_{i=1}^k U_i,\Omega_T\right) \leq \sum_{i = 1}^{k} \ca_{\mathrm{var}}^s\left(U_i,\Omega_T\right)\leq \sum_{i = 1}^\infty \ca_{\mathrm{var}}^s (E_i, \Omega_T) + \eps.
\end{align*}
By taking the supremum over $K \subset U$ and using the fact that $\bigcup_{i=1}^\infty E_i \subset U$ the claim follows.
\end{proof}

The following lemma states the upwards monotone convergence, up to a constant, for the variational capacity. For the proof we refer to Lemma~\ref{lem:cap*-increasinglim} and Remark~\ref{rem:cap*-increasinglim}.

\begin{lemma} \label{lem:capvar-upwards-conv}
Let $1<p<\infty$ and $0<T\leq \infty$ and $E_i \subset \Omega_T$ be a set satisfying $E_i \subset E_{i+1}$ for every $i \in \N$. Then
$$
\ca_{\mathrm{var}} \left( \bigcup_{i=1}^\infty E_i, \Omega_T \right) \approx \lim_{i \to \infty} \ca_{\mathrm{var}} \left( E_i, \Omega_T \right)
$$
up to a constant depending only on $n,p$ and $\alpha$.
\end{lemma}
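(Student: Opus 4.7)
The easy inequality $\lim_i \ca_{\mathrm{var}}(E_i,\Omega_T) \le \ca_{\mathrm{var}}(\bigcup_i E_i,\Omega_T)$ is immediate from monotonicity (Lemma~\ref{lem:capvar-prop}(ii)) and the fact that the sequence is increasing. The multiplicative constant in the statement therefore only enters through the reverse inequality. My plan is to detour through an auxiliary capacity $\ca^*$, defined exactly as in Definition~\ref{def:varcap} but with the admissible class enlarged from $C_0^\infty(\Omega\times\R)$ to $\mathcal{W}(\Omega_T)$ (and the obstacle condition interpreted a.e.); inner and outer regularity extend $\ca^*$ to all sets as in \eqref{eq:capvar-openset}--\eqref{eq:capvar-anyset}.

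The first step would be to show $\ca^*\approx\ca_{\mathrm{var}}$ up to a constant $c(n,p,\alpha)$. One direction is trivial since $C_0^\infty(\Omega\times\R)\subset\mathcal{W}(\Omega_T)$; the reverse follows from the approximation Lemma~\ref{lem:W-approx}, which replaces any $\mathcal{W}$-admissible function for a compact obstacle by a smooth one at the cost of a factor $c(n,p,\alpha)$. Extended to open and arbitrary sets via regularity, this equivalence is precisely what supplies the constant in the final conclusion.

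The core step is to prove the upward monotone convergence for $\ca^*$ itself, this time with equality. For increasing \emph{open} sets this is straightforward: any compact $K\subset\bigcup_iU_i$ lies in some $U_N$ by compactness, so inner regularity gives $\ca^*(\bigcup_iU_i)\leq\lim_i\ca^*(U_i)$. For general increasing $E_i$ with $L:=\lim_i\ca^*(E_i)<\infty$, I would pick near-optimal $\mathcal{W}$-admissible functions $v_i$ with $v_i\geq\chi_{E_i}$ a.e.\ and $\|v_i\|_{\mathcal{W}(\Omega_T)}\leq L+\tfrac1i$, and then exploit that $\mathcal{W}(\Omega_T)$ is stable under weak-type limits. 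Passing to a subsequence, $v_i\wto v$ in $\mathcal{V}(\Omega_T)$, $\partial_t v_i\wto\partial_t v$ in $\mathcal{V}'(\Omega_T)$, and $v_i\wsto v$ in $L^\infty(0,T;L^2(\Omega))$; an Aubin--Lions argument yields a.e.\ convergence along a further subsequence, whence the limit $v$ inherits $v\geq\chi_{E_k}$ a.e.\ for every fixed $k$ and therefore $v\geq\chi_{\bigcup_iE_i}$ a.e. Weak lower semicontinuity of each of the three summands of $\|\cdot\|_{\mathcal{W}(\Omega_T)}$ then gives $\|v\|_{\mathcal{W}(\Omega_T)}\leq L$, so that $\ca^*(\bigcup_iE_i)\leq L$. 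Combining this with the first step concludes the argument.

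The main technical obstacle I anticipate is ensuring that the obstacle inequality $v\geq\chi_{E_k}$, inherited only almost everywhere from the weak-limit construction, is compatible with the inner/outer regularity used to define $\ca^*$ on non-open sets; a clean way to handle this is to first reduce $E_i$ to their open neighborhoods and then to compact exhaustions, where a.e.\ obstacles become pointwise on the compact set via a quasi-continuous representative. An alternative, more direct manipulation via $\tilde v_N=\max(v_1,\ldots,v_N)$ controls the $\mathcal{V}$- and $L^\infty(0,T;L^2(\Omega))$-parts of the norm but leaves the dual norm $\|\partial_t\tilde v_N\|_{\mathcal{V}'(\Omega_T)}$ without a useful bound in terms of $L$; the weak-compactness route sketched above is the cleanest way to sidestep this difficulty.
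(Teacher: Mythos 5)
Your overall strategy — introducing an auxiliary capacity $\ca^*$ on the larger admissible class $\mathcal{W}(\Omega_T)$, proving exact upward monotone convergence for it via weak compactness and lower semicontinuity, and then transferring the result back to $\ca_{\mathrm{var}}$ via Lemma~\ref{lem:W-approx} — is precisely the paper's route (Lemma~\ref{lem:cap*-increasinglim} with Remark~\ref{rem:cap*-increasinglim}, via Lemmas~\ref{lem:cap*-cap'-eq} and~\ref{lem:cap-cap'-equiv}). Your closing observation that the naive manipulation $\tilde v_N=\max(v_1,\ldots,v_N)$ fails to control $\|\partial_t\tilde v_N\|_{\mathcal{V}'(\Omega_T)}$ is exactly the reason the weak-compactness route is used.

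However, there is a genuine gap in your setup of $\ca^*$. You define $\ca^*(K)=\inf\{\|v\|_{\mathcal{W}(\Omega_T)}:v\in\mathcal{W}(\Omega_T),\ v\geq\chi_K\text{ a.e.}\}$ for compact $K$. If $K$ has $(n+1)$-dimensional Lebesgue measure zero, this constraint is satisfied by $v\equiv0$, so $\ca^*(K)=0$; yet $\ca_{\mathrm{var}}(K,\Omega_T)$ can be strictly positive for such $K$ — e.g.\ $\ca_{\mathrm{var}}(\overline{B_\rho(x_o)}\times\{t_o\},\Omega_T)\gtrsim\rho^n$ by Lemma~\ref{lem:capvar-H-est}, cf.\ also Lemma~\ref{lem:cap-slice-null}. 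Thus $\ca^*\approx\ca_{\mathrm{var}}$ fails and Step 1 of your plan collapses; Lemma~\ref{lem:W-approx} cannot repair it, since it only applies to compact cylinder unions (which have positive measure), and your $\ca^*$ does not satisfy the downward monotone convergence of Lemma~\ref{lem:capvar-limit-compact} that would be needed to pass from cylinder unions to general compacts (take $K_i=\overline{B_\rho}\times[t_o-\tfrac1i,t_o+\tfrac1i]\downarrow\overline{B_\rho}\times\{t_o\}$). A second, related issue appears in your weak-limit step: for a general set $E_i$, your $\ca^*(E_i)$ is an inf-over-open of a sup-over-compact, so there is no immediate near-optimal $v_i\in\mathcal{W}(\Omega_T)$ with $v_i\geq\chi_{E_i}$ a.e.\ and norm close to $\ca^*(E_i)$. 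The paper avoids both problems by building the neighborhood into the definition: in Definition~\ref{def:varcap'}, an admissible $v$ for compact $K$ must satisfy $v\geq\chi_U$ a.e.\ for some \emph{open} $U\supset K$, which gives the constraint teeth on null compacts; and in Definition~\ref{def:varcap*}, the capacity of an open set is a \emph{direct} infimum over $v\geq\chi_U$ a.e.\ (not an inner-regularity supremum), which is exactly what lets one pick near-optimal $v_i$ for open neighborhoods $U_i\supset E_i$ in the monotone-limit argument. That these two variants coincide (Lemma~\ref{lem:cap*-cap'-eq}) and are comparable to $\ca_{\mathrm{var}}$ (Lemma~\ref{lem:cap-cap'-equiv}) is the nontrivial content you need to establish first; your appeal to ``quasi-continuous representatives'' points at the same obstruction but does not substitute for it.
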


The next two lemmas deal with the dependence of the variational
capacity on the final time $T\in(0,\infty]$ of the reference set.

\begin{lemma} \label{lem:capvar-Tinfty}
Let $\frac{2n}{n+2} < p < \infty$ and $K \subset \Omega_\infty$ be a compact set. Then
$$
\lim_{T\to \infty} \ca_{\mathrm{var}} (K,\Omega_T) = \ca_{\mathrm{var}} (K,\Omega_\infty).
$$
\end{lemma}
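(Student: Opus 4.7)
The plan is to combine the obvious monotonicity inequality with a cutoff argument in time based on Lemma~\ref{lem:WT-Winfty-supercritical}. By item (iii) of Lemma~\ref{lem:capvar-prop}, the function $T\mapsto \ca_{\mathrm{var}}(K,\Omega_T)$ is nondecreasing once $T$ is large enough that $K\subset\Omega_T$, and bounded above by $\ca_{\mathrm{var}}(K,\Omega_\infty)$. Therefore $\lim_{T\to\infty}\ca_{\mathrm{var}}(K,\Omega_T)$ exists and is $\le\ca_{\mathrm{var}}(K,\Omega_\infty)$. The only nontrivial part is the reverse direction, and we may assume $\ca_{\mathrm{var}}(K,\Omega_\infty)<\infty$ because otherwise the claim is vacuous.

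Fix $\varepsilon>0$ and choose $T_o>0$ so large that $K\subset\Omega\times[0,T_o/2]$. For every $T\ge T_o$ pick an admissible competitor $v_T\in C^\infty_0(\Omega\times\R)$ with $v_T\ge\chi_K$ and
\[
\|v_T\|_{\mathcal{W}(\Omega_T)}\le\ca_{\mathrm{var}}(K,\Omega_T)+\varepsilon\le\ca_{\mathrm{var}}(K,\Omega_\infty)+\varepsilon.
\]
In particular, the quantities $\|\partial_t v_T\|_{\mathcal{V}'(\Omega_T)}$ and $\|v_T\|_{L^\infty(0,T;L^2(\Omega))}$ remain uniformly bounded in $T$. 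Let $\xi\in C^\infty_0((-\infty,T),[0,1])$ be the cutoff from Lemma~\ref{lem:WT-Winfty-supercritical}: $\xi\equiv1$ on $[0,T/2]$ and $|\xi'|\le\tfrac3T$. Then $w_T:=\xi v_T\in C^\infty_0(\Omega\times\R)$, and since $T\ge T_o$ we have $\xi\equiv1$ on the time projection of $K$, so $w_T\ge\chi_K$. Hence $w_T$ is admissible in the definition of $\ca_{\mathrm{var}}(K,\Omega_\infty)$.

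Applying Lemma~\ref{lem:WT-Winfty-supercritical} to $v_T$ in the range $\frac{2n}{n+2}<p<\infty$ (which is where the embedding used in its proof is available), we obtain
\begin{align*}
\ca_{\mathrm{var}}(K,\Omega_\infty)
&\le\|w_T\|_{\mathcal{W}(\Omega_\infty)}\\
&\le\|v_T\|_{\mathcal{W}(\Omega_T)}+\bigl[(1+cT^{-1/p})^{p'-1}-1\bigr]\|\partial_tv_T\|_{\mathcal{V}'(\Omega_T)}^{p'}\\
&\quad+cT^{-1/p}(1+cT^{-1/p})^{p'-1}\|v_T\|_{L^\infty(0,T;L^2(\Omega))}^{p'}\\
&\le\ca_{\mathrm{var}}(K,\Omega_T)+\varepsilon+o(1)\qquad(T\to\infty),
\end{align*}
where the $o(1)$ term uses the uniform boundedness established above. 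Letting $T\to\infty$ and then $\varepsilon\to0$ yields $\ca_{\mathrm{var}}(K,\Omega_\infty)\le\lim_{T\to\infty}\ca_{\mathrm{var}}(K,\Omega_T)$, which together with the monotonicity bound completes the proof.

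The only real content is the cutoff control; once Lemma~\ref{lem:WT-Winfty-supercritical} is invoked the main observation is that the error terms in that estimate carry an explicit negative power of $T$, so uniform bounds on the approximants (which come for free from the assumption $\ca_{\mathrm{var}}(K,\Omega_\infty)<\infty$) make those errors vanish in the limit. The restriction $p>\tfrac{2n}{n+2}$ enters precisely through the Sobolev embedding used in the proof of Lemma~\ref{lem:WT-Winfty-supercritical}; outside that range one would need to control the $L^\infty(L^2)$ piece by other means, which is why the statement is limited to this range.
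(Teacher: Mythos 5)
Your argument is correct and follows essentially the same route as the paper's proof: monotonicity from Lemma~\ref{lem:capvar-prop}\,(iii) gives the easy inequality, and the reverse inequality is obtained by applying the time-cutoff estimate of Lemma~\ref{lem:WT-Winfty-supercritical} to near-optimal competitors on $\Omega_T$, observing that the error terms carry explicit negative powers of $T$ and the norms they multiply are uniformly bounded. One small inaccuracy: the claim is not literally ``vacuous'' if $\ca_{\mathrm{var}}(K,\Omega_\infty)=\infty$ (one would then still have to show the limit diverges), but the reduction is harmless since Lemma~\ref{lem:capvar-prop}\,(i) guarantees finiteness of the capacity of any compact set, including for $T=\infty$.
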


\begin{proof}

Since $K$ is compact there exists $T_o \in (1,\infty)$ such that $K
\subset \Omega_T$ for every $T \geq T_o/2$. The variational capacity
is well defined for all such $T$. Observe that $\ca_{\mathrm{var}}
(K,\Omega_T)$ is a nonnegative and increasing function in $T$ by Lemma~\ref{lem:capvar-prop}\,(iii), which implies that the limit exists and clearly 
$$
\lambda:= \lim_{T\to \infty} \ca_{\mathrm{var}} (K,\Omega_T) \leq \ca_{\mathrm{var}} (K,\Omega_\infty).
$$
Let $(T_i)_{i \in \N}$ be a sequence such that $T_i \geq T_o$ for
every $i \in \N$ and $T_i \to \infty$ in the limit $i \to \infty$. We
take a sequence $(\eps_i )_{i \in \N}$ with $\eps_i > 0$ and $\eps_i\to 0$, as $i \to \infty$. By definition, for every $i \in \N$ there exists $v_i \in C_0^\infty(\Omega \times \R)$ with $v_i \geq \chi_K$ such that 
$$
\|v_i\|_{\mathcal{W}(\Omega_{T_i})} \leq \ca_{\mathrm{var}} (K,\Omega_{T_i}) + \eps_i \leq \lambda + \eps_i.
$$
We use a cutoff function $\xi_i \in C_0^\infty(-\infty,T_i,[0,1])$ with $\xi_i =1$ in $(0,T_i/2)$ and $|\xi_i'| \leq 3/T_i$. Observe that $\xi_i v_i$ is admissible for $\ca_{\mathrm{var}}(K,\Omega_\infty)$ for every $i\in \N$, such that Lemma~\ref{lem:WT-Winfty-supercritical} implies
\begin{align*}
\ca_{\mathrm{var}} (K,\Omega_\infty) &\leq
\|v_i \xi_i\|_{\mathcal{W}(\Omega_\infty)} \\
&\leq \|v_i\|_{\mathcal{W}(\Omega_{T_i})} +
     \Big[\big(1+cT_i^{-\frac1p}\big)^{p'-1}-1\Big]\|\partial_t
     v_i\|_{\mathcal{V}'(\Omega_{T_i})}^{p'}\\
     &\qquad+
     cT_i^{-\frac1p}\big(1+cT_i^{-\frac1p}\big)^{p'-1}\|v_i\|_{L^\infty(0,T_i;L^2(\Omega))}^{p'}\\
     &\leq \big(1+cT_i^{-\frac1p}\big)^{p'-1} (\lambda + \eps_i) +
     cT_i^{-\frac1p}\big(1+cT_i^{-\frac1p}\big)^{p'-1} (\lambda + \eps_i)^\frac{p'}{2}.
\end{align*}
By passing to the limit $i \to \infty$, the claim follows.

\end{proof}

\begin{lemma} \label{lem:capvar-Tinfty2}
Let $1< p  < \infty$, $0<T< \infty$ and $E \subset \Omega_T$ be a set. Then, there exists a constant $c = c(n,p,\alpha) \geq 1$, such that
$$
\ca_{\mathrm{var}} (E, \Omega_T) \leq \ca_{\mathrm{var}}(E,\Omega_\infty) \leq c \ca_{\mathrm{var}}(E,\Omega_T).
$$
\end{lemma}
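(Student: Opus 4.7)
The first inequality $\ca_{\mathrm{var}}(E,\Omega_T)\le\ca_{\mathrm{var}}(E,\Omega_\infty)$ is immediate from the monotonicity in the final time recorded in Lemma~\ref{lem:capvar-prop}(iii). For the reverse estimate up to a constant, the plan is to establish it first for compact sets and then propagate it via Definition~\ref{def:varcap}: once $\ca_{\mathrm{var}}(K,\Omega_\infty)\le c\,\ca_{\mathrm{var}}(K,\Omega_T)$ is known for every compact $K\subset\Omega_T$, taking suprema over compact $K\subset U$ yields the analogous bound for open $U\subset\Omega_T$, and then taking infima over open $U\supset E$ with $U\subset\Omega_T$ yields the bound for arbitrary $E\subset\Omega_T$.

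For a general compact $K\subset\Omega_T$, I will first approximate $K$ from above by a decreasing sequence $K_i\supset K_{i+1}\supset K$ of compact sets with $\bigcap_i K_i=K$, each $K_i$ being a finite union of closed space-time cylinders with ball bases and contained in $\Omega_T$. Such a sequence is built inductively by covering $K$ at the $i$-th step by finitely many ball-based cylinders of parabolic diameter $\le 2^{-i}$ whose closures lie strictly inside the interior of $K_{i-1}$. By Lemma~\ref{lem:capvar-limit-compact}, applied both in $\Omega_T$ and in $\Omega_\infty$, it is enough to prove the desired bound for each $K_i$ separately with a constant independent of $i$. So I fix a compact $K\subset\Omega_T$ that is a finite union of ball-based cylinders.

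Given $\varepsilon>0$, I pick $v\in C_0^\infty(\Omega\times\R)$ with $v\ge\chi_K$ and $\|v\|_{\mathcal{W}(\Omega_T)}\le\ca_{\mathrm{var}}(K,\Omega_T)+\varepsilon$, noting that $v\ge 0$ everywhere and is uniformly bounded. I extend $v$ to $\Omega_\infty$ by setting $\hat v:=v$ on $\Omega_T$ and $\hat v:=v_2$ on $\Omega\times(T,\infty)$, where $v_2$ is the weak solution of $\partial_tv_2-\Div(|\nabla v_2|^{p-2}\nabla v_2)=0$ on $\Omega\times(T,\infty)$ with initial datum $v(\cdot,T)$ and zero lateral Dirichlet values. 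The comparison principle forces $0\le v_2\le\|v\|_\infty$, so $\hat v\ge\chi_K$ a.e.\ in $\Omega_\infty$ and $\hat v\in L^\infty(\Omega_\infty)$. Lemma~\ref{lem:WT-Winfty}, applied with its ``solution'' case, then yields $\hat v\in\mathcal{W}(\Omega_\infty)$ with $\|\hat v\|_{\mathcal{W}(\Omega_\infty)}\le 3\,\|v\|_{\mathcal{W}(\Omega_T)}$. Since $K$ is a finite union of ball-based cylinders and $\hat v$ is bounded, Lemma~\ref{lem:W-approx} converts $\hat v$ into a genuinely admissible test function $w\in C_0^\infty(\Omega\times\R)$ with $w\ge\chi_K$ and $\|w\|_{\mathcal{W}(\Omega_\infty)}\le c(n,p,\alpha)\,\|\hat v\|_{\mathcal{W}(\Omega_\infty)}$. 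Combining gives $\ca_{\mathrm{var}}(K,\Omega_\infty)\le 3\,c(n,p,\alpha)\,(\ca_{\mathrm{var}}(K,\Omega_T)+\varepsilon)$, and sending $\varepsilon\downarrow 0$ and $i\to\infty$ closes the argument.

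The main obstacle is the coordination of three ingredients: the PDE extension of Lemma~\ref{lem:WT-Winfty}, which delivers the factor $3$ but requires a bona fide weak solution on $\Omega\times(T,\infty)$; the comparison principle, which is essential to preserve the obstacle condition $\hat v\ge\chi_K$ past time $T$; and the smoothing of Lemma~\ref{lem:W-approx}, which is restricted to obstacles $\chi_K$ with $K$ a finite union of ball-based cylinders and hence forces the preliminary approximation of a general compact set. A further delicate point is the subcritical case $1<p\le\tfrac{2n}{n+2}$ with $T=\infty$, where Lemma~\ref{lem:W-approx} demands an $L^\infty$-control on its input; this is exactly why I retain uniform boundedness of both $v$ and $v_2$ throughout.
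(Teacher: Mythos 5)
Your proof is correct and follows essentially the same strategy as the paper: monotonicity gives the first inequality; for the second, reduce to compact $K$ a finite union of ball-based cylinders, extend an admissible $v$ past time $T$ by solving the parabolic $p$-Laplace equation forward in time, apply Lemma~\ref{lem:WT-Winfty} to get the factor $3$, then smooth via Lemma~\ref{lem:W-approx}, and finally pass to general compact sets through the decreasing-limit Lemma~\ref{lem:capvar-limit-compact} and to arbitrary sets by the definition. The only (minor) difference is that you spell out the construction of the decreasing approximating cylinders and explicitly verify the sign and boundedness of $\hat v$ via comparison, which the paper leaves implicit.
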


\begin{proof}
The first inequality is clear by Lemma~\ref{lem:capvar-prop}\,(iii). For the second inequality, we first consider the case where $K$ is a finite union of space time cylinders, whose bases are balls. By definition, for every $\eps > 0$ there exists $v \in C_0^\infty(\Omega \times \R)$ with $v \geq \chi_K$ such that 
$$
\| v \|_{\mathcal{W}(\Omega_T)} \leq \ca_{\mathrm{var}} (K, \Omega_T) + \eps.
$$
Let $\tilde v \in \mathcal{V}(\Omega \times (T,\infty))$ be a solution to
\begin{align*}
\begin{aligned}
\left\{
\begin{array}{rl}
\partial_t \tilde v - \Delta_p  \tilde v = 0 \quad &\text{ in } \Omega \times (T,\infty), \\
\tilde v = v \quad &\text{ on } \Omega \times \{T\}.
\end{array}
\right.
\end{aligned}
\end{align*}
We define $\hat v = v$ if $t \leq T$ and $\hat v = \tilde v$ if $t >
T$. According to Lemma~\ref{lem:WT-Winfty} we have $\hat
v\in\mathcal{W}(\Omega_\infty)$ with 
$$
\| \hat v \|_{\mathcal{W}(\Omega_\infty)} \leq 3 \| v \|_{\mathcal{W}(\Omega_T)}.
$$
By observing that $\hat v \in L^\infty(\Omega_\infty)$ and using Lemma~\ref{lem:W-approx}, we conclude that
$$
\ca_{\mathrm{var}}(K,\Omega_\infty) \leq c(n,p,\alpha) \ca_{\mathrm{var}}(K,\Omega_T).
$$
To deduce the inequality for arbitrary compact set $K$, we take a shrinking sequence of compact sets, $(K_i)_{i \in \N}$, for which $K_i \supset K_{i+1}$ and $K_i$ is a finite union of space time cylinders whose bases are balls for each $i \in \N$ such that $\bigcap_{i=1}^\infty K_i = K$. The proof in this case is concluded by an application of Lemma~\ref{lem:capvar-limit-compact}. The claim for arbitrary set $E$ follows from the definition.

\end{proof}

\subsection{Connection of nonlinear parabolic capacity and variational capacity} \label{subseq:capvar-cap-eq}

First, we recall that the nonlinear parabolic capacity from
Definition~\ref{def:nonlinear-capacity-compact} 
and the energy capacity from Definition~\ref{def:encap} are equivalent
for finite unions of compact space time cylinders. The proof
from~\cite[Theorem 4.1]{AKP} can be immediately extended to the whole range $1<p<\infty$
by using Lemmas~\ref{lem:cap-compact-representation} and~\ref{lem:supercal-prop}, and observing that $\widehat{R}_K$ is bounded in $\Omega_T$ and has zero boundary values on $\partial_p \Omega_T$ by Lemma~\ref{lem:balayage-K-properties}.

\begin{lemma} \label{lem:cap-eq-encap}
Let $1 < p < \infty$ and $0<T\leq \infty$ 
and $K \subset \Omega_T$ be a finite family of compact space-time
cylinders whose bases are balls. Then 
$$
c(p) \ca (K, \Omega_\infty) \leq \ca_{\mathrm{en}} (K, \Omega_T) \leq 2 \ca (K, \Omega_\infty). 
$$
\end{lemma}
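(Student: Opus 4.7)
The strategy is to adapt the argument of \cite[Theorem 4.1]{AKP} to the full range $1<p<\infty$, with the balayage $\widehat R_K$ of $\chi_K$ playing the central role. By Lemma~\ref{lem:balayage-K-properties}, $\widehat R_K$ is a bounded continuous supercaloric function in $\Omega_T$ belonging to $L^p(0,T;W^{1,p}_0(\Omega))$ with vanishing parabolic boundary trace; by Lemma~\ref{lem:cap-compact-representation} its Riesz measure $\mu:=\mu_{\widehat R_K}$ is supported in $K$ and satisfies $\mu(K)=\ca(K,\Omega_\infty)$; and $0\le\widehat R_K\le 1$, since the constant $1$ is a supercaloric majorant of $\chi_K$.

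For the upper bound, $\widehat R_K$ is admissible in Definition~\ref{def:encap}. The key step is to derive the energy identity
\begin{equation*}
  \tfrac12\int_\Omega\widehat R_K(\cdot,\tau)^2\,\d x+\iint_{\Omega_\tau}|\nabla\widehat R_K|^p\,\d x\d t=\iint_{\Omega_\tau}\widehat R_K\,\d\mu,\qquad\tau\in(0,T],
\end{equation*}
which I would obtain by testing the measure-data equation $\partial_t\widehat R_K-\Div(|\nabla\widehat R_K|^{p-2}\nabla\widehat R_K)=\mu$ with a Steklov time-average of $\widehat R_K$ itself and passing to the limit, using the zero initial trace. Since $\widehat R_K\le 1$ and $\mu$ is concentrated on $K$, the right-hand side is bounded by $\mu(K)=\ca(K,\Omega_\infty)$, so each of the two nonnegative terms on the left is bounded by $\ca(K,\Omega_\infty)$. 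Passing to the supremum in $\tau$ and summing yields $\|\widehat R_K\|_{\mathrm{en},\Omega_T}\le 2\,\ca(K,\Omega_\infty)$, which is the upper bound.

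For the lower bound, let $v$ be any competitor in Definition~\ref{def:encap}. By Lemma~\ref{lem:supercal-prop}, $v\in\mathcal V(\Omega_T)$ is a weak supersolution; lower semicontinuity combined with $v\ge\chi_K$ forces $v\ge 1$ on $K$, so the truncation $u:=\min\{v,1\}$ still satisfies $u=1$ on $K$ and lies in $\mathcal V(\Omega_T)\cap L^\infty(\Omega_T)$. Consequently
\begin{equation*}
  \ca(K,\Omega_\infty)=\mu(K)=\iint_{\Omega_T}u\,\d\mu.
\end{equation*}
The plan is then to test the equation of $\widehat R_K$ against a time-mollification of $u$; after passing to the limit this rewrites the right-hand side as
\begin{equation*}
  \iint u\,\d\mu=\iint\bigl(|\nabla\widehat R_K|^{p-2}\nabla\widehat R_K\cdot\nabla u-\widehat R_K\,\partial_tu\bigr)\,\d x\d t.
\end{equation*}
The gradient term is controlled by H\"older's inequality and the bound $\iint|\nabla\widehat R_K|^p\le\ca(K,\Omega_\infty)$ from the previous step, while the time term is controlled by integration by parts using $\|\widehat R_K\|_{L^\infty(L^2)}^2\le 2\ca(K,\Omega_\infty)$, $\|u\|_{L^\infty(L^2)}\le\|v\|_{L^\infty(L^2)}$ and the zero initial trace of $\widehat R_K$. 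Absorbing the resulting powers of $\ca(K,\Omega_\infty)$ on the left yields $\ca(K,\Omega_\infty)\le c(p)\,\|v\|_{\mathrm{en},\Omega_T}$, and taking the infimum over $v$ finishes the proof.

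The main technical obstacle is justifying the two testings rigorously, since $v$ has no a priori time derivative in a dual space and $\mu$ is only a Radon measure; both are handled through Steklov averaging, relying essentially on the boundedness, continuity, and vanishing initial/lateral traces of $\widehat R_K$ provided by Lemma~\ref{lem:balayage-K-properties}. The hypothesis that $K$ be a finite union of cylinders with spherical bases enters only through the construction of $\widehat R_K$ via Proposition~\ref{prop:obstacle}; the remainder of the argument is independent of $p\in(1,\infty)$, which is precisely what allows the AKP scheme to extend from $p>2$ to the full range.
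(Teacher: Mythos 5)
The paper does not actually write out a proof of Lemma~\ref{lem:cap-eq-encap}; it merely records that the proof of \cite[Theorem 4.1]{AKP} extends to all $1<p<\infty$ once Lemmas~\ref{lem:cap-compact-representation}, \ref{lem:supercal-prop} and \ref{lem:balayage-K-properties} are available. Your proposal fleshes out exactly that adaptation, so the strategy matches the paper's intent. The upper bound is essentially complete: the energy identity obtained by testing the measure equation of $\widehat R_K$ against its own Steklov averages, together with $0\le\widehat R_K\le 1$ and $\mu(K)=\ca(K,\Omega_\infty)$, gives each of the two terms in $\|\widehat R_K\|_{\mathrm{en},\Omega_T}$ a bound of $\ca(K,\Omega_\infty)$, and admissibility of $\widehat R_K$ yields $\ca_{\mathrm{en}}(K,\Omega_T)\le 2\ca(K,\Omega_\infty)$.

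In the lower bound, however, the treatment of the time term is too loose. After testing the weak formulation of $\widehat R_K$ against (a mollification of) $u=\min\{v,1\}$, the term $-\iint\widehat R_K\,\partial_tu$ does \emph{not} reduce to a boundary term controllable by $\|\widehat R_K\|_{L^\infty(L^2)}$ and $\|u\|_{L^\infty(L^2)}$ alone. Integrating by parts in time merely reproduces the measure identity and is circular. What is actually needed is the additional piece of information that $u$ is itself a bounded weak supersolution with zero lateral trace (which follows from Lemma~\ref{lem:supercal-prop} and the truncation result, e.g.\ \cite[Lemma 3.2]{KKP}). Testing the supersolution inequality of $u$ against the nonnegative admissible function $\widehat R_K\chi$ gives
\begin{equation*}
-\iint_{\Omega_T}\widehat R_K\chi\,\partial_tu_h\,\d x\d t
\le
\iint_{\Omega_T}\chi\,|\nabla u|^{p-2}\nabla u\cdot\nabla\widehat R_K\,\d x\d t,
\end{equation*}
which converts the problematic bulk term into a gradient cross term. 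Combining this with the measure identity, letting the cutoff $\chi\to\chi_{(0,T)}$, and applying Young's inequality to both cross terms and to the boundary term $\int_\Omega u(T)\widehat R_K(T)\,\d x$, one absorbs $\ca(K,\Omega_\infty)$ on the left and obtains $\ca(K,\Omega_\infty)\le c(p)\|v\|_{\mathrm{en},\Omega_T}$. Your write-up mentions Steklov averaging and $L^\infty(L^2)$ estimates but does not make explicit this use of the supersolution structure of $u$, which is the ingredient that actually closes the argument.
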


Then we state and prove the result~\cite[Theorem 4.2]{AKP} in the
whole range $1<p<\infty$. The core of the proof consists of the following two lemmas.  Observe that we assume $T < \infty$ in order to apply a backwards in time problem in the proof.

\begin{lemma} \label{lem:W-leq-en}
Let $1<p<\infty$ and $0<T<\infty$. For every nonnegative supercaloric function $u \in \mathcal{V}(\Omega_T) \cap L^\infty(\Omega_T)$ there exists $v \in \mathcal{W}(\Omega_T)$ and $c = c(p) > 0$ such that $v \geq u$ a.e. in $\Omega_T$ and 
$$
\|v\|_{\mathcal{W}(\Omega_T)} \leq c \|u\|_{\mathrm{en}, \Omega_T}.
$$
\end{lemma}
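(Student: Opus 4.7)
The plan is to build $v$ via a backward-in-time obstacle problem for equation \eqref{p-lap} with obstacle $u$. Concretely, I would take $v$ to be the smallest function satisfying $v\geq u$ a.e.\ in $\Omega_T$, vanishing on $S_T$ and at the final time $t=T$, and such that $-\partial_t v - \Delta_p v \ge 0$ weakly in $\Omega_T$. Via the time-reversal $\tilde v(x,t):=v(x,T-t)$, this becomes a forward-in-time obstacle problem with obstacle $\tilde u(x,t):=u(x,T-t)$ and vanishing initial and lateral data, for which existence is classical; explicitly, one obtains $v$ as the limit of the penalized approximations $v_\eps$ solving
\[
-\partial_t v_\eps - \Delta_p v_\eps = \tfrac{1}{\eps}(u-v_\eps)_+,\qquad v_\eps=0\mbox{ on }S_T,\ v_\eps(\cdot,T)=0,
\]
as $\eps\downarrow0$. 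Comparison with the constant $\|u\|_{L^\infty(\Omega_T)}$ yields $0\le v\le\|u\|_{L^\infty(\Omega_T)}$, and the penalty term enforces $v\ge u$ in the limit. Observe that this step crucially uses the finiteness of $T$, since the backward Cauchy data at $t=T$ are prescribed.

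For the $\mathcal{V}(\Omega_T)$- and $L^\infty(0,T;L^2(\Omega))$-bounds, testing the equation for $v_\eps$ with $v_\eps$ itself and integrating from $t$ to $T$ yields, after passing to the limit and absorbing the penalty contribution via the uniform $L^\infty$-bound together with $v_\eps\le u$ on $\mathrm{supp}(u-v_\eps)_+$,
\[
\tfrac{1}{2}\sup_{0<t<T}\int_\Omega v^2(\cdot,t)\,\dx + \iint_{\Omega_T}|\nabla v|^p\,\dx\,\dt \le c\,\|u\|_{\mathrm{en},\Omega_T}.
\]
This handles two of the three pieces of the $\mathcal{W}$-norm.

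The main obstacle is the bound $\|\partial_t v\|_{\mathcal{V}'(\Omega_T)}^{p'}\le c\,\|u\|_{\mathrm{en},\Omega_T}$. The limit satisfies $-\partial_t v - \Delta_p v = \mu$ for a nonnegative Radon measure $\mu$ supported on the contact set $\{v=u\}$, so for $\varphi\in C_0^\infty(\Omega_T)$,
\[
\iint_{\Omega_T} v\,\partial_t\varphi\,\dx\,\dt = \iint_{\Omega_T}|\nabla v|^{p-2}\nabla v\cdot\nabla\varphi\,\dx\,\dt - \int_{\Omega_T}\varphi\,\d\mu,
\]
and the first term is directly controlled by $\|\nabla v\|_{L^p}^{p-1}\|\nabla\varphi\|_{L^p}\le c\,\|u\|_{\mathrm{en},\Omega_T}^{1/p'}\|\varphi\|_{\mathcal{V}(\Omega_T)}$. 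The delicate step is to estimate $|\int\varphi\,\d\mu|$ by a multiple of $\|\varphi\|_{\mathcal{V}(\Omega_T)}$. Here I would exploit that $u$ is itself a weak supersolution by Lemma~\ref{lem:supercal-prop}, with its own Riesz measure $\mu_u\ge0$, and that $v=u$ on the support of $\mu$. Combining the two weak formulations tested against the same $\varphi$ and using this contact information converts the measure term into $\mathcal{V}$-continuous pieces of the form $\iint(v-u)\partial_t\varphi$ and $\iint(|\nabla v|^{p-2}\nabla v-|\nabla u|^{p-2}\nabla u)\cdot\nabla\varphi$. Each is estimated by $c\,\|u\|_{\mathrm{en},\Omega_T}^{1/p'}\|\varphi\|_{\mathcal{V}(\Omega_T)}$ via H\"older's inequality; this step is the one extending the argument of \cite{AKP} beyond $p>2$ and will require separately treating $1<p<2$ and $p\ge 2$ because of the differing monotonicity and growth of $\xi\mapsto|\xi|^{p-2}\xi$.
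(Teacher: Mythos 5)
Your plan to solve a backward-in-time \emph{obstacle} problem with obstacle $u$ and terminal data $v(\cdot,T)=0$ has two genuine gaps, both of which the paper's construction is specifically designed to avoid.

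First, the terminal condition $v(\cdot,T)=0$ is generically incompatible with $v\ge u$ a.e. and $v\in\mathcal{W}(\Omega_T)$: any $v\in\mathcal{W}(\Omega_T)$ lies in $C([0,T];L^2(\Omega))$, so $v(\cdot,T)=0$ forces $\|v(\cdot,t)\|_{L^2(\Omega)}\to0$ as $t\uparrow T$, while $v\ge u\ge0$ a.e. forces $\|v(\cdot,t)\|_{L^2(\Omega)}\ge\|u(\cdot,t)\|_{L^2(\Omega)}$ for a.e. $t$, and there is no reason for $u$ to vanish near $t=T$. In the penalized approximation this manifests as a concentration of the penalty $\tfrac1\eps(u-v_\eps)_+$ near $t=T$. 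The paper resolves this by first extending $u$ forward in time to $\Omega_\infty$ via solutions of the initial value problem from Lebesgue instants (controlling $\|\bar u\|_{\mathrm{en},\Omega_\infty}\le 2\|u\|_{\mathrm{en},\Omega_T}$), then choosing a Lebesgue instant $\tau>T$ and prescribing the backward terminal data $v(\cdot,\tau)=u(\cdot,\tau)$, which is compatible with $v\ge u$. This extension step is missing from your proposal and is essential.

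Second, the bound on $\|\partial_tv\|_{\mathcal{V}'(\Omega_T)}$ does not close. The limit measure $\mu$ of the obstacle problem is a priori only a Radon measure, and the manipulation you sketch is circular: the identity $\int\varphi\,d\mu=\iint v\,\partial_t\varphi+\iint|\nabla v|^{p-2}\nabla v\cdot\nabla\varphi$ contains exactly the term you are trying to estimate, and taking the sum or difference of the weak formulations for $v$ and $u$ does not eliminate $\mu$ but rather introduces the Riesz measure $\mu_u$ of $u$, which is \emph{not} controlled by $\|u\|_{\mathrm{en},\Omega_T}$. The support condition $\spt\mu\subset\{v=u\}$ does not convert $\int\varphi\,d\mu$ into integrals of $v-u$ because $\mu$ lives on the contact set while the bulk integrals do not. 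Your energy estimate has the same root cause: testing the penalized equation with $v_\eps$ gives $\tfrac12\int v_\eps^2(\cdot,0)\,\d x+\iint|\nabla v_\eps|^p\,\dx\dt=\tfrac1\eps\iint(u-v_\eps)_+v_\eps\,\dx\dt$, and bounding the right-hand side by $\|u\|_{L^\infty}\cdot\tfrac1\eps\iint(u-v_\eps)_+$ requires a bound on the total penalty mass, which is precisely what is missing. The paper sidesteps all of this by \emph{not} posing an obstacle problem: it defines $v$ as the solution of the backward equation $-\partial_tv-\Delta_pv=-2\Delta_pu$ with the right-hand side $-2\Delta_pu\in\mathcal{V}'(\Omega_\tau)$ prescribed everywhere. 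Since $u$ is a forward supersolution, $-2\Delta_pu\ge-\partial_tu-\Delta_pu$ distributionally, so $u$ is a backward subsolution with the same terminal and lateral data and the comparison principle gives $v\ge u$ directly. Because the right-hand side is explicitly in $\mathcal{V}'$, the estimates $\|v\|_{\mathrm{en},\Omega_\tau}\le c(p)\|u\|_{\mathrm{en},\Omega_\tau}$ and $\|\partial_tv\|_{\mathcal{V}'}^{p'}\le c(p)(\|v\|_{\mathcal{V}}^p+\|u\|_{\mathcal{V}}^p)$ follow by straightforward testing, with no case distinction on $p$.
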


\begin{proof}

We proceed as in~\cite[Theorem 4.2]{AKP}. Let $\tau \in (0,T)$ be a Lebesgue instant of $u$, and let $w \in \mathcal{V}(\Omega \times (\tau,\infty))$ be a solution to
\begin{align}\label{extension}
\begin{aligned}
\left\{
\begin{array}{rl}
\partial_t w - \Delta_p w = 0 \quad &\text{ in } \Omega  \times (\tau, \infty),  \\
w = u\quad &\text{ on } \Omega \times \{\tau\}.
\end{array}
\right.
\end{aligned}
\end{align}
Observe that $w \in L^\infty(\Omega \times (\tau,\infty))$ and $w$
takes the initial values $u$ in the $L^2$-sense. We define
$u^\tau\in\mathcal{V}(\Omega_\infty)$ by letting $u^\tau = u$ if $t
\leq \tau$ and $u^\tau = w$ if $t > \tau$. The equation~\eqref{extension} implies
$$
\| u^\tau \|_{\mathrm{en},\Omega_\infty} \leq 3 \| u \|_{\mathrm{en},\Omega_T}.
$$
Note that by the comparison principle, $\tau \mapsto u^\tau$ is an
increasing mapping. By taking an increasing sequence of Lebesgue instants $\{\tau_i\}$ with $\tau_i \xrightarrow{i\to \infty} T$, we have that $\bar u := \lim_{i\to \infty}u_i := \lim_{i\to \infty} u^{\tau_i}$ is a bounded supercaloric function, since $ u_i \leq \|u\|_{L^\infty(\Omega_T)}$ in $\Omega_T$ uniformly in $i \in \N$. Furthermore, $\bar u = u$ in $\Omega_T$ and 
$$
\| \bar u \|_{\mathrm{en},\Omega_\infty} \leq 2 \| u \|_{\mathrm{en},\Omega_T}.
$$
We denote $\bar u$ by $u$ and take a Lebesgue instant $\tau \in
(T,\infty)$ of $u$. Then we choose $v$ as a weak solution of the backwards in time equation 
\begin{align*}
\begin{aligned}
\left\{
\begin{array}{rl}
-\partial_t v - \Delta_p v = -2 \Delta_p u \quad &\text{ in } \Omega_\tau, \\
v = u\quad &\text{ on } \Omega \times \{\tau\}, \\
v = 0 \quad &\text{ on } \partial \Omega \times (0,\tau).
\end{array}
\right.
\end{aligned}
\end{align*}
Observe that since $u \in \mathcal{V}(\Omega_\tau)$, it follows that
$-2 \Delta_p u \in \mathcal{V}'(\Omega_\tau)$. In case $\frac{2n}{n+2}
< p <\infty$ this already implies that the solution $v \in
\mathcal{W}(\Omega_\tau)$ exists by~\cite[Chapter III.4, Proposition
4.1]{Showalter}. If $1<p\leq \frac{2n}{n+2}$, we observe that
$\mathcal{V}'(\Omega_\tau) \subset L^{p'}(0,\tau; (W^{1,p}_0(\Omega)
\cap L^2(\Omega))')$ which implies that a solution with $v \in
L^{p}(0,\tau; W^{1,p}_0(\Omega) \cap L^2(\Omega))$ exists as well by
the previous result. This further implies that $v \in
C([0,\tau];L^2(\Omega))$ by~\cite[Chapter III.1, Proposition 1.2]{Showalter}.

We test the weak formulation of the problem above by $( v_\eps
\chi_{h}(t))_\eps$, where the subscript $\eps$ denotes the standard
mollification in time and $\chi_h\in C^\infty_0(0,\tau)$, $h>0$, is an
approximation of $\chi_{(t_1,\tau)}$ for some $t_1\in(0,\tau)$. In this way, 
we obtain
\begin{align*}
&\frac12 \iint_{\Omega_\tau} v^2 \chi_h' \, \d x \d t +
                 \iint_{\Omega_\tau} \chi_h |\nabla v|^p \, \d x \d t
  \\
  &\qquad= 2 \iint_{\Omega_\tau}\chi_h |\nabla u|^{p-2} \nabla u \cdot \nabla v \, \d x \d t \\
&\qquad\leq \frac12 \iint_{\Omega_\tau}\chi_h |\nabla v|^p \, \d x \d t 
+ c(p) \iint_{\Omega_\tau}\chi_h |\nabla u|^p \, \d x \d t
\end{align*}
after passing to the limit $\eps \to 0$ and using Young's inequality. By passing to the limit $h \to 0$, we obtain
\begin{align*}
\int_{\Omega} &v^2(x,t_1) \, \d x + \iint_{\Omega \times (t_1,\tau)} |\nabla v|^p \, \d x \d t\\
&\leq c(p) \iint_{\Omega_\tau} |\nabla u|^p \, \d x \d t + \int_{\Omega} u^2(x,\tau) \, \d x \leq c(p) \|u\|_{\mathrm{en},\Omega_\tau},
\end{align*}
for a.e. $t_1 \in (0,\tau)$, which implies 
$$
\|v\|_{\mathrm{en},\Omega_\tau} \leq c(p) \|u\|_{\mathrm{en},\Omega_\tau} \leq c(p) \|u\|_{\mathrm{en},\Omega_T}.
$$
Furthermore,
$$
\|\partial_t v\|_{\mathcal{V}'(\Omega_T)}^{p'} \leq c(p) \left(\|v\|_{\mathcal{V}(\Omega_T)}^{p} + \|u\|_{\mathcal{V}(\Omega_T)}^{p}\right).
$$
By combining the estimates, it follows that
$$
\|v\|_{\mathcal{W}(\Omega_T)} \leq c(p)  \|u\|_{\mathrm{en},\Omega_T}.
$$
Furthermore, since $u$ is a weak supersolution, we have
$-2\Delta_pu\ge -\partial_tu-\Delta_pu$ in the distributional
sense. Therefore, the comparison principle implies 
$v \geq u$ a.e. in $\Omega_T$.

\end{proof}

\begin{lemma} \label{lem:en-leq-W}
Let $1<p<\infty$, $0<T\leq \infty$ and $v \in C_0^\infty(\Omega \times \R,\R_{\ge0})$. Then there exist a nonnegative supercaloric function $u \in \mathcal{V}(\Omega_T) \cap C(\overline{\Omega_T})$ with $u \geq v$ in $\Omega_T$ and a constant $c = c(p) > 0$ such that
$$
\|u\|_{\mathrm{en}, \Omega_T} \leq c \|v\|_{\mathcal{W}(\Omega_T)}
$$ 
holds true.
\end{lemma}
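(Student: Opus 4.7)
The plan is to take $u := \widehat R_v$, the balayage of $v$ viewed as a continuous nonnegative obstacle. By Lemmas~\ref{lem:balayage-psi-prop} and~\ref{lem:balayage-K-properties} (applied after a standard reduction to the case $\mathrm{supp}(v)\Subset\Omega_T$, which enlarges $\|v\|_{\mathcal W(\Omega_T)}$ only by a universal constant), $u$ is a bounded supercaloric function with $u \in \mathcal V(\Omega_T) \cap C(\overline{\Omega_T})$, $u \ge v$ in $\Omega_T$, $u$ a weak solution of~\eqref{p-lap} on $\{u>v\}$, $u$ vanishing on $\partial\Omega \times (0,T)$, and $u(\cdot,t)\equiv 0$ for small $t>0$. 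This already settles the regularity requirements of the lemma.

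Let $\mu_u$ be the Riesz measure of $u$. Since $u$ is a weak solution on $\{u>v\}$, $\mathrm{supp}(\mu_u)\subset\{u=v\}\cap\mathrm{supp}(v)$, a compact subset of $\Omega_T$; in particular $u = v$ on $\mathrm{supp}(\mu_u)$. The central step is to derive the energy estimate
\begin{equation*}
  \|u\|_{\mathrm{en},\Omega_T}\le c(p)\int_{\Omega_T}v\,\d\mu_u
\end{equation*}
by formally testing the distributional equation $\partial_t u - \Delta_p u = \mu_u$ with $u$ itself: the left-hand side contains $\sup_\tau\tfrac12\int u^2(\cdot,\tau)\,\dx + \iint|\nabla u|^p\,\dx\dt$ thanks to the vanishing of $u$ at $t=0$ and on $\partial\Omega$, while the right-hand side becomes $\int u\,\d\mu_u = \int v\,\d\mu_u$ by the contact property. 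To make this rigorous I would combine Steklov time-mollification of $u$ with a piecewise affine temporal cutoff $\chi_h\in C^\infty_0((0,T))$ approximating $\chi_{(0,\tau)}$, following the template in the proof of Lemma~\ref{lem:W-leq-en}.

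To bound the right-hand side I would test the weak formulation of $\partial_t u - \Delta_p u = \mu_u$ with $v\in C_0^\infty(\Omega_T)$, giving
\begin{equation*}
  \int_{\Omega_T}v\,\d\mu_u = \iint_{\Omega_T}\bigl(-u\,\partial_t v + |\nabla u|^{p-2}\nabla u\cdot\nabla v\bigr)\,\dx\dt,
\end{equation*}
which by the duality pairing between $\mathcal V(\Omega_T)$ and $\mathcal V'(\Omega_T)$ and H\"older's inequality is at most $\|\partial_t v\|_{\mathcal V'(\Omega_T)}\|u\|_{\mathcal V(\Omega_T)} + \|u\|_{\mathcal V(\Omega_T)}^{p-1}\|v\|_{\mathcal V(\Omega_T)}$. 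Writing $E := \|u\|_{\mathrm{en},\Omega_T}$ and using $\|u\|_{\mathcal V(\Omega_T)}^p \le E$, two applications of Young's inequality with conjugate exponents $(p,p')$ absorb the $E^{1/p}$ and $E^{1/p'}$ factors into the left-hand side, leaving
\begin{equation*}
  E \le c(p)\bigl(\|\partial_t v\|_{\mathcal V'(\Omega_T)}^{p'} + \|v\|_{\mathcal V(\Omega_T)}^p\bigr) \le c(p)\|v\|_{\mathcal W(\Omega_T)},
\end{equation*}
which is the desired bound.

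The main obstacle will be the rigorous justification of the energy identity in the second paragraph, since the measure $\mu_u$ need not belong to $\mathcal V'(\Omega_T)$, so that $u$ is not a priori in $\mathcal W(\Omega_T)$ and one cannot literally test the equation with $u$. The Steklov mollification scheme nevertheless goes through because $u \in \mathcal V(\Omega_T) \cap L^\infty(\Omega_T)$ and $\mathrm{supp}(\mu_u)$ is compactly contained in $\Omega_T$, which makes all the limit passages legitimate; the minor reduction to $\mathrm{supp}(v)\Subset\Omega_T$ used in the first paragraph is then the only remaining technical point.
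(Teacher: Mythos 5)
Your choice of $u$ coincides with the paper's: the balayage $\widehat R_v$ for the continuous obstacle $v$ is precisely the obstacle-problem solution of Proposition~\ref{prop:obstacle}. But your route to the energy estimate is genuinely different — a two-stage argument (test with $u$ to get $\|u\|_{\mathrm{en}}\lesssim\int v\,\d\mu_u$, then test with $v$ to bound $\int v\,\d\mu_u$), whereas the paper uses the single test function $(((u-v-\delta)_\eps)_+\chi_{h,\tau})_\eps$, which kills the $\mu_u$-integral directly by the contact condition. The catch is that your scheme requires integrating the equation $\partial_t u-\Delta_p u=\mu_u$ against test functions that do not vanish near $t=0$ and $t=T$, and you defer this to a ``standard reduction to $\mathrm{supp}(v)\Subset\Omega_T$''.

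That reduction is the genuine gap. A function $v\in C_0^\infty(\Omega\times\R)$ need not vanish near $t=0$ or $t=T$, and multiplying $v$ by a temporal cutoff $\zeta_\eps$ (equal to $1$ outside an $\eps$-neighbourhood of $\{0,T\}$ and $0$ near $\{0,T\}$) contributes $v\zeta_\eps'$ to $\partial_t(\zeta_\eps v)$; testing against $\phi\in C^\infty_0(\Omega_T)$ with $\|\nabla\phi\|_{L^p}\le1$ and using the embedding $W^{1,p}_0(\Omega)\hookrightarrow L^1(\Omega)$ shows $\|v\zeta_\eps'\|_{\mathcal V'(\Omega_T)}\gtrsim\eps^{-1/p}\|v\|_{L^\infty}$. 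This blows up as $\eps\downarrow0$ and cannot be made universal for fixed $\eps$ either, since $\eps$ must be chosen small enough (depending on $v$) for the cutoff to preserve $\zeta_\eps v\geq\chi_K$-type constraints downstream. So the reduction does not preserve $\|v\|_{\mathcal W(\Omega_T)}$ up to a dimensional constant. The same issue affects your second step: the identity $\int_{\Omega_T}v\,\d\mu_u=\iint(-u\partial_t v+|\nabla u|^{p-2}\nabla u\cdot\nabla v)\,\dx\dt$ is only valid for $v\in C_0^\infty(\Omega_T)$; in general there are boundary terms $-\int_\Omega u(\cdot,0)v(\cdot,0)\,\dx+\lim_{\tau\uparrow T}\int_\Omega u(\cdot,\tau)v(\cdot,\tau)\,\dx$. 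The paper sidesteps all of this: Proposition~\ref{prop:obstacle} applies to any $\psi\in C(\overline{\Omega_T})$ without a compact-support assumption, the test function $(u-v-\delta)_+$ vanishes on $\{u=v\}\supset\spt\mu_u$ regardless of where that contact set lies, and the unavoidable $t=0$ contribution is absorbed via $u=v$ on $\Omega\times\{0\}$ together with $(u-v)^2\geq\tfrac12u^2-v^2$. Your argument can likely be repaired by dropping the reduction and carrying the boundary terms (bounding them through $\|u\|_{L^\infty(0,T;L^2)}\|v\|_{L^\infty(0,T;L^2)}$ and Young's inequality), but as written it rests on an assertion that is false.
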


\begin{proof}
Since $v \in C_0^\infty(\Omega \times \R,\R_{\geq 0})$, there exists a
continuous solution (attaining the boundary values given by $v$
continuously) to the obstacle problem with $v$ acting as an obstacle
by Proposition~\ref{prop:obstacle}. Let us denote this solution by
$u$. 
Observe that by definition $u\ge v$ and
$u$ is a weak supersolution in $\Omega_T$ by Lemma~\ref{lem:supercal-prop}, such that it satisfies~\eqref{e.weak_measuresol} with a Riesz measure $\mu_u$.

Let us use a test function $\varphi =
(((u-v-\delta)_\eps)_+\chi_{h,\tau})_\eps$ for $\delta, h > 0$, $\eps \in
(0,h/2)$ and $\tau \in (h,T-h)$ in~\eqref{e.weak_measuresol}, where
$(\cdot)_\eps$ denotes the standard mollification in time direction
and $\chi_{h,\tau}\in C^\infty_0(0,T)$ are approximations of
$\chi_{(0,\tau)}$. This yields
\begin{align*}
\iint_{\Omega_T} &\partial_t u_\eps ((u-v-\delta)_\eps)_+ \chi_{h,\tau} \, \d x \d t \\
&+ \iint_{\Omega_T} \chi_{h,\tau} (|\nabla u|^{p-2} \nabla u)_\eps \cdot \nabla ((u-v-\delta)_\eps)_+  \, \d x \d t = \iint_{\Omega_T} \varphi \, \d \mu_u. 
\end{align*}
For the first term we have
\begin{align*}
\iint_{\Omega_T} \partial_t u_\eps ((u-v-\delta)_\eps)_+ \chi_{h,\tau} \, \d x \d t &= \tfrac12 \iint_{\Omega_T}\chi_{h,\tau}  \partial_t ((u-v-\delta)_\eps)_+^2  \, \d x \d t \\
&\quad + \iint_{\Omega_T}\chi_{h,\tau}  \partial_t v_\eps ((u-v-\delta)_\eps)_+  \, \d x \d t \\
&= - \tfrac12 \iint_{\Omega_T}\chi_{h,\tau}'  ((u-v-\delta)_\eps)_+^2  \, \d x \d t \\
&\quad + \iint_{\Omega_T}\chi_{h,\tau}  \partial_t v_\eps ((u-v-\delta)_\eps)_+  \, \d x \d t.
\end{align*}
The last term can be estimated as
\begin{align*}
\iint_{\Omega_T}&\chi_{h,\tau}  \partial_t v_\eps ((u-v-\delta)_\eps)_+  \, \d x \d t \\ &\geq - \|\partial_t v\|_{\mathcal{V}'(\Omega_T)} \|u-v\|_{\mathcal{V}(\Omega_T)} \\
&\geq - c(\sigma,p) \|\partial_t v\|_{\mathcal{V}'(\Omega_T)}^{p'} - \sigma \|u\|_{\mathcal{V}(\Omega_T)}^p - \sigma \|v\|_{\mathcal{V}(\Omega_T)}^p
\end{align*}
for every $\sigma > 0$. Now we obtain
\begin{align*}
\liminf_{\eps,\delta \to 0} &\iint_{\Omega_T} \partial_t u_\eps ((u-v-\delta)_\eps)_+ \chi_{h,\tau} \, \d x \d t \\
&\ge - \tfrac12 \iint_{\Omega_T}\chi_{h,\tau}'  (u-v)^2  \, \d x \d t- c(\sigma,p) \|\partial_t v\|_{\mathcal{V}'(\Omega_T)}^{p'}\\
&\quad - \sigma \|u\|_{\mathcal{V}(\Omega_T)}^p - \sigma \|v\|_{\mathcal{V}(\Omega_T)}^p.
\end{align*}
By Young's inequality it follows that 
\begin{align*}
\lim_{\eps, \delta \to 0} &\iint_{\Omega_T} \chi_{h,\tau} (|\nabla u|^{p-2} \nabla u)_\eps \cdot \nabla ((u-v-\delta)_\eps)_+  \, \d x \d t \\
&= \iint_{\Omega_T} \chi_{h,\tau}( |\nabla u|^{p}  - |\nabla u|^{p-2} \nabla u \cdot \nabla v ) \, \d x \d t \\
&\geq \tfrac{1}{p} \iint_{\Omega_T} \chi_{h,\tau} |\nabla u|^{p} \, \d x \d t -  \tfrac{1}{p} \iint_{\Omega_T} \chi_{h,\tau} |\nabla v|^{p} \, \d x \d t.
\end{align*}
Moreover, by continuity of $u$ and $v$, we have $\varphi \xrightarrow{\eps,\delta \to 0} 0$ uniformly on $\{ u = v \}$, and $\spt( \mu_u) \subset \{u=v\}$ by Proposition~\ref{prop:obstacle} (ii). Thus, by combining the estimates above we obtain
\begin{align} \label{eq:en-W-prelim-est}
- \tfrac12 \iint_{\Omega_T}&\chi_{h,\tau}'  (u-v)^2  \, \d x \d t + \tfrac{1}{p} \iint_{\Omega_T} \chi_{h,\tau} |\nabla u|^{p} \, \d x \d t \nonumber \\
&\leq c(\sigma,p) \|\partial_t v\|_{\mathcal{V}'(\Omega_T)}^{p'} + \sigma \|u\|_{\mathcal{V}(\Omega_T)}^p + (\sigma + \tfrac{1}{p}) \|v\|_{\mathcal{V}(\Omega_T)}^p.
\end{align}
Observe that 
\begin{align*}
\lim_{h \to 0} - \tfrac12 \iint_{\Omega_T}\chi_{h,\tau}'  (u-v)^2  \, \d x \d t = \tfrac12 \int_{\Omega\times \{\tau\}} (u-v)^2  \, \d x,
\end{align*}
because $u=v$ on $\Omega\times\{0\}$. 
Since $(u-v)^2 \geq \frac12 u^2 - v^2$, after passing to the limit $h \to 0$ in~\eqref{eq:en-W-prelim-est} we obtain
\begin{align*}
\tfrac14 \int_{\Omega \times \{\tau\}}& u^2  \, \d x + \tfrac{1}{p} \iint_{\Omega_\tau} |\nabla u|^{p} \, \d x \d t \nonumber \\
&\leq c(\sigma,p) \|\partial_t v\|_{\mathcal{V}'(\Omega_T)}^{p'} + \sigma \|u\|_{\mathcal{V}(\Omega_T)}^p + (\sigma + \tfrac{1}{p}) \|v\|_{\mathcal{V}(\Omega_T)}^p + \tfrac12 \int_{\Omega \times \{\tau\}}& v^2  \, \d x \\
&\leq c(\sigma, p) \|v\|_{\mathcal{W}(\Omega_T)} + \sigma \|u\|_{\mathcal{V}(\Omega_T)}^p.
\end{align*}
By considering separately the terms on the left hand side, we can let
$\tau \to T$ in the second term and take the supremum over $\tau \in (0,T)$ in the first, which after choosing $\sigma = \frac{1}{4p}$ implies 
\begin{equation*}
\|u\|_{\mathrm{en},\Omega_T} = \tfrac12 \sup_{t\in (0,T)}\int_{\Omega \times \{t\}} u^2  \, \d x + \iint_{\Omega_T} |\nabla u|^{p} \, \d x \d t \leq c(p)  \|v\|_{\mathcal{W}(\Omega_T)},
\end{equation*}
concluding the claim.

\end{proof}

The combination of the two preceding lemmas yields the following result.
\begin{lemma} \label{lem:varcap-eq-encap}
Let $0<T<\infty$ and $K \subset \Omega_T$ be a compact set consisting
of a finite union of space time cylinders, whose bases are balls. Then,
$$
c(p) \ca_{\mathrm{en}} (K,\Omega_T) \leq \ca_{\mathrm{var}} (K,\Omega_T) \leq c(n,p,\alpha) \ca_{\mathrm{en}} (K,\Omega_T).
$$
\end{lemma}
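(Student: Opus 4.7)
The plan is to prove the two inequalities separately, with Lemmas~\ref{lem:W-leq-en} and~\ref{lem:en-leq-W} providing the transfer between $\mathcal{W}$-admissible and energy-admissible functions, and Lemma~\ref{lem:W-approx} handling the passage back to smooth compactly supported admissible test functions that the definition of $\ca_{\mathrm{var}}$ demands.

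\textbf{Lower bound.} For the estimate $c(p)\,\ca_{\mathrm{en}}(K,\Omega_T)\le\ca_{\mathrm{var}}(K,\Omega_T)$, I would fix an arbitrary admissible $v\in C_0^\infty(\Omega\times\R)$ with $v\ge\chi_K$ for the variational capacity. After replacing $v$ by its positive part (which is still admissible up to a routine smooth truncation), Lemma~\ref{lem:en-leq-W} yields a nonnegative supercaloric function $u\in\mathcal{V}(\Omega_T)\cap C(\overline{\Omega_T})$ with $u\ge v\ge\chi_K$ in $\Omega_T$ and $\|u\|_{\mathrm{en},\Omega_T}\le c(p)\|v\|_{\mathcal{W}(\Omega_T)}$. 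Such a $u$ is admissible for $\ca_{\mathrm{en}}(K,\Omega_T)$, whence $\ca_{\mathrm{en}}(K,\Omega_T)\le c(p)\|v\|_{\mathcal{W}(\Omega_T)}$; taking the infimum over admissible $v$ concludes this direction.

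\textbf{Upper bound.} For the converse $\ca_{\mathrm{var}}(K,\Omega_T)\le c(n,p,\alpha)\,\ca_{\mathrm{en}}(K,\Omega_T)$, I would fix an admissible $u$ for $\ca_{\mathrm{en}}(K,\Omega_T)$, i.e.\ a supercaloric function in $\mathcal{V}(\Omega_T)$ with $u\ge\chi_K$. Since Lemma~\ref{lem:W-leq-en} requires boundedness, I first replace $u$ by the truncation $u_M:=\min\{u,M\}$ for some $M>1$; this is still a nonnegative supercaloric function in $\mathcal{V}(\Omega_T)\cap L^\infty(\Omega_T)$, still satisfies $u_M\ge\chi_K$, and obeys $\|u_M\|_{\mathrm{en},\Omega_T}\le\|u\|_{\mathrm{en},\Omega_T}$ because $u_M^2\le u^2$ and $|\nabla u_M|\le|\nabla u|$ a.e. Applying Lemma~\ref{lem:W-leq-en} to $u_M$ gives a function $\tilde v\in\mathcal{W}(\Omega_T)$ with $\tilde v\ge u_M\ge\chi_K$ a.e.\ and $\|\tilde v\|_{\mathcal{W}(\Omega_T)}\le c(p)\|u\|_{\mathrm{en},\Omega_T}$.

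\textbf{Smoothing step.} The output $\tilde v$ is only in $\mathcal{W}(\Omega_T)$, whereas the definition of $\ca_{\mathrm{var}}$ requires smooth compactly supported competitors. This is precisely where the assumption that $K$ is a finite union of space-time cylinders with ball bases becomes essential: Lemma~\ref{lem:W-approx} (whose hypotheses are met since $T<\infty$, so the extra $L^\infty$-condition needed only when $T=\infty$ and $p\le\tfrac{2n}{n+2}$ is irrelevant) produces $w\in C_0^\infty(\Omega\times\R)$ with $w\ge\chi_K$ and $\|w\|_{\mathcal{W}(\Omega_T)}\le c(n,p,\alpha)\|\tilde v\|_{\mathcal{W}(\Omega_T)}$. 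Chaining the inequalities and taking the infimum over admissible $u$ yields the asserted bound. The main obstacle throughout is exactly this last smoothing step, namely preserving the obstacle condition $w\ge\chi_K$ while mollifying and cutting off near $\partial\Omega$; that difficulty is already encapsulated in Lemma~\ref{lem:W-approx}, which is why the geometric restriction on $K$ enters the hypotheses of the lemma we are proving.
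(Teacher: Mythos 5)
Your proof is correct and follows essentially the same route as the paper's: both directions go through Lemmas~\ref{lem:W-leq-en} and~\ref{lem:en-leq-W}, with truncation to reduce to the bounded case in the upper bound and Lemma~\ref{lem:W-approx} to recover a smooth admissible competitor. One small remark: the ``replace $v$ by its positive part'' step is unnecessary, since the obstacle condition $v\ge\chi_K$ already forces $v\ge0$ everywhere (as $\chi_K\ge0$), so Lemma~\ref{lem:en-leq-W} applies directly.
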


\begin{proof}

First we show the second inequality. Suppose that the right hand side is finite since otherwise the claim is clear. For any $\eps > 0$ we can find a supercaloric function $u \in \mathcal{V}(\Omega_T)$ with $u \geq \chi_K$ such that 
$$
\|u\|_{\mathrm{en},\Omega_T} \leq \ca_{\mathrm{en}} (K,\Omega_T) + \eps.
$$  
By truncation, we may assume that $u$ is bounded, such that
Lemma~\ref{lem:W-leq-en} implies that there exists $v \in
\mathcal{W}(\Omega_T)$ satisfying $v \geq u$ a.e. in $\Omega_T$ and 
$$
\|v\|_{\mathcal{W}(\Omega_T)} \leq c(p) \|u\|_{\mathrm{en},\Omega_T}.
$$ 
Since $\eps > 0$ was arbitrary, the second inequality follows by Lemma~\ref{lem:W-approx}.

Then let us consider the first inequality and suppose again that the right hand side is finite. For every $\eps > 0$ there exists $v\in C_0^\infty(\Omega \times \R)$ with $v \geq \chi_K$ such that 
$$
\|v\|_{\mathcal{W}(\Omega_T)}\leq \ca_{\mathrm{var}} (K,\Omega_T) + \eps.
$$
In this case, Lemma~\ref{lem:en-leq-W} concludes the claim.
\end{proof}

Now we are in a position to give the

\subsubsection*{Proof of Theorem~\ref{thm:capvar-eq-cap}. }
For a given compact set $K\subset\Omega_T$, we
take a sequence $\{K_i\}_{i \in \N}$ of decreasing sets in $\Omega_T$ such that $K_i$ is a finite union of space time cylinders whose bases are balls for every $i \in \N$ and
$$
\bigcap_{i=1}^\infty K_i = K.
$$
In the case $0<T<\infty$, we infer from Lemmas~\ref{lem:cap-eq-encap} and~\ref{lem:varcap-eq-encap} that 
$$
\ca_{\mathrm{var}} (K_i,\Omega_T) \approx \ca_{\mathrm{en}} (K_i,\Omega_T) \approx \ca (K_i,\Omega_\infty)
$$
for every $i\in\N$. 
By Lemmas~\ref{lem:cap-decreasing-limit}
and~\ref{lem:capvar-limit-compact} the claim in the case $T < \infty$
follows by letting $i \to \infty$. The case $T = \infty$ follows now
from Lemma~\ref{lem:capvar-Tinfty} by passing to the limit $T \to
\infty$ in case $\frac{2n}{n+2} < p <\infty$. In the case $1 < p \leq
\frac{2n}{n+2}$, the claim follows by using
Lemma~\ref{lem:capvar-Tinfty2}. 
\hfill \qed

\section{Capacities of explicit sets} \label{sec:cap-sets}

Recall the definition of elliptic capacity from Definition~\ref{def:elliptic-cap}. First we state two results~\cite[Theorem 2.16 \& Theorem 2.15]{droniou} related to specific sets of null capacity. Since our setting and statements are slightly different compared to~\cite{droniou}, we also give proofs of the parts that are not completely identical.

\begin{lemma}
Let $1 < p < \infty$, $E \subset \Omega$ be a set and $0\leq t_1 < t_2 \leq T \leq \infty$. Then 
$$
\ca_{\mathrm{e}}(E,\Omega) = 0 \quad \text{ if and only if } \quad \ca_{\mathrm{var}}(E\times (t_1,t_2),\Omega_T) = 0.
$$
\end{lemma}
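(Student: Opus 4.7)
The plan is to prove the two implications separately, each relying on a tensor-product construction $v(x,t)=u(x)\xi(t)$ that links spatial competitors for $\ca_\mathrm{e}$ with admissible functions for $\ca_\mathrm{var}$.

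For the forward implication, given $\varepsilon>0$ I would first pick, by outer regularity of the elliptic capacity, an open $V\supset E$ in $\Omega$ with $\ca_\mathrm{e}(V,\Omega)<\varepsilon$. Then $\mathcal U:=V\times(t_1,t_2)$ is an open neighbourhood of $E\times(t_1,t_2)$ in $\Omega_T$, and by~\eqref{eq:capvar-openset} it suffices to estimate $\ca_\mathrm{var}(\mathcal K,\Omega_T)$ uniformly for compact $\mathcal K\subset\mathcal U$. Any such $\mathcal K$ is contained in $K_0\times[t_1',t_2']$ with $K_0:=\pi_\Omega(\mathcal K)\Subset V$ and $[t_1',t_2']\Subset(t_1,t_2)$. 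Since $\ca_\mathrm{e}(K_0,\Omega)<\varepsilon$, a standard truncation-and-mollification procedure yields $u\in C_0^\infty(\Omega)$ with $0\le u\le 1$, $u\ge 1$ on $K_0$ and $\|\nabla u\|_{L^p(\Omega)}^p<C\varepsilon$. With a smooth time cutoff $\xi\in C_0^\infty(\R,[0,1])$ of compact support that equals $1$ on $[t_1',t_2']$, the tensor product $v(x,t):=u(x)\xi(t)$ is admissible for $\ca_\mathrm{var}(K_0\times[t_1',t_2'],\Omega_T)$. I would then verify that each of the three contributions to $\|v\|_{\mathcal W(\Omega_T)}$ vanishes with $\varepsilon$: the gradient term by $\|\xi\|_{L^p(0,T)}^p\|\nabla u\|_{L^p(\Omega)}^p$; the supremum $L^2$-term $\|\xi\|_\infty^2\|u\|_{L^2(\Omega)}^2$ via Poincaré combined with the truncation $u\le 1$; and the dual norm of $\partial_t v$ by integration by parts $\iint v\,\partial_t\phi=-\iint u\,\xi'\,\phi$ followed by H\"older and Poincaré on $\phi(\cdot,t)$, yielding a bound $C\|u\|_{L^{p'}(\Omega)}\|\xi'\|_{L^{p'}(\R)}\|\nabla\phi\|_{L^p(\Omega_T)}$.

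For the reverse implication, fix $\varepsilon>0$ and choose an open $\mathcal U\supset E\times(t_1,t_2)$ with $\ca_\mathrm{var}(\mathcal U,\Omega_T)<\varepsilon$. With $t_0:=\tfrac12(t_1+t_2)$ and $\delta:=\tfrac14(t_2-t_1)$, introduce the ``tube'' set
\begin{equation*}
V_\delta:=\bigl\{x\in\Omega\colon\{x\}\times[t_0-\delta,t_0+\delta]\subset\mathcal U\bigr\}.
\end{equation*}
This is open in $\Omega$ because $\mathcal U$ is open and the slice is compact, and it contains $E$ since $[t_0-\delta,t_0+\delta]\subset(t_1,t_2)$. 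For any compact $K\subset V_\delta$, the cylinder $K\times[t_0-\delta,t_0+\delta]$ is a compact subset of $\mathcal U$, so a corresponding admissible $v\in C_0^\infty(\Omega\times\R)$ satisfies $\int_{t_0-\delta}^{t_0+\delta}\int_\Omega|\nabla v|^p\,\d x\d t\le\|v\|_{\mathcal W(\Omega_T)}<\varepsilon$. Chebyshev's inequality then furnishes a time $s\in(t_0-\delta,t_0+\delta)$ with $\int_\Omega|\nabla v(\cdot,s)|^p\,\d x<\varepsilon/(2\delta)$, and since $v(\cdot,s)\in C_0^\infty(\Omega)$ is $\ge 1$ on $K$, it is admissible for $\ca_\mathrm{e}(K,\Omega)$, giving $\ca_\mathrm{e}(K,\Omega)<\varepsilon/(2\delta)$. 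Passing to the supremum over compact $K\subset V_\delta$ and then to the infimum over open supersets yields $\ca_\mathrm{e}(E,\Omega)\le\ca_\mathrm{e}(V_\delta,\Omega)\le\varepsilon/(2\delta)$, and letting $\varepsilon\downarrow 0$ finishes the argument, since $\delta$ depends only on $t_1,t_2$.

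The main technical subtlety lies in the forward direction in the subcritical regime $1<p<\frac{2n}{n+2}$, where $W_0^{1,p}(\Omega)\not\hookrightarrow L^2(\Omega)$, so that the $L^\infty L^2$-contribution cannot be absorbed by Sobolev embedding. This is resolved by exploiting the truncation $u\le 1$: when $p\le 2$ one has $u^2\le u^p$ and hence $\|u\|_{L^2(\Omega)}^2\le\|u\|_{L^p(\Omega)}^p\le c\|\nabla u\|_{L^p(\Omega)}^p$ by Poincaré, while for $p\ge 2$ a direct H\"older inequality on the bounded domain suffices. The same dichotomy controls $\|u\|_{L^{p'}(\Omega)}$ appearing in the bound for the dual norm. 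The reverse direction is essentially a geometric slicing argument that presents no analogous difficulty.
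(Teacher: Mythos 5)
Your proof is correct, and it differs from the paper's in one genuinely interesting respect. In the forward implication, the paper uses the time-constant competitor $u(x,t)=v(x)$ (so the dual norm of the time derivative simply vanishes) but this $u$ fails to have compact support in time, forcing an appeal to the auxiliary capacity $\ca'_{\mathrm{var}}$ and its equivalence with $\ca_{\mathrm{var}}$ (Lemma~\ref{lem:cap-cap'-equiv}); it also reduces to $T<\infty$ first via Lemma~\ref{lem:capvar-Tinfty2}. You instead build a genuine $C_0^\infty(\Omega\times\R)$ competitor $v=u(x)\xi(t)$ with a smooth time cutoff $\xi$, and then pay the price of estimating $\|\partial_tv\|_{\mathcal V'(\Omega_T)}$ directly; your slicewise Poincar\'e bound $\bigl|\iint u\,\xi'\,\phi\bigr|\le c\|u\|_{L^{p'}(\Omega)}\|\xi'\|_{L^{p'}(\R)}\|\nabla\phi\|_{L^p(\Omega_T)}$ does the job, and the $0\le u\le1$ truncation is what makes $\|u\|_{L^{p'}}$ and $\|u\|_{L^2}$ controllable for all $1<p<\infty$, exactly as in the paper's estimate of the $L^\infty L^2$ term. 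The upshot is that your argument stays within the original definition of $\ca_{\mathrm{var}}$ and works uniformly for $0<T\le\infty$ without the auxiliary notion, at the cost of a slightly longer computation. In the reverse implication the two proofs are close in spirit: the paper averages $v$ in time to produce a spatial competitor, while you pick a good slice via Chebyshev; both exploit the same bound $\iint_{Q}|\nabla v|^p\le\|v\|_{\mathcal W(\Omega_T)}$ to produce a $C_0^\infty(\Omega)$ competitor $\ge1$ on $K$ with controlled $W^{1,p}_0$-energy, and both need the ``tube''/$\widehat U$ construction to pass from the open neighbourhood of $E\times(t_1,t_2)$ to an open neighbourhood of $E$ in $\Omega$.
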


\begin{proof}
By Lemma~\ref{lem:capvar-Tinfty2}, it is sufficient to consider the case $T < \infty$.
Let $\ca_{\mathrm{e}}(E,\Omega) = 0$. Then, for every $\eps > 0$ there
exists an open set $U \supset E$ with $U \subset \Omega$ such that
$\ca_{\mathrm{e}}(U,\Omega) < \eps/2$, which implies that for every
compact set $K \subset U$ there exists $v \in C(\Omega, [0,1]) \cap
W^{1,p}_0(\Omega)$ with $v =1$ in a neighborhood of $K$ such that 
$$
\|v\|_{W^{1,p}_0(\Omega)} < \eps.
$$
Observe that when $p \geq 2$ we have $\|v\|_{L^2(\Omega)} \leq
c|\Omega|^{(p-2)/2p} \|v\|_{W^{1,p}_0(\Omega)} < c|\Omega|^{(p-2)/2p}
\eps$ by Poincar\'e's inequality and when $p < 2$ we have
$\|v\|_{L^2(\Omega)}^2 \leq \|v\|_{L^p(\Omega)}^p\leq
c\|v\|_{W^{1,p}_0(\Omega)}^{p} < c\eps^{p}$ since $0 \leq v \leq
1$. Thus $\|v\|_{L^2(\Omega)}^2 + \|v\|_{W^{1,p}_0(\Omega)}^p <
c(p,\Omega) (\eps^2 + \eps^p)$. Let $\widehat K$ be an arbitrary
compact set contained in $U \times (t_1,t_2)$. Then there exists a
compact set $K \subset U$ and $t_1 < \tau_1 < \tau_2 < t_2$ such that $\widehat K \subset K \times [\tau_1, \tau_2]$. By using $u(x,t) = v(x)$ in the definition of $\ca_{\mathrm{var}}'(K \times [\tau_1,\tau_2],\Omega_T)$ and Lemma~\ref{lem:cap-cap'-equiv} we have 
\begin{align*}
\ca_{\mathrm{var}}(\widehat K,\Omega_T) &\leq c \ca_{\mathrm{var}}'(\widehat K,\Omega_T) \leq c \ca_{\mathrm{var}}'(K \times [\tau_1,\tau_2],\Omega_T) \\
&\leq c \|u\|_{\mathcal{W}(\Omega_T)} < c (\eps^2 + \eps^p),
\end{align*}
where $c = c(n,p,\alpha,\Omega)$. Since $\widehat{K} \subset U \times (t_1,t_2)$ was arbitrary, we obtain that
$$
\ca_{\mathrm{var}}(E \times (t_1,t_2),\Omega_T) \leq \ca_{\mathrm{var}}(U \times (t_1,t_2),\Omega_T) \leq c (\eps^2 + \eps^p).
$$
Since $\eps > 0$ was arbitrary, the proof of the "only if" direction is completed.

Then suppose that $\ca_{\mathrm{var}}(E\times (t_1,t_2),\Omega_T) =
0$. From definition it follows that there exists an open set $U
\supset E \times (t_1,t_2)$ such that
$\ca_{\mathrm{var}}(U,\Omega_T) < \eps$.
Fix $t_1 < \tau_1 < \tau_2
< t_2$. For every $x \in E$ there exists an open set $U_x \subset \Omega$ such that $\{x\} \times (\tau_1,\tau_2) \subset U_x \times (\tau_1,\tau_2) \subset U$. Let $\widehat{U} = \bigcup_{x \in E} U_x$ (that is an open set), which implies that $E \subset \widehat{U} \subset \Omega$ and $\widehat{U} \times (\tau_1, \tau_2) \subset U$. Thus 
$$
\ca_{\mathrm{var}}(\widehat{U} \times (\tau_1,\tau_2),\Omega_T) \leq \ca_{\mathrm{var}}(U,\Omega_T) < \eps.
$$
In particular, for every compact subcylinder
$K\times[\sigma_1,\sigma_2]\subset \widehat{U} \times
(\tau_1,\tau_2)$, we can find $v \in C^\infty_0(\Omega\times\R)$ with $v \geq \chi_{K\times [\sigma_1,\sigma_2]}$ in $\Omega_T$ such that $\|v\|_{\mathcal{W}(\Omega_T)} < \eps$. By defining
$$
u = \frac{1}{\sigma_2-\sigma_1} \int_{\sigma_1}^{\sigma_2} v \, \d t,
$$
we have that $u \in C^\infty_0(\Omega)$ and $u \geq \chi_{K}$ in
$\Omega$. This implies that 
$$
  \ca_{\mathrm{e}}(K,\Omega)
\le
\|u\|_{W^{1,p}_0(\Omega)}^p \leq \frac{T^{p-1}}{(\sigma_2-\sigma_1)^p} \|v\|_{\mathcal{W}(\Omega_T)} < \frac{T^{p-1}}{(\sigma_2-\sigma_1)^p} \eps.
$$
Since the compact subset $K\subset\widehat U$ and
$[\sigma_1,\sigma_2]\subset(\tau_1,\tau_2)$ were arbitrary and $E \subset \widehat{U}$, this implies that 
$$
\ca_{\mathrm{e}}(E,\Omega) \leq \ca_{\mathrm{e}} (\widehat{U},\Omega) \le \frac{T^{p-1}}{(\tau_2-\tau_1)^p} \eps.
$$
Since $\eps > 0$ was arbitrary the claim follows.

\end{proof} 

For a set $E \subset \R^{n+1}$, we denote $\pi_t(E) = \left\{x : (x,t) \in E \right\}$.

\begin{lemma} \label{lem:cap-slice-null}
Let $E \subset \Omega$ be a set and $t_o \in (0,T)$. Then
$$
\ca_{\mathrm{var}} (E \times \{t_o\} , \Omega_T) = 0 \quad \text{ if and only if }\quad |E|_n = 0,
$$ 
in which $|\cdot|_n$ denotes the $n$-dimensional Lebesgue measure.
\end{lemma}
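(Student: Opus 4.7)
\smallskip
\noindent\textbf{Proof plan.}
The strategy is to treat each implication separately, isolating a different contribution to $\|\cdot\|_{\mathcal{W}(\Omega_T)}$ in each case.

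For the direction $\ca_{\mathrm{var}}(E\times\{t_o\},\Omega_T)=0\Rightarrow|E|_n=0$, the plan is to exploit the $L^\infty(0,T;L^2(\Omega))$-contribution. The first step is to establish, for every compact $K\subset\Omega$ and every closed interval $I\subset(0,T)$ containing $t_o$, the slice lower bound $\ca_{\mathrm{var}}(K\times I,\Omega_T)\ge|K|_n$. This follows immediately from the observation that any admissible $v\in C_0^\infty(\Omega\times\R)$ with $v\ge\chi_{K\times I}$ satisfies $v(\cdot,t_o)\ge\chi_K$, so
\begin{equation*}
 \|v\|_{\mathcal{W}(\Omega_T)}
 \;\ge\;\sup_{0<t<T}\int_\Omega v(x,t)^2\,\dx
 \;\ge\;\int_\Omega v(x,t_o)^2\,\dx\;\ge\;|K|_n.
\end{equation*}
Next, for any $\varepsilon>0$, definition~\eqref{eq:capvar-anyset} furnishes an open set $\widetilde U\supset E\times\{t_o\}$ with $\ca_{\mathrm{var}}(\widetilde U,\Omega_T)<\varepsilon$. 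For every compact $K\subset E$, compactness of $K\times\{t_o\}$ inside the open set $\widetilde U$ yields $h=h(K)>0$ with $K\times[t_o-h,t_o+h]\subset\widetilde U$, and combining this with the slice bound gives $|K|_n\le\ca_{\mathrm{var}}(\widetilde U,\Omega_T)<\varepsilon$. Inner regularity of Lebesgue measure then leads to $|E|_n\le\varepsilon$, and letting $\varepsilon\downarrow0$ concludes this direction.

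For the converse $|E|_n=0\Rightarrow\ca_{\mathrm{var}}(E\times\{t_o\},\Omega_T)=0$, the plan is to cover $E\times\{t_o\}$ by parabolically scaled space--time cylinders whose total capacity is controlled by $\sum_i\rho_i^n$. Given $\varepsilon>0$, by outer regularity of Lebesgue measure I would cover $E$ by countably many balls $B_{\rho_i}(x_i)\subset\Omega$ with $\sum_i\rho_i^n<\varepsilon$ and with radii small enough that each parabolic cylinder
\begin{equation*}
 C_i\;:=\;B_{\rho_i}(x_i)\times\big(t_o-\rho_i^p,\,t_o+\rho_i^p\big)
\end{equation*}
lies in $\Omega_T$. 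Their union $\bigcup_i C_i$ is then an open neighborhood of $E\times\{t_o\}$, and the countable-union bound from Lemma~\ref{lem:cap-union-cylinder-upperbound}, combined with the single-cylinder estimate~\eqref{eq:capvar-cylinder}, gives
\begin{equation*}
 \ca_{\mathrm{var}}(E\times\{t_o\},\Omega_T)
 \;\le\;\ca_{\mathrm{var}}\Big(\bigcup_{i}C_i,\Omega_T\Big)
 \;\le\;c\sum_i\big(\rho_i^n+\rho_i^p\,\rho_i^{n-p}\big)
 \;=\;2c\sum_i\rho_i^n
 \;<\;2c\varepsilon.
\end{equation*}
Letting $\varepsilon\downarrow0$ completes the proof.

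The delicate step is the converse direction. The plain countable subadditivity of Lemma~\ref{lem:subadditivity} applied to the cylinders $C_i$ only controls the power $\ca_{\mathrm{var}}^s$ with $s=1/\max\{p,p'\}<1$ and therefore produces a bound in terms of $\sum_i\rho_i^{ns}$, which need not be small when $\rho_i<1$ and $s<1$. What saves the argument is the additional summability provided by the parabolic scaling $\tau_i=\rho_i^p$ of the time extent of $C_i$, as encoded in the cylinder-specific bound of Lemma~\ref{lem:cap-union-cylinder-upperbound}.
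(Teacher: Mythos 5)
Your forward direction follows the same idea as the paper (isolating the $L^\infty(0,T;L^2(\Omega))$ contribution), but the final appeal to ``inner regularity of Lebesgue measure'' applied to $E$ is not quite correct. The lemma is stated for an arbitrary set $E\subset\Omega$, and for a general set the supremum of $|K|_n$ over compacts $K\subset E$ can be strictly smaller than the outer measure $|E|_n$; inner regularity by compacts is not available for arbitrary sets. The fix, and what the paper actually does, is to take $K$ compact inside the \emph{open} slice $\pi_{t_o}(\widetilde U)=\{x:(x,t_o)\in\widetilde U\}$ rather than inside $E$: then $|K|_n<\varepsilon$ as you computed, inner regularity for open sets in $\R^n$ gives $|\pi_{t_o}(\widetilde U)|_n\le\varepsilon$, and monotonicity of outer measure together with $E\subset\pi_{t_o}(\widetilde U)$ yields $|E|_n\le\varepsilon$. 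Also note that your intermediate detour through an interval $I\ni t_o$ is unnecessary: an admissible $v$ for $\ca_{\mathrm{var}}(K\times\{t_o\},\Omega_T)$ already has $v(\cdot,t_o)\ge\chi_K$.

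For the converse direction, you give a self-contained covering argument via Lemma~\ref{lem:cap-union-cylinder-upperbound} applied to parabolically scaled cylinders $C_i=B_{\rho_i}(x_i)\times(t_o-\rho_i^p,t_o+\rho_i^p)$, whereas the paper refers to the analogous argument in Droniou--Porretta--Prignet (using $\ca^*_{\mathrm{var}}$). Your route is arguably preferable in that it stays within the paper's own toolbox and makes the mechanism explicit. Two details should be made precise: the hypotheses of Lemma~\ref{lem:cap-union-cylinder-upperbound} require $B_{2\rho_i}(x_i)\Subset\Omega$, which is not guaranteed merely by asking $C_i\subset\Omega_T$; one should pass first to an open $V\supset E$ with $|V|<\varepsilon$ and then take a Vitali-type cover by balls with the doubled balls contained in $V\subset\Omega$. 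Also, Lemma~\ref{lem:cap-union-cylinder-upperbound} appears after Lemma~\ref{lem:cap-slice-null} in the paper, so using this argument would require a mild reordering of the section; there is no logical circularity, since the proof of Lemma~\ref{lem:cap-union-cylinder-upperbound} does not rely on Lemma~\ref{lem:cap-slice-null}.

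Your concluding observation about why plain subadditivity of $\ca^s_{\mathrm{var}}$ is not sufficient is correct and well worth noting: the power $s=1/\max\{p,p'\}<1$ would give a bound in $\sum_i\rho_i^{ns}\ge\sum_i\rho_i^n$ that is useless for small radii, and it is precisely the direct (unpowered) bound of Lemma~\ref{lem:cap-union-cylinder-upperbound} for unions of cylinders that rescues the argument.
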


\begin{proof}

Suppose that $\ca_{\mathrm{var}}(E \times \{t_o\}, \Omega_T) = 0 $. Then, for every $\eps > 0$ there exists an open set $U \supset E \times \{t_o\}$ with $U \subset \Omega_T$ such that $\ca_{\mathrm{var}}(U , \Omega_T) < \eps/2$. Thus,
$$
\ca_{\mathrm{var}}(K \times \{t_o\}, \Omega_T) \leq \ca_{\mathrm{var}}(U , \Omega_T) < \eps/2
$$
for every compact $K \subset \pi_{t_o}(U)$. By definition of variational capacity, there exists $v \in C_0^\infty(\Omega \times \R)$ with $v \geq \chi_{K \times \{t_o\}}$ such that $\|v\|_{\mathcal{W}(\Omega_T)} < \eps$ . This implies 
$$
|K|_n \leq \int_{K \times\{t_o\}} v^2 \, \d x \leq \|v\|_{L^\infty(0,T;L^2(\Omega))}^2 \leq \|v\|_{\mathcal{W}(\Omega_T)} < \eps.
$$
Since $\pi_{t_o} (U)$ is open in $\R^n$, by inner regularity of the Lebesgue measure we have that 
$$
|\pi_{t_o}(U)|_n < \eps.
$$
Since $\pi_{t_o} (U)\supset E$ and $\eps > 0$ was arbitrary, it
follows that $|E|_n = 0$. This proves the ``only if'' direction. 

The proof of the other direction follows the lines of~\cite[Theorem
2.15]{droniou} by using $\ca_{\mathrm{var}}^*$ instead of
$\ca_{\mathrm{var}}$, see Appendix~\ref{appendix}.

\end{proof}

We show that the $(n+1)$-dimensional Lebesgue measure of a set is bounded by the (variational) capacity of the set. The proof is analogous to~\cite[Lemma 6.1]{AS}. 

\begin{lemma} \label{lem:meas-leq-cap}
Let $E \subset \Omega_T$ be a set and $0<T \leq \infty$. Then there exists $c = c(n,p) >0$ such that
$$
|E| \leq c \ca_{\mathrm{var}} (E, \Omega_T)^\frac{n+p}{n}.
$$
\end{lemma}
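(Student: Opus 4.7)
The plan is to combine a standard parabolic Sobolev embedding with the three ingredients of the norm $\|\cdot\|_{\mathcal W(\Omega_T)}$ appearing in Definition~\ref{def:varcap}. Specifically, for admissible $v$ the inequality $v\ge\chi_K$ lets us control $|K|$ by a power of $v$, and the parabolic Gagliardo--Nirenberg inequality distributes this power across the $L^p$-norm of $\nabla v$ and the $L^\infty_tL^2_x$-norm of $v$, each of which is directly dominated by $\|v\|_{\mathcal W(\Omega_T)}$.

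First I would reduce the statement to compact sets. By definitions \eqref{eq:capvar-openset}--\eqref{eq:capvar-anyset} of $\ca_{\mathrm{var}}$ for open and arbitrary sets, together with the outer regularity of Lebesgue measure and its inner regularity on open sets, it suffices to prove
\[
|K|\le c\,\ca_{\mathrm{var}}(K,\Omega_T)^{(n+p)/n}
\]
for every compact $K\subset\Omega_T$. For such a $K$, pick any admissible $v\in C_0^\infty(\Omega\times\R)$ with $v\ge\chi_K$.

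The key analytic step is the parabolic Sobolev embedding: for functions $v\in L^p(0,T;W^{1,p}_0(\Omega))\cap L^\infty(0,T;L^2(\Omega))$ one has
\[
\iint_{\Omega_T}|v|^{p(n+2)/n}\,\dx\dt
\;\le\; c(n,p)\left(\iint_{\Omega_T}|\nabla v|^p\,\dx\dt\right)\left(\esssup_{0<t<T}\int_\Omega v^2\,\dx\right)^{p/n}.
\]
Since $v\in C_0^\infty(\Omega\times\R)$ has compact support (so the case $T=\infty$ causes no issue) and $v\ge 1$ on $K$, the left-hand side is bounded below by $|K|$. Using the definition \eqref{W-norm} of $\|v\|_{\mathcal W(\Omega_T)}$ the two factors on the right satisfy
\[
\iint_{\Omega_T}|\nabla v|^p\,\dx\dt\le\|v\|_{\mathcal W(\Omega_T)},\qquad\esssup_{0<t<T}\int_\Omega v^2\,\dx\le\|v\|_{\mathcal W(\Omega_T)},
\]
so the product on the right is at most $c(n,p)\|v\|_{\mathcal W(\Omega_T)}^{1+p/n}=c(n,p)\|v\|_{\mathcal W(\Omega_T)}^{(n+p)/n}$. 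Taking the infimum over admissible $v$ gives the desired bound for compact $K$, and the reduction above transfers it to arbitrary $E$.

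The only non-routine point is the parabolic Sobolev embedding above; it is a classical Gagliardo--Nirenberg-type estimate that holds in the whole range $1<p<\infty$ on the function class we use, and the exponent $p(n+2)/n$ is exactly what matches the target exponent $(n+p)/n$ once the two factors on the right are absorbed into $\|v\|_{\mathcal W(\Omega_T)}$. Thus there is no genuine obstacle beyond invoking the right embedding and bookkeeping the exponents.
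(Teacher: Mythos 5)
Your proof is correct, and it takes a genuinely more direct route than the paper. The paper proves this estimate via the energy capacity $\ca_{\mathrm{en}}$: it takes a decreasing approximation of $K$ by finite unions of space-time cylinders $K_i$, picks bounded supercaloric competitors for $\ca_{\mathrm{en}}(K_i,\Omega_T)$, applies the parabolic Sobolev embedding to those competitors, and then transfers the estimate back to $\ca_{\mathrm{var}}$ through the equivalence $\ca_{\mathrm{en}}\approx\ca_{\mathrm{var}}$ of Lemma~\ref{lem:varcap-eq-encap} and the decreasing-limit property Lemma~\ref{lem:capvar-limit-compact}. You instead apply the same Gagliardo--Nirenberg/Sobolev embedding directly to the smooth admissible functions $v\in C_0^\infty(\Omega\times\R)$ in Definition~\ref{def:varcap}, bound both factors by $\|v\|_{\mathcal W(\Omega_T)}$, and pass to the infimum. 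This bypasses the energy capacity, the obstacle-problem machinery behind Lemma~\ref{lem:varcap-eq-encap}, and the approximation by finite unions of cylinders entirely; your argument also does not use the uniform $p$-fatness of $\R^n\setminus\Omega$, whereas the route through Lemma~\ref{lem:varcap-eq-encap} implicitly relies on it. The only ingredient you need that deserves a precise citation is the parabolic embedding $\iint|v|^{p(n+2)/n}\le c(n,p)\iint|\nabla v|^p\cdot(\esssup_t\int v^2)^{p/n}$ for $v\in L^p(0,T;W^{1,p}_0(\Omega))\cap L^\infty(0,T;L^2(\Omega))$, which indeed holds for all $1\le p<\infty$ (e.g.\ DiBenedetto, Degenerate Parabolic Equations, Chapter~I, Proposition~3.1); your remaining steps (the reduction to compact sets via outer/inner regularity, $\chi_K\le|v|^{p(n+2)/n}$ from $v\ge\chi_K$, the exponent bookkeeping $1+p/n=(n+p)/n$) are all sound.
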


\begin{proof}
Let $K \subset \Omega_T$ be an arbitrary compact set and let $\{K_i\}$
be a decreasing sequence of compact sets, such that $K_i$ for every $i
\in \N$ is a finite union of space time cylinders whose bases are balls, and $K = \cap_{i=1}^\infty K_i$. Fix $\eps > 0$ and let $v_i \in \mathcal{V}(\Omega_T)$ be a (bounded) supercaloric function with $v_i \geq \chi_{K_i}$ and
$$
\|v_i\|_{\mathrm{en},\Omega_T} \leq \ca_{\mathrm{en}} (K_i, \Omega_T) + \eps.
$$
Since we can assume that $v_i \leq 1$ in $\Omega_T$, by Sobolev inequality we have
\begin{align*}
|K_i| &\leq \iint_{\Omega_T} v_i^\frac{p(n+2)}{n} \, \d x\d t \leq c \iint_{\Omega_T} |\nabla v_i|^p \, \d x\d t \left( \sup_{0<t<T} \int_\Omega v_i^2 \, \d  x \right)^\frac{p}{n} \\
&\leq c \|v_i\|_{\mathrm{en},\Omega_T}^\frac{n+p}{n} \leq c \left( \ca_{\mathrm{en}} (K_i, \Omega_T) + \eps \right)^\frac{n+p}{n}
\end{align*}
for $c = c(n,p) > 0$. 
By using Lemmas~\ref{lem:varcap-eq-encap} and~\ref{lem:capvar-limit-compact}, and passing to the limit $i \to \infty$, we have 
$$
|K| \leq c(n,p)  \ca_{\mathrm{var}} (K, \Omega_T)^\frac{n+p}{n}.
$$
The claim follows by definition of variational capacity and properties of the Lebesgue (outer) measure.
\end{proof}

We recall the following lemmas from~\cite[Theorem 5.2 \& Lemma 5.3]{AKP}. The first follows directly from the definitions.

\begin{lemma} \label{lem:capvar-geq-ellipticint}
Suppose that $1<p<\infty$ and let $K \subset \Omega_T$ be a compact set. Then
$$
\int_0^T \ca_{\mathrm{e}}(\pi_t(K),\Omega) \, \d t \leq \ca_{\mathrm{var}}(K,\Omega_T).
$$
\end{lemma}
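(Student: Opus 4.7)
The plan is to exploit a simple slicing argument: any admissible competitor for the parabolic variational capacity of $K$, when restricted to a time slice $\{t\}$, becomes an admissible competitor for the elliptic capacity of $\pi_t(K)$ in $\Omega$.

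More concretely, I would first fix an arbitrary admissible function $v\in C_0^\infty(\Omega\times\R)$ with $v\ge \chi_K$ for $\ca_{\mathrm{var}}(K,\Omega_T)$. For each $t\in(0,T)$, the section $v(\cdot,t)$ lies in $C_0^\infty(\Omega)$, and for every $x\in\pi_t(K)$ the point $(x,t)$ belongs to $K$, so $v(x,t)\ge 1$. Since $K$ is compact, $\pi_t(K)$ is a compact subset of $\Omega$, so $v(\cdot,t)$ is a valid test function in Definition~\ref{def:elliptic-cap} for $\ca_{\mathrm{e}}(\pi_t(K),\Omega)$. This gives the pointwise-in-$t$ bound
\begin{equation*}
\ca_{\mathrm{e}}(\pi_t(K),\Omega)\le \int_\Omega |\nabla v(x,t)|^p\,\d x
\qquad\text{for every }t\in(0,T).
\end{equation*}

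Integrating this inequality over $t\in(0,T)$ and using the definition of $\|v\|_{\mathcal{W}(\Omega_T)}$, I obtain
\begin{equation*}
\int_0^T \ca_{\mathrm{e}}(\pi_t(K),\Omega)\,\d t
\le
\iint_{\Omega_T} |\nabla v|^p\,\d x\d t
=
\|v\|_{\mathcal{V}(\Omega_T)}^p
\le
\|v\|_{\mathcal{W}(\Omega_T)}.
\end{equation*}
Taking the infimum over all admissible $v$ then yields the claim.

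The only mild subtlety to address is the measurability of $t\mapsto \ca_{\mathrm{e}}(\pi_t(K),\Omega)$, so that the left-hand integral is well defined in the Lebesgue sense. This can be handled by noting that, since $K$ is compact, the slice-map $t\mapsto\pi_t(K)$ is upper semicontinuous in the Hausdorff sense, and the elliptic capacity is upper semicontinuous on decreasing sequences of compact sets (Definition~\ref{def:elliptic-cap}). Alternatively, one may simply interpret the left-hand side as an upper Lebesgue integral; the pointwise bound forces it to be dominated by the continuous integrand $t\mapsto\int_\Omega|\nabla v(\cdot,t)|^p\,\d x$, and the argument proceeds without further change.
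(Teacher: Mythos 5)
Your proof is correct and coincides with the paper's approach: the paper simply remarks that this lemma ``follows directly from the definitions,'' and the slicing argument you give---restricting an admissible competitor $v$ to a time slice to obtain an admissible competitor for $\ca_{\mathrm{e}}(\pi_t(K),\Omega)$, then integrating in $t$ and dropping the other two terms of $\|v\|_{\mathcal{W}(\Omega_T)}$---is exactly that observation. Your remark on measurability is a reasonable way to tie up the one loose end.
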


\begin{lemma} \label{lem:capvar-lowerbound}
Let $1<p<n$ and $Q_{\rho,\tau} = B_\rho(x_o) \times (t_o - \tau, t_o) \Subset \Omega_T$. Then, there exists $c= c(n,p) > 0$ such that
$$
\ca_{\mathrm{var}}\left(\overline{Q}_{\rho,\tau},\Omega_T\right) \geq c \tau \rho^{n-p}.
$$
If $\Omega = B_{2\rho}(x_o)$, the estimate holds for every $1<p<\infty$.
\end{lemma}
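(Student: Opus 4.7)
The plan is to reduce the lower bound for the variational capacity to a lower bound on the elliptic $p$-capacity of the spatial slices, by invoking the slicing inequality in Lemma~\ref{lem:capvar-geq-ellipticint}. For $t\in(t_o-\tau,t_o)$ we have $\pi_t(\overline{Q}_{\rho,\tau})=\overline{B}_\rho(x_o)$, while for other $t$ the slice is empty. Therefore
\begin{equation*}
  \ca_{\mathrm{var}}(\overline{Q}_{\rho,\tau},\Omega_T)
  \geq \int_0^T \ca_{\mathrm{e}}(\pi_t(\overline{Q}_{\rho,\tau}),\Omega)\,\d t
  = \tau\,\ca_{\mathrm{e}}(\overline{B}_\rho(x_o),\Omega),
\end{equation*}
so the whole proof comes down to a lower bound of the form $\ca_{\mathrm{e}}(\overline{B}_\rho(x_o),\Omega)\geq c(n,p)\rho^{n-p}$.

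For the general case $1<p<n$, I would use the domain monotonicity of elliptic capacity: since $\Omega\subset\R^n$ is bounded, every admissible competitor in $C_0^\infty(\Omega)$ for $\ca_{\mathrm{e}}(\overline{B}_\rho(x_o),\Omega)$ extends by zero to a competitor in $C_0^\infty(\R^n)$ for the full $p$-capacity. Hence
\begin{equation*}
  \ca_{\mathrm{e}}(\overline{B}_\rho(x_o),\Omega)
  \geq \ca_p(\overline{B}_\rho(x_o))
  = c(n,p)\rho^{n-p},
\end{equation*}
where the final identity is the classical value of the $p$-capacity of a ball in the subconformal range $1<p<n$ (see e.g. \cite[Eqn. (2.13)]{HKM}).

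For the special case $\Omega=B_{2\rho}(x_o)$, the domain monotonicity argument is no longer needed since we have the explicit elliptic capacity of the concentric annulus, valid in the full range $1<p<\infty$:
\begin{equation*}
  \ca_{\mathrm{e}}(\overline{B}_\rho(x_o),B_{2\rho}(x_o)) = c(n,p)\rho^{n-p},
\end{equation*}
also recorded in \cite[Eqn. (2.13)]{HKM} (for $p=n$ interpreted via the logarithmic expression and for $p>n$ via the same radial extremal). Multiplying by $\tau$ in either case yields the claim, and there is no serious obstacle: the only non-routine point is having the sharp explicit form of the elliptic capacity of a ball, which is standard.
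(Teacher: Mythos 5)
Your proof is correct and takes essentially the same route as the paper: reduce to the elliptic $p$-capacity of the spatial slices via Lemma~\ref{lem:capvar-geq-ellipticint}, then apply the classical lower bounds $\ca_{\mathrm{e}}(\overline{B}_\rho,\Omega)\geq\ca_{\mathrm{e}}(\overline{B}_\rho,\R^n)=c(n,p)\rho^{n-p}$ for $1<p<n$ and the explicit annulus capacity $\ca_{\mathrm{e}}(\overline{B}_\rho,B_{2\rho})=c(n,p)\rho^{n-p}$ for all $1<p<\infty$. The only difference is that you spell out the slice integral $\int_0^T\ca_{\mathrm{e}}(\pi_t(\overline{Q}_{\rho,\tau}),\Omega)\,\d t=\tau\,\ca_{\mathrm{e}}(\overline{B}_\rho,\Omega)$, which the paper leaves implicit.
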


\begin{proof}

Observe that in case $1<p<n$, we have that
$$
\ca_{\mathrm{e}} \left(\overline{B_\rho(x_o)}, \Omega \right) \geq \ca_{\mathrm{e}} \left(\overline{B_\rho(x_o)}, \R^n \right) = c(n,p) \rho^{n-p}.
$$
Furthermore, for every $1<p<\infty$ we have
$$
\ca_{\mathrm{e}} \left(\overline{B_\rho(x_o)}, B_{2\rho}(x_o) \right) = c(n,p) \rho^{n-p}.
$$
By Lemma~\ref{lem:capvar-geq-ellipticint} the claims follow.

\end{proof}

The next result states a subadditivity type inequality for space time cylinders. In this form, it will also be useful in Section~\ref{sec:cap-hausdorff}.

\begin{lemma} \label{lem:cap-union-cylinder-upperbound}
Let $1<p<\infty$ and $Q'_i := Q'_{\rho_i,\tau_i}(z_i) = \overline{B_{\rho_i}(x_i)} \times \left[ [t_i - \tau_i, t_i] \cap (0,T) \right]$ such that $B_{2\rho_i}(x_i)\Subset \Omega_T$ for every $i \in \N$. Then, there exists $c= c(n,p,\alpha) > 0$ such that
$$
\ca_{\mathrm{var}}\left(\bigcup_{i=1}^\infty Q'_i, \Omega_T\right) \leq c \sum_{i=1}^\infty (\rho_i^n + \tau_i \rho_i^{n-p}).
$$
\end{lemma}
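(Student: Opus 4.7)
The plan is to prove the bound in two stages: first establish the single-cylinder estimate $\ca_{\mathrm{var}}(Q'_i,\Omega_T)\le c(\rho_i^n+\tau_i\rho_i^{n-p})$ via an explicit separable test function, and then combine these estimates for the countable union using a Vitali-type covering argument.

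For the single-cylinder bound, I would take $u_i(x,t)=\zeta_i(x)\xi_i(t)\in C_0^\infty(\Omega\times\R)$, where $\zeta_i\in C_0^\infty(B_{2\rho_i}(x_i),[0,1])$ equals $1$ on $\overline{B_{\rho_i}(x_i)}$ with $|\nabla\zeta_i|\le c/\rho_i$, and $\xi_i\in C_0^\infty(\R,[0,1])$ equals $1$ on $[t_i-\tau_i,t_i]\cap(0,T)$, is supported in an interval of length at most $\tau_i+2\rho_i^p$, with $|\xi'_i|\le c/\rho_i^p$. Then $u_i\ge\chi_{Q'_i}$, and direct calculation yields $\|u_i\|_{L^\infty(0,T;L^2(\Omega))}^2\le c\rho_i^n$ and $\iint_{\Omega_T}|\nabla u_i|^p\,\d x\d t\le c(\tau_i+\rho_i^p)\rho_i^{n-p}$. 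The key step is the dual-norm estimate $\|\partial_tu_i\|_{\mathcal V'(\Omega_T)}^{p'}\le c\rho_i^n$, which I would establish by testing against $\phi\in C_0^\infty(\Omega_T)$ with $\|\nabla\phi\|_{L^p(\Omega_T)}\le 1$: H\"older's inequality in space gives $|\int_\Omega\zeta_i\phi(\cdot,t)\,\d x|\le c\rho_i^{n/p'}\|\phi(\cdot,t)\|_{L^p(B_{2\rho_i})}$, the scale-invariant Sobolev--Poincar\'e bound $\|\phi(\cdot,t)\|_{L^p(B_{2\rho_i})}\le c\rho_i\|\nabla\phi(\cdot,t)\|_{L^p(\Omega)}$ (valid since $B_{2\rho_i}\Subset\Omega$) raises this to $c\rho_i^{n/p'+1}\|\nabla\phi(\cdot,t)\|_{L^p(\Omega)}$, and H\"older's inequality in time together with $\|\xi'_i\|_{L^{p'}(\R)}\le c\rho_i^{-1}$ closes the argument. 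Summing the three norm contributions gives $\|u_i\|_{\mathcal W(\Omega_T)}\le c(\rho_i^n+\tau_i\rho_i^{n-p})$.

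For the union, I would apply a Vitali-type covering in parabolic geometry to extract a pairwise disjoint subfamily $\{Q'_{i_k}\}$ with $\bigcup_iQ'_i\subset\bigcup_kc_0Q'_{i_k}$, where the $c_0$-dilated family has bounded overlap $M=M(n,p)$, and partition the indices $\{i_k\}$ into $M$ groups $G_j$ within each of which the dilated cylinders are pairwise disjoint. For each group, set $v_j=\sum_{k\in G_j}u^*_{i_k}$, where $u^*_{i_k}$ is the single-cylinder test function for $c_0Q'_{i_k}$. Disjointness of supports within each group gives $\int|\nabla v_j|^p\,\d x\d t=\sum_k\int|\nabla u^*_{i_k}|^p\,\d x\d t$ and $\sup_t\int v_j^2\,\d x\le\sum_k\sup_t\int(u^*_{i_k})^2\,\d x$, while a partition-of-unity splitting of a testing function $\phi\in\mathcal V$ (cutoffs $\eta_k$ adapted to the disjoint supports) together with H\"older yields $\|\partial_tv_j\|_{\mathcal V'}^{p'}\le c\sum_k\|\partial_tu^*_{i_k}\|_{\mathcal V'}^{p'}$. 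Summing the $M$ groups picks up only an $M$-dependent multiplicative constant, producing an admissible $w\in C_0^\infty(\Omega\times\R)$ with $w\ge\chi_{\bigcup_iQ'_i}$ and $\|w\|_{\mathcal W(\Omega_T)}\le c(n,p,\alpha)\sum_i(\rho_i^n+\tau_i\rho_i^{n-p})$; the countable limit is handled via finite approximations together with Lemma~\ref{lem:capvar-upwards-conv}.

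The main obstacle is the dual-norm estimate in the single-cylinder bound: obtaining the scale-invariant factor $\rho_i$ in the Sobolev--Poincar\'e inequality on $B_{2\rho_i}$ is essential for the estimate to have the desired $\rho_i^n$ form; this is straightforward when $1<p<n$ via the Sobolev embedding $W^{1,p}_0(\Omega)\hookrightarrow L^{p^*}(\Omega)$, but requires a separate argument (using Hardy's inequality from Lemma~\ref{lem:hardy} and the $p$-fatness parameter $\alpha$) when $p\ge n$. A secondary technical point is verifying the $\mathcal V'$-additivity for disjoint supports via the partition-of-unity decomposition of testing functions.
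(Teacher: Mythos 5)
The single‑cylinder dual‑norm estimate is where your proposal breaks down. You choose $\xi_i$ with transition length $\approx\rho_i^p$, so $\|\xi_i'\|_{L^{p'}}\approx\rho_i^{-1}$, and you need the scale‑invariant bound $\|\phi(\cdot,t)\|_{L^p(B_{2\rho_i})}\le c\rho_i\|\nabla\phi(\cdot,t)\|_{L^p(\Omega)}$ for $\phi\in C_0^\infty(\Omega_T)$. This inequality is correct for $1<p<n$ via the Sobolev embedding $W^{1,p}_0(\Omega)\hookrightarrow L^{p^*}(\Omega)$ and H\"older's inequality on $B_{2\rho_i}$, but it is genuinely false for $p>n$: take $\phi$ identically $1$ on a fixed ball $B\Subset\Omega$, cut off to zero near $\partial\Omega$, and let $\rho_i\to0$ with $B_{2\rho_i}\subset B$; then the left side scales like $\rho_i^{n/p}$ while the right side scales like $\rho_i$, and $n/p<1$. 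Hardy's inequality from Lemma~\ref{lem:hardy} does not repair this, since it produces a $\rho_i$‑independent constant, not the needed factor $\rho_i$. Since the lemma is claimed for all $1<p<\infty$, this is a real gap, and it cannot be closed simply by tweaking the transition length $\ell$ of $\xi_i$: one is forced to have $\ell\le\rho_i^p$ to control the gradient term, but (when $p>n$) then forced to have $\ell\ge\rho_i^n$ to control the dual norm, and these are incompatible as $\rho_i\to0$.

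The paper sidesteps this entirely. It chooses $\xi_i$ close to a characteristic function, so $\partial_tv_i=\xi_i'w_i$ has small $L^1$-norm $\lesssim\rho_i^n$ (rather than small $\mathcal{V}'$-norm), and sets $g=\max\{v_1,\dots,v_k\}$, for which all three quantities $\iint|\nabla g|^p$, $\sup_t\int g^2$, and $\iint|\partial_tg|$ are subadditive. It then invokes Lemma~\ref{lem:petitta} with the trivial decomposition $\partial_tg=0+\partial_tg\in\mathcal{V}'(\Omega_T)+L^1(\Omega_T)$ to produce an admissible majorant $u\in\mathcal{W}(\Omega_T)$ with $\|u\|_{\mathcal{W}(\Omega_T)}\lesssim\|g\|_{\mathcal{V}}^p+\|g\|_{L^\infty}\|\partial_tg\|_{L^1}+\|g\|_{L^\infty L^2}^2$, and concludes via Lemma~\ref{lem:cap-cap'-equiv} and Lemma~\ref{lem:capvar-upwards-conv}. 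This route works uniformly in $p$ and also renders your Vitali covering and partition‑of‑unity step unnecessary; the latter would also require care, as restricting the $\mathcal{V}'$-duality to subcylinders introduces cutoff errors that again lead back to a Poincar\'e estimate on $\phi$ of exactly the problematic type. I would recommend replacing your direct dual‑norm estimate by the $L^1$-plus-Lemma~\ref{lem:petitta} argument, and the covering argument by a pointwise maximum.
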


\begin{proof}

Fix $k \in \N$. We consider radii $\rho_i+\eps$ for some small $\eps> 0$, but for simplicity we denote it by $\rho_i$. Furthermore, suppose that $[t_i - \tau_i, t_i] \subset (0,T)$, such that if $t_i-\tau_i = 0$ consider $\tau_i-\delta$ instead of $\tau_i$ and if $t_i = T$ consider $t_i - \delta$ instead of $t_i$ for small $\delta >0$. Let $w_i \in C_0^\infty(B_{2\rho_i},[0,1])$ such that $w_i = 1$ in $\overline{B_{\rho_i}}$ and $|\nabla w_i| \leq \frac{2}{\rho_i}$. Furthermore, let $C^\infty(\R,[0,1]) \ni\xi_i(t) \approx \chi_{[t_i-\tau_i-\eps,t_i+\eps]}$ with $\|\xi'\|_{L^1(\R)} \leq 2$. Denote $v_i = \xi_i w_i$. Observe that $v_i \leq 1$. Then, it follows that 
\begin{align*}
\iint_{\Omega_T} |\nabla v_i|^p \, \d x \d t = \iint_{\Omega_T} \xi_i^p|\nabla w_i|^p \, \d x \d t &\leq (\tau_i + 2\eps) \left(\tfrac{2}{\rho_i} \right)^p |B_{2\rho_i}| \\
&= c(n,p) (\tau_i + 2\eps)\rho_i^{n-p}
\end{align*}
and 
$$
\int_\Omega v_i^2(x,t) \, \d x \leq 
\int_\Omega w_i^2(x,t) \, \d x \leq |B_{2\rho_i}| \leq  c(n) \rho_i^n
$$
for every $t\in(0,T)$. 
Now we have $\partial_t v_i = \xi_i' w_i
\in L^1(\Omega_T)$, and 
$$
\iint_{\Omega_T} |\partial_t v_i| \, \d x \d t \leq c(n) \rho_i^n.
$$
For a fixed $k\in\N$, denote $g = \max\{v_1,v_2,...,v_k\}$ and observe that 
$$
\iint_{\Omega_T} |\nabla g|^p \, \d x \d t \leq \sum_{i=1}^k
\iint_{\Omega_T} |\nabla v_i|^p \, \d x \d t
\le
c(n,p)\sum_{i=1}^k (\tau_i + 2\eps)\rho_i^{n-p},
$$
$$
\int_{\Omega} g^2(x,t) \, \d x \leq \sum_{i=1}^k \int_{\Omega}
v_i^2(x,t) \, \d x
\le c(n)\sum_{i=1}^k\rho_i^n
$$
for every $t\in(0,T)$, and
$$
\iint_{\Omega_T} |\partial_t g| \, \d x \d t \leq \sum_{i=1}^k
\iint_{\Omega_T} |\partial_t v_i| \, \d x \d t
\le c(n)\sum_{i=1}^k\rho_i^n.
$$
By Lemma~\ref{lem:petitta}, using the trivial decomposition
$\partial_t g=0+\partial_tg\in \mathcal{V}'(\Omega_T)+L^1(\Omega_T)$,  there exists $\mathcal{W}(\Omega_T) \ni u \geq g$ a.e. in $\Omega_T$ such that
\begin{align*}
  \|u\|_{\mathcal{W}(\Omega_T)}
  &\leq
  c(p) \bigg( \|g\|_{\mathcal{V}(\Omega_T)}^p +
             \|g\|_{L^\infty(\Omega_T)} \|\partial_t g\|_{L^1(\Omega_T)} + \|g\|_{L^\infty(0,T;L^2(\Omega))}^2 \bigg)\\
  &\leq
  c(n,p) \sum_{i=1}^k \left( \rho_i^n + (\tau_i + 2\eps)\rho_i^{n-p} \right).
\end{align*}
Observe that $g \geq 1$ in a neighborhood of $\bigcup_{i=1}^k \overline{Q_i'}$, which implies that $g$ is admissible for $\ca_{\mathrm{var}}'$. By Lemma~\ref{lem:cap-cap'-equiv} and passing to the limit in $\eps \to 0$, we obtain
$$
\ca_{\mathrm{var}}\left(\bigcup_{i=1}^k Q_i', \Omega_T\right) \leq c \sum_{i=1}^k (\rho_i^n + \tau_i \rho_i^{n-p}),
$$
where $c=c(n,p,\alpha)$. 
Letting $\delta \to 0$ and $k \to \infty$ by using Lemma~\ref{lem:capvar-upwards-conv}, the claim follows.

\end{proof}

The following lemma gives an estimate for the variational capacity of a
graph contained in a space-time cylinder, which was proved
in~\cite{AKP} in case $p>2$ and $T= \infty$. For convenience we
include the proof of the lemma in our case.

\begin{lemma} \label{lem:capvar-H-est}
Let $1<p<\infty$ and $Q^+_{\rho,\tau} = B_\rho(x_o) \times (t_o,t_o +
\tau)$ be a cylinder such that $Q_{2\rho,\tau}^+ \Subset \Omega_T$ and let
$$
\mathcal H = \left\{ (x,h(x)) : x \in \overline{B_\rho(x_o)} \right\},
$$
in which $h \in C(\R^n,[t_o,t_o+\tau])$ satisfies $h(x) = t_o$ for
every $x \in \partial B_\rho(x_o)$.
Then
$$
c_1 \left( \int_0^T \ca_{\mathrm{e}} (\pi_t(\mathcal{H}),\Omega) \, \d t + \rho^n \right) \leq \ca_{\mathrm{var}} (\mathcal{H},\Omega_T) \leq c_2 \left( \rho^n + \tau \rho^{n-p} \right)
$$
holds true for constants $c_1 = c_1(n,p)>0$ and $c_2 = c_2(n,p,\alpha) > 0$.
\end{lemma}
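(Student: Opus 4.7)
For the upper bound, I would note that $\mathcal{H}\subset\overline{Q_{\rho,\tau}^+}$, so the monotonicity of $\ca_{\mathrm{var}}$ (Lemma~\ref{lem:capvar-prop}) reduces the task to bounding $\ca_{\mathrm{var}}(\overline{Q_{\rho,\tau}^+},\Omega_T)$. A direct application of Lemma~\ref{lem:cap-union-cylinder-upperbound} to this single cylinder will yield the bound $c_2(\rho^n+\tau\rho^{n-p})$ with $c_2=c_2(n,p,\alpha)$.

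For the lower bound, I would split the estimate into two separate contributions. The inequality $\int_0^T \ca_{\mathrm{e}}(\pi_t(\mathcal{H}),\Omega)\,\d t\le\ca_{\mathrm{var}}(\mathcal{H},\Omega_T)$ follows immediately from Lemma~\ref{lem:capvar-geq-ellipticint}, since any admissible $v\in C_0^\infty(\Omega\times\R)$ with $v\ge\chi_{\mathcal{H}}$ has slice $v(\cdot,t)\ge\chi_{\pi_t(\mathcal{H})}$ admissible for the elliptic capacity of $\pi_t(\mathcal{H})$ in $\Omega$.

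The remaining estimate $c\rho^n\le\ca_{\mathrm{var}}(\mathcal{H},\Omega_T)$ is the heart of the argument. Since $v(x,h(x))\ge1$ for every $x\in\overline{B_\rho(x_o)}$, we have
\begin{equation*}
|B_\rho(x_o)|\le\int_{B_\rho(x_o)}v(x,h(x))^2\,\d x,
\end{equation*}
and my plan is to bound the right-hand side by $c\|v\|_{\mathcal{W}(\Omega_T)}$. Writing $v(x,h(x))^2=v(x,t_o)^2+2\int_{t_o}^{h(x)}v\,\partial_s v\,\d s$ via the fundamental theorem of calculus and integrating in $x$ will yield
\begin{equation*}
\int_{B_\rho(x_o)}v(x,h(x))^2\,\d x\le\|v\|_{L^\infty(0,T;L^2(\Omega))}^2+2\bigg|\iint_{R}v\,\partial_s v\,\d x\,\d s\bigg|,
\end{equation*}
where $R=\{(x,s):x\in B_\rho(x_o),\,t_o<s<h(x)\}\subset\Omega_T$. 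To handle the double integral I would use the characterization $\partial_s v=\Div_x F$ for some $F\in L^{p'}(\Omega_T,\R^n)$ whose norm is controlled by $\|\partial_s v\|_{\mathcal{V}'(\Omega_T)}$, then integrate by parts in $x$ on the time-slices of $R$ and apply Young's inequality to bound the resulting expression by $c\|v\|_{\mathcal{W}(\Omega_T)}$. Passing to the infimum over $v$ will then conclude the argument.

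The hardest step will be the integration-by-parts on the region $R$, whose upper boundary is the graph $\mathcal{H}$, since $h$ is merely assumed continuous. Boundary contributions involving the gradient of $h$ will arise, and dealing with them will require regularizing $h$ and passing to the limit, the key being that these boundary integrals can ultimately be absorbed into the energy norm of $v$.
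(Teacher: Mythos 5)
Your upper bound and the lower bound $\int_0^T\ca_{\mathrm{e}}(\pi_t(\mathcal{H}),\Omega)\,\d t\le\ca_{\mathrm{var}}(\mathcal{H},\Omega_T)$ are correct and follow exactly the paper's route (Lemma~\ref{lem:cap-union-cylinder-upperbound} and Lemma~\ref{lem:capvar-geq-ellipticint}). The gap is in the $\rho^n$ estimate, and it is genuine.

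The problem is the last step you describe. After the fundamental theorem of calculus you must control $\iint_R v\,\partial_s v\,\d x\,\d s$ with $R=\{(x,s):x\in B_\rho(x_o),\,t_o<s<h(x)\}$. Writing $\partial_s v=\Div_x F$ and integrating by parts on the time slices $\pi_s(R)=\{x:h(x)>s\}$ produces boundary integrals of the form $\int_{\partial\{h>s\}}vF\cdot\nu\,\d\mathcal{H}^{n-1}$. There are two obstructions that do not disappear by smoothing $h$. First, $F\in L^{p'}(\Omega_T,\R^n)$ has no trace on hypersurfaces, and even if you choose a smooth representative $F$ for the given smooth $v$, the constant in $\|F\|_{L^{p'}}\lesssim\|\partial_t v\|_{\mathcal{V}'(\Omega_T)}$ controls only an integral norm, not pointwise values on the graph. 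Second and more fundamentally, the co-area formula shows $\int_{t_o}^{t_o+\tau}\mathcal{H}^{n-1}(\partial\{h_\eps>s\})\,\d s=\int_{B_\rho}|\nabla h_\eps|\,\d x$, so the total size of the boundary contributions is governed by the total variation of $h$; but $h$ is assumed merely continuous, so $\int|\nabla h_\eps|$ can blow up as $\eps\downarrow0$. Equivalently, $v\chi_R\notin\mathcal{V}(\Omega_T)$ because $\nabla\chi_R$ is a singular measure supported on the graph whose mass is uncontrolled. So the scheme of pairing $\partial_t v$ against $v\chi_R$ cannot be made quantitative with a constant independent of $h$, and neither regularization of $h$ nor Young's inequality rescues it.

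The paper circumvents the time-derivative term entirely by passing to the energy capacity $\ca_{\mathrm{en}}$ (Definition~\ref{def:encap}), which involves only $\esssup_t\int v^2$ and $\iint|\nabla v|^p$, and which is comparable to $\ca_{\mathrm{var}}$ for compact sets by Lemma~\ref{lem:varcap-eq-encap}. The admissible objects for $\ca_{\mathrm{en}}$ are supercaloric functions, and the decisive structural fact is that if $v$ is a bounded supercaloric function with $v\ge\chi_{\mathcal{H}}$, then the function $\tilde v$ obtained by setting $\tilde v=1$ on the full subgraph $\widetilde{\mathcal{H}}=\{(x,t):x\in\overline{B_\rho},\,t\in[t_o,h(x)]\}$ and $\tilde v=\min\{v,1\}$ elsewhere is still supercaloric (pasting lemma from \cite{BBGP}). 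This converts the constraint on the measure-zero graph into a constraint $\tilde v\ge1$ on a solid region; continuity of $h$ then guarantees a time slice $t_o+\delta$ at which $\tilde v\ge1$ on a set of measure at least $\frac14|B_\rho|$, and an energy estimate for weak supersolutions obtained by testing the weak formulation delivers $\rho^n\le c\,\ca_{\mathrm{en}}(\mathcal{H},\Omega_T)$. To make your approach work you would essentially need to reprove this comparison-based trick in the $\mathcal{W}$-framework, but truncation and pasting do not preserve $\mathcal{W}(\Omega_T)$, so the supercaloric/energy formulation is the natural one here.
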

\begin{proof}
Lemma~\ref{lem:capvar-prop}\,(ii) together with
Lemma~\ref{lem:cap-union-cylinder-upperbound} implies the upper bound,
since $\mathcal{H}\subset\overline{Q_{\rho,\tau}^+}$. 

By definition of $\ca_{\mathrm{en}}$, for every $\eps > 0$ there exists a bounded supercaloric function $v \in \mathcal{V}(\Omega_T)$ such that $v \geq \chi_{\mathcal{H}}$ and 
$$
\|v\|_{\mathrm{en},\Omega_T} \leq \ca_{\mathrm{en}} (\mathcal{H}, \Omega_T) + \eps.
$$
Define 
$$
\widetilde{\mathcal{H}} := \left\{ (x,t) : x \in \overline{B_\rho(x_o)} ,\,t\in [t_o, h(x)] \right\}.
$$

By~\cite[Lemma 2.9]{BBGP} the function
\[  \tilde v := \left\{
\begin{array}{ll}
      \min\{v,1 \} & (x,t) \in \Omega_T \setminus \widetilde{\mathcal{H}} \\
      1 & (x,t) \in  \widetilde{\mathcal{H}} \\
\end{array} 
\right. \]
is a supercaloric function since it is lower semicontinuous.

Suppose that $|\pi_{t_o} (\mathcal{H})| \geq \frac{1}{2}
|B_\rho(x_o)|$. Observe that $v \geq 1$ on
$\pi_{t_o}(\mathcal{H})$. By Lemma~\ref{lem:supercal-prop}, the
bounded 
supercaloric function $v$ is a weak supersolution. For any $s \in
(t_o,T)$ we may use $(v_\eps \chi_{t_o,s})_\eps$ as a test function in
the weak formulation of $v$, where the subscript $\eps$ denotes the 
standard mollification in time. In this way, we obtain
\begin{align*}
\frac14 |B_\rho(x_o)| &\leq \frac12 \int_{\Omega} v^2(x,t_o) \, \d x \leq \frac12 \int_{\Omega} v^2(x,s) \, \d x + \iint_{\Omega \times (t_o,s)} | \nabla v|^p \, \d x \d t \\
&\leq \|v\|_{\mathrm{en},\Omega_{T}} \leq \ca_{\mathrm{en}} (\mathcal{H}, \Omega_T) + \eps,
\end{align*}
where we also used the assumptions $v \geq 1$ on $\pi_{t_o}(\mathcal{H})$ and $|\pi_{t_o} (\mathcal{H})| \geq \frac{1}{2} |B_\rho(x_o)|$.

Then suppose that $|\pi_{t_o} (\mathcal{H})| < \frac{1}{2}
|B_\rho(x_o)|$. By continuity of $h$ there exists $\delta > 0$ such
that $|\pi_{t_o+\delta}(\widetilde{\mathcal{H}})| \geq \frac14
|B_\rho(x_o)|$. By diminishing $\delta>0$ further if necessary, we can ensure that
there exists $s \in (t_o+\delta, T)$ with $\pi_{s}
(\widetilde{\mathcal{H}}) = \varnothing$. Arguing similarly as above,
we use the weak
formulation for $\tilde v$ to conclude 
\begin{align*}
\frac18 |B_\rho(x_o)| &\leq \frac12 \int_{\Omega} \tilde v^2(x,t_o+\delta) \, \d x \leq \frac12 \int_{\Omega} \tilde v^2(x,s) \, \d x + \iint_{\Omega \times (t_o+\delta,s)} | \nabla \tilde v|^p \, \d x \d t \\
&\leq \| v\|_{\mathrm{en},\Omega_{T}} \leq \ca_{\mathrm{en}} (\mathcal{H}, \Omega_T) + \eps.
\end{align*}
Since $\eps > 0$ was arbitrary, it follows that 
$$
\rho^n \leq c(n) \ca_{\mathrm{en}} (\mathcal{H}, \Omega_T).
$$
Observe that for arbitrary compact set $K \subset \Omega_T$ we have $\ca_{\mathrm{en}} (K, \Omega_T) \leq c(n,p) \ca_{\mathrm{var}} (K, \Omega_T)$ by Lemmas~\ref{lem:capvar-limit-compact} and~\ref{lem:varcap-eq-encap}. By using Lemma~\ref{lem:capvar-geq-ellipticint} the claim follows.
\end{proof}

By the results above we obtain the following estimate for the variational capacity of a space-time cylinder.

\begin{lemma} \label{lem:capvar-cylinder}
Let $1<p<n$ and $Q_{\rho,\tau} = B_\rho(x_o) \times (t_o-\tau,t_o)$ be
a cylinder such that $Q_{2\rho,\tau} \Subset \Omega_T$. Then 
$$
\ca_{\mathrm{var}} (\overline{Q}_{\rho,\tau},\Omega_T) \approx \rho^n + \tau \rho^{n-p}
$$
up to a constant depending only on $n,p$ and $\alpha$.

If $\Omega = B_{2\rho}(x_o)$ the result holds for all $1<p<\infty$.
\end{lemma}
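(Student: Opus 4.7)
The proof is essentially an assembly of lemmas already established in the section, so I would carry it out in a short two-step argument.

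The upper bound $\ca_{\mathrm{var}}(\overline{Q}_{\rho,\tau},\Omega_T)\le c(\rho^n+\tau\rho^{n-p})$ is immediate from Lemma~\ref{lem:cap-union-cylinder-upperbound} applied to the single cylinder $Q_{\rho,\tau}$. The hypothesis $Q_{2\rho,\tau}\Subset\Omega_T$ guarantees that $B_{2\rho}(x_o)$ sits compactly inside $\Omega$ and that $[t_o-\tau,t_o]\subset(0,T)$, which is exactly what that lemma requires. No further work is needed here, since the constant from Lemma~\ref{lem:cap-union-cylinder-upperbound} depends only on $n,p,\alpha$.

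For the lower bound I would prove separately that $\ca_{\mathrm{var}}(\overline{Q}_{\rho,\tau},\Omega_T)\ge c\tau\rho^{n-p}$ and $\ca_{\mathrm{var}}(\overline{Q}_{\rho,\tau},\Omega_T)\ge c\rho^n$, and then add the two inequalities. The first estimate is precisely the content of Lemma~\ref{lem:capvar-lowerbound}, whose hypothesis $1<p<n$ (respectively $\Omega=B_{2\rho}(x_o)$ for general $p$) matches the two cases of the present statement. The second estimate is obtained by specializing Lemma~\ref{lem:capvar-H-est} to the degenerate graph given by the constant function $h\equiv t_o-\tau$ on $\R^n$ (written in the notation of Lemma~\ref{lem:capvar-H-est}, this is $h\equiv t_o^{\scriptscriptstyle(4.7)}$, so the boundary condition $h\equiv t_o^{\scriptscriptstyle(4.7)}$ on $\partial B_\rho(x_o)$ is satisfied trivially). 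With this choice $\mathcal{H}=\overline{B_\rho(x_o)}\times\{t_o-\tau\}\subset\overline{Q}_{\rho,\tau}$, so by the monotonicity of variational capacity (Lemma~\ref{lem:capvar-prop}\,(ii)) and the lower bound in Lemma~\ref{lem:capvar-H-est},
\begin{equation*}
  \ca_{\mathrm{var}}(\overline{Q}_{\rho,\tau},\Omega_T)\ge\ca_{\mathrm{var}}(\mathcal{H},\Omega_T)\ge c_1\rho^n.
\end{equation*}
Crucially, the proof of the $\rho^n$-part of Lemma~\ref{lem:capvar-H-est} uses only the $L^\infty(L^2)$-piece of the energy norm of a supercaloric competitor and therefore does not depend on the restriction $p<n$, so this argument also covers the additional case $\Omega=B_{2\rho}(x_o)$ with $1<p<\infty$.

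Adding the two lower bounds (each times $\tfrac12$) gives $\ca_{\mathrm{var}}(\overline{Q}_{\rho,\tau},\Omega_T)\ge c(n,p,\alpha)(\rho^n+\tau\rho^{n-p})$, which together with the upper bound yields the asserted two-sided estimate. There is no genuine obstacle in this proof; the only minor point to double-check is that the degenerate choice $h\equiv t_o-\tau$ is indeed admissible in Lemma~\ref{lem:capvar-H-est} (which it is, since the boundary condition reduces to an identity), and that all enclosing assumptions $Q_{2\rho,\tau}\Subset\Omega_T$ carry over to the invoked lemmas.
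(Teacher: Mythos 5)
Your proof is correct and follows essentially the same route as the paper's: upper bound from Lemma~\ref{lem:cap-union-cylinder-upperbound}, the $\tau\rho^{n-p}$ lower bound from Lemma~\ref{lem:capvar-lowerbound}, and the $\rho^n$ lower bound by specializing Lemma~\ref{lem:capvar-H-est} to a constant graph $h$ together with monotonicity (Lemma~\ref{lem:capvar-prop}\,(ii)). The only cosmetic difference is that you place the flat slice at the bottom of the cylinder ($h\equiv t_o-\tau$, i.e. $\mathcal H=\overline{B_\rho(x_o)}\times\{t_o-\tau\}$), whereas the paper uses the top slice $\overline{B_\rho(x_o)}\times\{t_o\}$; both are admissible in Lemma~\ref{lem:capvar-H-est} and both are contained in $\overline{Q}_{\rho,\tau}$.
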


\begin{remark}
The result holds also for open cylinders $Q_{\rho,\tau}$ by Lemma~\ref{lem:capvar-upwards-conv}. 
\end{remark}

\begin{proof}
By Lemmas~\ref{lem:capvar-lowerbound} and~\ref{lem:cap-union-cylinder-upperbound} we obtain
$$
c^{-1} \tau \rho^{n-p} \leq \ca_{\mathrm{var}} (\overline{Q}_{\rho,\tau},\Omega_T) \leq c (\rho^n + \tau \rho^{n-p} ).
$$
By choosing $h \in C(\R^n)$ such that $h(x) = t_o$ for every $x \in \overline{B_\rho(x_o)}$ in Lemma~\ref{lem:capvar-H-est}, we obtain that 
$$
c^{-1} \rho^n \leq \ca_{\mathrm{var}}(\overline{B_\rho(x_o)}\times \{t_o\}, \Omega_T) \leq \ca_{\mathrm{var}}(\overline{Q}_{\rho,\tau}, \Omega_T),
$$
where we also used Lemma~\ref{lem:capvar-prop} (ii). Thus the claim follows.
\end{proof}

\section{Capacity and Hausdorff measure} \label{sec:cap-hausdorff}

We use parabolic cylinders of the type
\begin{equation*}
  Q_\rho(z_o):= B_\rho(x_o)\times\Lambda_\rho(t_o):= B_\rho(x_o)\times (t_o-\rho^p,t_o+\rho^p),
\end{equation*}
for $z_o=(x_o,t_o)$ and $\rho>0$.

We consider a metric defined by
$$
d_p ((x,t);(0,0)) = \max \left\{ |x|, |t|^\frac{1}{p} \right\}
$$
for $(x,t) \in \R^{n+1}$ and a parabolic diameter of a set  $E \subset \R^{n+1}$ by
$$
d_p(E) = \sup \left\{ d_p((x,t);(y,s)) : (x,t),(y,s) \in E \right\}.
$$
Moreover, for a set $E \subset \Omega_T$ we define the $s$-dimensional Hausdorff $\delta$-content with respect to $d_p$ as
$$
\mathcal{P}_\delta^s(E) = \inf \left\{ \sum_{i=1}^\infty d_p(A_i)^s: E \subset \bigcup_{i=1}^\infty A_i, A_i \subset \Omega_T, d_p(A_i) < \delta \right\}.
$$
The $s$-dimensional Hausdorff measure with respect to $d_p$ is obtained by
$$
\mathcal{P}^s (E) = \lim_{\delta \to 0} \mathcal{P}_\delta^s(E).
$$

In the following, we denote
$$
(u)_{x_o,\rho}(t) = (u)_{B_\rho(x_o)}(t) = \bint_{B_\rho(x_o)\times \{t\}} u \, \d x,
$$
and
$$
(u)_{z_o,\rho} = (u)_{Q_\rho(z_o)} = \biint_{Q_\rho(z_o)} u \, \d x\d t.
$$

First we prove a suitable gluing type lemma.

\begin{lemma}\label{lem:gluing}
  Let $u\in L^1(Q_\rho(z_o))$ be a function that satisfies $\partial_tu= \Div F$ in the
  distributional sense in $Q_\rho(z_o)$, for a vector field $F\in
  L^1(Q_\rho(z_o),\R^n)$. Then there exists a radius $\hat\rho\in
  (\frac{\rho}{2},\rho)$ such that for a.e.
  $t_1,t_2\in\Lambda_\rho(t_o)$ there holds 
\begin{align*}
 \big|(u)_{x_o,\hat\rho}(t_2) - (u)_{x_o,\hat\rho}(t_1)\big| 
	&\le
	2^{n+2}\rho^{p-1}
	\biint_{Q_{\rho}(z_o)} |F|\, \dx\dt.
\end{align*}
\end{lemma}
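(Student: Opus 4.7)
The idea is to test the distributional equation $\partial_tu=\Div F$ against a product test function $\varphi(x)\eta(t)$, where $\varphi$ is a Lipschitz spatial cutoff supported in a thin annulus around $\partial B_{\hat\rho}(x_o)$ and $\eta$ approximates $\chi_{(t_1,t_2)}$, and to choose the radius $\hat\rho\in(\rho/2,\rho)$ via a Fubini-type averaging that keeps the ``lateral flux'' of $F$ through $\partial B_{\hat\rho}\times\Lambda_\rho(t_o)$ small. The estimate on the oscillation of $(u)_{x_o,\hat\rho}$ in time then follows from the fundamental theorem of calculus applied to a suitable absolutely continuous function.

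First I would select the radius $\hat\rho$. Define $f(r):=\int_{\Lambda_\rho(t_o)}\int_{\partial B_r(x_o)}|F|\,\d\mathcal H^{n-1}\,\d t$ for $r\in(\rho/2,\rho)$. Spherical coordinates yield
\begin{equation*}
\int_{\rho/2}^{\rho}f(r)\,\d r=\iint_{(B_\rho(x_o)\setminus B_{\rho/2}(x_o))\times\Lambda_\rho(t_o)}|F|\,\d x\,\d t\le\iint_{Q_\rho(z_o)}|F|\,\d x\,\d t.
\end{equation*}
A Chebyshev-type argument --- the inequality $f(r)>\tfrac{2}{\rho}\iint_{Q_\rho(z_o)}|F|\,\d x\,\d t$ cannot hold on a subset of $(\rho/2,\rho)$ of full measure --- shows that $\{r\colon f(r)\le\tfrac{2}{\rho}\iint_{Q_\rho(z_o)}|F|\,\d x\,\d t\}$ has positive measure in $(\rho/2,\rho)$ and hence contains Lebesgue points of $f$. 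I fix $\hat\rho$ to be such a point.

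Next, for small $h>0$ with $\hat\rho+h<\rho$, take the radially symmetric Lipschitz cutoff $\varphi_h$ equal to $1$ on $\overline B_{\hat\rho}(x_o)$, vanishing outside $B_{\hat\rho+h}(x_o)$, with $|\nabla\varphi_h|=h^{-1}\chi_{B_{\hat\rho+h}\setminus B_{\hat\rho}}$. Testing the distributional identity with $\varphi_h(x)\eta(t)$ for arbitrary $\eta\in C_0^\infty(\Lambda_\rho(t_o))$ (justified for Lipschitz $\varphi_h$ via mollification in the $x$-variable) shows that $U_h(t):=\int u(x,t)\varphi_h(x)\,\d x$ is absolutely continuous on $\Lambda_\rho(t_o)$ with weak derivative satisfying $|U_h'(t)|\le h^{-1}\int_{B_{\hat\rho+h}\setminus B_{\hat\rho}}|F(x,t)|\,\d x\in L^1(\Lambda_\rho(t_o))$. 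The fundamental theorem of calculus then gives, for a.e. $t_1,t_2\in\Lambda_\rho(t_o)$,
\begin{equation*}
|U_h(t_2)-U_h(t_1)|\le h^{-1}\int_{t_1}^{t_2}\int_{B_{\hat\rho+h}\setminus B_{\hat\rho}}|F|\,\d x\,\d t\le h^{-1}\int_0^hf(\hat\rho+s)\,\d s,
\end{equation*}
where the last step is Fubini combined with $(t_1,t_2)\subset\Lambda_\rho(t_o)$.

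To conclude I would pass to the limit $h\to0$. Dominated convergence yields $U_h(t)\to|B_{\hat\rho}(x_o)|(u)_{x_o,\hat\rho}(t)$ for a.e. $t$, and the Lebesgue-point property of $\hat\rho$ forces $h^{-1}\int_0^hf(\hat\rho+s)\,\d s\to f(\hat\rho)\le\tfrac{2}{\rho}\iint_{Q_\rho(z_o)}|F|\,\d x\,\d t$. Dividing by $|B_{\hat\rho}(x_o)|\ge|B_1|\rho^n/2^n$ and using $|Q_\rho(z_o)|=2|B_1|\rho^{n+p}$ to rewrite the absolute integral of $|F|$ as an average produces the claimed constant $2\cdot 2^{n+1}=2^{n+2}$. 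The one point requiring care is that a single $\hat\rho$ must serve for almost every pair $(t_1,t_2)$; this is handled by the absolute continuity of $U_h$, which encapsulates the relevant a.e. statement in $t$ once $\hat\rho$ is fixed.
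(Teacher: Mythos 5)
Your proof is correct and takes essentially the same approach as the paper: test $\partial_t u = \Div F$ against a product of a time cutoff and a thin annular spatial cutoff, select a good radius $\hat\rho\in(\rho/2,\rho)$ by averaging the lateral flux of $F$ over $r$, and normalize by $|B_{\hat\rho}|\ge 2^{-n}|B_\rho|$ to produce the stated constant. The only organizational difference is that you fix $\hat\rho$ up front as a Lebesgue point of $r\mapsto\int_{\Lambda_\rho}\int_{\partial B_r}|F|$ and defer the annulus-shrinking limit $h\to 0$ to the end, which makes the uniformity of $\hat\rho$ over a.e.\ pairs $(t_1,t_2)$ slightly more explicit than in the paper, where the surface-integral limit is taken first and the averaging over $r$ afterwards.
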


\begin{proof}
  Since the center of the cylinder is fixed throughout the proof, we
  omit it in the notation and write $Q_\rho:=Q_\rho(z_o)$,
  $B_\rho:=B_\rho(x_o)$ and $\Lambda_\rho:=\Lambda_\rho(t_o)$. 
We fix $t_1,t_2\in\Lambda_\rho$ with $t_1<t_2$ and consider a radius
$r \in (\frac{\rho}{2}, \rho)$.
For $\delta>0$ and $0<\varepsilon\ll 1$, we define $\xi_\epsilon\in W^{1,\infty}_0(\Lambda_\rho)$ by 
$$
	\xi_\epsilon(t)
	:=
	\left\{
	\begin{array}{cl}
	0 ,& \mbox{for $t_o-\rho^p\le t\le t_1-\epsilon$,}\\[3pt]
	\frac{t-t_1+\epsilon}{\epsilon}, & \mbox{for $t_1-\epsilon< t< t_1$,}\\[3pt]
	1 ,& \mbox{for $t_1\le t\le t_2$,}\\[3pt]
	\frac{t_2+\epsilon-t}{\epsilon}, & \mbox{for $t_2< t< t_2+\epsilon$,}\\[3pt]
	0 ,& \mbox{for $t_2+\epsilon\le t\le t_o+\rho^p$,}
	\end{array}
	\right.
$$
and a function $\Psi_\delta(x):=\psi_\delta(|x-x_o|)\in
W^{1,\infty}_0(B_{r+\delta})$ by letting
$$
	\psi_\delta(s)
	:=
	\left\{
	\begin{array}{cl}
	1 ,& \mbox{for $0\le s\le r$,}\\[3pt]
	\frac{r+\delta-s}{\delta}, & \mbox{for $r< s< r+\delta$,}\\[3pt]
	0 ,& \mbox{for $r+\delta\le s\le \rho$.}
	\end{array}
	\right.
$$
Testing the equation $\partial_tu=\div F$ with
the test function $\varphi(x,t):= \Psi_\delta(x)\xi_\epsilon(t)\in
W^{1,\infty}_0(Q_\rho)$, which can be justified by a standard
approximation argument, we deduce
\begin{equation*}
  \iint_{Q_\rho}u\xi_\eps'(t)\Psi_\delta(x)\,\dx\dt
  =
  \iint_{Q_\rho}F\cdot\nabla\Psi_\delta(x)\xi_\eps(t)\,\dx\dt.
\end{equation*}
We let $\varepsilon,\delta \downarrow 0$ and obtain
\begin{align*}
	\bigg|\int_{B_{r}} \big[u(t_2)-u(t_1)\big]\,\d x\bigg| 
	&= 
	\bigg|\int_{t_1}^{t_2}\int_{\partial B_{r}}
	F\cdot \frac{x-x_o}{|x-x_o|}\,\d \mathcal{H}^{n-1}(x) \d t\bigg| \\
        &\le
          \int_{t_1}^{t_2}\int_{\partial B_{r}}|F|\, \d\mathcal{H}^{n-1}(x)\d t
\end{align*}
for a.e. $t_1,t_2\in\Lambda_\rho$ and
a.e. $r\in(\frac{\rho}{2},\rho)$. 
For the mean integral of the right hand side over
$r\in(\frac\rho2,\rho)$, we have
\begin{align*}
  \mint_{\frac\rho2}^\rho \int_{t_1}^{t_2}\int_{\partial
  B_{r}}|F|\d\mathcal{H}^{n-1}(x)\, \d t\d r
  &\le 
  \frac{2}{\rho}\iint_{Q_\rho}|F|\, \dx\dt.
\end{align*}
Therefore, we can choose $\hat\rho\in(\frac\rho2,\rho)$ such that
\begin{align*}
  \bigg|\int_{B_{\hat\rho}} \big[u(t_2)-u(t_1)\big]\, \d x\bigg| 
  &\le 
  \frac{2}{\rho}\iint_{Q_\rho}|F|\,\dx\dt.
\end{align*}
After diving both sides by $|B_{\hat\rho}|$ and using $|B_{\hat\rho}|\ge
2^{-n}|B_{\rho}|=2^{-n-1}\rho^{-p}|Q_\rho|$, we arrive at the claim.
\end{proof}

\begin{corollary}\label{cor:poincare}
  Let $u\in L^p(\Lambda_\rho(t_o);W^{1,p}_0(B_\rho(x_o)))$ be a
  function with $\partial_tu=\div F$ in the distributional sense in
  $Q_\rho(z_o)$, for a vector field $F\in
  L^1(Q_\rho(z_o),\R^n)$. Then we have the Poincar\'e type
  inequality
  \begin{equation*}
    \biint_{Q_\rho(z_o)}|u-(u)_{z_o,\rho}|^p\, \dx\dt
    \le
    c\rho^p\biint_{Q_\rho(z_o)}|\nabla u|^p\, \dx\dt
    +
    c\bigg(\rho^{p-1}\biint_{Q_\rho(z_o)}|F|\, \dx\dt\bigg)^{p}
  \end{equation*}
  with a constant $c=c(n,p)$. 
\end{corollary}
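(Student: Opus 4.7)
The strategy is to combine Lemma~\ref{lem:gluing} with a slice-wise elliptic Poincar\'e--Friedrichs inequality. The gluing lemma provides a good radius $\hat\rho \in (\rho/2, \rho)$ on which the slice-means $(u)_{B_{\hat\rho}(x_o)}(t)$ oscillate slowly in time, with oscillation controlled by the $F$-term; once this is done, the parabolic inequality follows from standard elliptic estimates on each time slice.

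First, I would apply Lemma~\ref{lem:gluing} to extract a radius $\hat\rho \in (\tfrac{\rho}{2}, \rho)$ satisfying
\begin{equation*}
	\big|(u)_{B_{\hat\rho}(x_o)}(t_2) - (u)_{B_{\hat\rho}(x_o)}(t_1)\big|
	\le M
	\qquad\mbox{for a.e.\ $t_1, t_2 \in \Lambda_\rho(t_o)$},
\end{equation*}
where $M := 2^{n+2}\rho^{p-1}\biint_{Q_\rho(z_o)}|F|\,\dx\dt$. Setting $c_* := \fint_{\Lambda_\rho(t_o)} (u)_{B_{\hat\rho}(x_o)}(s)\,\d s$, Jensen's inequality then gives $|(u)_{B_{\hat\rho}(x_o)}(t) - c_*| \le M$ for a.e.\ $t\in\Lambda_\rho(t_o)$.

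Next, I would use the triangle inequality in the form
\begin{equation*}
	|u(x,t) - c_*|^p
	\le
	2^{p-1}\big[|u(x,t) - (u)_{B_{\hat\rho}(x_o)}(t)|^p + M^p\big]
\end{equation*}
and integrate over $Q_\rho(z_o)$. For the first term on the right, since $u(\cdot,t) \in W^{1,p}_0(B_\rho(x_o))$ for a.e.\ $t$, I split
\begin{align*}
	\int_{B_\rho}|u - (u)_{B_{\hat\rho}}(t)|^p\,\d x
	&\le 2^{p-1}\int_{B_\rho}|u|^p\,\d x + 2^{p-1}|B_\rho|\,|(u)_{B_{\hat\rho}}(t)|^p\\
	&\le c(n,p)\int_{B_\rho}|u|^p\,\d x
	\le c(n,p)\rho^p \int_{B_\rho}|\nabla u|^p\,\d x,
\end{align*}
where in the middle step I use $|(u)_{B_{\hat\rho}}(t)|^p \le \fint_{B_{\hat\rho}}|u|^p\,\d x \le 2^n|B_\rho|^{-1}\int_{B_\rho}|u|^p\,\d x$ (via H\"older/Jensen and $|B_\rho|/|B_{\hat\rho}|\le 2^n$), and in the last step I invoke the Friedrichs--Poincar\'e inequality. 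Integrating in time, this yields $\biint_{Q_\rho}|u - c_*|^p\,\dx\dt \le c\rho^p \biint_{Q_\rho}|\nabla u|^p\,\dx\dt + cM^p$.

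Finally, since $(u)_{z_o,\rho}$ is a constant, the elementary bound
\begin{equation*}
	\biint_{Q_\rho(z_o)} |u - (u)_{z_o,\rho}|^p\,\dx\dt
	\le 2^p \biint_{Q_\rho(z_o)} |u - c_*|^p\,\dx\dt
\end{equation*}
combined with the previous estimate and the definition of $M$ gives the claim. There is no substantial obstacle: the whole point is to replace the temporal mean $(u)_{z_o,\rho}$ (whose control through the equation $\partial_tu=\div F$ is nontrivial) by a surrogate constant $c_*$ built from spatial slice-means, which is exactly what the gluing lemma allows; the remaining work is a chain of triangle inequalities and slice-wise elliptic Poincar\'e estimates.
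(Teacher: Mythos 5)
Your argument is correct and follows the paper's strategy: extract a good radius $\hat\rho$ from Lemma~\ref{lem:gluing}, control the slice means $(u)_{B_{\hat\rho}(x_o)}(t)$ around a time-averaged surrogate constant $c_*$, apply a slice-wise elliptic Poincar\'e inequality, and finally absorb $c_*$ into $(u)_{z_o,\rho}$ by mean-minimization. The only small deviation is in the slice-wise step, where you split off $|(u)_{B_{\hat\rho}(x_o)}(t)|^p$ and invoke the Friedrichs inequality (using the zero boundary values), whereas the paper keeps the deviation $|u-(u)_{B_\rho(x_o)}(t)|^p$ and uses the Poincar\'e--Wirtinger inequality, which does not require the $W^{1,p}_0$ assumption; under the stated hypothesis both routes are valid and yield the same $c(n,p)$ dependence.
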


\begin{proof}
  As in the previous proof, we omit the center of the cylinder in the
  notation. 
  With the radius $\hat\rho\in[\frac\rho2,\rho]$ provided by
  Lemma~\ref{lem:gluing}, we estimate
  \begin{align*}
    &\biint_{Q_\rho}|u-(u)_{\rho}|^p\, \dx\dt
    \le
    c(p)\biint_{Q_\rho}|u-(u)_{\hat\rho}|^p\, \dx\dt\\
    &\qquad\le
      c(p)\biint_{Q_\rho}|u-(u)_{\hat\rho}(t)|^p\, \dx\dt
      +
      c(p)\mint_{\Lambda_\rho}|(u)_{\hat\rho}(t)-(u)_{\hat\rho}|^p\, \dt\\
    &\qquad\le
      c(n,p)\biint_{Q_\rho}|u-(u)_{\rho}(t)|^p\, \dx\dt
      +
      c(p)\mint_{\Lambda_\rho}\mint_{\Lambda_\rho}|(u)_{\hat\rho}(t)-(u)_{\hat\rho}(\tau)|^p\, \dt\d \tau.
  \end{align*}
  We estimate the second last integral by slicewise applications of
  Poincar\'e's inequality and the last one by
  Lemma~\ref{lem:gluing}. This yields the estimate
  \begin{align*}
    &\biint_{Q_\rho}|u-(u)_{\rho}|^p\, \dx\dt
      \le
      c\rho^p\biint_{Q_\rho}|\nabla u|^p\, \dx\dt
      +
      c\bigg(\rho^{p-1}\biint_{Q_\rho}|F|\, \dx\dt\bigg)^p,
  \end{align*}
  with a constant $c$ depending on $n$ and $p$. 
\end{proof}

Next, we establish a connection between the capacity and parabolic
Hausdorff measures, as in Theorem~\ref{thm:hausdorff-capvar}~\eqref{eq:hausdorff-capvar1}. We follow the strategy from \cite[Section 4.7, Theorem 4]{EG}.
\begin{proposition}\label{prop:cap-hausdorff}
  Let $E\subset\Omega_T$ be a set with
  $\ca_{\mathrm{var}}(E,\Omega_T)=0$. Then we have $\mathcal{P}^s(E)=0$ for every
  $s>n$. 
\end{proposition}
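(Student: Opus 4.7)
The strategy is to adapt the classical elliptic argument of \cite[Sect.~4.7, Thm.~4]{EG} to the parabolic setting, with the gluing lemma (Lemma~\ref{lem:gluing}) and the parabolic Poincar\'e inequality (Corollary~\ref{cor:poincare}) playing the role of the elliptic Poincar\'e inequality. The aim is to establish the quantitative estimate
\begin{equation*}
\mathcal{P}^s(K)\le C\,\ca_\mathrm{var}(K,\Omega_T)^{s/n}
\end{equation*}
for every compact $K\subset\Omega_T$ and every $s>n$, with $C=C(n,p,s,\alpha,\Omega)$. Given this bound, the proposition follows from outer regularity: for $\ca_\mathrm{var}(E,\Omega_T)=0$ and $\eta>0$, Definition~\ref{def:varcap} yields an open set $U\supset E$ with $\ca_\mathrm{var}(U,\Omega_T)<\eta$, and for a compact exhaustion $K_j\nearrow U$ (so that $\ca_\mathrm{var}(K_j)<\eta$) the estimate gives $\mathcal{P}^s(K_j)\le C\eta^{s/n}$. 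Continuity from below of the Borel measure $\mathcal{P}^s$ along $K_j\nearrow U$ produces $\mathcal{P}^s(E)\le\mathcal{P}^s(U)=\lim_j\mathcal{P}^s(K_j)\le C\eta^{s/n}$, which tends to zero as $\eta\downarrow0$.

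To prove the quantitative estimate, fix $\eps>\ca_\mathrm{var}(K,\Omega_T)$ and pick $v\in C^\infty_0(\Omega\times\R)$ with $v\ge\chi_K$ and $\|v\|_{\mathcal{W}(\Omega_T)}<\eps$. By continuity, $v\ge\tfrac12$ on some $d_p$-neighbourhood of $K$. Using the well-known representation $\partial_tv=\Div F$ with $F\in L^{p'}(\Omega_T,\R^n)$, Lemma~\ref{lem:gluing} provides, for each cylinder $Q_r(z_o)$ with $z_o\in K$, a radius $\hat r\in[r/2,r]$ bounding the time-oscillation of the spatial average $t\mapsto(v(\cdot,t))_{B_{\hat r}(x_o)}$ on $\Lambda_r(t_o)$ by $2^{n+2}r^{p-1}\biint_{Q_r(z_o)}|F|\,\dx\dt$. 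Since the spatial average at $t=t_o$ converges to $v(z_o)\ge1$ as $r\downarrow0$ while the oscillation term vanishes, for each $z_o\in K$ I choose $r(z_o)>0$ small enough that
\begin{equation*}
(v(\cdot,t))_{B_{\hat r(z_o)}(x_o)}\ge\tfrac14\qquad\text{for a.e.\ }t\in\Lambda_{r(z_o)}(t_o).
\end{equation*}
Vitali's covering theorem in the parabolic metric $d_p$ then extracts from $\{Q_{r(z)}(z)\}_{z\in K}$ a countable disjoint subfamily $\{Q_{r_i}(z_i)\}_{i\in\N}$ whose $5$-times-enlarged versions $\{Q_{5r_i}(z_i)\}$ still cover $K$.

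The central estimate is $\sum_ir_i^n\le C\eps$. For fixed $t\in(0,T)$, set $I_t:=\{i\colon t\in\Lambda_{r_i}(t_i)\}$; disjointness of the $Q_{r_i}$ forces the balls $\{B_{\hat r_i}(x_i)\}_{i\in I_t}$ to be pairwise disjoint in $\Omega$, each carrying the slicewise mean-value bound $(v(\cdot,t))_{B_{\hat r_i}}\ge\tfrac14$. In the regime $1<p<n$, combining this with the elliptic capacity lower bound $\ca_\mathrm{e}(\overline{B_\rho},\R^n)\gtrsim\rho^{n-p}$ and applying Hardy's inequality (Lemma~\ref{lem:hardy}) to $v(\cdot,t)\in W^{1,p}_0(\Omega)$ yields $\sum_{i\in I_t}r_i^{n-p}\le C\|\nabla v(\cdot,t)\|_{L^p(\Omega)}^p$. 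Integrating over $t\in(0,T)$ and using $|\Lambda_{r_i}(t_i)|=2r_i^p$,
\begin{equation*}
2\sum_ir_i^n=\int_0^T\sum_{i\in I_t}r_i^{n-p}\,\dt\le C\iint_{\Omega_T}|\nabla v|^p\,\dx\dt\le C\eps.
\end{equation*}
The complementary range $p\ge n$ is treated similarly, with the $L^\infty(0,T;L^2(\Omega))$-term of the $\mathcal{W}$-norm replacing the gradient as the driving quantity. Since $r_i^n\le\sum_jr_j^n\le C\eps$, we obtain $\max_ir_i\le C\eps^{1/n}\to0$. For $s>n$,
\begin{equation*}
\mathcal{P}^s_{c\eps^{1/n}}(K)\le\sum_i(5r_i)^s\le 5^s(\max_ir_i)^{s-n}\sum_ir_i^n\le C\eps^{s/n},
\end{equation*}
and passing to the limit $\eps\downarrow\ca_\mathrm{var}(K,\Omega_T)$ and then $\delta\downarrow0$ in $\mathcal{P}^s_\delta$ establishes the quantitative claim.

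The principal obstacle is the slicewise elliptic capacity estimate: the gluing lemma is indispensable to transfer the parabolic mean-value condition into a slicewise one at (a.e.)\ every time, and then one must carefully argue that disjoint balls carrying only average bounds force a gradient (or $L^2$) lower bound with the sharp scaling $r_i^n$. Handling the regimes $p<n$ and $p\ge n$ in parallel, together with the interplay between Hardy's inequality and the $p$-fatness of $\R^n\setminus\Omega$ (which ensures constants independent of the cylinders' distance to $\partial\Omega$), is where the real work lies.
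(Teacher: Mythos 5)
Your proposal takes a genuinely different route from the paper: you aim for a quantitative bound $\mathcal{P}^s(K)\le C\,\ca_{\mathrm{var}}(K,\Omega_T)^{s/n}$ via a Vitali covering, whereas the paper builds a single function $u=\sum_iv_i\in\mathcal{W}(\Omega_T)$ whose parabolic means blow up at every point of $E$, shows via the Poincar\'e inequality of Corollary~\ref{cor:poincare} that boundedness of the parabolic density $r^{-s}\iint_{Q_r(z_o)}[|\nabla u|^p+|F|^{p'}]\,\dx\dt$ would make the means a Cauchy sequence, and then invokes the density theorem for Hausdorff measures (Federer 2.10.19(3)) to conclude. The paper's argument never needs to sum capacity-type lower bounds over a disjoint family.

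Unfortunately, the central step of your argument contains a genuine gap. You claim that the slicewise bound
\begin{equation*}
\sum_{i\in I_t} r_i^{n-p}\le C\,\|\nabla v(\cdot,t)\|_{L^p(\Omega)}^p
\end{equation*}
follows from the mean-value conditions $(v(\cdot,t))_{B_{\hat r_i}(x_i)}\ge\tfrac14$ on pairwise disjoint balls, combined with the elliptic capacity estimate $\ca_{\mathrm{e}}(\overline{B_\rho},\R^n)\gtrsim\rho^{n-p}$ and Hardy's inequality. Neither tool yields this. The elliptic capacity lower bound is an estimate for a \emph{single} ball and does not sum over disjoint balls, because elliptic capacity is only subadditive, not superadditive. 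Concretely, for $n-1<p<n$ one can take $m$ disjoint balls of radius $R/m$ inside a ball $B_R$ far from $\partial\Omega$ and a $W^{1,p}_0(\Omega)$ function $v$ with $v\equiv\tfrac12$ on $B_R$, decaying to $0$ on $B_{2R}\setminus B_R$; then $\|\nabla v\|_{L^p(\Omega)}^p\approx R^{n-p}$ stays fixed while $\sum_i r_i^{n-p}=m^{1-n+p}R^{n-p}\to\infty$ as $m\to\infty$. Hardy's inequality and the Sobolev/Poincar\'e route produce, from the mean-value condition, only $\sum_{i\in I_t}r_i^{n}\lesssim\|v(\cdot,t)\|_{L^2(\Omega)}^2$ (or an analogous bound with $\|\nabla v(\cdot,t)\|_p^{p^*}$). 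Integrating this against $|\Lambda_{r_i}(t_i)|=2r_i^p$ yields at best $\sum_i r_i^{n+p}\lesssim\eps$, which, inserted into your final chain of inequalities, gives $\mathcal{P}^s(E)=0$ only for $s>n+p$. To descend to the sharp threshold $s>n$ one needs a mechanism that converts parabolic $\mathcal{W}$-smallness into a pointwise density statement rather than a covering inequality; that is precisely what the paper's Poincar\'e-plus-density-theorem argument supplies. (As an aside, for $v\in C^\infty_0(\Omega\times\R)$ the gluing lemma plays no role in obtaining the slicewise mean-value bounds, since $v\ge\tfrac12$ pointwise on a parabolic neighbourhood of $K$ by continuity.)
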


\begin{proof}
  In view of Lemma~\ref{lem:cap*-cap'-eq} and Lemma~\ref{lem:cap-cap'-equiv} we have
  $\ca^\ast_{\mathrm{var}}(E,\Omega_T)=0$. Therefore, we can find
  nonnegative functions $v_i\in\mathcal{W}(\Omega_T)$ with $v_i\ge1$
  a.e. on a neighborhood $U_i$ of $E$ and
  \begin{equation}\label{choice-vi}
    \iint_{\Omega_T}|\nabla v_i|^p\, \dx\dt+\|\partial_tv_i\|_{\mathcal{V}'(\Omega_T)}^{p'}+\sup_{0<t<T}\int_\Omega
    v_i^2\, \dx\le \frac1{4^{qi}}
  \end{equation}
  for each $i\in\N$, where $q = \max\{p,p'\}$. We let
  \begin{equation*}
    u:=\sum_{i\in\N}v_i.
  \end{equation*}
  The triangle inequalities for the norms in $L^p(\Omega_T)$,
  $\mathcal{V}'(\Omega_T)$ and $L^\infty(0,T;L^2(\Omega))$ together
  with the choice of $v_i$ according to~\eqref{choice-vi} imply
  $u\in\mathcal{W}(\Omega_T)$. By definition, for every $m\in\N$ we
  have $u\ge m$ on the neighborhood $\cap_{i=1}^mU_i$ of $E$.
  Therefore, we have 
  \begin{equation}\label{means-blow-up}
    \lim_{r\downarrow0}\,(u)_{z_o,r}=\infty
    \qquad\mbox{for every $z_o\in E$}. 
  \end{equation}
  Because of a well-known characterization of $W^{-1,p'}(\Omega)$, we
  can choose $F\in L^{p'}(\Omega_T,\R^n)$ with $\partial_tu=\div F$ and
  $\|F\|_{L^{p'}(\Omega_T)}\le
  c(n)\|\partial_tu\|_{\mathcal{V}'(\Omega_T)}$, see e.g. \cite[Prop.~9.20]{Brezis}.
  We claim that
  \begin{equation}
    \label{claim:density}
    \limsup_{r\downarrow0} \frac1{r^s}\iint_{Q_r(z_o)}\big[|\nabla u|^p+|F|^{p'}\big]\, \dx\dt=\infty
  \end{equation}
  for every $z_o\in E$. 
  Indeed, otherwise we would have
  \begin{equation}
    \label{not-claim}
    \frac1{r^s}\iint_{Q_r(z_o)}\big[|\nabla u|^p+|F|^{p'}\big]\dx\dt\le M
  \end{equation}
  for some $z_o\in E$, all $r\in(0,1]$ and a constant $M>0$. Then, the Poincar\'e
  inequality from Corollary~\ref{cor:poincare} and H\"older's inequality yield the estimate
  \begin{align*}
    \biint_{Q_r(z_o)}|u-(u)_{z_o,r}|^p\, \dx\dt
    &\le
    cr^p\biint_{Q_r(z_o)}|\nabla u|^p \, \dx\dt
    +
    c\bigg(r^{p}\biint_{Q_r(z_o)}|F|^{p'}\, \dx\dt\bigg)^{p-1}\\
    &\le
      cr^{s-n}M+c(r^{s-n}M)^{p-1}
    \le c_Mr^\theta  
  \end{align*}
  with a constant $c_M=c_M(n,p,M)$ and
  $\theta:=(s-n)\min\{1,p-1\}>0$. Therefore, we have
  \begin{align*}
    \big|(u)_{z_o,r/2}-(u)_{z_o,r}\big|
    &\le
    \biint_{Q_{r/2}(z_o)}|u-(u)_{z_o,r}|\,\dx\dt\\
    &\le
    \bigg(2^{n+p}\biint_{Q_{r}(z_o)}|u-(u)_{z_o,r}|^p\,\dx\dt\bigg)^{\frac1p}
    \le
    c_Mr^{\theta/p}  
  \end{align*}
  for every $r\in(0,1]$, and consequently,
  \begin{align*}
    \big|(u)_{z_o,2^{-k}}-(u)_{z_o,2^{-\ell}}\big|
    \le
    \sum_{j=\ell}^{k-1}\big|(u)_{z_o,2^{-j-1}}-(u)_{z_o,2^{-j}}\big|
    \le
    c_M\sum_{j=\ell}^{k-1}\bigg(\frac{1}{2^{\theta/p}}\bigg)^j
  \end{align*}
  for $k>\ell$ in $\N$. Since the right hand side vanishes in the
  limit $k,\ell\to\infty$, this
  implies that $((u)_{z_o,2^{-k}})_{k\in\N}$ is a Cauchy sequence, which
  contradicts~\eqref{means-blow-up}. Therefore, we have
  established~\eqref{claim:density}, which implies in particular
  \begin{equation*}
    E\subset \bigg\{z_o\in\Omega_T\colon \limsup_{r\downarrow0}\frac1{r^s}\iint_{Q_r(z_o)}|\nabla u|^p+|F|^{p'}\, \dx\dt>\lambda\bigg\}
  \end{equation*}
  for every $\lambda>0$. By a standard result on the densities of
  measures, this implies
  \begin{equation*}
    \mathcal{P}^s(E)\le \frac{1}{\lambda}\iint_{\Omega_T}|\nabla u|^p+|F|^{p'}\, \dx\dt,
  \end{equation*}
  see e.g. \cite[2.10.19(3)]{Federer}. 
  Since $|Du|^p+|F|^{p'}\in L^1(\Omega_T)$ and $\lambda>0$ was
  arbitrary, this implies the claim $\mathcal{P}^s(E)=0$.
\end{proof}

Then we prove the direction~\eqref{eq:hausdoff-capvar2} in Theorem~\ref{thm:hausdorff-capvar}. Recall that we denote the lateral boundary of $\Omega_T$ by $S_T:= \partial \Omega \times [0,T)$.

\begin{proposition} \label{prop:cap-leq-Pn}
  Let $E \subset \Omega_T$ be a set.
  Then $\ca_{\mathrm{var}}(E,\Omega_T) \leq c(n,p,\alpha) \mathcal{P}^n(E)$.
\end{proposition}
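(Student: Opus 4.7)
My plan is a covering argument based on Lemma~\ref{lem:cap-union-cylinder-upperbound}. The key observation is that for a parabolic ball $Q_\rho(z_o)=B_\rho(x_o)\times(t_o-\rho^p,t_o+\rho^p)$, viewed as a one-sided cylinder of the form appearing in Lemma~\ref{lem:cap-union-cylinder-upperbound} (with $\rho_i=\rho$ and $\tau_i=2\rho^p$ and time center shifted to $t_o+\rho^p$), the capacity bound provided by that lemma evaluates to $\rho^n+2\rho^p\cdot\rho^{n-p}=3\rho^n$. This matches exactly the contribution of such a ball to $\mathcal{P}^n$, so a parabolic Hausdorff cover of $E$ should translate directly into a capacity estimate.

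The main obstacle is that Lemma~\ref{lem:cap-union-cylinder-upperbound} requires $B_{2\rho_i}(x_i)\Subset\Omega$, while an element of a generic Hausdorff cover of $E$ can sit arbitrarily close to the lateral boundary $S_T$ or to the time endpoints. I would circumvent this by first restricting to subsets bounded away from the parabolic boundary. Setting
\begin{equation*}
    E_k:=E\cap\big(\Omega_{1/k}\times J_k\big),\qquad J_k:=(\tfrac1k,T-\tfrac1k),
\end{equation*}
for $k$ large (replacing $J_k$ by $(\tfrac1k,\infty)$ if $T=\infty$), where $\Omega_{1/k}$ is the inner parallel set from Lemma~\ref{lem:parallel-fatness}, one has $E_k\subset E_{k+1}$ and $\bigcup_k E_k=E$. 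Lemma~\ref{lem:capvar-upwards-conv} then gives
\begin{equation*}
    \ca_{\mathrm{var}}(E,\Omega_T)\le c\,\lim_{k\to\infty}\ca_{\mathrm{var}}(E_k,\Omega_T).
\end{equation*}

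To estimate each $\ca_{\mathrm{var}}(E_k,\Omega_T)$, I may assume $\mathcal{P}^n(E)<\infty$, as otherwise the claim is trivial. For fixed $k$ and $\varepsilon>0$, I choose $\delta>0$ small enough that $\delta<\tfrac1{4k}$ and $\delta^p<\tfrac1{2k}$, and select a cover $\{A_i\}_{i\in\N}$ of $E$ with $A_i\subset\Omega_T$, $d_p(A_i)<\delta$, and $\sum_i d_p(A_i)^n<\mathcal{P}^n(E)+\varepsilon$. Discarding those $A_i$ disjoint from $E_k$, I pick $z_i=(x_i,t_i)\in A_i\cap E_k$ and set $d_i:=d_p(A_i)$. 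Then $A_i\subset Q'_i:=\overline{B_{d_i}(x_i)}\times[t_i-d_i^p,t_i+d_i^p]$, and by the choices of $\delta$ and $E_k$ we have $B_{2d_i}(x_i)\Subset\Omega$ together with $[t_i-d_i^p,t_i+d_i^p]\subset(0,T)$. Applying Lemma~\ref{lem:cap-union-cylinder-upperbound} to the family $\{Q'_i\}_{i\in\N}$ yields
\begin{equation*}
    \ca_{\mathrm{var}}(E_k,\Omega_T)\le c\sum_i\big(d_i^n+2d_i^p\cdot d_i^{n-p}\big)=3c\sum_i d_i^n\le 3c\big(\mathcal{P}^n(E)+\varepsilon\big),
\end{equation*}
with $c=c(n,p,\alpha)$. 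Letting $\varepsilon\to 0$ and then $k\to\infty$ delivers the claim.
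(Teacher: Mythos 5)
Your proof is correct and follows essentially the same strategy as the paper: cover by parabolic cylinders with radius $d_p(A_i)$, invoke Lemma~\ref{lem:cap-union-cylinder-upperbound} to bound the capacity of the union by $c\sum d_p(A_i)^n$, and handle the lateral boundary by first restricting to subsets bounded away from $S_T$ and then passing to the limit via Lemma~\ref{lem:capvar-upwards-conv}. Your exhaustion by $E\cap(\Omega_{1/k}\times J_k)$ is a minor cosmetic variant of the paper's exhaustion by $E\cap K_i$ with $K_i\Subset\Omega_T$, with no difference in substance.
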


\begin{proof}
First, we prove the statement for a set $E\Subset\Omega_T$.
Let $\eps > 0$ and fix $0< \delta < \frac12
\mathrm{dist}(E,S_T)$. Then there exists a covering $\bigcup_{i=1}^\infty A_i \supset E$ with $d_p(A_i) < \delta$ and 
$$
\sum_{i=1}^\infty d_p(A_i)^n < \mathcal{P}^n_\delta(E) + \eps.
$$
There exists a parabolic cylinder $Q_i \supset A_i$ with radius $d_p(A_i)$ for every $i \in \N$ such that $\bigcup_{i=1}^\infty Q_i \subset \Omega_T$. Then by Lemma~\ref{lem:cap-union-cylinder-upperbound} we have
\begin{align*}
\ca_{\mathrm{var}} (E, \Omega_T) &\leq \ca_{\mathrm{var}} \left( \bigcup_{i=1}^\infty A_i, \Omega_T\right) \leq c \ca_{\mathrm{var}} \left( \bigcup_{i=1}^\infty Q_i, \Omega_T\right) \\
&\leq c\sum_{i=1}^\infty d_p(A_i)^n \leq c \left( \mathcal{P}^n_\delta(E) + \eps\right) \leq c \left( \mathcal{P}^n(E) + \eps\right).
\end{align*}
By letting $\eps \to 0$ the claim follows
in case $E\Subset\Omega_T$.

If $E\subset\Omega_T$ is an arbitrary subset, we consider an
increasing sequence of compact sets $K_i\Subset\Omega_T$, $i\in\N$,
with $\bigcup_{i\in\N}K_i=\Omega_T$. Using
Lemma~\ref{lem:capvar-upwards-conv} and then the already proven
inequality for $E\cap K_i\Subset\Omega_T$, we obtain
\begin{align*}
  \ca_{\mathrm{var}} (E, \Omega_T)
  \le
  c\lim_{i\to\infty}\ca_{\mathrm{var}} (E\cap K_i, \Omega_T)
  \le
  c\lim_{i\to\infty}\mathcal{P}^n(E\cap K_i)
  \le
  c\mathcal{P}^n(E).
\end{align*}
This is the asserted inequality for arbitrary sets.
\end{proof}

\section{Polar sets of supercaloric functions} \label{sec:polarsets}

Observe that for a supercaloric function $u$ in $\Omega_T$ the gradient defined by
\begin{equation} \label{eq:gradient}
Du = \lim_{k \to \infty} \nabla (\min\{u,k\})
\end{equation}
is a well defined measurable function by Lemma~\ref{lem:supercal-prop}. If $|Dv| \in L^1_{\loc}(\Omega_T)$ then it is the standard gradient in the weak sense.

First we recall a result from~\cite{GKM,KuLiPa} in case $\tfrac{2n}{n+1}<p<\infty$. The result concerns certain integrability properties for the supercaloric function as well as its gradient, which are enough to guarantee that the Riesz measure also exists. 

\begin{lemma} \label{lem:barenblatt-hi}
Let $\tfrac{2n}{n+1}< p < \infty$ and $s_o = \max \big\{ p-2, \tfrac{n}{p}(2-p) \big\}$. Suppose that $u$ is a supercaloric function in $\Omega_T$. If $u \in L^{s_o}_{\loc}(\Omega_T)$, then $u \in L^s_{\loc}(\Omega_T)$ for every $0<s<p-1+\tfrac{p}{n}$ and $|Du| \in L^{q}_{\loc}(\Omega_T)$ for every $0<q<p-1+\tfrac{1}{n+1}$.
\end{lemma}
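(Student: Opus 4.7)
The result appears in \cite{KuLiPa} (slow diffusion) and is extended in \cite{GKM} to the supercritical fast diffusion range. The plan is to run a Caccioppoli--Sobolev iteration on a sequence of truncations, using exactly the ingredients already assembled in the preliminaries.

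\textbf{Reduction and setup.} Fix a subcylinder $Q\Subset\Omega_T$. Since supercaloric functions are locally bounded from below (as noted after Definition~\ref{d.supercal}), and adding a constant preserves the supercaloric property, I may assume $u\ge 1$ on a slightly larger cylinder $Q'\Supset Q$. The truncations $v_k:=\min\{u,k\}$ then satisfy $1\le v_k\le k$ and are bounded weak supersolutions in $Q'$ by Lemma~\ref{lem:supercal-prop}, with $v_k\nearrow u$ a.e.\ as $k\to\infty$.

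\textbf{Higher integrability of $u$.} For $\eps\in(0,1)$ and a cutoff $\varphi\in C^\infty_0(Q')$ with $\varphi\equiv1$ on $Q$, the Caccioppoli estimate~\eqref{eq:caccioppoli-sup} applied to $v_k$ yields
\begin{equation*}
\iint v_k^{-1-\eps}|\nabla v_k|^p\varphi^p\,\dx\dt
+\sup_t\int v_k^{1-\eps}\varphi^p\,\dx
\le c(\eps)\iint v_k^{p-1-\eps}\bigl(|\nabla\varphi|^p+|\partial_t\varphi^p|\bigr)\dx\dt.
\end{equation*}
Setting $w_k:=v_k^{(p-1-\eps)/p}\varphi$, a direct computation gives
$|\nabla w_k|^p\lesssim v_k^{-1-\eps}|\nabla v_k|^p\varphi^p+v_k^{p-1-\eps}|\nabla\varphi|^p$,
so that the parabolic Gagliardo--Nirenberg embedding
$$
\iint|w_k|^{p(n+2)/n}\d x\d t\le c\Big(\iint|\nabla w_k|^p\d x\d t\Big)\Big(\sup_t\int w_k^2\d x\Big)^{p/n}
$$
converts the Caccioppoli bound into an $L^q$-estimate for $v_k$. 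After the exponent algebra (and an interpolation to bridge the discrepancy between the $L^{1-\eps}$ and $L^2$ slice norms), $q$ can be taken arbitrarily close to $p-1+\tfrac{p}{n}$ by letting $\eps\downarrow0$. The right hand side involves only $L^{p-1-\eps}$-norms of $v_k\le u$; since $s_o\ge p-2$ in the slow diffusion range and $s_o=\tfrac{n(2-p)}{p}\ge p-2$ in the fast diffusion range $\tfrac{2n}{n+1}<p\le2$, the hypothesis $u\in L^{s_o}_{\loc}$ provides the initial integrability to start the iteration (a short bootstrap in $\eps$ raises the RHS exponent as needed). The resulting bounds are uniform in $k$, so monotone convergence yields $u\in L^s_{\loc}$ for every $s<p-1+\tfrac{p}{n}$.

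\textbf{Gradient estimate.} For $0<q<p-1+\tfrac{1}{n+1}$, H\"older's inequality with exponents $\tfrac{p}{q}$ and $\tfrac{p}{p-q}$ gives
$$
\iint_Q|\nabla v_k|^q\,\dx\dt
\le\Big(\iint_{Q'} v_k^{-1-\eps}|\nabla v_k|^p\varphi^p\,\dx\dt\Big)^{q/p}\Big(\iint_{Q'} v_k^{(1+\eps)q/(p-q)}\,\dx\dt\Big)^{(p-q)/p}.
$$
The first factor is controlled by the Caccioppoli estimate above (using~\eqref{eq:caccioppoli-p'} in place of~\eqref{eq:caccioppoli-sup} when $1<p<2$, to avoid an unwanted time derivative on the right). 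The second factor is finite as soon as $(1+\eps)q/(p-q)<p-1+\tfrac{p}{n}$, which an elementary calculation shows holds for every $q<p-1+\tfrac{1}{n+1}$ after choosing $\eps>0$ small enough; by the first part of the proof this second factor is then bounded independently of $k$. Passing $k\to\infty$ via the definition $Du=\lim_k\nabla v_k$ from~\eqref{eq:gradient} and Fatou's lemma yields $|Du|\in L^q_{\loc}$.

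\textbf{Main obstacle.} The delicate part is the exponent bookkeeping: the threshold $p>\tfrac{2n}{n+1}$ is exactly what guarantees that the Caccioppoli bound on $v_k^{-1-\eps}|\nabla v_k|^p$ can be coupled with the parabolic Sobolev embedding to reach the sharp exponents $p-1+\tfrac{p}{n}$ and $p-1+\tfrac{1}{n+1}$, and the asymmetry between the $L^{1-\eps}$ norm coming from~\eqref{eq:caccioppoli-sup} and the $L^2$ norm required by the Sobolev embedding must be handled by interpolation rather than by direct matching.
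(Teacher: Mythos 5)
The paper does not prove this lemma: it simply recalls the result from \cite{GKM,KuLiPa} as known, so there is no internal argument to compare against. Your blind reconstruction via a Caccioppoli--Sobolev iteration on the truncations $v_k=\min\{u,k\}$ is the right circle of ideas, and the target exponents $p-1+\tfrac{p}{n}$ and $p-1+\tfrac{1}{n+1}$ do come out of the H\"older/Sobolev arithmetic you describe.

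One technical imprecision: the Gagliardo--Nirenberg estimate you quote is the $r=2$ form, whereas the Caccioppoli inequality~\eqref{eq:caccioppoli-sup} controls $\sup_t\int v_k^{1-\eps}\varphi^p$, not $\sup_t\int w_k^2$. What you actually need is the general form
$\iint|w|^{p(n+r)/n}\dx\dt\le c\big(\iint|\nabla w|^p\dx\dt\big)\big(\sup_t\int|w|^r\dx\big)^{p/n}$,
applied with $w_k=v_k^{(p-1-\eps)/p}\varphi$ and $r=p(1-\eps)/(p-1-\eps)$ so that $w_k^r=v_k^{1-\eps}\varphi^r$; a power adjustment on the cutoff absorbs the mismatch between $\varphi^r$ and $\varphi^p$. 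Your appeal to ``interpolation'' is gesturing at this but is not the right fix as stated.

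More seriously, there is a genuine gap in the initialization when $p>2$. In that range $s_o=p-2$ exactly, and the Caccioppoli right-hand side $\iint v_k^{p-1-\eps}|\nabla\varphi|^p$ must be bounded uniformly in $k$, which requires $u\in L^{p-1-\eps}_{\loc}$ for some $\eps\in(0,1)$, i.e.\ $u\in L^{s}_{\loc}$ for some $s$ \emph{strictly} greater than $p-2$. The improvement map of your iteration is $s\mapsto s+\tfrac{p}{n}\big(s-(p-2)\big)$, which has $s=p-2$ as a fixed point: starting exactly at $s_o=p-2$, the ``short bootstrap in $\eps$'' does not move. (By contrast, in the fast-diffusion range $\tfrac{2n}{n+1}<p<2$ one has $s_o=\tfrac{n}{p}(2-p)>p-2$ with strict inequality, so there the iteration does start and your argument closes.) For $p>2$ the proof in \cite{KuLiPa} does not run a direct $L^{p-2}_{\loc}$-to-$L^{s}_{\loc}$ self-improvement; it first establishes local finiteness of the Riesz measure from the $L^{p-2}_{\loc}$ hypothesis by a separate argument, and only then obtains the gradient and higher integrability estimates. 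That intermediate step is missing from your sketch, and without it the claimed iteration cannot get off the ground at the endpoint.
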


Then we show that in case $1<p \leq \tfrac{2n}{n+1}$, any locally integrable supercaloric function has a gradient in the sense of~\eqref{eq:gradient}, which is locally integrable to power $p-1$. Again, this guarantees that the Riesz measure exists. 

\begin{lemma} \label{lem:supercal-L1}
Let $1<p \leq \tfrac{2n}{n+1}$ and $u$ be a supercaloric function in $\Omega_T$. If $u \in L^1_{\loc}(\Omega_T)$, then $Du$ defined in the sense of~\eqref{eq:gradient} satisfies $|Du| \in L^{p-1}_{\loc}(\Omega_T)$.
\end{lemma}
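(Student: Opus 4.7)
The plan is to combine the Caccioppoli inequality~\eqref{eq:caccioppoli-p'} for $1<p<2$ applied to truncations of $u$ with H\"older's inequality to produce a uniform bound on $|\nabla u_k|^{p-1}$. First I would fix a relatively compact set $V\Subset\Omega_T$. Since $u$ is lower semicontinuous with $u>-\infty$ pointwise, it attains its infimum on the compact set $\overline V$, so there is a constant $M>0$ with $u+M\geq 1$ on $V$. The function $u+M$ is supercaloric on $V$ and satisfies $D(u+M)=Du$, so it suffices to prove the claim under the additional assumption $u\geq 1$ on $V$. By Lemma~\ref{lem:supercal-prop}, the truncations $u_k:=\min\{u,k\}$ for $k\geq 1$ are then bounded weak supersolutions in $V$ with $1\leq u_k\leq u$.

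Next I would fix a cutoff $\eta\in C_0^\infty(V,[0,1])$ and choose $\epsilon\in(0,1)$ so small that $(1+\epsilon)(p-1)\leq 1$, which is possible since $p\leq\tfrac{2n}{n+1}<2$. H\"older's inequality with conjugate exponents $p/(p-1)$ and $p$ gives
\begin{align*}
\iint_V \eta^{p'}|\nabla u_k|^{p-1}\,\dx\dt
&\leq
\bigg(\iint_V \eta^{p'}u_k^{-(1+\epsilon)}|\nabla u_k|^p\,\dx\dt\bigg)^{(p-1)/p}\\
&\quad\times
\bigg(\iint_V \eta^{p'}u_k^{(1+\epsilon)(p-1)}\,\dx\dt\bigg)^{1/p}.
\end{align*}
Since $u_k\geq 1$ and $(1+\epsilon)(p-1)\leq 1$, one has $u_k^{(1+\epsilon)(p-1)}\leq u_k\leq u$, so the second factor is bounded by $\|u\|_{L^1(\spt\eta)}^{1/p}$ uniformly in $k$. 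For the first factor, the Caccioppoli estimate~\eqref{eq:caccioppoli-p'} applied to $u_k$ with test function $\eta$ yields a bound in terms of $\iint u_k^{p-1-\epsilon}\eta^{p(p'-2)}|\nabla\eta|^p\,\dx\dt$ and $|\iint u_k^{1-\epsilon}\partial_t\eta^{p'}\,\dx\dt|$. Since $u_k\geq 1$ and the exponents satisfy $p-1-\epsilon<1$ and $1-\epsilon<1$, we have $u_k^{p-1-\epsilon},u_k^{1-\epsilon}\leq u_k\leq u\in L^1(V)$, so the right-hand side is bounded uniformly in $k$ by a constant depending on $n,p,\epsilon,\eta$ and $\|u\|_{L^1(\spt\eta)}$.

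Combining these estimates gives $\iint_V\eta^{p'}|\nabla u_k|^{p-1}\,\dx\dt\leq C$ uniformly in $k$. Because $u\in L^1(V)$ implies $|\{u=\infty\}|=0$ and $|\nabla u_k|=|\nabla u_j|\chi_{\{u<k\}}$ a.e. for $j\geq k$, the sequence $|\nabla u_k|$ converges monotonically a.e. to $|Du|$, so Fatou's lemma yields $|Du|\in L^{p-1}(\{\eta=1\})$. Varying $\eta$ and $V$ concludes the proof. The main technical point is to set up H\"older's inequality so that the singular factor $u_k^{-(1+\epsilon)}|\nabla u_k|^p$ from~\eqref{eq:caccioppoli-p'} appears as the first term while the exponents $(1+\epsilon)(p-1)$, $p-1-\epsilon$ and $1-\epsilon$ of $u_k$ remain $\leq 1$; this is the essential place where $p<2$ is used and is precisely what allows us to rely only on $u\in L^1_{\loc}$.
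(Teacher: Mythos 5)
Your proof is correct and follows essentially the same route as the paper: shift $u$ to be locally $\geq 1$, truncate, apply H\"older's inequality to split $|\nabla u_k|^{p-1}$ into the singular factor $u_k^{-1-\eps}|\nabla u_k|^p$ (controlled by Caccioppoli) times a power of $u_k$ at most $1$ (controlled by $u\in L^1_{\loc}$), then pass to the limit. The only cosmetic difference is that you invoke the second Caccioppoli inequality~\eqref{eq:caccioppoli-p'} while the paper cites~\eqref{eq:caccioppoli-sup}; both work here since the right-hand-side powers of $u_k$ are identical.
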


\begin{proof}

Let $Q \Subset \Omega$ and denote $u_k = \min \{u,k\}$ for $k \in
\N$.
Since supercaloric functions are locally bounded from
  below by definition, we can assume without loss of generality that
$u \geq 1$ in $Q$. By Lemma~\ref{lem:supercal-prop} we observe that $u_k$ is a weak supersolution for every $k \in \N$. As in~\cite[Theorem 3.5]{GKM}, by Hölder's inequality we obtain 
\begin{align*}
\iint_{Q} |\nabla u_k|^q \, \d x\d t \leq \left( \tfrac{p}{p-1-\eps} \right)^q \left( \iint_{Q} \left|\nabla \left( u_k^\frac{p-1-\eps}{p} \right)\right|^p \, \d x\d t \right)^\frac{q}{p} \left( \iint_{Q} u^{q \frac{1+\eps}{p-q}} \, \d x\d t \right)^{1-\frac{q}{p}}
\end{align*}
for all $\eps \in (0,p-1)$. Observe that by Caccioppoli inequality for
weak supersolutions,~\eqref{eq:caccioppoli-sup} in
Lemma~\ref{lem:caccioppoli}, and $u \in L^1_{\loc}(\Omega_T)$ we have
that the first integral is finite for all $\eps \in (0,p-1)$ uniformly
in $k\in \N$. The second integral is finite whenever $q
\tfrac{1+\eps}{p-q} \leq 1$, which implies that it is finite for any
$q < \tfrac{p}{2}$ if we choose $\eps>0$ appropriately. Since $\tfrac{p}{2} > p-1$ when $p < 2$, the claim follows.

\end{proof}

Next we prove a result which states that the limit of any increasing sequence of nonnegative weak solutions is locally bounded, provided that the limit satisfies an appropriate integrability condition.

\begin{lemma} \label{lem:blowup}
Let $1<p<\infty$ and suppose that $(h_i)_{i \in \N}$ is a pointwise
increasing sequence of nonnegative continuous weak solutions in
$\Omega_T$, and denote $h := \lim_{k \to \infty}h_k$. If $h \in
L^s_{\loc}(\Omega_T)$ for some $s > \max\big \{ p-2, \tfrac{n}{p}(2-p)
\big\}$, then $h$ is a locally bounded weak solution in $\Omega_T$.
\end{lemma}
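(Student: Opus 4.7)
The plan is to reduce the statement to a standard sup-estimate for nonnegative weak solutions to the parabolic $p$-Laplace equation, and then exploit stability of weak solutions under uniformly bounded monotone convergence.

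First I would fix a parabolic subcylinder $Q_{2\rho,\tau}\Subset\Omega_T$ and recall that for nonnegative weak solutions to~\eqref{p-lap} one has a DiBenedetto-type sup-estimate of the form
\begin{equation*}
 \esssup_{Q_{\rho,\tau/2}} h_i
 \le
 c\bigg[\bigg(\biint_{Q_{2\rho,\tau}} h_i^s\,\dx\dt\bigg)^{\alpha}
       +
       \bigg(\biint_{Q_{2\rho,\tau}} h_i^s\,\dx\dt\bigg)^{\beta}\bigg]
\end{equation*}
valid for every $s>\max\{p-2,\tfrac{n}{p}(2-p)\}$, with constants $c,\alpha,\beta$ depending only on $n,p,s$ and $\rho,\tau$. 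This is precisely where the assumed integrability threshold is used; it is a consequence of a Caccioppoli estimate combined with a Moser / De Giorgi iteration on the time-slices (see e.g.~\cite{Di}). Since each $h_i$ is a continuous nonnegative weak solution, this estimate is applicable to it.

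Next, monotone convergence together with $h\in L^s_{\loc}(\Omega_T)$ yields
\begin{equation*}
 \sup_{i\in\N}\iint_{Q_{2\rho,\tau}} h_i^s\,\dx\dt
 \le
 \iint_{Q_{2\rho,\tau}} h^s\,\dx\dt<\infty.
\end{equation*}
Plugging this into the sup-estimate, I obtain a uniform bound $\esssup_{Q_{\rho,\tau/2}} h_i\le M$ with $M$ independent of $i$. Passing to the limit $i\to\infty$ shows $h\le M$ a.e. on $Q_{\rho,\tau/2}$, and since $Q_{2\rho,\tau}$ was arbitrary, $h\in L^\infty_{\loc}(\Omega_T)$.

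Finally, with the uniform local bound in hand, Lemma~\ref{lem:bdd-weak-sol-convergence} applies to the $h_i$, giving that $h$ is a weak supersolution and $\mu_{h_i}\wto\mu_h$. Because each $h_i$ is in fact a weak solution, i.e. $\mu_{h_i}\equiv 0$, the limit measure satisfies $\mu_h\equiv 0$ as well, so $h$ is a weak solution. (Alternatively, apply the same convergence result to the sequence $-h_i$ of weak subsolutions after using the local bound to obtain compactness of $\nabla h_i$ in the appropriate sense.) The main obstacle is the sup-estimate itself in the subcritical regime $1<p\le\tfrac{2n}{n+2}$: there the natural energy estimates do not suffice and one must use that $s$ lies strictly above $\tfrac{n}{p}(2-p)$ to close the iteration, which is exactly what the hypothesis provides.
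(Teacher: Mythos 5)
Your proposal follows essentially the same route as the paper: both invoke the DiBenedetto sup-estimate (Chapter V, Theorems 4.1 and 5.1 in \cite{Di}) for nonnegative weak solutions in terms of an $L^s$-average with $s$ above the critical threshold, use the pointwise monotonicity to control $\iint h_i^s$ by $\iint h^s<\infty$, and deduce a uniform local $L^\infty$-bound that passes to the limit; the only cosmetic difference is that the paper closes by a Caccioppoli-plus-compactness argument while you invoke Lemma~\ref{lem:bdd-weak-sol-convergence} and observe $\mu_{h_i}\equiv0$ forces $\mu_h\equiv0$, which is a correct and equally standard conclusion.
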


\begin{proof}

We fix $z_o \in \Omega_T$ and denote $Q_r^{\sigma}(z_o) = B_{\sigma r}(x_o) \times (t_o-\sigma r^p, t_o)$ for $\sigma > 0$ such that $Q_r(z_o) := Q_r^{1}(z_o) \Subset \Omega_T$. Furthermore, let $s_o = \max\big \{ p-2, \tfrac{n}{p}(2-p) \big\}$ and $s \in (s_o, s_o + 2]$. Then, by~\cite[Chapter V, Theorem 4.1 \& Theorem 5.1]{Di} and the fact that the sequence $(h_k)_{k\in \N}$ is pointwise increasing, we obtain that for every $\sigma \in (0,1)$
\begin{align*}
h_k(y,\tau) \leq \frac{c(n,p)}{(1-\sigma)^\frac{n+p}{s-s_o}} \max \left\{\left(\biint_{Q_r(z_o)} h^s \, \d x \d t\right)^\frac{1}{s-s_o} ,1\right\}
\end{align*}
holds for all $(y,\tau) \in Q_r^\sigma(z_o)$ and every $k \in \N$. Thus, by passing to the limit $k \to \infty$, local boundedness holds by a covering argument. By Caccioppoli's inequality for locally bounded weak solutions it is clear that $h$ is a weak solution in $\Omega_T$.

\end{proof}

The integrability condition in Lemma~\ref{lem:blowup} can be sharpened to the endpoint in case $p>2$ by~\cite[Proposition 4.3]{KL-dichotomy}. However, due to Lemma~\ref{lem:barenblatt-hi}, the assumptions in Lemma~\ref{lem:blowup} are sufficient for our purposes.

Then we encapsulate the assumptions we use in the proof of Theorem~\ref{thm:supercal-polarset}.

\begin{assumption} \label{as:barenblatt}
Denote $s_o = \max \big\{ p-2,\tfrac{n}{p}(2-p) \big\}$. Suppose 
\begin{itemize}

\item $u \in L^{s_o}_{\loc}(\Omega_T)$ if $\tfrac{2n}{n+1} < p < \infty$,

\item $u \in L^s_{\loc}(\Omega_T)$ for some $s > s_o$ if $1<p \leq \tfrac{2n}{n+1}$.

\end{itemize}

\end{assumption}

In case $p =2$ we have that $s_o = 0$ in
Assumption~\ref{as:barenblatt}. However, in this case every
supercaloric function is locally integrable, see
e.g.~\cite{watson-book}, such that the assumption is redundant.
For a more detailed analysis of polar sets in the context of the heat equation we refer to~\cite{watson}.

Let us denote the balayage of $u\chi_K$ by $\widehat R_K^u$. 

\begin{lemma} \label{lem:RuK-prop}
Let $1<p<\infty$ and $u$ be a nonnegative supercaloric function in $\Omega_T$ that satisfies Assumption~\ref{as:barenblatt}. Then, $\widehat R_K^u$ is a weak solution in $\Omega_T \setminus K$, $\widehat R_K^u (\cdot,t)$ has zero lateral boundary values in $W^{1,p}_0$-sense on $\partial \Omega$ for a.e. $t \in (0,T)$ and $\widehat R_K^u (\cdot,t) \equiv 0$ for all $t \in (0,\delta)$ for some $\delta > 0$. Furthermore, $\widehat{R}_K^u$ attains the zero boundary values continuously.
\end{lemma}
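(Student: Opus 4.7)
The plan is to approximate $u$ by its bounded truncations $u_j := \min\{u, j\}$, apply Lemma~\ref{lem:balayage-K-properties} to the auxiliary balayages $v_j := \widehat R_K^{u_j}$, and pass to a monotone limit. Since $u_j$ is a bounded nonnegative supercaloric function in $\Omega_T$, the obstacle $u_j\chi_K$ is bounded with compact support in $\Omega_T$, so Lemma~\ref{lem:balayage-K-properties} tells us that $v_j$ is a bounded continuous weak solution in $\Omega_T \setminus K$, belongs to $L^p(0,T;W^{1,p}_0(\Omega))$, vanishes identically on $\Omega \times (0, \delta_o]$ for some $\delta_o > 0$ depending only on $K$, and attains zero lateral boundary values continuously. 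The monotonicity of the balayage in the obstacle implies that $(v_j)$ is increasing in $j$, and since $u$ itself is a supercaloric function above each $u_j \chi_K$, we have $v_j \leq u$ for every $j$. Setting $v^* := \sup_j v_j \leq u$, we see that $v^*$ is lower semicontinuous as the supremum of continuous functions, finite on a dense subset, and supercaloric. One then checks $v := \widehat R_K^u = v^*$ by comparing with the definition of the r\'eduite together with the lower semicontinuity of $v^*$.

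With this approximation in hand, three of the four assertions pass to the limit in a rather direct manner. The identity $v \equiv 0$ on $\Omega \times (0, \delta_o)$ is inherited from $(v_j)$ since $\delta_o$ is independent of $j$. For the weak solution property in $\Omega_T \setminus K$, the $v_j$ form an increasing sequence of continuous weak solutions there and $v \leq u$, where Assumption~\ref{as:barenblatt} together with Lemma~\ref{lem:barenblatt-hi} (in the case $p > \tfrac{2n}{n+1}$) or Lemma~\ref{lem:supercal-L1} (in the case $1 < p \leq \tfrac{2n}{n+1}$) yields $u \in L^s_{\loc}(\Omega_T)$ for some $s > \max\{p-2, \tfrac{n}{p}(2-p)\}$. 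Lemma~\ref{lem:blowup} then implies that $v$ is a locally bounded weak solution in $\Omega_T \setminus K$.

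It remains to address the boundary behaviour on $\partial \Omega \times (0, T)$. For the continuous version, since $K \Subset \Omega_T$, $v$ is a bounded weak solution in a neighborhood $\mathcal N$ of $\partial\Omega\times(0,T)$; the boundary regularity results from~\cite{GL1,GL2,GLL,kilpelainen-lindqvist} under uniform $p$-fatness of $\R^n\setminus\Omega$ imply that $v$ is H\"older continuous up to the lateral boundary. To identify the boundary values, fix a cylinder $Q\subset\mathcal N$ adjacent to $\partial\Omega\times(0,T)$ with $\overline Q\cap K=\varnothing$; the sequence $(v_j)$ is uniformly bounded on $\overline Q$ by $v$, hence equicontinuous on compact subsets by the same boundary regularity, and Dini's theorem upgrades the pointwise monotone convergence $v_j\nearrow v$ to uniform convergence, so the zero continuous lateral boundary values of the $v_j$ pass to $v$. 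The $W^{1,p}_0$-sense statement is the main technical obstacle, because the norms $\|v_j\|_{\mathcal V(\Omega_T)}$ need not be uniformly bounded in $j$. The idea to bypass this is to work with the truncation $w_k := \min\{v, k\}$, a bounded weak supersolution that is continuous up to and vanishing on $\partial\Omega\times(0,T)$ by the preceding step; combining the energy estimate obtained by testing the weak formulation for $v_j$ with $\min\{v_j,k\}$ (suitably mollified in time), passing to the limit $j\to\infty$, and exploiting Hardy's inequality from Lemma~\ref{lem:hardy} in the uniformly $p$-fat setting, one controls $\|\nabla w_k\|_{L^p(\Omega_T)}$ and deduces $w_k(\cdot,t)\in W^{1,p}_0(\Omega)$ for a.e. $t\in(0,T)$, which is the desired statement for the gradient $Dv$ defined in~\eqref{eq:gradient}.
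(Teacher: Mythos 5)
Your strategy---truncate $u$ to $u_j := \min\{u,j\}$, apply Lemma~\ref{lem:balayage-K-properties} to the bounded obstacles $u_j\chi_K$, and pass to the increasing limit $v^* := \sup_j \widehat R_K^{u_j}$---is genuinely different from the paper's. The paper instead approximates $\widehat R_K^u$ itself from below by Lipschitz obstacles in a fixed cylinder $Q\Subset\Omega_T\setminus K$, solves boundary value problems there, and identifies the limit with $\widehat R_K^u$ by a Poisson-modification comparison in $Q$ (pasting and minimality). Both routes funnel through the same core tools, Lemma~\ref{lem:barenblatt-hi} (resp.\ Lemma~\ref{lem:supercal-L1}) and Lemma~\ref{lem:blowup}.

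The step on which your proof hinges---the identification $v^*=\widehat R_K^u$---is only asserted, and the stated justification (``comparing with the definition of the r\'eduite together with the lower semicontinuity of $v^*$'') does not establish it. The inequality $v^*\leq\widehat R_K^u$ is immediate from monotonicity of the balayage in the obstacle. But for the reverse you would need $v^*\in\mathcal U_{u\chi_K}$, i.e.\ $v^*\geq u$ \emph{pointwise} on $K$, and that fails in general: $\widehat R_K^{u_j}$ only dominates $u_j\chi_K$ a.e.\ (Lemma~\ref{lem:balayage-psi-prop}), so on a thin $K$ the inequality $v^*\geq u$ on $K$ may fail outright. (For $K=\{z_o\}$ and $u\equiv 1$ one has $\widehat R_K^u\equiv 0<1$, so $v^*\equiv 0\notin\mathcal U_{u\chi_K}$ even though the identity $v^*=\widehat R_K^u$ still holds.) What rescues the identity is not the definition of the r\'eduite but a Poisson-modification comparison in cylinders $Q\Subset\Omega_T\setminus K$, exactly as the paper does; you should supply that step rather than defer to the r\'eduite.

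A second gap is in the energy estimate for the $W^{1,p}_0$ claim. Testing the weak formulation of the supersolution $v_j$ with (a mollification of) $\min\{v_j,k\}$ produces a right-hand side of size $k\,\mu_{v_j}(K)$. By the comparison $v_j\leq\widehat R_{j\chi_K}$ together with Lemmas~\ref{lem:Riesz-measure-order} and~\ref{lem:u1-ulambda-measure-estimates}, the mass $\mu_{v_j}(\Omega_\infty)$ grows like $j^{p-1}+j^{1/(p-1)}$, so this estimate does not close uniformly in $j$ for fixed $k$. One would either need a Caccioppoli estimate with a negative power $v_j^{-\eps}$, as in Lemma~\ref{lem:caccioppoli} and the proof of Lemma~\ref{lem:supercal-L1}, or---more in the spirit of the paper---localize near $\partial\Omega$, where $\widehat R_K^u$ is a bounded weak solution away from $K$, and argue via a Poisson modification on an annular domain adjacent to $\partial\Omega$.
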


\begin{proof}

Let $Q \Subset \Omega_T\setminus K$ be a smooth enough space-time
cylinder and $\psi_i$ be a nonnegative Lipschitz function in $\Omega_T$ for every $i \in \N$ such that $\psi_i \nearrow \widehat{R}_K^{u}$ pointwise in $\Omega_T$ as $i \to \infty$. Let $h_i$ be a weak solution in $Q$ with boundary values $\psi_i$ on $\partial_p Q$. Observe that $h_i \in C(\overline{Q})$, $h_i \leq h_{i+1}$ and $h_i \leq \widehat{R}_K^{u}$ in $Q$ for every $i \in \N$.

By construction there holds $h:=\lim_{i \to \infty} h_i \leq
\widehat{R}_K^{u} \leq u$ in $Q$. Due to the integrability assumption
on $u$, Lemmas~\ref{lem:barenblatt-hi} and~\ref{lem:blowup} imply that
$h$ is a locally bounded weak solution in $Q$. The comparison
principle in~\cite[Theorem 2.4]{BBGP} guarantees that the function
\[ v = 
\begin{cases} 
      \widehat R_K^u & \text{ in } \Omega_T \setminus Q, \\
      h & \text{ in } Q,
   \end{cases}
\]
is supercaloric in $\Omega_T$, and by minimality of $\widehat{R}_K^u$ it follows that $\widehat{R}_K^{u} = h$ in $Q$, which implies that also $\widehat{R}_K^{u}$ is a locally bounded weak solution in $Q$. Since $Q \Subset \Omega_T\setminus K$ was arbitrary, the first claim follows. 

For the second claim, use a Poisson modification in a cylinder $Q' = D
\times (0,T)$ for which $D \Subset \R^n \setminus K$ and $\partial
\Omega \subset \partial D$. With a similar argument as in~\cite[Lemma~4.6]{MS-obstacle}, we can conclude that $\widehat{R}_K^{u}$ attains zero lateral boundary values in the Sobolev $W^{1,p}_0$-sense and continuously, since $\R^n \setminus \Omega$ is uniformly $p$-fat (see~\cite{GL1,GL2,GLL,kilpelainen-lindqvist}).

\end{proof}

We recall the following three results from~\cite{KKKP}.

\begin{lemma}[\!\!{\cite[Lemma 6.4]{KKKP}}] \label{lem:u1-ulambda-measure-estimates}
Suppose that $1<p<\infty$ and that $K \subset \Omega_\infty$ is a compact set, and denote by $u_1$ and $u_\lambda$ for $\lambda > 0$ the balayage of $\chi_K$ and $\lambda \chi_K$, respectively. Then
$$
\mu_{u_1} (\Omega_\infty) \leq c \left( \lambda^{1-p} + \lambda^{-\frac{1}{p-1}} \right) \mu_{u_\lambda}(\Omega_\infty)
$$
and 
$$
\mu_{u_\lambda} (\Omega_\infty) \leq c \left( \lambda^{p-1} + \lambda^{\frac{1}{p-1}} \right) \mu_{u_1}(\Omega_\infty),
$$
where $c = c(p) > 0$ and $\mu_{u_1}$ and $\mu_{u_\lambda}$ are the Riesz measures of $u_1$ and $u_\lambda$, respectively.
\end{lemma}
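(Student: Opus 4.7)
\noindent\textbf{Proof plan for Lemma~\ref{lem:u1-ulambda-measure-estimates}.} The starting point is the representation $\mu_{u_1}(\Omega_\infty)=\mu_{u_1}(K)$ and $\mu_{u_\lambda}(\Omega_\infty)=\mu_{u_\lambda}(K)$ from Lemma~\ref{lem:cap-compact-representation}, so both sides of the asserted inequalities are really the total masses of the Riesz measures of the two balayages. The central tool will be the intrinsic scaling of the parabolic $p$-Laplace equation: if $u$ is a weak (super)solution and $\sigma,\tau>0$, then $v(x,t):=\sigma\, u(x,\tau t)$ satisfies
\begin{equation*}
  \partial_tv-\Delta_pv=\sigma\tau\,\partial_tu(x,\tau t)-\sigma^{p-1}\Delta_pu(x,\tau t),
\end{equation*}
so the choice $\tau=\sigma^{p-2}$ produces again a (super)solution, whose Riesz measure is the pullback of $\mu_u$ under the time dilation multiplied by $\sigma^{p-1}$.

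My first step would be to obtain pointwise comparisons between $u_1$ and $u_\lambda$ using minimality of the balayage (Proposition~\ref{prop:obstacle} together with Lemma~\ref{lem:balayage-psi-prop}). Since $u_\lambda/\lambda\ge\chi_K$ on $K$, if $u_\lambda/\lambda$ were supercaloric it would dominate $u_1$. The correction needed to make this scaling produce an admissible competitor is supplied by the intrinsic time rescaling $\tau=\sigma^{p-2}$ with $\sigma=\lambda^{-1}$, which gives a genuine competitor $\widetilde{u}_\lambda(x,t):=\lambda^{-1}u_\lambda(x,\lambda^{p-2}t)$ that is supercaloric on the time-rescaled cylinder $\Omega_\infty$, vanishes on the parabolic boundary by Lemma~\ref{lem:balayage-K-properties}, and whose obstacle set is the time rescaling $K^\ast$ of $K$. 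A symmetric construction, using $\sigma=\lambda$ instead, yields the reverse comparison $u_\lambda\le\lambda\,\widetilde{u}_1$ with a time-rescaled version of $u_1$.

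Next, I would turn the pointwise comparisons into comparisons of total masses by testing the measure equations against a suitable cutoff that approximates the constant $1$, and letting the cutoff expand to all of $\Omega_\infty$. Because both $u_1$ and $u_\lambda$ vanish on $\partial_p\Omega_\infty$ in the $L^p(0,T;W^{1,p}_0(\Omega))$ sense (Lemma~\ref{lem:balayage-K-properties}) and in the $L^2$ initial sense, the gradient and time-derivative boundary contributions vanish in the limit, so the total mass of each $\mu$ is controlled purely by the other balayage. Tracking the two rescaling factors (the pointwise factor $\sigma$ and the time factor $\sigma^{p-2}$) through the total mass computation produces two competing powers of $\lambda$: the spatial rescaling contributes a factor $\lambda^{1-p}$ to $\mu_{u_1}/\mu_{u_\lambda}$ (or $\lambda^{p-1}$ in the reverse direction), while the intrinsic time dilation needed to make the scaling admissible contributes an additional factor that, after undoing the time pullback, becomes $\lambda^{-1/(p-1)}$ (resp.\ $\lambda^{1/(p-1)}$). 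The sum of the two terms reflects the fact that either rescaling can be the efficient one depending on whether $\lambda\lessgtr1$ and $p\lessgtr 2$.

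The hardest step is precisely this last bookkeeping: because $K$ is not preserved under the time rescaling that makes the scaled balayage supercaloric, one cannot simply compare $\mu_{u_1}$ with a rescaled $\mu_{u_\lambda}$ on the same set. The two terms on the right-hand side emerge exactly from combining the comparison on the rescaled set with an estimate accounting for the mismatch. I expect the routine input (Caccioppoli bounds, vanishing of boundary traces, convergence of Riesz measures from Lemma~\ref{lem:bdd-weak-sol-convergence}) to be straightforward, but reconciling the two scalings so as to recover the stated symmetric pair of inequalities is the point where care is essential.
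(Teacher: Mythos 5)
The lemma you are asked to prove is not proved in this paper: it is quoted verbatim from~\cite[Lemma~6.4]{KKKP} and used as a black box, so there is no ``paper's own proof'' to match against. Comparing against what the argument would have to be, your plan has both a quantitative bookkeeping error and an unresolved gap at the step you yourself flag as the hardest.

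The bookkeeping error: for $v(x,t)=\sigma u(x,\sigma^{p-2}t)$ one has $\partial_tv-\Delta_pv=\sigma^{p-1}(\partial_tu-\Delta_pu)(x,\sigma^{p-2}t)$, so the \emph{density} of $\mu_v$ picks up a factor $\sigma^{p-1}$; but the time change of variables introduces a Jacobian $\sigma^{-(p-2)}$, so the \emph{total mass} scales by $\sigma^{p-1}\cdot\sigma^{-(p-2)}=\sigma$, not $\sigma^{p-1}$. In particular, if the scaled function $\lambda^{-1}u_\lambda(\cdot,\lambda^{p-2}\cdot)$ were an admissible competitor above $\chi_K$, balayage minimality together with Lemma~\ref{lem:Riesz-measure-order} would yield $\mu_{u_1}(\Omega_\infty)\le\lambda^{-1}\mu_{u_\lambda}(\Omega_\infty)$. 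That exponent is \emph{stronger} than either of $\lambda^{1-p}$ and $\lambda^{-1/(p-1)}$ for all $\lambda$ and $p\neq2$, which shows the two terms in the lemma cannot emerge simply from ``undoing the time pullback'' as you suggest; they encode the genuine loss caused by the obstacle not being preserved.

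That is the actual gap: the rescaled function dominates $\chi_{K^\ast}$ with $K^\ast=\{(x,t):(x,\lambda^{p-2}t)\in K\}$, and for a general compact $K$ there is no inclusion between $K$ and $K^\ast$, so there is no balayage competitor in sight. You observe this, but then assert that ``the two terms on the right-hand side emerge exactly from combining the comparison on the rescaled set with an estimate accounting for the mismatch'' without indicating what that estimate is or why it produces the powers $\lambda^{1\pm p}$ and $\lambda^{\pm1/(p-1)}$. Those exponents are the conjugate pair one expects from Young's inequality applied to cross terms such as $\int|\nabla u_\lambda|^{p-2}\nabla u_\lambda\cdot\nabla u_1$, which arise when one tests the two measure equations against each other's capacitary potentials (using that $u_1=1$ and $u_\lambda=\lambda$ on the respective supports of $\mu_{u_\lambda}$ and $\mu_{u_1}$, together with the energy identity $\|u_\lambda\|_{\mathrm{en}}\approx\lambda\,\mu_{u_\lambda}(\Omega_\infty)$). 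An argument along those lines circumvents the $K\neq K^\ast$ issue entirely. As written, your proposal identifies the right ingredients (intrinsic scaling, balayage minimality, vanishing boundary traces, Riesz measure convergence) but does not contain the step that actually produces the stated inequalities.
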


\begin{lemma}[\!\!{\cite[Lemma 6.3]{KKKP}}] \label{lem:Riesz-measure-order}
Suppose that $1<p<\infty$ and let $u$ and $v$ be weak supersolutions in $\Omega_\infty$ such that both are continuous weak solutions up to the boundary $\partial_p \Omega_\infty$ outside some compact subset of $\Omega_\infty$. If $u \geq v$ in $\Omega_\infty$ and $u = v  = 0$  on $\partial_p \Omega_\infty$, then 
$$
\mu_v(\Omega_\infty) \leq \mu_u (\Omega_\infty),
$$ 
where $\mu_v$ and $\mu_u$ are Riesz measures of $v$ and $u$, respectively.
\end{lemma}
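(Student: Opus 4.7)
The plan is to compare $\mu_u(\Omega_\infty)$ and $\mu_v(\Omega_\infty)$ by testing the weak formulations of $u$ and $v$ against a common cutoff that is identically one on a neighborhood of both Riesz supports, and then exploiting $u\ge v$ together with the common zero boundary values.

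First I observe that, since $u$ and $v$ are weak solutions in $\Omega_\infty\setminus K$ for some compact $K\Subset\Omega_\infty$, both $\spt\mu_u$ and $\spt\mu_v$ lie in $K$, so both measures are finite. Choosing $\Omega'\Subset\Omega$ and $0<t_0<t_1<T$ with $K\Subset\Omega'\times(t_0,t_1)$, I would consider product cutoffs $\phi_{\delta,\eps}(x,t)=\eta_\delta(x)\xi_\eps(t)\in C_0^\infty(\Omega_\infty)$ with $\eta_\delta\equiv 1$ on $\Omega_\delta:=\{x\in\Omega\colon\operatorname{dist}(x,\partial\Omega)>\delta\}$ (and $\nabla\eta_\delta$ of magnitude $\delta^{-1}$ pointing inward on the $\delta$-annulus near $\partial\Omega$), and $\xi_\eps\in C_0^\infty((0,T))$ with $\xi_\eps\equiv 1$ on $[\eps,T-\eps]$. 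For $\delta,\eps$ small enough, $\phi_{\delta,\eps}\equiv 1$ on $K$, so inserting $\phi_{\delta,\eps}$ in the weak formulations of $u$ and $v$ and subtracting yields
\begin{equation*}
  \mu_u(\Omega_\infty)-\mu_v(\Omega_\infty)
  =\iint_{\Omega_\infty}\!-(u-v)\eta_\delta\xi_\eps'+\big(|\nabla u|^{p-2}\nabla u-|\nabla v|^{p-2}\nabla v\big)\cdot\nabla\eta_\delta\,\xi_\eps\,\dx\dt.
\end{equation*}

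Next I would treat the two resulting contributions separately. The time term splits into a ramp-up piece on $\Omega\times(0,\eps)$ and a ramp-down piece on $\Omega\times(T-\eps,T)$. For $\eps<t_0$ the set $\spt\eta_\delta\times[0,\eps]$ is disjoint from $K$, hence $u-v$ is continuous there with zero trace at $\{t=0\}$, and the ramp-up piece vanishes as $\eps\downarrow 0$. The ramp-down piece converges to $\int_\Omega(u-v)(\cdot,T)\eta_\delta\,\dx$, which is nonnegative because of $u\ge v$. After passing to the limit $\eps\downarrow 0$ the identity therefore reduces to
\begin{equation*}
  \mu_u(\Omega_\infty)-\mu_v(\Omega_\infty)
  =\int_\Omega(u-v)(\cdot,T)\eta_\delta\,\dx+\iint_{\Omega\times(0,T)}\big(|\nabla u|^{p-2}\nabla u-|\nabla v|^{p-2}\nabla v\big)\cdot\nabla\eta_\delta\,\dx\dt,
\end{equation*}
and the first term on the right is already nonnegative.

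The main obstacle will be passing to the limit $\delta\downarrow 0$ in the lateral contribution, where $\nabla\eta_\delta$ is of size $\delta^{-1}$ on a thin annulus yet we only have continuity, not classical $C^1$ regularity, of $u,v$ at $\partial\Omega$. Taking $\eta_\delta$ to be a smoothing of $\min\{1,\operatorname{dist}(\cdot,\partial\Omega)/\delta\}$ and applying the coarea formula, the lateral integral becomes an average over $s\in(0,\delta)$ of surface integrals $-\int_{\partial\Omega_s}\big(|\nabla u|^{p-2}\nabla u-|\nabla v|^{p-2}\nabla v\big)\cdot\nu_s\,\d\mathcal{H}^{n-1}$ over the parallel surfaces $\partial\Omega_s$, each of whose complement inherits uniform $p$-fatness by Lemma~\ref{lem:parallel-fatness}. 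Since in the annulus $u$ and $v$ solve the homogeneous equation with continuous zero trace on $\partial\Omega$, the boundary regularity for $p$-Laplace equations under uniform $p$-fatness permits passage to the trace limit $s=0$, and the resulting quantity equals $\int_0^T\!\int_{\partial\Omega}\big(|\nabla u|^{p-1}-|\nabla v|^{p-1}\big)\,\d\mathcal{H}^{n-1}\dt\ge 0$, because $u-v\ge 0$ vanishing continuously on $\partial\Omega$ forces the inward normal derivative of $u$ to dominate that of $v$. Combined with the already nonnegative first term, this gives $\mu_u(\Omega_\infty)\ge\mu_v(\Omega_\infty)$.
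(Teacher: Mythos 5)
The proposal's strategy—subtract the weak formulations of $u$ and $v$ tested against a common cutoff, discard the nonnegative terminal-time term, and show that the lateral flux contribution is nonnegative—is formally sound. The ramp-up and ramp-down analysis in time is correct. However, the treatment of the lateral term contains a genuine gap that the sketch does not close.

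The lateral contribution reads
$$
\mathrm{II}(\delta)
=
\iint_{\Omega_T}\big(|\nabla u|^{p-2}\nabla u-|\nabla v|^{p-2}\nabla v\big)\cdot\nabla\eta_\delta\,\dx\dt,
$$
with $|\nabla\eta_\delta|\approx\delta^{-1}$ on the $\delta$-annulus near $\partial\Omega$. You reduce this via the coarea formula to an average of surface flux integrals over parallel sets $\partial\Omega_s$, and then claim that passage to $s=0$ yields a nonnegative boundary integral because the inward normal derivative of $u$ dominates that of $v$. This step is not justified by the hypotheses, for two reasons. First, uniform $p$-fatness of $\R^n\setminus\Omega$ gives only \emph{continuity} of $u$ and $v$ up to $\partial\Omega$ (this is precisely the content of the cited Wiener-type results, and of Lemma~\ref{lem:balayage-K-properties}); it does not give $C^1$ regularity, so the boundary trace of $|\nabla u|^{p-2}\nabla u\cdot\nu$ need not exist, and the Hopf-type comparison of normal derivatives you invoke is not available. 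Second, on the interior level sets $\partial\Omega_s$ with $s>0$ one does not have $u=v$, so the Hopf-type comparison cannot be applied slice-by-slice and then passed to the limit; the ordering of the fluxes only becomes plausible in the limit $s\downarrow 0$, which is exactly where regularity is lacking. Showing $\delta^{-1}\iint_{A_\delta}|\nabla u|^{p-1}\,\dx\dt$ to be controlled is itself a nontrivial boundary decay estimate that does not follow from $u\in L^p(0,T;W^{1,p}_0(\Omega))$ alone, so one cannot even conclude that $\mathrm{II}(\delta)$ has a limit, let alone a sign.

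Incidentally, a small additional issue is the formula $\int_0^T\int_{\partial\Omega}(|\nabla u|^{p-1}-|\nabla v|^{p-1})$: the correct expression (were it meaningful) involves the odd power $s\mapsto s|s|^{p-2}$ of the signed normal derivatives rather than $|\nabla u|^{p-1}-|\nabla v|^{p-1}$. This is only cosmetic, but the main point stands: the final inequality is obtained by a Hopf-lemma argument that requires boundary differentiability, and under the stated $p$-fatness assumption there is no such regularity. To make the argument work, you would need either to work in smoother domains and use an approximation argument that tracks the measures, or to replace the boundary-flux comparison by a device that avoids taking normal derivatives at $\partial\Omega$ entirely (as is done in~\cite{KKKP}, where the lemma is established by different means). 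As written, the proposal does not constitute a proof of the statement.
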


\begin{lemma} \label{lem:truncation-Riesz-measure}
Suppose that $1<p<\infty$ and $\lambda > 0$. Let $v$ be a supercaloric function in $\Omega_\infty$ such that $v \in L^1_{\loc}(\Omega_\infty)$ and $|D v| \in L^{p-1}_{\loc}(\Omega_\infty)$ hold true, and suppose that $v$ is a continuous weak solution in $\overline{\Omega_\infty} \setminus K$ for some compact set $K\subset \Omega_\infty$ with $v < \lambda$ in $\Omega_\infty \setminus K$. Then
$$
\mu_v(\Omega_\infty) = \mu_{\min_{\{v,\lambda\}}} (\Omega_\infty).
$$
\end{lemma}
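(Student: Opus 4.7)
The plan is to exploit the fact that $v$ and $w:=\min\{v,\lambda\}$ agree on $\Omega_\infty\setminus K$, so that their Riesz measures are both concentrated on $K$ and can be compared by integrating against a cutoff that equals $1$ on $K$. First I would note that $w$ is a bounded supercaloric function (truncation of a supercaloric function from above preserves the defining properties), hence by Lemma~\ref{lem:supercal-prop} a bounded weak supersolution with associated Riesz measure $\mu_w$. Since $v<\lambda$ on $\Omega_\infty\setminus K$ by assumption, we have $w=v$ there, and since $v$ is a continuous weak solution on that set, so is $w$. Consequently both $\mu_v$ and $\mu_w$ are supported in $K$.

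Next I would fix $\varphi\in C_0^\infty(\Omega_\infty)$ with $\varphi\equiv 1$ on an open neighborhood $U$ satisfying $K\subset U\Subset\Omega_\infty$, and work with the bounded truncations $v_k:=\min\{v,k\}$ for $k\ge\lambda$. Each $v_k$ is a bounded weak supersolution whose Riesz measure $\mu_{v_k}$ is supported in $K$ by exactly the same reasoning. Because the measures are concentrated in $K\subset\{\varphi=1\}$, we have
\begin{equation*}
\mu_{v_k}(\Omega_\infty)=\int_{\Omega_\infty}\varphi\,\d\mu_{v_k}
\qquad\text{and}\qquad
\mu_w(\Omega_\infty)=\int_{\Omega_\infty}\varphi\,\d\mu_w.
\end{equation*}
Expanding both sides via~\eqref{e.weak_measuresol} and noting that $\partial_t\varphi$ and $\nabla\varphi$ vanish on $U\supset K$, the integrations reduce to $\Omega_\infty\setminus U\subset\Omega_\infty\setminus K$, where $v_k=v=w$ and $\nabla v_k=\nabla w$ hold pointwise. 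The two expressions therefore coincide, yielding $\mu_{v_k}(\Omega_\infty)=\mu_w(\Omega_\infty)$ for every $k\ge\lambda$.

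It remains to identify $\mu_v(\Omega_\infty)$ with $\lim_{k\to\infty}\mu_{v_k}(\Omega_\infty)$. The integrability hypotheses $v\in L^1_{\mathrm{loc}}(\Omega_\infty)$ and $|Dv|\in L^{p-1}_{\mathrm{loc}}(\Omega_\infty)$ ensure $v_k\to v$ in $L^1_{\mathrm{loc}}$ and $\nabla v_k\to Dv$ pointwise a.e.\ with $|\nabla v_k|\le|Dv|$. Dominated convergence applied in the weak formulation of $v_k$ tested against an arbitrary $\varphi\in C_0^\infty(\Omega_\infty)$ then gives $\mu_{v_k}\wto\mu_v$ in the sense of Radon measures; taking once more $\varphi\equiv 1$ on a neighborhood of $K$ and passing to the limit in the identity above yields $\mu_v(\Omega_\infty)=\mu_w(\Omega_\infty)$.

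The main obstacle is this final passage to the limit in the nonlinear term $|\nabla v_k|^{p-2}\nabla v_k\cdot\nabla\varphi$ without a uniform $L^\infty$-bound on the $v_k$, which would otherwise allow a direct appeal to Lemma~\ref{lem:bdd-weak-sol-convergence}. The integrability assumption on $|Dv|$ is tailored precisely to supply the $L^{p-1}$-domination needed for dominated convergence, and is particularly delicate in the subcritical range $1<p\le\frac{2n}{n+2}$ treated by Lemma~\ref{lem:supercal-L1}.
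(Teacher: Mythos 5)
Your proof is correct and rests on the same central idea as the paper's: compare the weak formulations of $v$ (via the generalized gradient $Dv$) and of $\min\{v,\lambda\}$ against a cutoff $\varphi$ that equals $1$ on a neighborhood of $K$, where the difference vanishes because the two functions coincide outside $K$ while $\nabla\varphi$ and $\partial_t\varphi$ vanish where they differ. The only structural difference is your detour through the bounded truncations $v_k=\min\{v,k\}$ followed by a dominated-convergence passage to the limit using $|Dv|\in L^{p-1}_{\loc}(\Omega_\infty)$; the paper shortcuts this by directly invoking Lemma~\ref{lem:supercal-prop} to assert that $v$ itself satisfies the weak measure formulation with gradient $Dv$, so the same integrability hypothesis is doing the work in both arguments.
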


\begin{proof}

Suppose that $\{v > \lambda\} \neq \varnothing$ since otherwise the claim is clear. Observe that by the assumptions $\overline{\{v > \lambda\}}$ is a compact subset of $\Omega_\infty$ and both $v$ and $\min\{v,\lambda\}$ are weak solutions outside some compact set of $\Omega_\infty$. Also clearly $v$ satisfies~\eqref{eq:weak-super} by Lemma~\ref{lem:supercal-prop} with the gradient $Dv$ defined in the sense of~\eqref{eq:gradient}. Let $\varphi \in C^\infty_0(\Omega_\infty,[0,1])$ such that $\varphi =1 $ in $\overline{\{v > \lambda\}}$. It follows that
\begin{align*}
\int_{\Omega_\infty} \varphi \, \d \mu_v - \int_{\Omega_\infty} \varphi \, \d \mu_{\min\{v,\lambda\}} &= \iint_{\{v > \lambda\}} |D v|^{p-2} D v \cdot D \varphi - (v-\lambda) \partial_t \varphi \, \d x \d t \\
&= 0
\end{align*}
by the assumptions on $\varphi$. Since the support of $\varphi$ was arbitrary and by the assumptions the claim follows.
\end{proof}

\begin{lemma} \label{lem:supercal-cap-levelset}
Let $1 < p < \infty$ and $u$ be a nonnegative supercaloric function in $\Omega_T$ that satisfies Assumption~\ref{as:barenblatt}. Furthermore, suppose that $\lambda > 1$. Then, there exists a constant $c = c(n,p,\alpha)> 0$ such that
$$
\ca_{\mathrm{var}}(\{u > \lambda \} \cap \mathrm{int}( K), \Omega_T) \leq c \mu_{\widehat R_K^u}(\Omega_T) \left( \lambda^{1-p} + \lambda^{- \frac{1}{p-1}} \right) 
$$
for every compact set $K \subset \Omega_T$.
\end{lemma}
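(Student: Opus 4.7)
\emph{The plan is as follows.} Since $u$ is lower semicontinuous, the superlevel set $\{u>\lambda\}$ is open, so $E:=\{u>\lambda\}\cap\mathrm{int}(K)$ is an open subset of $\Omega_T$. The open-set definition~\eqref{eq:capvar-openset} of variational capacity reduces the claim to a bound on $\ca_{\mathrm{var}}(K',\Omega_T)$ for an arbitrary compact $K'\subset E$, and then passing to the supremum. Theorem~\ref{thm:capvar-eq-cap} together with Lemma~\ref{lem:cap-compact-representation} converts this into a Riesz-measure quantity,
\begin{equation*}
  \ca_{\mathrm{var}}(K',\Omega_T)\le c\,\ca(K',\Omega_\infty)=c\,\mu_{\widehat R_{K'}}(\Omega_\infty),
\end{equation*}
where $\widehat R_{K'}$ denotes the balayage of $\chi_{K'}$. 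The exact factor $\lambda^{1-p}+\lambda^{-1/(p-1)}$ appearing in the statement is then produced by rescaling the obstacle by $\lambda$ via Lemma~\ref{lem:u1-ulambda-measure-estimates}, which gives
\begin{equation*}
  \mu_{\widehat R_{K'}}(\Omega_\infty)\le c(p)\bigl(\lambda^{1-p}+\lambda^{-1/(p-1)}\bigr)\mu_{\widehat R^\lambda_{K'}}(\Omega_\infty),
\end{equation*}
with $\widehat R^\lambda_{K'}$ the balayage of $\lambda\chi_{K'}$.

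\emph{The remaining step is to show} $\mu_{\widehat R^\lambda_{K'}}(\Omega_\infty)\le\mu_{\widehat R_K^u}(\Omega_T)$ up to an absolute constant. Here minimality of the balayage enters: since $K'\subset E\subset\mathrm{int}(K)\subset K$ and $u>\lambda$ on $K'$, we have the pointwise bound $u\chi_K\ge\lambda\chi_{K'}$ in $\Omega_T$, so that every supercaloric function dominating the obstacle $u\chi_K$ also dominates $\lambda\chi_{K'}$, and consequently $\widehat R^\lambda_{K'}\le\widehat R_K^u$. Both functions are weak supersolutions that vanish at the lateral boundary and are continuous weak solutions outside a compact set, by Lemmas~\ref{lem:balayage-K-properties} and~\ref{lem:RuK-prop}, so they fit into the hypothesis of Lemma~\ref{lem:Riesz-measure-order}, which yields the desired ordering of Riesz masses. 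Chaining the three estimates and taking the supremum over compact $K'\subset E$ then proves the claim.

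\emph{The main obstacle} I anticipate is to harmonise the reference cylinders $\Omega_T$ and $\Omega_\infty$, because Lemmas~\ref{lem:cap-compact-representation}, \ref{lem:u1-ulambda-measure-estimates} and~\ref{lem:Riesz-measure-order} are formulated on $\Omega_\infty$ whereas $\widehat R_K^u$ lives on $\Omega_T$ and is not a priori bounded under Assumption~\ref{as:barenblatt}. I would circumvent this by first running the argument for the bounded truncation $u_j:=\min\{u,j\}$, which is a bounded weak supersolution by Lemma~\ref{lem:supercal-prop} and whose balayage $\widehat R_K^{u_j}$ is a bounded continuous weak solution outside $K$ for which the auxiliary lemmas apply directly, after a harmless extension of $\widehat R_K^{u_j}$ to $\Omega_\infty$ by its lateral boundary values. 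Since $\widehat R_K^{u_j}$ is monotonically increasing in $j$ with limit $\widehat R_K^u$, a standard limit argument based on Lemma~\ref{lem:truncation-Riesz-measure} and Lemma~\ref{lem:bdd-weak-sol-convergence} transfers the bound to $\widehat R_K^u$ and completes the proof.
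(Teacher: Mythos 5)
Your overall strategy---monotonicity and the open-set definition to reduce to compact $K'\subset\{u>\lambda\}\cap\mathrm{int}(K)$, then Theorem~\ref{thm:capvar-eq-cap} and Lemma~\ref{lem:cap-compact-representation} to convert to $\mu_{\widehat R_{K'}}(\Omega_\infty)$, then Lemma~\ref{lem:u1-ulambda-measure-estimates} for the $\lambda$-factor, and finally an ordering of Riesz masses via minimality of the balayage---is close in spirit to the paper's argument (the paper instead decomposes $\{u>\lambda\}\cap\mathrm{int}(K)$ via an increasing sequence of obstacle-problem solutions $u_i$ whose superlevel sets $\{u_i\ge\lambda\}$ are automatically compact, but both routes rely on the same three key lemmas). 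The first two steps of your chain are correct.

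The genuine gap is in your final comparison and the limit passage. You invoke Lemma~\ref{lem:Riesz-measure-order} with the pair $\widehat R_{K'}^\lambda\le\widehat R_K^u$, but under Assumption~\ref{as:barenblatt} the function $\widehat R_K^u$ need not be bounded, so it is not a weak supersolution in the standard $W^{1,p}_{\loc}$ sense (in the subcritical range one only has $|D\widehat R_K^u|\in L^{p-1}_{\loc}$ by Lemma~\ref{lem:supercal-L1}), and the hypothesis of Lemma~\ref{lem:Riesz-measure-order} is not met. You anticipate this and propose to first run the argument with $u_j=\min\{u,j\}$, but the resulting sequence $\widehat R_K^{u_j}$ is not uniformly bounded, so Lemma~\ref{lem:bdd-weak-sol-convergence} does not apply and the ``standard limit argument'' is not standard here. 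The correct fix is to truncate at height $\lambda$: observe that $\widehat R_{K'}^\lambda\le\lambda$ (the constant $\lambda$ is supercaloric and dominates $\lambda\chi_{K'}$), hence $\widehat R_{K'}^\lambda\le\min\{\widehat R_K^{u_j},\lambda\}$, apply Lemma~\ref{lem:Riesz-measure-order} to this pair of bounded functions, then pass $j\to\infty$ with Lemma~\ref{lem:bdd-weak-sol-convergence} for the uniformly bounded sequence $\min\{\widehat R_K^{u_j},\lambda\}\nearrow\min\{\widehat R_K^u,\lambda\}$, and finally invoke Lemma~\ref{lem:truncation-Riesz-measure} to identify $\mu_{\min\{\widehat R_K^u,\lambda\}}$ with $\mu_{\widehat R_K^u}$. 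This in turn requires the non-trivial observation---derived in the paper from the decay/extinction of bounded solutions on $\Omega\times[T,\infty)$---that $\widehat R_K^u<\lambda$ outside a \emph{fixed} compact set $K'\Subset\Omega_\infty$, both to justify Lemma~\ref{lem:truncation-Riesz-measure} and to keep the supports of the Riesz measures $\mu_{\min\{\widehat R_K^{u_j},\lambda\}}$ uniformly compact so that weak convergence of measures upgrades to convergence of total mass. None of this is addressed in your sketch.
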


\begin{proof}

By Lemma~\ref{lem:RuK-prop} it follows that $\widehat{R}_K^u$ is a
weak solution in $\Omega_T \setminus K$. Thus, $\widehat{R}_K^u$ can
be extended as a weak solution to $\Omega \times [T,\infty)$ by using
$\widehat{R}_K^u$ as initial values at time $t = T$. We denote the
extended function in $\Omega_\infty$ still by $\widehat{R}_K^u$. Since $\widehat{R}_K^u$ is a bounded weak solution in $\Omega \times
[T,\infty)$ with vanishing boundary values, it tends to zero uniformly
in the limit $t\to\infty$ or even becomes extinct in finite time, see
\cite[Chapter~V, Thm.~4.3]{Di} and \cite[Chapter~VII, Prop.2.1]{Di}, respectively.
Therefore, there exists a compact set $K' \subset \Omega_\infty$ such that
$\widehat{R}_K^{u}$ is a continuous weak solution and $\widehat{R}_K^{u} < \lambda$ in $\Omega_\infty \setminus K'$.
Let
$\psi_i$ be an increasing sequence of continuous functions such that
$\psi_i = \widehat{R}_K^{u}$ in $\Omega_\infty \setminus K'$ and
$\psi_i \xrightarrow{i \to \infty} \widehat{R}_K^{u}$
pointwise. Denote the solution to the obstacle problem with obstacle
$\psi_i$ from Proposition~\ref{prop:obstacle} by $u_i$. Observe that $u_i$ is a weak solution in $\Omega_\infty \setminus K'$. This can be deduced as follows. Since $\psi_i \in C(\overline{\Omega_\infty})$, also $u_i \in C(\overline{\Omega_\infty})$ by Proposition~\ref{prop:obstacle}. Furthermore, $u_i$ agrees with its Poisson modification in any cylinder in $\Omega_\infty \setminus K'$, since $\widehat{R}_K^{u}$ is a weak solution in such a cylinder, and by using comparison principle and minimality of $u_i$. This further implies that the Riesz measure of $u_i$ has support in $K'$ for every $i \in \N$.

Thus, since the sequence $(u_i)_{i \in \N}$ is increasing, it follows that $\{u_i \geq \lambda\} \subset \{u_{i+1} \geq \lambda\}$ for every $i \in \N$, and furthermore, the sets are compact by continuity of $u_i$. Observe that
$$
\{u > \lambda\} \cap \mathrm{int}(K) \subset \{ \widehat{R}_K^{u} > \lambda \} \cap K \subset \bigcup_{i=1}^\infty  ( \{u_i \geq \lambda \} \cap K).
$$
Thus
\begin{align*}
\ca_{\mathrm{var}} (\{u > \lambda\} \cap \mathrm{int}(K), \Omega_T) &\leq \ca_{\mathrm{var}} \left( \bigcup_{i=1}^\infty ( \{u_i \geq \lambda \} \cap K), \Omega_T \right)\\
&\leq c \lim_{i\to \infty} \ca_{\mathrm{var}} (\{u_i \geq \lambda\} \cap K, \Omega_T) \\
&\leq c \lim_{i\to \infty} \ca_{\mathrm{var}} (\{u_i \geq \lambda\}, \Omega_\infty) \\
&\leq c \lim_{i\to \infty} \ca (\{u_i \geq \lambda\}, \Omega_\infty) 
\end{align*}
by Lemmas~\ref{lem:capvar-prop},~\ref{lem:capvar-upwards-conv} and
Theorem~\ref{thm:capvar-eq-cap}. Denote by $u_{1,i}$ and
$u_{\lambda,i}$ the balayage of $\chi_{\{u_i \geq \lambda\}}$ and
$\lambda\chi_{\{u_i \geq \lambda\}}$, respectively. By
Lemmas~\ref{lem:cap-compact-representation} and \ref{lem:u1-ulambda-measure-estimates} we obtain
$$
\ca (\{u_i \geq \lambda\}, \Omega_\infty) = \mu_{u_{1,i}} (\Omega_\infty) \leq c \left( \lambda^{1-p} + \lambda^{-\frac{1}{p-1}} \right) \mu_{u_{\lambda,i}}(\Omega_\infty).
$$
By observing that $u_{\lambda,i} \leq u_i$ and using Lemma~\ref{lem:Riesz-measure-order} we obtain
$$
\mu_{u_{\lambda,i}}(\Omega_\infty) \leq \mu_{\min \{u_{i},\lambda\}}(\Omega_\infty) = \mu_{\min \{u_{i},\lambda\}}(K').
$$
Since $\min\{u_i,\lambda\} \xrightarrow{i \to \infty} \min\{\widehat R_K^u,\lambda\}$ pointwise in $\Omega_\infty$, by Lemma~\ref{lem:supercal-prop} and weak convergence of the Riesz measures in Lemma~\ref{lem:bdd-weak-sol-convergence} together with Lemma~\ref{lem:truncation-Riesz-measure} we have
$$
\limsup_{i \to \infty} \mu_{\min\{u_i,\lambda\}}(K') \leq \mu_{\min\{\widehat R_K^u,\lambda\}}(K') = \mu_{\widehat R_K^u}(K'),
$$
since $\widehat R_K^u$ is a weak solution with $\widehat R_K^u < \lambda$ in $\Omega_\infty \setminus K'$. By combining all the estimates we arrive at 
$$
\ca_{\mathrm{var}} (\{u > \lambda\} \cap \mathrm{int}(K), \Omega_T) \leq c  \left( \lambda^{1-p} + \lambda^{-\frac{1}{p-1}} \right) \mu_{\widehat R_K^u}(\Omega_T),
$$
which concludes the result.

\end{proof}

If a supercaloric function $u$ takes also negative values, we use the
following construction. The main difficulty arises if $u$ is not bounded from below in a neighborhood of the lateral boundary of $\Omega_T$.

For a compact set $K \subset \Omega_T$ there exists a set
$\Omega'_{t_1,t_2} := \Omega' \times (t_1,t_2) \Subset \Omega_T$ with
$K \subset \Omega'_{t_1,t_2}$, where $\Omega'$ can be chosen in the
form $\Omega' = \{x\in \Omega : \mathrm{dist}
(x,\R^n \setminus \Omega) > \delta\}$ for small enough $\delta > 0$,
such that $\R^n \setminus \Omega'$ is uniformly $p$-fat by
Lemma~\ref{lem:parallel-fatness}, independently of $\delta>0$. Observe that since $u$ is locally bounded from below in $\Omega_T$, we have that $\tilde u := u + \big| \inf_{\Omega'_{t_1,t_2}} u \big|$ is a nonnegative supercaloric function in $\Omega'_{t_1,t_2}$, and 
\begin{align} \label{eq:signed-supercal-to-nonneg}
\begin{aligned}
v:=
\left\{
\begin{array}{c}
\tilde u \quad \text{ in } \Omega' \times (t_1,t_2), \\
0 \quad \text{ in } \Omega' \times (0,t_1],
\end{array}
\right.
\end{aligned}
\end{align}
is as well in $\Omega'_{0,t_2}$. Clearly, $v$ satisfies Assumption~\ref{as:barenblatt} in $\Omega'_{0,t_2}$ whenever $u$ does in $\Omega_T$. Thus, in that case, Lemma~\ref{lem:RuK-prop} guarantees that the balayage of $v \chi_K$ exists in $\Omega'_{0,t_2}$ with the desired properties. Furthermore, since the balayage is a continuous weak solution in $\Omega'_{0,t_2} \setminus K$ with zero boundary values on the parabolic boundary of $\Omega'_{0,t_2}$, we can extend it as a continuous weak solution to $\Omega' \times [t_2, \infty)$. With this construction, we obtain the following variant of Lemma~\ref{lem:supercal-cap-levelset}.

\begin{corollary} \label{cor:signed-supercal-cap-levelset}
Let $1 < p < \infty$ and $u$ be a supercaloric function in $\Omega_T$ that satisfies Assumption~\ref{as:barenblatt}. Furthermore, suppose that $\lambda > 1$, and let $K$ be a compact set in $\Omega_T$. Then, there exists an open set $\Omega' \Subset \Omega$ with $K \subset \Omega'$ and a constant $c = c(n,p,\alpha)> 0$ such that
$$
\ca_{\mathrm{var}}(\{u > \lambda \} \cap \mathrm{int}(K), \Omega_T) \leq c \mu_{\widehat R_K^{v}}(\Omega'_T) \left( \lambda^{1-p} + \lambda^{- \frac{1}{p-1}} \right),
$$
where $v$ is defined as in~\eqref{eq:signed-supercal-to-nonneg}, and $\widehat R_K^{v}$ is a balayage of $v \chi_K$ in $\Omega'_T$.
\end{corollary}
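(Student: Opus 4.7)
The plan is to reduce the signed case directly to the already established nonnegative case in Lemma~\ref{lem:supercal-cap-levelset}, by first shrinking the spatial domain to $\Omega'$ (on which the shifted function $v$ from \eqref{eq:signed-supercal-to-nonneg} is nonnegative and supercaloric) and then re-running the argument of Lemma~\ref{lem:supercal-cap-levelset} in this smaller ambient domain.

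First, since $K \subset \Omega_T$ is compact we have $\mathrm{dist}(K,S_T)>0$, so Lemma~\ref{lem:capvar-omega-omega'} furnishes a domain $\Omega'=\{x\in\Omega:\mathrm{dist}(x,\R^n\setminus\Omega)>\delta\}$, with $\delta>0$ small enough that $K\subset\Omega'\times(t_1,t_2)\Subset\Omega_T$, such that
$$
\ca_{\mathrm{var}}(E,\Omega_T)\le c(n,p,\alpha)\,\ca_{\mathrm{var}}(E,\Omega'_T)
$$
for any $E\subset K$. By Lemma~\ref{lem:parallel-fatness}, the complement $\R^n\setminus\Omega'$ is uniformly $p$-fat with a parameter $\alpha_o=\alpha_o(n,p,\alpha)$. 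Since $v=u+|\inf_{\Omega'_{t_1,t_2}}u|\ge u$ on $\Omega'_{t_1,t_2}\supset K$, we obtain the inclusion of superlevel sets
$$
\{u>\lambda\}\cap\mathrm{int}(K)\subset\{v>\lambda\}\cap\mathrm{int}(K),
$$
which combined with the previous inequality reduces the claim to the estimate
$$
\ca_{\mathrm{var}}\big(\{v>\lambda\}\cap\mathrm{int}(K),\Omega'_T\big)\le c\,\mu_{\widehat R_K^v}(\Omega'_T)\bigl(\lambda^{1-p}+\lambda^{-\frac1{p-1}}\bigr).
$$

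To establish this, I would simply repeat the proof of Lemma~\ref{lem:supercal-cap-levelset} with $(v,\Omega')$ in the roles of $(u,\Omega)$. The function $v$ is nonnegative and supercaloric on $\Omega'_{0,t_2}$ by construction, and it inherits Assumption~\ref{as:barenblatt} from $u$ because it equals $u$ plus a constant on $\Omega'_{t_1,t_2}$ and vanishes identically on $\Omega'_{0,t_1}$. Hence Lemma~\ref{lem:RuK-prop}, applied in $\Omega'_{0,t_2}$, provides the balayage $\widehat R_K^v$ together with its boundary properties, and it is then extended to $\Omega'\times[t_2,\infty)$ as a continuous weak solution with zero lateral data, exactly as in the paragraph preceding the corollary. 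From this stage on, the construction of the obstacle approximants $\psi_i,u_i$, the passage via Lemma~\ref{lem:capvar-upwards-conv} and Theorem~\ref{thm:capvar-eq-cap} to the nonlinear capacity, and the Riesz-measure estimates of Lemmas~\ref{lem:u1-ulambda-measure-estimates}, \ref{lem:Riesz-measure-order} and \ref{lem:truncation-Riesz-measure} all carry over verbatim.

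The only delicate point is that $v$ itself is defined only up to time $t_2$, so the extinction/decay argument from the original proof cannot be applied to $v$; it must instead be applied to the extended balayage. This still works because on $\Omega'\times[t_2,\infty)$ the extension $\widehat R_K^v$ is a bounded weak solution with vanishing lateral values, and uniform $p$-fatness of $\R^n\setminus\Omega'$ (secured by Lemma~\ref{lem:parallel-fatness}) allows us to invoke \cite[Chapter~V, Thm.~4.3]{Di} and \cite[Chapter~VII, Prop.~2.1]{Di} to produce a compact set $K'\subset\Omega'_\infty$ outside of which $\widehat R_K^v<\lambda$. This is the only place where the fatness constant explicitly enters, and it is also the reason the final constant in the corollary depends only on $n,p,\alpha$.
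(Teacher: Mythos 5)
Your proof is correct and follows essentially the same route as the paper: shrink the spatial domain via Lemmas~\ref{lem:parallel-fatness} and~\ref{lem:capvar-omega-omega'}, use the level-set inclusion $\{u>\lambda\}\cap\mathrm{int}(K)\subset\{v>\lambda\}\cap\mathrm{int}(K)$, and invoke Lemma~\ref{lem:supercal-cap-levelset} for $(v,\Omega')$ together with Lemma~\ref{lem:capvar-Tinfty2}. The only streamlining the paper makes, which you may as well adopt, is to apply Lemma~\ref{lem:supercal-cap-levelset} directly with final time $t_2$ (so $v$ is genuinely a nonnegative supercaloric function on all of $\Omega'_{0,t_2}$) and then restore the reference cylinder $\Omega_T$ via Lemmas~\ref{lem:capvar-prop}\,(iii), \ref{lem:capvar-Tinfty2} and~\ref{lem:capvar-omega-omega'}, which sidesteps the issue you correctly flag about $v$ being undefined beyond $t_2$ and makes ``re-running'' the lemma's proof unnecessary.
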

\begin{proof}

By Lemmas~\ref{lem:parallel-fatness} and~\ref{lem:supercal-cap-levelset}, and the construction above, we obtain 
$$
\ca_{\mathrm{var}}(\{v > \lambda \} \cap \mathrm{int}(K), \Omega'_{0,t_2}) \leq c \mu_{\widehat R_K^v}(\Omega'_T) \left( \lambda^{1-p} + \lambda^{- \frac{1}{p-1}} \right).
$$
Furthermore, since $v = u + \big| \inf_{\Omega'_{t_1,t_2}} u \big|$ in $K$, we have that $\{ u > \lambda \} \cap \mathrm{int}(K) \subset \{v  >\lambda\} \cap \mathrm{int}(K)$, and by monotonicity of the capacity we have 
\begin{align*}
\ca_{\mathrm{var}}(\{v > \lambda \} \cap \mathrm{int}(K), \Omega'_{0,t_2}) &\geq \ca_{\mathrm{var}}(\{u > \lambda \} \cap \mathrm{int}(K), \Omega'_{0,t_2}) \\
&\geq c \ca_{\mathrm{var}}(\{u > \lambda \} \cap \mathrm{int}(K), \Omega'_T) \\
&\geq c \ca_{\mathrm{var}}(\{u > \lambda \} \cap \mathrm{int}(K), \Omega_T),
\end{align*}
for a constant $c = c(n,p,\alpha) > 0$ by using also Lemmas~\ref{lem:capvar-Tinfty2} and~\ref{lem:capvar-omega-omega'}. Thus the claim follows.
\end{proof}

Now we are in a position to provide the\\[-1ex]

\noindent\textit{Proof of Theorem~\ref{thm:supercal-polarset}. }
Let $K_i \Subset \Omega_T$ be compact sets such that $\mathrm{int}(K_i) \subset \mathrm{int}(K_{i+1})$ for every $i \in \N$ and $\bigcup_{i=1}^\infty \mathrm{int}(K_i) = \Omega_T$. Observe that by Corollary~\ref{cor:signed-supercal-cap-levelset} we have
\begin{align} \label{eq:cap-polar-compact}
\ca_{\mathrm{var}} (\{u=\infty\} \cap \mathrm{int}(K_i),\Omega_T) &\leq \lim_{\lambda\to \infty} \ca_{\mathrm{var}} (\{u>\lambda\} \cap \mathrm{int}(K_i) , \Omega_T) = 0
\end{align}
for every $i \in \N$. Clearly, we have 
$$
\ca_{\mathrm{var}} (\{u=\infty\},\Omega_T) = \ca_{\mathrm{var}} \left( \bigcup_{i=1}^\infty \left( \{u=\infty\} \cap \mathrm{int}(K_i) \right),\Omega_T\right).
$$
By using Lemma~\ref{lem:capvar-upwards-conv} together with~\eqref{eq:cap-polar-compact} the claim follows.

\hfill \qed

The following lemma shows that the integrability condition in Theorem~\ref{thm:supercal-polarset} is sharp in the case $p > 2$.

\begin{lemma} \label{lem:monster-polarset}
Let $p > 2$. Then there exists a supercaloric function $u$ in $\Omega_T$ such that $\ca_{\mathrm{var}} (\{u=\infty\}, \Omega_T) > 0$, and $u \notin L^{p-2}_{\loc}(\Omega_T)$ with $u \in L^q_{\loc}(\Omega_T)$ for every $0<q<p-2$.
\end{lemma}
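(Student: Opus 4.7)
The plan is to construct a two-sided friendly giant whose singularity is concentrated on an interior time slice, taking advantage of the fact that the friendly-giant blow-up rate $|t-t_o|^{-1/(p-2)}$ is precisely at the borderline of $L^{p-2}$-integrability. I will take $\Omega$ to be a ball $B$ and fix an interior time $t_o\in(0,T)$, with $\delta:=\min\{t_o,T-t_o\}$.

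My first task is to produce a spatial profile $W\in W^{1,p}_0(B)\cap C(\overline B)$, positive on $B$, that solves
\begin{equation*}
  -\Div(|\nabla W|^{p-2}\nabla W)=\tfrac{1}{p-2}\,W \quad\text{ in }B.
\end{equation*}
I would do this by minimizing $F(v):=\tfrac1p\int_B|\nabla v|^p\dx-\tfrac{1}{2(p-2)}\int_Bv^2\dx$ over $W^{1,p}_0(B)$; coercivity uses crucially that $p>2$, positivity of the minimizer follows from the strong maximum principle, and any positive solution can be rescaled to normalize the right-hand side constant. A separation-of-variables computation then shows that $V^+(x,t):=(t-t_o)^{-1/(p-2)}W(x)$ is a weak solution of~\eqref{p-lap} on $B\times(t_o,t_o+\delta)$, whereas $V^-(x,t):=(t_o-t)^{-1/(p-2)}W(x)$ has $\partial_t V^- -\Div(|\nabla V^-|^{p-2}\nabla V^-)=\tfrac{2}{p-2}(t_o-t)^{-(p-1)/(p-2)}W\ge 0$, i.e.\ is a weak supersolution on $B\times(t_o-\delta,t_o)$. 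The candidate supercaloric function is then
\begin{equation*}
  u(x,t):=\begin{cases}|t-t_o|^{-1/(p-2)}W(x),&x\in B,\ 0<|t-t_o|<\delta,\\ +\infty,&x\in B,\ t=t_o,\\ 0,&\text{otherwise in }\Omega_T.\end{cases}
\end{equation*}

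Next I would check that $u$ is supercaloric. Lower semicontinuity is clear away from $\{t=t_o\}$, and on the singular slice both one-sided limits are $+\infty$, matching $u(x,t_o)=+\infty$ whenever $x\in B$; the finite values of $u$ fill an open dense set. The comparison principle on a cylinder $Q=V\times(t_1,t_2)\Subset\Omega_T$ splits into the case $t_o\notin[t_1,t_2]$, handled by the standard comparison principle for weak supersolutions applied to $V^\pm$ via Lemma~\ref{lem:supercal-prop}, and the case $t_o\in(t_1,t_2)$. In the second case I would apply the comparison principle separately on $V\times(t_1,t_o-\varepsilon)$ and $V\times(t_o+\varepsilon,t_2)$ and let $\varepsilon\downarrow 0$; the initial inequality at $t=t_o\pm\varepsilon$ is automatic once $\varepsilon$ is small, since $u$ there equals $\varepsilon^{-1/(p-2)}W\ge\sup_{\overline Q}|h|$ using the uniform positive lower bound $W\ge c_o>0$ on the compact set $\overline V\Subset B$. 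This cut-off-and-absorb argument across the singular slice is the main technical point of the proof.

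Finally, $\{u=\infty\}=B\times\{t_o\}$ has positive $n$-dimensional Lebesgue base, so Lemma~\ref{lem:cap-slice-null} yields $\ca_{\mathrm{var}}(\{u=\infty\},\Omega_T)>0$. Direct integration together with the positive two-sided bounds on $W$ on any $\Omega'\Subset B$ gives
\begin{equation*}
  \iint_{\Omega'\times(t_o-\delta,t_o+\delta)}u^q\,\dx\dt\approx \int_0^\delta s^{-q/(p-2)}\,\d s,
\end{equation*}
which is finite if and only if $q<p-2$, yielding $u\in L^q_{\loc}(\Omega_T)$ for every $0<q<p-2$ and $u\notin L^{p-2}_{\loc}(\Omega_T)$ and completing the plan.
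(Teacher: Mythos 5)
Your construction takes a genuinely different route from the paper's: the paper simply cites the example of \cite[Section~3]{KuLiPa}, which glues a Barenblatt-type weak solution $\mathfrak D$ on $\Omega\times(0,\tau)$ to a friendly giant $(t-\tau)^{-1/(p-2)}\mathfrak U(x)$ on $\Omega\times(\tau,T)$, whereas you use the \emph{same} friendly-giant profile $W$ on both sides of the singular slice, observing that the backward-in-time version $V^-=(t_o-t)^{-1/(p-2)}W$ is a weak supersolution (not a solution, but that is all supercaloricity requires). The separation-of-variables computations for $V^\pm$ are correct, the variational construction of $W$ (coercivity via $p>2$, $\inf F<0$ from the quadratic term, positivity by the strong maximum principle) is sound, the ``cut off the singular slice and absorb $\sup|h|$'' comparison argument is the right way to verify supercaloricity across $\{t=t_o\}$, and the final integrability computation is identical in both proofs. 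This is a self-contained and arguably cleaner construction than the one in \cite{KuLiPa}.

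There is, however, a genuine gap in the way $u$ is defined. With $\delta=\min\{t_o,T-t_o\}$ and $u\equiv0$ on the ``otherwise'' region, whenever $t_o\neq T/2$ one of the two cut-off times, say $t_o+\delta$, lies strictly inside $(0,T)$, and there $u$ drops discontinuously from $\delta^{-1/(p-2)}W(x)>0$ to $0$. This kills the comparison principle: take $Q=V'\times(t_o+\delta/2,\,t_o+\delta+\eta)\Subset\Omega_T$ with $V'\Subset B$, and let $h\in C(\overline Q)$ solve the equation in $Q$ with initial data $u(\cdot,t_o+\delta/2)|_{V'}$ and zero lateral data. Then $h\le u$ on $\partial_pQ$, but for $p>2$ there is no finite-time extinction, so $h>0=u$ in the interior of $V'$ for $t$ slightly above $t_o+\delta$. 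Thus the function you wrote down is \emph{not} supercaloric in $\Omega_T$ unless $t_o=T/2$. The fix is trivial: drop the ``otherwise'' clause altogether and set $u(x,t)=|t-t_o|^{-1/(p-2)}W(x)$ for all $t\in(0,T)\setminus\{t_o\}$ (equivalently, choose $t_o=T/2$). With that change the cut-off-and-absorb argument applies in every subcylinder and the proof is complete.
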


\begin{proof}

Let $\tau \in (0,T)$ and recall the following example of supercaloric function from~\cite[Section 3]{KuLiPa} defined by
\[u(x,t) = \begin{cases} 
      \mathfrak{D}(x,t) & (x,t) \in \Omega \times (0,\tau), \\
      (t-\tau)^{-\frac{1}{p-2}} \mathfrak{U}(x) & (x,t) \in \Omega \times [ \tau ,T),
   \end{cases}
\]
in which 
$$
\mathfrak{D}(x,t) = \left[ A \left( \frac{\tau}{\tau - t} \right)^\frac{n(p-2)}{\lambda(p-1)} + \left( \tfrac{p-2}{p} \right) \lambda^{-\frac{1}{p-1}} \left( \frac{|x|^p}{\tau-t} \right)^\frac{1}{p-1} \right]^\frac{p-1}{p-2},
$$
with $A > 0$ and $\lambda = n(p-2) +p$, is a weak solution in $\Omega
\times (0,\tau)$ (see also~\cite[Chapter~XI, Remark~7.1]{Di}) and $\mathfrak{U} \in C(\overline{\Omega}) \cap W^{1,p}(\Omega)$ is a weak solution to
$$
\Div (|\nabla \mathfrak{U}|^{p-2} \nabla \mathfrak{U}) + \tfrac{1}{p-2} \mathfrak{U} = 0 \quad \text{ in } \Omega
$$
with $\mathfrak{U} > 0$ in $\Omega$. Observe that $u$ is a supercaloric function in $\Omega_T$ with $\{u = \infty\} = \Omega \times \{\tau\}$. Lemma~\ref{lem:cap-slice-null} concludes the claim.

\end{proof}

Next, we show that every compact set of vanishing capacity is
contained in a polar set. 

\begin{lemma}
Let $1<p<\infty$ and $K \subset \Omega_T$ be a compact set with $\ca_{\mathrm{var}} (K, \Omega_T) = 0$. Then there exists a supercaloric function $u$ in $\Omega_T$ such that $K \subset \{(x,t) \in \Omega_T : u(x,t) = \infty\}$.
\end{lemma}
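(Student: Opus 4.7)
The strategy is to construct $u$ as an infinite sum of balayages of shrinking compact neighborhoods of $K$. Since $\ca_{\mathrm{var}}(K,\Omega_T)=0$, for each $i\in\N$ I choose an open set $U_i\supset K$ with $\ca_{\mathrm{var}}(U_i,\Omega_T)\le 2^{-i}$, and then, using the compactness of $K$, a compact set $K_i$ with $K\subset\mathrm{int}(K_i)\subset K_i\subset U_i$. By monotonicity, Theorem~\ref{thm:capvar-eq-cap} and Lemma~\ref{lem:cap-compact-representation}, the balayage $u_i:=\widehat R_{K_i}$ provided by Lemmas~\ref{lem:balayage-psi-prop} and~\ref{lem:balayage-K-properties} is a bounded supercaloric function in $\Omega_T$ with $0\le u_i\le 1$, satisfying $u_i\equiv 1$ on $\mathrm{int}(K_i)\supset K$, and its Riesz measure has total mass $\mu_{u_i}(\Omega_\infty)=\ca(K_i,\Omega_\infty)\le C\,2^{-i}$. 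The candidate function will be $u:=\sum_{i=1}^\infty u_i$.

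To see that $u$ is finite on a dense set, I test the equation $\partial_tu_i-\Delta_pu_i=\mu_{u_i}$ against a Steklov average of $u_i$ itself, exploiting $u_i\in L^p(0,T;W^{1,p}_0(\Omega))\cap L^\infty(\Omega_T)$ and the vanishing of $u_i$ near $t=0$ supplied by Lemma~\ref{lem:balayage-K-properties}. Using $u_i\le 1$ on $\spt\mu_{u_i}$, this yields the energy bound
\[
  \|u_i\|_{\mathrm{en},\Omega_T}=\tfrac12\sup_{0<t<T}\int_\Omega u_i^2(\cdot,t)\,\d x+\iint_{\Omega_T}|\nabla u_i|^p\,\d x\d t\le C\mu_{u_i}(\Omega_\infty)\le C\,2^{-i}.
\]
The parabolic Sobolev inequality, applied exactly as in the proof of Lemma~\ref{lem:meas-leq-cap}, then gives $\|u_i\|_{L^{p(n+2)/n}(\Omega_T)}\le C\,2^{-i\theta}$ for $\theta=\frac{n+p}{p(n+2)}>0$. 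Summing this geometric bound and invoking Beppo Levi, $u\in L^{p(n+2)/n}(\Omega_T)$, so $u<\infty$ almost everywhere, hence on a dense subset of $\Omega_T$.

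It remains to verify the axioms of a supercaloric function. The sum $u$ is lower semicontinuous as the monotone pointwise limit of the lsc partial sums $u^{(N)}:=\sum_{i=1}^N u_i$, each of which is supercaloric as a finite sum of supercaloric functions. For the comparison principle, fix a subcylinder $Q_{t_1,t_2}\Subset\Omega_T$ and a weak solution $h\in C(\overline{Q_{t_1,t_2}})$ with $h\le u$ on $\partial_pQ_{t_1,t_2}$; given $\eps>0$, the sets $A_N:=\{u^{(N)}>h-\eps\}\cap\partial_pQ_{t_1,t_2}$ are relatively open (as superlevel sets of lsc functions minus a continuous function), increasing in $N$, and their union equals $\partial_pQ_{t_1,t_2}$ since $u^{(N)}(z)\nearrow u(z)\ge h(z)>h(z)-\eps$ there. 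The compactness of $\partial_pQ_{t_1,t_2}$ delivers some $N$ with $A_N=\partial_pQ_{t_1,t_2}$; applying comparison to the supercaloric $u^{(N)}$ yields $u^{(N)}\ge h-\eps$ in $Q_{t_1,t_2}$, and sending $N\to\infty$ and $\eps\downarrow 0$ gives $u\ge h$ there. Since $u(z)=\sum_i u_i(z)=\infty$ for every $z\in K$ by construction, the proof is complete. The main technical obstacle will be justifying the energy estimate for $u_i$, since the right-hand side is only a Radon measure and $u_i$ must be used as a test function against its own measure data; this is handled by Steklov averaging together with the boundary properties of the balayage from Lemma~\ref{lem:balayage-K-properties}.
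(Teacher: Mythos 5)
There is a genuine gap in your argument: you assert that each partial sum $u^{(N)}=\sum_{i=1}^N u_i$ is ``supercaloric as a finite sum of supercaloric functions.'' This is false for $p\neq 2$. The $p$-Laplace operator is not linear, so
\[
|\nabla(u_1+u_2)|^{p-2}\nabla(u_1+u_2)\neq |\nabla u_1|^{p-2}\nabla u_1+|\nabla u_2|^{p-2}\nabla u_2
\]
in general, and consequently the sum of two weak supersolutions is not a weak supersolution; since bounded supercaloric functions and lower semicontinuous weak supersolutions coincide (Lemmas~\ref{lem:supercal-prop} and~\ref{lem:supersol-supercal}), the sum of two supercaloric functions need not be supercaloric. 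Because of this, the comparison argument you then apply to $u^{(N)}$ is not available, and the supercaloric-ness of the limit $u$ is unjustified for $p\neq2$. The remaining ingredients of your construction — the choice of $K_i$, the energy estimate $\|\widehat R_{K_i}\|_{\mathrm{en},\Omega_T}\leq C\,\mu_{\widehat R_{K_i}}(\Omega_\infty)\approx\ca(K_i,\Omega_\infty)$ obtained by Steklov testing, the parabolic Sobolev inequality giving summability and hence finiteness a.e. of $u$, and the lower semicontinuity of the increasing limit — are all sound, but they do not rescue the incorrect additivity claim.

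The paper's proof avoids exactly this pitfall. It sums the \emph{admissible smooth functions} $v_i\in C_0^\infty(\Omega\times\R)$, which is harmless because $\mathcal{W}(\Omega_T)$ is a linear space, obtaining bounded partial sums $\varphi_k$, then multiplies by a spatial cut-off to get $\psi_k$, and finally invokes Lemma~\ref{lem:en-leq-W} to produce, for each $k$, the \emph{smallest} supercaloric function $u_k$ above $\psi_k$, with $\|u_k\|_{\mathrm{en},\Omega_T}$ uniformly bounded. Since the obstacles $\psi_k$ increase in $k$, the majorants $u_k$ form an increasing sequence of supercaloric functions, and the increasing limit of supercaloric functions that is finite on a dense set is again supercaloric (the key fact being monotone limits, not sums, preserve supercaloricity). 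If you wish to salvage your approach while keeping the balayages $\widehat R_{K_i}$, you would have to replace the partial sums by, e.g., the smallest supercaloric majorants of $\sum_{i=1}^N\widehat R_{K_i}$ and then establish a uniform energy bound for these, which is essentially what Lemma~\ref{lem:en-leq-W} does for you.
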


\begin{remark}
The result holds for any set of the form $\bigcup_{i=1}^\infty K_i$, in which $K_i$ is compact and $K_i \subset K_{i+1} \subset \Omega_T$ for each $i \in \N$.
\end{remark}

\begin{proof}
Since $\ca_{\mathrm{var}} (K, \Omega_T) = 0$, for every $i \in \N$ there exists $v_i \in C_0^\infty(\Omega \times \R)$ such that $v_i \geq \chi_K$ with 
$$
\|v_i\|_{\mathcal{W}(\Omega_T)} \leq 4^{-is},
$$
where $s = \max \{p,p'\}$.
By setting 
$$
\varphi_k = \sum_{i=1}^k v_i,
$$
for $k\in\N$, we have that
\begin{align*}
\|\varphi_k\|_{\mathcal{W}(\Omega_T)} \leq \sum_{i=1}^k 2^{is}  \|v_i\|_{\mathcal{W}(\Omega_T)} \leq \frac{1-2^{-sk}}{2^s -1} \leq \frac{1}{2^s - 1} =: c_1(p).
\end{align*}
By taking an open set $U \Subset \Omega$ and $0<t_1<t_2 <T$ such that $K \subset U \times (t_1,t_2)$ and $\zeta \in C_0^\infty(U, [0,1])$ with $\zeta =1$ in $K$, we have 
$$
\|\zeta \varphi_k\|_{\mathcal{W}(\Omega_T)} \leq c(p,\mathrm{diam} (\Omega),d) \|\varphi_k\|_{\mathcal{W}(\Omega_T)} \leq c_2(p,\mathrm{diam} (\Omega),d),
$$
where $d = \mathrm{dist}(\partial U \times (t_1,t_2), K)$. Denote $\psi_k = \zeta \varphi_k$. By Lemma~\ref{lem:en-leq-W} there exists a supercaloric function $u_k \in \mathcal{V}(\Omega_T)\cap C(\overline{\Omega_T})$ with $u_k \geq \psi_k$ such that 
$$
\|u_k\|_{\mathrm{en}, \Omega_T} \leq c(p) \|\psi_k\|_{\mathcal{W}(\Omega_T)} \leq c(p,\mathrm{diam} (\Omega),d).
$$
More precisely, the function $u_k$ from Lemma~\ref{lem:en-leq-W} is
constructed as the smallest supercaloric function above
$\psi_k$. Therefore, we have that $u_k$ is an increasing sequence
since $\psi_k$ is. By the inequality above, we have that $u = \lim_{k \to \infty} u_k$ is finite a.e. in $\Omega_T$ and thus a supercaloric function as an increasing limit of such functions. Since $u \geq \lim_{k \to \infty} \psi_k$ in $\Omega_T$, it follows that $u = \infty$ in $K$.

\end{proof}

We continue with an auxiliary result that provides a connection between the capacity of an
open set and the Riesz measure of the balayage, cf.
Lemma~\ref{lem:cap-compact-representation}.

\begin{lemma} \label{lem:capvar-openset-potential}
Let $1<p<\infty$ and $U \Subset \Omega_T$ be an open set. Then 
$$
\ca_{\mathrm{var}} (U, \Omega_T) \approx \mu_{\widehat{R}_U} (\Omega_T)
$$
up to a constant depending only on $n,p$ and $\alpha$.
\end{lemma}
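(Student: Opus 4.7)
The plan is to approximate the open set $U$ by an exhausting sequence of compact sets and reduce the assertion to the analogous identity for compact sets, which is already known through the combination of Theorem~\ref{thm:capvar-eq-cap} and Lemma~\ref{lem:cap-compact-representation}. Concretely, I would fix compact sets $K_i \Subset \Omega_T$ with $K_i \subset \mathrm{int}(K_{i+1})$ and $\bigcup_{i=1}^\infty K_i = U$. By Lemma~\ref{lem:capvar-upwards-conv}, $\ca_{\mathrm{var}}(U,\Omega_T) \approx \lim_{i \to \infty}\ca_{\mathrm{var}}(K_i,\Omega_T)$ up to a constant depending on $n,p,\alpha$, and by Theorem~\ref{thm:capvar-eq-cap} together with Lemma~\ref{lem:cap-compact-representation},
\begin{equation*}
  \ca_{\mathrm{var}}(K_i,\Omega_T)\approx \ca(K_i,\Omega_\infty)=\mu_{\widehat R_{K_i}}(\Omega_\infty).
\end{equation*}
Thus it remains to show that $\mu_{\widehat R_{K_i}}(\Omega_\infty)\to\mu_{\widehat R_U}(\Omega_T)$ as $i\to\infty$.

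The key step is to identify the monotone limit of the balayages. Since $\chi_{K_i}\le\chi_U$, minimality yields $\widehat R_{K_i}\le\widehat R_U$. Hence $v:=\lim_{i\to\infty}\widehat R_{K_i}$ is well-defined, uniformly bounded by $1$, and is a supercaloric function as an increasing limit of locally bounded supercaloric functions. Since every $(x,t)\in U$ eventually lies in $\mathrm{int}(K_{i})$, we have $v\ge\chi_U$, so $v$ is a competitor in the definition of $R_U$. Because $v$ is already lower semicontinuous, this forces $v\ge R_U\ge\widehat R_U$, and combined with the reverse inequality we obtain $v=\widehat R_U$ pointwise in $\Omega_T$.

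Having identified the limit, the convergence of the Riesz measures follows from Lemma~\ref{lem:bdd-weak-sol-convergence}, giving $\mu_{\widehat R_{K_i}}\wto\mu_{\widehat R_U}$ weakly as Radon measures on $\Omega_T$. Since each $\mu_{\widehat R_{K_i}}$ is supported in $K_i\subset\overline U\Subset\Omega_T$ by Lemma~\ref{lem:balayage-K-properties}, testing the weak convergence against a fixed cutoff $\varphi\in C^\infty_0(\Omega_T)$ with $\varphi\equiv 1$ on $\overline U$ shows both that $\mu_{\widehat R_U}$ is supported in $\overline U$ and that the total masses converge, i.e. $\mu_{\widehat R_{K_i}}(\Omega_T)\to\mu_{\widehat R_U}(\Omega_T)$. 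Concatenating the two chains of equivalences completes the proof.

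I expect the main obstacle to be the reconciliation between the balayage in $\Omega_\infty$ that appears implicitly through Lemma~\ref{lem:cap-compact-representation} and the balayage in $\Omega_T$ that appears in the statement. Both are supercaloric, minimal above $\chi_{K_i}$ in the respective reference cylinders, and carry Riesz measures supported in the compact set $K_i\Subset\Omega_T$; by the comparison principle their restrictions to $\Omega_T$ differ only by contributions that vanish after the limit $T\to\infty$ in Lemma~\ref{lem:capvar-Tinfty2}, so the discrepancy is absorbed into the constant. The rest of the argument is routine monotone/weak-convergence bookkeeping.
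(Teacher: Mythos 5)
Your argument is essentially identical to the paper's: exhaust $U$ by compacts $K_i$, combine Lemma~\ref{lem:capvar-upwards-conv}, Theorem~\ref{thm:capvar-eq-cap} and Lemma~\ref{lem:cap-compact-representation} to reduce to the masses $\mu_{\widehat R_{K_i}}$, identify the increasing pointwise limit of the balayages $\widehat R_{K_i}$ with $\widehat R_U$ via minimality and lower semicontinuity, and pass to the limit in the Riesz measures by Lemma~\ref{lem:bdd-weak-sol-convergence} together with a cutoff $\varphi\equiv1$ on $\overline U$. The concluding worry about $\Omega_T$ versus $\Omega_\infty$ is likewise left implicit in the paper and is harmless here because each $\mu_{\widehat R_{K_i}}$ is supported in $K_i\Subset\Omega_T$, so the total mass is the same in both reference cylinders.
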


\begin{proof}

Take an expanding sequence of compact sets $K_i$ such that $U =
\bigcup_{i=1}^\infty K_i$. By Lemma~\ref{lem:capvar-upwards-conv},
Theorem~\ref{thm:capvar-eq-cap} and Lemma~\ref{lem:cap-compact-representation} we obtain 
$$
\ca_{\mathrm{var}} (U, \Omega_T) \approx \lim_{i \to \infty} \ca_{\mathrm{var}} (K_i, \Omega_T) \approx \lim_{i \to \infty}\ca (K_i, \Omega_\infty) = \lim_{i \to \infty} \mu_{u_i} (\Omega_T),
$$
where we denoted $u_i = \widehat{R}_{K_i}$. Observe that $(u_i)_{i\in
  \N}$ is an increasing sequence of uniformly bounded supercaloric
functions, and thus lower semicontinuous weak supersolutions by
Lemma~\ref{lem:supercal-prop}. Hence, the pointwise limit $u$ exists
as a bounded supercaloric function, which again is a weak
supersolution by Lemma~\ref{lem:supercal-prop}. On one hand, $u_i \leq
\widehat{R}_U$ for each $i \in \N$, and on the other hand $u \geq
\chi_U$, which implies $u \geq \widehat{R}_U$ by minimality of
$\widehat{R}_U$. This implies that $u = \widehat{R}_U$.

Furthermore, we have
$$
\lim_{i \to \infty} \int_{\Omega_T} \varphi \, \d \mu_{u_i} = \int_{\Omega_T} \varphi \, \d \mu_{u}
$$
for every $\varphi \in C_0^\infty(\Omega_T)$ by Lemma~\ref{lem:bdd-weak-sol-convergence}. Thus, by choosing $\varphi = 1$ in $\overline{U}$ and using outer regularity of $\mu_u$ it follows that 
$$
\mu_u (\overline{U}) \approx \ca_{\mathrm{var}} (U, \Omega_T),
$$
which concludes the proof.
\end{proof}

Then we show that for a compactly contained set in $\Omega_T$, the
balayage vanishes if and only if the set has capacity zero. The proof
follows the strategy presented in~\cite[Theorem
4.1]{AS}.

\begin{lemma} \label{lem:balayage-zero-iff-cap-zero}
Let $1<p<\infty$ and $E \Subset \Omega_T$ be a set. Then we have $\widehat R_E \equiv 0$ if and only if $\ca_{\mathrm{var}}(E,\Omega_T) = 0$.
\end{lemma}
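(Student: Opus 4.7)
The plan is to argue the two implications separately, both going through the connection between the balayage, its Riesz measure, and the variational capacity provided by Lemma~\ref{lem:capvar-openset-potential}.

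For the implication $\ca_{\mathrm{var}}(E,\Omega_T)=0\Longrightarrow \widehat R_E\equiv 0$, I would use the outer regularity built into Definition~\ref{def:varcap} to select a sequence of open sets $U_i\Subset\Omega_T$ with $E\subset U_i$ and $\ca_{\mathrm{var}}(U_i,\Omega_T)\to 0$. Monotonicity of the balayage in the obstacle (inherited from $\chi_E\le\chi_{U_i}$, the definition of $R_\psi$, and the order-preserving lsc regularization) then gives $0\le\widehat R_E\le\widehat R_{U_i}$, while Lemma~\ref{lem:capvar-openset-potential} yields $\mu_{\widehat R_{U_i}}(\Omega_T)\to 0$. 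Since $\chi_{U_i}$ is bounded with compact support in $\Omega_T$, Lemma~\ref{lem:balayage-K-properties} guarantees $\widehat R_{U_i}\in L^p(0,T;W^{1,p}_0(\Omega))$, $\widehat R_{U_i}\le 1$, and $\widehat R_{U_i}$ vanishes on some initial strip $\Omega\times(0,\delta_i]$. Testing the measure equation \eqref{e.weak_measuresol} for $\widehat R_{U_i}$ with a Steklov approximation of $\widehat R_{U_i}\xi$, for $\xi$ a smooth cutoff in time approximating $\chi_{(0,\tau)}$, and passing to the limit using the vanishing initial trace yields
\begin{equation*}
  \tfrac12\esssup_{0<\tau<T}\int_\Omega \widehat R_{U_i}^2(x,\tau)\,\dx
  +
  \iint_{\Omega_T}|\nabla \widehat R_{U_i}|^p\,\dx\dt
  \le
  \iint_{\Omega_T}\widehat R_{U_i}\,\d\mu_{\widehat R_{U_i}}
  \le
  \mu_{\widehat R_{U_i}}(\Omega_T).
\end{equation*}
Thus $\widehat R_{U_i}\to 0$ in $L^\infty(0,T;L^2(\Omega))$, hence also in $L^2(\Omega_T)$, so a subsequence tends to $0$ a.e. Combined with $0\le\widehat R_E\le\widehat R_{U_i}$ this gives $\widehat R_E=0$ a.e.\ in $\Omega_T$, and lower semicontinuity together with $\widehat R_E\ge 0$ upgrades the a.e.\ vanishing to $\widehat R_E\equiv 0$ everywhere.

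For the converse, take any compact $K\subset E$. Monotonicity of the balayage gives $0\le\widehat R_K\le\widehat R_E\equiv 0$, hence $\widehat R_K\equiv 0$ and in particular $\mu_{\widehat R_K}\equiv 0$. Lemma~\ref{lem:cap-compact-representation} then yields $\ca(K,\Omega_\infty)=0$, and Theorem~\ref{thm:capvar-eq-cap} translates this into $\ca_{\mathrm{var}}(K,\Omega_T)=0$. The remaining task is to promote this vanishing on compact subsets of $E$ to the vanishing of $\ca_{\mathrm{var}}(E,\Omega_T)$ itself. I would combine the outer regularity of $\ca_{\mathrm{var}}$, the downward limit on compact sets from Lemma~\ref{lem:capvar-limit-compact}, the quasi-continuity from below provided by Lemma~\ref{lem:capvar-upwards-conv}, and the subadditivity of $\ca_{\mathrm{var}}^s$ from Lemma~\ref{lem:subadditivity} in a Choquet-type scheme to extract a sequence of open neighborhoods $U\supset E$ with $\ca_{\mathrm{var}}(U,\Omega_T)\to 0$.

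The principal obstacle I expect is precisely this last propagation step, since $\ca_{\mathrm{var}}$ satisfies the axioms of a Choquet capacity only \emph{up to constants}, and $E$ is a generic (possibly non-$\sigma$-compact) subset of $\Omega_T$; the standard capacitability scheme must therefore be adapted to the quasi-subadditive, quasi-monotone situation given by Lemmas~\ref{lem:subadditivity} and~\ref{lem:capvar-upwards-conv}. A secondary technical point is the rigorous testing of the weak formulation with $\widehat R_{U_i}$ itself in the first direction, but this is routine via a Steklov or time-mollification argument, exploiting that $\widehat R_{U_i}$ has vanishing initial trace and zero lateral boundary values in the Sobolev sense by Lemma~\ref{lem:balayage-K-properties}.
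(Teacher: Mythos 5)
Your direction $\ca_{\mathrm{var}}(E,\Omega_T)=0\Rightarrow\widehat R_E\equiv 0$ is essentially sound, though the route differs from the paper's. You extract the conclusion from an energy estimate obtained by testing the measure equation for $\widehat R_{U_i}$ with itself, giving $\widehat R_{U_i}\to 0$ in $L^\infty(0,T;L^2(\Omega))$ and hence a.e.; the paper instead passes to the pointwise limit $u$ of the decreasing sequence $\widehat R_{U_i}$, uses Lemma~\ref{lem:bdd-weak-sol-convergence} to get weak convergence of Riesz measures, concludes $\mu_u\equiv 0$, and then $\widehat u\equiv 0\geq\widehat R_E$. Both are viable; the Riesz-measure route avoids the somewhat delicate self-testing and Steklov-limit bookkeeping, but your energy argument would work with care.

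The direction $\widehat R_E\equiv 0\Rightarrow\ca_{\mathrm{var}}(E,\Omega_T)=0$ has a genuine gap, and you have correctly located it: showing $\ca_{\mathrm{var}}(K,\Omega_T)=0$ for every compact $K\subset E$ controls only the \emph{inner} capacity of $E$, whereas Definition~\ref{def:varcap} defines $\ca_{\mathrm{var}}(E,\Omega_T)$ as an \emph{outer} capacity, an infimum over open supersets. For arbitrary (possibly non-Borel, non-Suslin) $E$, the tools you list — Lemmas~\ref{lem:capvar-limit-compact}, \ref{lem:capvar-upwards-conv}, \ref{lem:subadditivity} and Proposition~\ref{prop:capvar-choquet} — do not close this gap; Proposition~\ref{prop:capvar-choquet} only asserts capacitability of \emph{Borel} sets, and even then only up to a multiplicative constant. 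The key idea you are missing is Choquet's topological lemma, which is how the paper produces \emph{open} neighborhoods of $E$ with small capacity directly from the hypothesis $\widehat R_E\equiv 0$. Concretely: one extracts a decreasing sequence of supercaloric functions $v_j\geq\chi_E$ whose lsc-regularized limit is $\widehat R_E$; sets $u_j:=\min\{\widehat R_K,v_j\}$ for a compact $K$ with $\mathrm{int}(K)\supset\overline E$, so that the $u_j$ are uniformly bounded weak supersolutions decreasing to $0$ a.e.; and then uses the \emph{open} superlevel sets $U_j=\{u_j>\tfrac12\}\supset E$. The capacity of $U_j$ is then controlled via Lemmas~\ref{lem:capvar-upwards-conv}, \ref{lem:cap-compact-representation}, \ref{lem:u1-ulambda-measure-estimates} and weak convergence of the Riesz measures of the associated réduites to zero. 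This step — obtaining a shrinking sequence of open neighborhoods of $E$ from the assumption on the balayage — is the heart of the implication, and a purely abstract Choquet-capacitability scheme does not supply it for general $E$.
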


\begin{proof}
First we show the "only if" part. We extend $\widehat{R}_E$ by zero to $\Omega \times [T,\infty)$ and consider the reference set $\Omega_\infty$. By Choquet's topological lemma we have that there exists a decreasing sequence of supercaloric functions $v_j\geq \chi_E$ such that the pointwise limit $v = \lim_{j\to \infty} v_j$ satisfies $\widehat v = \widehat{R}_E$. Observe that $\widehat{R}_E = R_E$ a.e. by Lemma~\ref{lem:balayage-psi-prop} and $\widehat v = (v)_*$ everywhere in $\Omega_T$ since $v$ is a limit of uniformly bounded supercaloric functions, see~\cite[Lemma 2.7]{LP}. Since $v$ is a weak supersolution, also $v = (v)_* = \widehat{v}$ a.e. in $\Omega_\infty$ by Lemma~\ref{lem:supersol-lsc} and $v = \widehat{R}_E = 0$ a.e. in $\Omega_\infty$. 

Let $K \subset \Omega_T$ be a compact set such that
$\mathrm{int}(K)\supset \overline{E}$ and define $u_j = \min \{
\widehat{R}_K, v_j \}$. Observe that $v_j \geq u_j \geq \chi_E$. Since
also $u_j \geq R_E$ a.e. in $\Omega_\infty$, it follows that the
pointwise limit $\lim_{j\to \infty} u_j = u \geq R_E$ a.e. in
$\Omega_\infty$. Furthermore, $u$ is a weak supersolution as pointwise
limit of uniformly bounded weak supersolutions and thus $(u)_* = u
\geq R_E = \widehat{R}_E$ a.e. in $\Omega_\infty$. Since $u \leq v$ it
follows that $u = 0$ a.e. in $\Omega_\infty$. Furthermore, $\mu_{u_j}
\wto \mu_u$ weakly in $\Omega_\infty$ as $j \to \infty$ by Lemma~\ref{lem:bdd-weak-sol-convergence}.

Let $U_j = \left\{u_j > \tfrac12 \right\} \supset E$. Observe that
clearly $U_j \subset \{\widehat R_K > \tfrac12 \} \Subset
\Omega_\infty$. Exhaust $U_j$ by an expanding sequence of compact sets $\{K_j^i\}$. By Lemma~\ref{lem:capvar-upwards-conv} we have that for any $\eps > 0$ we can find a subsequence of such sets, denoted by $\{K_j'\}$ such that 
$$
\ca_{\mathrm{var}} (K_j',\Omega_\infty) \geq c(n,p,\alpha) \ca_{\mathrm{var}} (U_j,\Omega_\infty) - \eps.
$$
Observe that $\ca_{\mathrm{var}} (K_j',\Omega_\infty) \leq c \ca (K_j',\Omega_\infty) $ for $c= c(n,p,\alpha)>0$ by Theorem~\ref{thm:capvar-eq-cap}  and $\ca (K_j',\Omega_\infty) = \mu_{R_{K_j'}} (\Omega_\infty)$ by Lemma~\ref{lem:cap-compact-representation}. Denoting the r\'eduite of $\frac12 \chi_{K_j'}$ by $R_j$ we have 
\begin{align*}
c \ca_{\mathrm{var}} &(U_j,\Omega_\infty) - \eps\\
&\leq \ca_{\mathrm{var}} (K_j',\Omega_\infty)
\leq c \ca (K_j',\Omega_\infty) = c \mu_{R_{K_j'}} (\Omega_\infty) \leq c \mu_{R_j} (\Omega_\infty),
\end{align*}
where we used Lemma~\ref{lem:u1-ulambda-measure-estimates} as
well. Since $u_j \geq \widehat{R}_j$ for every $j \in \N$ by
minimality of $\widehat{R}_j$, we have
that $\widehat{R}_j \to 0$ a.e. in $\Omega_T$
and thus $\mu_{R_j} \wto 0$ weakly as $j\to\infty$. Since there is a
compact set in which the support of $\mu_{R_j}$ is contained for every $j\in \N$ we have
\begin{align*}
\ca_{\mathrm{var}}(E, \Omega_\infty) \leq \liminf_{j\to \infty} \ca_{\mathrm{var}} (U_j,\Omega_\infty) \leq c \left( \limsup_{j \to \infty} \mu_{R_j} (\Omega_\infty) + \eps \right) \leq 
c \eps.
\end{align*}
Since $\eps > 0$ was arbitrary, we conclude the proof by application of Lemma~\ref{lem:capvar-Tinfty2}.

Then we prove the "if" part.
By definition there exists a decreasing sequence of open sets $U_i$ satisfying $E \subset U_i \Subset \Omega_T$ such that 
$$
\lim_{i\to \infty} \ca_{\mathrm{var}} (U_i, \Omega_T) = 0,
$$
and Lemma~\ref{lem:capvar-openset-potential} implies that
$$
\mu_{\widehat{R}_{U_i}} (\Omega_T) \leq c(n,p,\alpha) \ca_{\mathrm{var}} (U_i, \Omega_T).
$$
The functions $\widehat{R}_{U_i}$ form a decreasing sequence of uniformly bounded weak supersolutions and they converge pointwise to a weak supersolution $u$. Since the Riesz measures converge weakly by Lemma~\ref{lem:bdd-weak-sol-convergence}, we have $\mu_u \equiv 0$ by the estimates above, and thus $\widehat{u} \equiv 0$. On the other hand, we have that $\widehat u \geq \widehat{R}_E$, which proves the claim.

\end{proof}

\section{Removability}
\label{sec:removability}

An important ingredient in the proof of the removability result for weak supersolutions, Lemma~\ref{lem:supersol-removability}, is the following approximation result. The proof is analogous to~\cite[Lemma 5.1]{AS}, which also applies ideas in~\cite{Petitta}. However, since our setting is slightly different, we include the proof for readers' convenience.

\begin{lemma} \label{lem:S-closure}
Let $1 < p < \infty$ and $E \subset \Omega_T$ be a relatively closed set in $\Omega_T$. If $\ca_{\mathrm{var}} (E,\Omega_T) = 0$, then
$$
\overline{C_0^\infty (O \setminus E)}^{\mathcal{S}(\Omega_T)} \simeq \overline{C_0^\infty(O)}^{\mathcal{S}(\Omega_{T})}\quad \text{ for all open } O \subset \Omega_T,
$$
where
$$
\mathcal{S}(\Omega_{T}) = \left\{ u \in \mathcal{V}(\Omega_T): \partial_t u \in \mathcal{V}'(\Omega_T) + L^1(\Omega_T) \right\}. 
$$
\end{lemma}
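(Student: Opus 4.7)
The plan is to establish both inclusions between the closures. The inclusion
$\overline{C_0^\infty(O\setminus E)}^{\mathcal{S}(\Omega_T)}\subseteq \overline{C_0^\infty(O)}^{\mathcal{S}(\Omega_T)}$
is immediate from $C_0^\infty(O\setminus E)\subseteq C_0^\infty(O)$. For the reverse, by a standard diagonal argument it suffices to show that each $\phi\in C_0^\infty(O)$ is the $\mathcal{S}(\Omega_T)$-limit of functions in $C_0^\infty(O\setminus E)$.

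Set $K:=\mathrm{supp}\,\phi\subset O$, which is compact in $\Omega_T$. Since $E$ is relatively closed in $\Omega_T$, $E\cap K$ is a compact subset of $\Omega_T$, and by monotonicity $\ca_{\mathrm{var}}(E\cap K,\Omega_T)=0$. The central step is to construct a sequence of smooth cutoffs $\eta_k\in C_0^\infty(\Omega\times\R,[0,1])$ satisfying $\eta_k\equiv 1$ on an open neighborhood $U_k$ of $E\cap K$ (with $U_k$ shrinking to $E\cap K$) and
\begin{equation*}
  \|\eta_k\|_{\mathcal{V}(\Omega_T)}+\|\partial_t\eta_k\|_{\mathcal{V}'(\Omega_T)+L^1(\Omega_T)}\longrightarrow 0\qquad\text{as } k\to\infty.
\end{equation*}
To produce these, I would cover $E\cap K$ by finitely many parabolic space--time cylinders whose total variational capacity is at most $k^{-1}$ (possible since $\ca_{\mathrm{var}}(E\cap K,\Omega_T)=0$) and take $\eta_k$ as a smoothed supremum of the tensor-product cutoffs $\xi_i(t)w_i(x)$ appearing in the proof of Lemma~\ref{lem:cap-union-cylinder-upperbound}. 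By construction, $\partial_t\eta_k$ is a sum of terms of the form $\xi_i'(t)w_i(x)$, whose $L^1(\Omega_T)$-norm is directly controlled by the measures of the covering cylinders, furnishing the required $L^1$-smallness of the time derivative.

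Given such $\eta_k$, I would define $\phi_k:=\phi(1-\eta_k)\in C_0^\infty(\Omega\times\R)$. Since $\phi_k$ vanishes identically on $U_k$, its support lies in the closed set $K\setminus U_k$, which is disjoint from $E$ (because $E\cap K\subset U_k$) and contained in $O$. Hence $\phi_k\in C_0^\infty(O\setminus E)$. To verify $\phi_k\to\phi$ in $\mathcal{S}(\Omega_T)$, equivalently $\phi\,\eta_k\to 0$, I would invoke the product rule
\begin{equation*}
  \|\nabla(\phi\eta_k)\|_{L^p(\Omega_T)}\le\|\nabla\phi\|_{L^\infty}\|\eta_k\|_{L^p(\Omega_T)}+\|\phi\|_{L^\infty}\|\nabla\eta_k\|_{L^p(\Omega_T)},
\end{equation*}
combined with Poincar\'e's inequality on time slices, to obtain $\phi\eta_k\to 0$ in $\mathcal{V}(\Omega_T)$. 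The time derivative decomposes as $\partial_t(\phi\eta_k)=\eta_k\partial_t\phi+\phi\partial_t\eta_k$, whose first summand is small in $L^1(\Omega_T)$. For the second, splitting $\partial_t\eta_k=a_k+b_k$ with $a_k\in\mathcal{V}'(\Omega_T)$ and $b_k\in L^1(\Omega_T)$ both of vanishing norm, the duality identity $\langle\phi a_k,\psi\rangle=\langle a_k,\phi\psi\rangle$ together with the boundedness of the multiplier $\psi\mapsto\phi\psi$ on $\mathcal{V}(\Omega_T)$ (from Hardy/Poincar\'e and the smoothness of $\phi$) yields $\|\phi a_k\|_{\mathcal{V}'}\le C(\phi)\|a_k\|_{\mathcal{V}'}$, while $\|\phi b_k\|_{L^1}\le\|\phi\|_{L^\infty}\|b_k\|_{L^1}$.

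The main obstacle I anticipate is precisely the construction of the cutoffs $\eta_k$ with time derivative small in the sum space $\mathcal{V}'+L^1$ and not merely in $\mathcal{V}'$: the variational capacity by itself only yields a $\mathcal{V}'$-bound on admissible functions, so a generic near-optimal capacitary function does not suffice. This is exactly why the statement is formulated in terms of $\mathcal{S}(\Omega_T)$ and why the cylinder-covering argument of Lemma~\ref{lem:cap-union-cylinder-upperbound} is the appropriate template, as there the $L^1$-control of $\partial_t(\xi_iw_i)=\xi_i'w_i$ is built into the construction via explicit geometric bounds.
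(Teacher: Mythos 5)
Your high-level plan (trivial inclusion, reduce to a single $\varphi\in C_0^\infty(O)$, multiply by $1-\eta_k$ with cutoffs that vanish near $E\cap\spt\varphi$ and tend to zero in $\mathcal{S}(\Omega_T)$) is the right skeleton, and you correctly identify where the difficulty lies. However, your proposed construction of the cutoffs does not work. The step ``cover $E\cap K$ by finitely many parabolic space-time cylinders whose total variational capacity is at most $k^{-1}$ (possible since $\ca_{\mathrm{var}}(E\cap K,\Omega_T)=0$)'' is not justified. From vanishing capacity you can produce an open neighborhood $U\supset E\cap K$ with $\ca_{\mathrm{var}}(U,\Omega_T)$ small and then a finite cylinder cover of the compact set inside $U$, but the quantity your estimates actually require, namely $\sum_i(\rho_i^n+\tau_i\rho_i^{n-p})$ (which controls both $\|\partial_t\eta_k\|_{L^1}$ and $\|\nabla\eta_k\|_{L^p}^p$ in the explicit tensor-product construction), is comparable to the \emph{sum} of the individual cylinder capacities and is in general much larger than $\ca_{\mathrm{var}}\big(\bigcup Q_i,\Omega_T\big)$. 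This is precisely the gap reflected in Theorem~\ref{thm:hausdorff-capvar}: $\ca_{\mathrm{var}}(E,\Omega_T)=0$ yields $\mathcal{P}^s(E)=0$ only for $s>n$, and it is the borderline $\mathcal{P}^n$-content that your construction needs to be small. Already in the elliptic setting a compact set of zero $p$-capacity may have positive $\mathcal{H}^{n-p}$-measure, so the analogous cover-by-balls strategy fails there too; nothing in the parabolic theory makes the situation better.

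The paper circumvents this with a different, non-geometric device. Rather than building cutoffs from a cylinder cover, it takes near-optimal capacitary functions $u_i\in C_0^\infty(\Omega\times\R)$ with $u_i\ge\chi_{K_i}$ and $\|u_i\|_{\mathcal{W}(\Omega_T)}\to0$ for a decreasing sequence of compact neighborhoods $K_i$ of $K=\spt\varphi\cap E$, and composes with a smooth truncation $H$ satisfying $0\le H\le1$ and $H(s)=1$ for $s\ge1$, setting $w_i=H(u_i)$. Writing $\partial_tu_i=-\Div F_i$ with $\|F_i\|_{L^{p'}(\Omega_T)}\lesssim\|\partial_tu_i\|_{\mathcal{V}'(\Omega_T)}$, the chain and product rules give the decomposition
\begin{equation*}
\partial_tw_i
=\bar H(u_i)\,\partial_tu_i
=-\Div\big(\bar H(u_i)F_i\big)+\bar H'(u_i)\nabla u_i\cdot F_i
=:[\partial_tw_i]_a+[\partial_tw_i]_b,
\end{equation*}
with $\|[\partial_tw_i]_a\|_{\mathcal{V}'(\Omega_T)}\lesssim\|\bar H\|_{L^\infty}\|F_i\|_{L^{p'}}$ and $\|[\partial_tw_i]_b\|_{L^1(\Omega_T)}\lesssim\|\bar H'\|_{L^\infty}\|\nabla u_i\|_{L^p}\|F_i\|_{L^{p'}}$, both tending to zero. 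The nonlinear truncation is thus essential: it converts the $\mathcal{V}'$-smallness of $\partial_tu_i$ into $\mathcal{V}'+L^1$ smallness of $\partial_tw_i$, with the $L^1$-piece generated by the commutator term, not by any geometric content. Once you have these $w_i$ in hand, the remainder of your argument (take $(1-w_i)\varphi$, estimate the $\mathcal{V}$-norm via the product rule, split the time derivative, use boundedness of the multiplier $\psi\mapsto\varphi\psi$) is fine and matches the paper.
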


\begin{proof}

The direction $\overline{C_0^\infty (O \setminus
  E)}^{\mathcal{S}(\Omega_{T})} \hookrightarrow
\overline{C_0^\infty(O)}^{\mathcal{S}(\Omega_{T})}$ is clear. To prove
the reverse direction, let $\varphi \in C_0^\infty(O)$ and denote $S =
\spt (\varphi) \subset \Omega_T$ and $K = S \cap E$. Observe that $K$
is a compact subset of $\Omega_T$. Take a nested sequence of compact
sets $(K_i)_{i \in \N}$ in $\Omega_T$ (e.g., each a finite union of
compact space-time cylinders) such that $K\subset\mathrm{int}(K_i)$ for every
$i\in\N$ and $\cap_{i=1}^\infty K_i = K$. The fact $\ca_{\mathrm{var}} (K,\Omega_T) \leq \ca_{\mathrm{var}} (E,\Omega_T) = 0$ and Lemma~\ref{lem:capvar-limit-compact} imply that for every $i \in \N$ there exists $u_i \in C_0^\infty(\Omega \times \R)$ such that 
\begin{equation} \label{eq:approx-min-seq}
u_i \geq \chi_{K_i}\quad \text{ and } \quad \|u_i\|_{\mathcal{W}(\Omega_T)} \leq 2 \ca_{\mathrm{var}} (K_i,\Omega_T) \xrightarrow{i \to \infty} 0.
\end{equation}
Let $\bar{H}(s) \in C_0^\infty(\R, \R_{\geq 0})$ such that
$\spt\bar{H}\subset(-1,1)$ and $\int_0^1 \bar{H}(s)\, \d s = 1$. Furthermore, denote $H(s) = \int_0^s \bar{H}(t)\, \d t$.

Define $w_i = H(u_i) \in C_0^\infty(\Omega \times \R)$, which implies
$$
0\leq w_i \leq 1 \quad \text{ and } \quad w_i \geq \chi_{K_i}.
$$ 
Since $\partial_t u_i \in \mathcal{V}'(\Omega_T)$, we have the
representation $\partial_t u_i = - \Div F_i$ for some $F_i \in
L^{p'}(\Omega_T,\R^n)$ such that $\|F_i\|_{L^{p'}(\Omega_T)} \leq c(n)
\|\partial_t u_i\|_{\mathcal{V}'(\Omega_T)}$,
see~\cite[Prop.~9.20]{Brezis}. We decompose the time derivative of
$w_i$ into $\partial_tw_i=[ \partial_t w_i]_a+[ \partial_t
w_i]_b\in\mathcal{V}'(\Omega_T)+L^1(\Omega_T)$ by letting  
$$
\left[ \partial_t w_i \right]_a = \left[ \bar{H}(u_i) \partial_t u_i \right]_a = - \bar{H}(u_i) \Div F_i - \bar{H}' (u_i) \nabla u_i \cdot F_i \in \mathcal{V}'(\Omega_T) 
$$
and 
$$
\left[ \partial_t w_i \right]_b = \bar{H}' (u_i) \nabla u_i \cdot F_i \in L^1(\Omega_T).
$$
By the divergence theorem we obtain 
\begin{align*}
\left| \iint_{\Omega_T} v \left[ \partial_t w_i \right]_a \, \d x\d t \right| &= \left| \iint_{\Omega_T} \bar{H}(u_i) \nabla v \cdot F_i \, \d x\d t \right| \leq \|\bar H\|_{L^\infty} \|v \|_{\mathcal{V}(\Omega_T)} \|F_i \|_{L^{p'}(\Omega_T)} \\
&\leq c\|\bar H\|_{L^\infty} \|v \|_{\mathcal{V}(\Omega_T)} \|\partial_t u_i \|_{\mathcal{V}'(\Omega_T)}
\end{align*}
for every $v \in C_0^\infty(\Omega_T)$. By taking the supremum over all such $v$ with $\|v\|_{\mathcal{V}(\Omega_T)} \leq 1$ we obtain
$$
\|\left[ \partial_t w_i \right]_a\|_{\mathcal{V}'(\Omega_T)} \leq c\|\bar H\|_{L^\infty} \|\partial_t u_i \|_{\mathcal{V}'(\Omega_T)}.
$$
On the other hand 
\begin{align*}
\|\left[ \partial_t w_i \right]_b\|_{L^1(\Omega_T)} &\leq \|\bar{H}'\|_{L^\infty} \|u_i\|_{\mathcal{V}(\Omega_T)} \|F_i \|_{L^{p'}(\Omega_T)}\\
&\leq c \|\bar{H}'\|_{L^\infty} \|u_i\|_{\mathcal{V}(\Omega_T)} \|\partial_t u_i \|_{\mathcal{V}'(\Omega_T)}.
\end{align*}
Also clearly 
$$
\|w_i\|_{\mathcal{V}(\Omega_T)} \leq \|\bar H\|_{L^\infty}
\|u_i\|_{\mathcal{V}(\Omega_T)}.
$$
From~\eqref{eq:approx-min-seq}$_2$ and the above estimates it follows that 
$$
\|w_i\|_{\mathcal{S}(\Omega_T)} \xrightarrow{i \to \infty} 0.
$$
Moreover, decomposing
$$
\partial_t(w_i\varphi)=[\partial_tw_i]_a\varphi+\big([\partial_tw_i]_b\varphi+
w_i\partial_t\varphi\big)=:[\partial_t(w_i\varphi)]_a+[\partial_t(w_i\varphi)]_b,
$$
we have 
\begin{equation*}
  \|\left[\partial_t(w_i\varphi)\right]_a\|_{\mathcal{V}'(\Omega_T)}
  +
  \|\left[\partial_t(w_i\varphi)\right]_b\|_{L^1(\Omega_T)}
  \xrightarrow{i \to \infty} 0.
\end{equation*}
Thus $(1-w_i) \varphi \in C_0^\infty(O \setminus E)$ and 
$$
(1-w_i) \varphi \xrightarrow{i \to \infty} \varphi \quad \text{ in } \mathcal{S}(\Omega_T).
$$
\end{proof}

We use the preceding lemma to prove a removability result. We start with
a version for weak supersolutions. 

\begin{lemma} \label{lem:supersol-removability}
Let $1<p< \infty$, $E \subset \Omega_T$ be a relatively closed set and $v$ be a weak supersolution in $\Omega_T \setminus E$ such that each point $z \in E$ has a neighborhood $U \subset \Omega_T$ such that $v$ is essentially bounded in $U \setminus E$. Furthermore, suppose that $v$ is locally essentially bounded from below in $\Omega_T \setminus E$. If $\ca_{\mathrm{var}} (E, \Omega_T) = 0$, then there exists a lower semicontinuous extension $u$ which is a weak supersolution in $\Omega_T$ and $v=u$ a.e. in $\Omega_T \setminus E$.
\end{lemma}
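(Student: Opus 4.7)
The plan is to define $u\colon\Omega_T\to[-\infty,\infty]$ by
$$
u(x,t) := \essliminf_{\Omega_T\setminus E\ni(y,s)\to(x,t)} v(y,s),
$$
which is lower semicontinuous by construction and, thanks to Lemma~\ref{lem:supersol-lsc} applied locally on $\Omega_T\setminus E$, coincides with $v_*$ there. Since $\ca_{\mathrm{var}}(E,\Omega_T)=0$ forces $|E|=0$ via Lemma~\ref{lem:meas-leq-cap}, the identity $u=v$ holds a.e. on $\Omega_T$. The local boundedness from below of $v$ on $\Omega_T\setminus E$ together with the two-sided essential boundedness assumed near each point of $E$ implies that $u$ is locally essentially bounded from below on all of $\Omega_T$. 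What remains is to verify that $u$ is a weak supersolution in $\Omega_T$.

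The first task is to promote the regularity of $v$ across $E$, namely to show $u\in L^p_{\loc}(0,T;W^{1,p}_{\loc}(\Omega))$. After a partition of unity we may localize to a neighborhood $U\subset\Omega_T$ of a point $z\in E$ on which $v$ is essentially bounded. For a cutoff $\eta\in C_0^\infty(U)$, Lemma~\ref{lem:S-closure} produces an approximating sequence $\eta_i\in C_0^\infty(U\setminus E)$ with $\eta_i\to\eta$ in $\mathcal{S}(\Omega_T)$. Since $v$ is a bounded weak supersolution in $\Omega_T\setminus E$, Caccioppoli's inequality may be applied slicewise with the admissible cutoff $\eta_i$ (acting on $v-M$ for a local upper bound $M$), yielding a uniform $L^p$-bound for $\eta_i\,\nabla v$ in terms of $\|\eta_i\|_{\mathcal{S}(\Omega_T)}$. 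Passing to the limit $i\to\infty$ gives $\eta\,\nabla v\in L^p(\Omega_T,\R^n)$, hence $\nabla u\in L^p_{\loc}(\Omega_T)$.

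To verify the supersolution inequality for an arbitrary $\varphi\in C_0^\infty(\Omega_T,\R_{\geq 0})$, we again localize $\varphi$ to a neighborhood $U$ as above and produce, via Lemma~\ref{lem:S-closure}, functions $\varphi_i\in C_0^\infty(U\setminus E)$ with $\varphi_i\to\varphi$ in $\mathcal{S}(\Omega_T)$. The explicit construction $\varphi_i=(1-w_i)\varphi$ from the proof of that lemma, with $0\le w_i\le 1$, preserves nonnegativity. Since $u=v$ a.e.\ and $v$ is a weak supersolution in $\Omega_T\setminus E$, we have
$$
\iint_{\Omega_T}\bigl(-u\,\partial_t\varphi_i+|\nabla u|^{p-2}\nabla u\cdot\nabla\varphi_i\bigr)\,\dx\dt\ge 0,
$$
and the diffusion term passes to the limit because $|\nabla u|^{p-2}\nabla u\in L^{p'}_{\loc}(\Omega_T)$ by the previous step and $\nabla\varphi_i\to\nabla\varphi$ in $L^p$.

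The hard part will be passing to the limit in the time-derivative term, and this is precisely where the augmented space $\mathcal{S}(\Omega_T)=\mathcal{V}+L^1$ plays its role. Decomposing $\partial_t(\varphi_i-\varphi)=A_i+B_i$ with $A_i\to 0$ in $\mathcal{V}'(\Omega_T)$ and $B_i\to 0$ in $L^1(\Omega_T)$, I would estimate
$$
\bigg|\iint_{\Omega_T}u\,\partial_t(\varphi_i-\varphi)\,\dx\dt\bigg|
\le c\,\|\eta u\|_{\mathcal{V}(\Omega_T)}\,\|A_i\|_{\mathcal{V}'(\Omega_T)}+\|u\|_{L^\infty(U)}\,\|B_i\|_{L^1(\Omega_T)},
$$
where $\eta\in C_0^\infty(U)$ is a fixed cutoff equal to one on $\spt\varphi$. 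The first factor is finite thanks to the gradient bound from the Caccioppoli step combined with Hardy's inequality (Lemma~\ref{lem:hardy}), the second thanks to the boundedness of $v$ near $E$, and both norms of $A_i$, $B_i$ tend to zero. This yields the supersolution inequality on the whole $\Omega_T$, and since $u$ is already lower semicontinuous by construction, the proof is complete.
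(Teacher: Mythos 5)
Your overall roadmap matches the paper's: define $u$ by the essliminf-regularization, promote the Sobolev regularity across $E$ via a Caccioppoli estimate with admissible cutoffs coming from Lemma~\ref{lem:S-closure}, and then verify the weak inequality on $\Omega_T$ by the same approximation in the space $\mathcal{S}(\Omega_T)$. The final limit passage in the time-derivative term using the decomposition $\mathcal{V}'(\Omega_T)+L^1(\Omega_T)$ is exactly what the paper does. So the architecture is sound.

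However, the Caccioppoli step is where the real work happens, and there is a genuine gap for $1<p<2$. You propose to test with something of the form $(M-v)\eta_i^q$, but you never fix the exponent $q$. Tracking the constraints shows that one needs simultaneously $q\ge p$ (to absorb the diffusion term), $q\ge 2$ (so that $\nabla\big[(M-v)^2\eta_i^{q-1}\big]$ carries $\eta_i^{q-2}\nabla\eta_i$ with a nonnegative power), and $q\ge p'$ (so that the $L^p$-norm of $\nabla v\,\eta_i^{q-1}$ appearing in the $\mathcal{V}'$-duality pairing can be reabsorbed into the left-hand side $\iint|\nabla v|^p\eta_i^q$); altogether $q\ge\max\{p,p'\}$. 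For $1<p<2$ this forces $q=p'$, not $q=p$. Your sketch never identifies this and implicitly suggests the standard $p$-power cutoff, which produces the non-integrable factor $\eta_i^{p-2}\nabla\eta_i$ and breaks the duality pairing. The paper handles precisely this issue by splitting into $p>2$ (referring to \cite{AS}) and $1<p<2$, and in the singular case it deploys the $p'$-power Caccioppoli inequality~\eqref{eq:caccioppoli-p'} of Lemma~\ref{lem:caccioppoli}, whose test function $u^{-\eps}\varphi^{p'}$ is tuned exactly so that $p(p'-1)=p'$ and $p'-2>0$ make all exponents cooperate. (As a minor point, the word ``slicewise'' is misleading: the difficulty in parabolic Caccioppoli estimates sits entirely in the time-derivative term and cannot be treated time-slice by time-slice.) So your proposal needs a concrete and case-dependent choice of cutoff exponent before it becomes a proof; as written it glosses over exactly the part of the argument where the paper has to work hard.
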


\begin{proof}

By assumptions we can define
$$
u(x,t) = \essliminf_{\Omega_T \setminus E \ni (y,s) \to (x,t)} v(y,s) > - \infty
$$
for every $(x,t) \in \Omega_T$. Observe that $u=v$ a.e. in $\Omega_T \setminus E$ by Lemma~\ref{lem:supersol-lsc}. 

Fix $z \in E$ and take an open neighborhood $U \subset \Omega_T$ of
$z$ such that $|u| \leq M$ in $U \setminus E$. Let $O \Subset
U$. Cover $E \cap \overline{O} =:K$ by a finite union of space-time
cylinders such that the closures stay inside $U$, denote a cylinder
from this union by $Q'$ and consider $Q \Supset Q'$ such that $Q
\Subset U$. Let $\varphi \in C_0^\infty(Q, [0,1])$ s.t. $\varphi = 1$
in $Q'$ and take $\varphi_i \in C_0^\infty(Q \setminus K,[0,1])$ such
that $\varphi_i \xrightarrow{i \to \infty} \varphi$ in
$\mathcal{S}(\Omega_T)$ provided by Lemma~\ref{lem:S-closure}. 

From here, the proof in the case $p> 2$ follows with the same arguments presented in~\cite[Theorem 5.2]{AS}, thus we focus here on the case $1<p<2$.

Without loss of generality we may assume that $u \geq 1$ in $Q$ by adding a constant due to the assumptions. By Caccioppoli's inequality for nonnegative weak supersolutions,~\eqref{eq:caccioppoli-p'} in Lemma~\ref{lem:caccioppoli}, we have
\begin{align*}
\iint_Q &u^{-1-\eps} |\nabla u|^p  \varphi_i^{p'} \, \d x \d t \\
&\leq  c \iint_Q u^{p-1-\eps} \varphi_i^{p(p'-2)} |\nabla \varphi_i|^p\, \d x\d t + c \left| \iint_Q u^{1-\eps} \partial_t \varphi_i^{p'} \, \d x\d t \right| =: \mathrm{I} + \mathrm{II}.
\end{align*}
for every $\eps \in (0,1)$. Since $u \leq M$, it follows that 
$$
\iint_Q |\nabla u|^p \varphi_i^{p'} \, \d x \d t  \leq M^{1+\eps} \iint_Q |\nabla u|^p u^{-1-\eps} \varphi_i^{p'} \, \d x \d t.
$$
For any decomposition
$\partial_t \varphi_i
=
\left[ \partial_t \varphi_i \right]_a
+
\left[ \partial_t \varphi_i \right]_b
\in\mathcal{V}'(\Omega_T)+L^1(\Omega_T)$ we obtain
\begin{align*}
\mathrm{II} &= cp' \left| \iint_Q u^{1-\eps} \varphi_i^{p'-1} \left[ \partial_t \varphi_i \right]_a \, \d x\d t + \iint_Q u^{1-\eps} \varphi_i^{p'-1}\left[ \partial_t \varphi_i \right]_b \, \d x\d t \right| \\
&\leq cp' \| u^{1-\eps} \varphi_i^{p'-1} \|_{\mathcal{V}(\Omega_T)} \|\left[ \partial_t \varphi_i \right]_a\|_{\mathcal{V}'(\Omega_T)}  + c p' M^{1-\eps} \| \left[ \partial_t \varphi_i \right]_b\|_{L^1(\Omega_T)} \\
&\leq \delta \| u^{1-\eps} \varphi_i^{p'-1} \|_{\mathcal{V}(\Omega_T)}^p + c_\delta \|\left[ \partial_t \varphi_i \right]_a\|_{\mathcal{V}'(\Omega_T)}^{p'} + c M^{1-\eps} \| \left[ \partial_t \varphi_i \right]_b\|_{L^1(\Omega_T)},
\end{align*}
for every $\delta>0$. 
For the first term on the right hand side we estimate
\begin{align*}
\iint_Q |\nabla (u^{1-\eps} \varphi_i^{p'-1})|^p \, \d x \d t &\leq c(p) \iint_Q |\nabla u |^p \varphi_i^{p(p'-1)} \, \d x \d t \\
&\quad + c(p) M^{p(1-\eps)} \iint_Q |\nabla \varphi_i |^p \varphi_i^{p(p'-2)} \, \d x \d t \\
&\leq c(p) \iint_Q |\nabla u |^p \varphi_i^{p'} \, \d x \d t \\
&\quad + c(p) M^{p(1-\eps)} \iint_Q |\nabla \varphi_i |^p \, \d x \d t
\end{align*}
by using the facts $u\ge1$ in $Q$, $p' > 2$, $p(p'-1) = p'$ and $\varphi_i \leq 1$. Also,
$$
\mathrm{I} \leq c \max\{1, M^{p-1-\eps} \} \iint_Q |\nabla \varphi_i|^p \, \d x\d t.
$$ 
By fixing $\eps = \frac12$ and choosing $\delta>0$ so small that we
can absorb the integral of $|\nabla u|^p\varphi_i^{p'}$ into the
left hand side, it follows that 
$$
\iint_Q |\nabla u|^p \varphi_i^{p'} \, \d x \d t  \leq c(p,M) \max\left\{ \|\varphi_i \|_{\mathcal{S}}, \| \varphi_i \|_{\mathcal{S}}^{p'} \right\}.
$$
Thus, the integral on the left hand side is uniformly bounded with respect to $i \in \N$, and the fact that $|\nabla u|^p \in L^1(Q')$ follows by passing to a subsequence and using Fatou's lemma on the left hand side. Thus, $u \in L_{\loc}^p(0,T; W^{1,p}_{\loc}(\Omega))$.

To show that $u$ is a weak supersolution in $\Omega_T$, let $Q$ and
$Q'$ be as above. For any $\varphi \in C_0^\infty(Q,\R_{\ge0})$, we use
Lemma~\ref{lem:S-closure} to choose a
sequence $\varphi_i \in C_0^\infty(Q \setminus K,\R_{\ge0})$ such that
$\varphi_i \xrightarrow{i\to \infty} \varphi$ in
$\mathcal{S}(\Omega_T)$. The test functions $\varphi_i$ are admissible
in the weak formulation~\eqref{eq:weak-super} of the weak supersolution
$u$ in $\Omega_T\setminus E$, and we have 
\begin{align*}
\left| \iint_Q |\nabla u|^{p-2}\nabla u \cdot \nabla (\varphi_i - \varphi) \, \d x \d t \right| \leq \|u\|_{\mathcal{V}(Q)}^{p-1} \|\varphi_i - \varphi\|_{\mathcal{V}(\Omega_T)} \xrightarrow{i \to \infty} 0,
\end{align*}
and
\begin{align*}
  \left| \iint_Q u \partial_t (\varphi_i - \varphi) \, \d x \d t \right|
  &\leq \|u\|_{\mathcal{V}(Q)} \|\left[ \partial_t (\varphi_i - \varphi) \right]_a \|_{\mathcal{V}'(\Omega_T)} \\
  &\quad  + \|u\|_{L^\infty(Q)} \|\left[ \partial_t (\varphi_i - \varphi) \right]_b\|_{L^1(\Omega_T)} \xrightarrow{i \to \infty} 0.
\end{align*}
Consequently, inequality~\eqref{eq:weak-super} holds for any $\varphi
\in C_0^\infty(Q,\R_{\ge0})$. Thus, $u$ is a weak supersolution in $\Omega_T$. 
\end{proof}

By applying the preceding result to $u$ and $-u$, we immediately obtain the
following removability result for weak solutions. 

\begin{corollary} \label{cor:weak-sol-removability}
Let $1<p< \infty$, $E \subset \Omega_T$ be a relatively closed set and $v$ be a weak solution in $\Omega_T \setminus E$ such that each point $z \in E$ has a neighborhood $U \subset \Omega_T$ such that $v$ is essentially bounded in $U \setminus E$ and locally essentially bounded in $\Omega_T \setminus E$. If $\ca_{\mathrm{var}} (E, \Omega_T) = 0$, then there exists a continuous extension $u$ which is a weak solution in $\Omega_T$ and $v=u$ a.e. in $\Omega_T \setminus E$.
\end{corollary}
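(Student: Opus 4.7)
The approach is a direct application of Lemma~\ref{lem:supersol-removability} applied twice, once to $v$ and once to $-v$, followed by standard regularity theory. The plan is to produce two extensions—one that is a supersolution from above and one that is a subsolution from below—show they coincide almost everywhere, and then invoke local Hölder regularity for weak solutions of the parabolic $p$-Laplace equation to obtain a continuous representative.

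First I would note that since $v$ is a weak solution in $\Omega_T\setminus E$, both $v$ and $-v$ are weak supersolutions there. The assumptions in Corollary~\ref{cor:weak-sol-removability} guarantee that for each $z\in E$ there is a neighborhood $U$ in which $v$ is essentially bounded, and hence both $v$ and $-v$ are essentially bounded in $U\setminus E$; similarly the local boundedness of $v$ in $\Omega_T\setminus E$ gives local essential boundedness from below for both $v$ and $-v$. Thus the hypotheses of Lemma~\ref{lem:supersol-removability} are satisfied by both functions.

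Applying Lemma~\ref{lem:supersol-removability} to $v$ yields a lower semicontinuous weak supersolution $u_+$ in $\Omega_T$ with $u_+=v$ a.e. in $\Omega_T\setminus E$. Applying the same lemma to $-v$ yields a lower semicontinuous weak supersolution $u_-$ in $\Omega_T$ with $u_-=-v$ a.e. in $\Omega_T\setminus E$. Then $w:=-u_-$ is a weak subsolution in $\Omega_T$ with $w=v$ a.e. in $\Omega_T\setminus E$. Since $\ca_{\mathrm{var}}(E,\Omega_T)=0$, Lemma~\ref{lem:meas-leq-cap} gives $|E|=0$, so $u_+=w$ a.e. in $\Omega_T$. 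As the weak (super/sub)solution property depends only on the a.e. equivalence class of the function, $u_+$ is simultaneously a weak supersolution and a weak subsolution, hence a weak solution in $\Omega_T$.

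Finally, by standard interior regularity for weak solutions of the parabolic $p$-Laplace equation (see e.g.~\cite{Di}), $u_+$ admits a locally Hölder continuous representative $u$ in $\Omega_T$ with $u=v$ a.e. in $\Omega_T\setminus E$. Since $v$ itself has a continuous representative on $\Omega_T\setminus E$ (being a weak solution there), the identity $u=v$ holds pointwise on $\Omega_T\setminus E$. There is no real obstacle in this argument, as all the work has been done in Lemma~\ref{lem:supersol-removability}; the only subtlety is the passage from weak supersolution to weak solution via the a.e.\ coincidence, which is immediate once one observes that $|E|=0$.
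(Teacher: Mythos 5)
Your proof is correct and follows exactly the route the paper takes: the paper's own justification is the single remark ``By applying the preceding result to $u$ and $-u$, we immediately obtain the following removability result for weak solutions,'' which you have simply fleshed out with the observation that $|E|=0$ (from Lemma~\ref{lem:meas-leq-cap}) lets the two extensions coincide a.e., and with the appeal to interior H\"older regularity for the continuous representative. All steps check out, including the verification of the hypotheses of Lemma~\ref{lem:supersol-removability} for both $v$ and $-v$ and the applicability of local regularity theory since the extension is locally essentially bounded in all of $\Omega_T$.
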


\begin{remark}
Observe that a weak (super)solution $v$ is locally essentially bounded (from below) in $\Omega_T \setminus E$ if $p > \frac{2n}{n+2}$ such that in this case the assumption is redundant, cf. Remark~\ref{rem:supersol-bddbelow}.
\end{remark}

\begin{remark}
If the set $E \subset \Omega_T$ is compact in Corollary~\ref{cor:weak-sol-removability}, then the removability result does not hold in general if $\ca_{\mathrm{var}} (E, \Omega_T) > 0$. This can be concluded as follows. By Lemma~\ref{lem:balayage-zero-iff-cap-zero} there exists $z_o = (x_o,t_o) \in \Omega_T$ such that $\widehat R_E(z_o) =: a > 0$, which further implies that $\widehat{R}_E > \tfrac12 a$ in $B_\rho (x_o) \times (t_o-\rho,t_o + \rho) \Subset \Omega_T$ for some $\rho > 0$.

In case $1<p<2$, Lemma~\ref{lem:supercal-slice-alt} implies that
$\widehat{R}_E (\cdot, t_o) > 0$ in the whole connected component of
$\Omega$ containing $x_o$. Since $E\subset\Omega_T$ is compact, this
implies that $\widehat{R}_E$ does not vanish identically in
$\Omega_T\setminus E$.  Since $\widehat{R}_E$ is a continuous weak solution in $\Omega_T \setminus E$ with zero boundary values on $\partial_p \Omega_T$, it follows that it cannot be extended as a continuous weak solution to $\Omega_T$.

In case $2 < p< \infty$, we construct a weak solution $h$ in
$Q:=B_\rho(x_o) \times (t_o,T)$ with initial values $\tfrac12 a \eta$
and zero lateral boundary values, where $\eta \in
C_0^\infty(B_\rho(x_o), [0,1])$ is a cutoff function with $\eta \equiv 1$ in
$B_{\frac{\rho}{2}}(x_o)$. Then, by application of
Harnack's inequality~\cite[Chapter 5, Theorem 1.1]{DGV} we have
 $h(x_o,t) > 0$ for all $t \in (t_o,T)$, and the comparison
principle implies that $\widehat{R}_E \geq h$ in $Q$. This shows that
$ \widehat{R}_E$, which is a weak solution in $\Omega_T\setminus E$
with zero boundary values on $\partial_p \Omega_T$, cannot be extended
as a continuous weak solution to $\Omega_T$.
\end{remark}

Finally, we extend the removability result to the case of supercaloric
functions, i.e. we give the \\[-1ex]

\noindent\textit{Proof of Theorem~\ref{thm:supercal-removability}. }
By assumption and definition of a supercaloric function we know that
$v$ is locally bounded from below in $\Omega_T \setminus E$ and that
for each $z \in E$ there exists a neighborhood $U \subset \Omega_T$
such that $v$ is locally essentially bounded from below in $U$. The
truncation $v_k = \min\{v,k\}$ for each $k = 1,2,...$ is a weak
supersolution by Lemma~\ref{lem:supercal-prop} and satisfies the assumptions in Lemma~\ref{lem:supersol-removability}. It also follows by Lemma~\ref{lem:meas-leq-cap} that $|E|= 0$. Thus 
$$
u_k(x,t) := \essliminf_{\Omega_T \setminus E \ni (y,s) \to (x,t)} v_k(y,s) = \essliminf_{\Omega_T \ni (y,s) \to (x,t)} v_k(y,s) 
$$
is a weak supersolution in $\Omega_T$. Observe that $u_k(x,t) = v_k(x,t)$ for every $(x,t) \in \Omega_T \setminus E$ by Lemma~\ref{lem:supercal-prop}, which implies
$$
u_k(x,t) = \essliminf_{\Omega_T \ni (y,s) \to (x,t)} u_k(y,s)
$$ 
for every $(x,t) \in \Omega_T$. It follows from
  Lemma~\ref{lem:supersol-supercal} that $u_k$ is a supercaloric function in $\Omega_T$. The function $u
:= \lim_{k \to \infty} u_k$ is an increasing limit of supercaloric
functions, and it is finite in a dense subset of $\Omega_T$ since $\ca_{\mathrm{var}} (E,\Omega_T) = 0$. Thus $u$ is a supercaloric function in $\Omega_T$ by~\cite[Proposition 5.1]{KKP}.
\hfill \qed

\appendix

\section{Different notions of variational capacity} \label{appendix}

In this section we analyze two variants of the variational capacity
introduced in Definition~\ref{def:varcap}.

\begin{definition} \label{def:varcap'}
For a compact set $K \subset \Omega_T$, $0<T\le\infty$, we define
\begin{align*}
&\ca_{\text{var}}' (K,\Omega_T) \\
&\phantom{+}= \inf \left\{ \|v\|_{\mathcal W(\Omega_T)}: v \in \mathcal W(\Omega_T),\, v \geq \chi_U \text{ a.e. in $\Omega_T$ for some open } U \supset K \right\}.
\end{align*}
The capacity $\ca_{\text{var}}'$ for open and arbitrary sets is defined as in Definition~\ref{def:varcap}.
\end{definition}

\begin{definition} \label{def:varcap*}
For an open set $U \subset \Omega_T$, $0<T\le\infty$, we define 
\begin{align*}
\ca_{\text{var}}^* (U,\Omega_T)= \inf \left\{ \|v\|_{\mathcal W(\Omega_T)}: v \in \mathcal W(\Omega_T),\, v \geq \chi_U \text{ a.e. in } \Omega_T \right\}.
\end{align*}
For an arbitrary set $E \subset \Omega_T$ we let
$$
\ca_{\text{var}}^* (E,\Omega_T) = \inf \left\{ \ca_{\text{var}}^* (U,\Omega_T),  U \text{ open subset of } \Omega_T, E \subset U \right\}.
$$
\end{definition}

First, we observe that these two notions of capacity are equivalent.
\begin{lemma} \label{lem:cap*-cap'-eq}
Let $U \subset \Omega_T$ be an open set. Then,
$$
\ca_{\mathrm{var}}^* (U,\Omega_T) = \ca_{\mathrm{var}}' (U,\Omega_T).
$$
\end{lemma}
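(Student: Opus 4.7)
The plan is to prove the two inequalities separately, exploiting that the $\ca_{\mathrm{var}}'$-admissibility condition is a relaxation of the $\ca_{\mathrm{var}}^*$-admissibility condition on one side, and a compactness/lower semicontinuity argument on the other.

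First, the inequality $\ca_{\mathrm{var}}'(U,\Omega_T) \le \ca_{\mathrm{var}}^*(U,\Omega_T)$ is immediate from the definitions. Indeed, for any compact $K \subset U$, any function $v \in \mathcal{W}(\Omega_T)$ with $v \ge \chi_U$ a.e. is admissible in the definition of $\ca_{\mathrm{var}}'(K,\Omega_T)$, the open witness being $U$ itself. Hence $\ca_{\mathrm{var}}'(K,\Omega_T) \le \ca_{\mathrm{var}}^*(U,\Omega_T)$, and the claim follows by taking the supremum over compact $K \subset U$.

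For the reverse inequality, assume $\ca_{\mathrm{var}}'(U,\Omega_T) < \infty$ (otherwise there is nothing to prove). Exhaust $U$ by an increasing sequence of compact sets $K_i \subset K_{i+1}$ with $\bigcup_i K_i = U$. For each $i$, choose $v_i \in \mathcal{W}(\Omega_T)$ with $v_i \ge \chi_{U_i}$ a.e. for some open $U_i \supset K_i$, and $\|v_i\|_{\mathcal{W}(\Omega_T)} \le \ca_{\mathrm{var}}'(U,\Omega_T) + \frac1i$. The sequence $\{v_i\}$ is bounded in $\mathcal{V}(\Omega_T)$, in $L^\infty(0,T;L^2(\Omega))$, and the sequence $\{\partial_t v_i\}$ is bounded in $\mathcal{V}'(\Omega_T)$. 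Passing to a subsequence, we obtain a function $v$ such that $v_i \rightharpoonup v$ weakly in $\mathcal{V}(\Omega_T)$, $v_i \wsto v$ weakly$\,^\ast$ in $L^\infty(0,T;L^2(\Omega))$, and $\partial_t v_i \rightharpoonup \partial_t v$ weakly in $\mathcal{V}'(\Omega_T)$ (uniqueness of the limits in the sense of distributions ensures we obtain the same $v$). By lower semicontinuity of each of the three summands in $\|\cdot\|_{\mathcal{W}(\Omega_T)}$ under the respective topologies, we obtain $\|v\|_{\mathcal{W}(\Omega_T)} \le \liminf_i \|v_i\|_{\mathcal{W}(\Omega_T)} \le \ca_{\mathrm{var}}'(U,\Omega_T)$.

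The step that requires the most care is to verify that the weak limit $v$ still satisfies $v \ge \chi_U$ a.e., which will make $v$ admissible for $\ca_{\mathrm{var}}^*(U,\Omega_T)$ and conclude the proof. To this end, fix any compact $E \subset U$; then $E \subset K_i \subset U_i$ for all sufficiently large $i$, so $v_i \ge 1$ a.e. on $E$ and thus $\int_E v_i \, \dx \dt \ge |E|$. Since weak convergence in $\mathcal{V}(\Omega_T) \hookrightarrow L^p(\Omega_T)$ implies convergence of integrals against $\chi_E$, passing to the limit yields $\int_E v \, \dx \dt \ge |E|$. Approximating arbitrary measurable subsets of $U$ of finite measure from inside by compact sets (using the absolute continuity of the integral of $|v| \in L^p$), we extend this estimate to all such subsets. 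Applying it to the sublevel sets $\{z \in U \colon v(z) \le 1 - 1/k\}$ forces them to have measure zero, and hence $v \ge 1$ a.e. on $U$, as desired.
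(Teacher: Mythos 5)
Your proof is correct and follows essentially the same route as the paper: the easy inequality $\ca_{\mathrm{var}}' \le \ca_{\mathrm{var}}^*$ is read off directly from the definitions, and the reverse inequality is obtained by extracting a weak limit $v$ from a minimizing sequence $(v_i)$ and invoking weak (resp.\ weak-$*$) lower semicontinuity of the three summands in $\|\cdot\|_{\mathcal{W}(\Omega_T)}$; in fact you spell out the verification that the weak limit still satisfies $v\ge\chi_U$ a.e., a step the paper merely asserts. One small imprecision: an arbitrary increasing exhaustion $K_i\subset K_{i+1}$ of $U$ by compacta with $\bigcup_iK_i=U$ does \emph{not} in general have the property that every compact $E\subset U$ is contained in some $K_i$; you need to choose an exhaustion with $K_i\subset\mathrm{int}(K_{i+1})$ (for instance the paper's concrete choice $K_i=\{z\in U:\mathrm{dist}(z,\partial U)\ge 1/i\}$, suitably intersected with a bounded set if $T=\infty$), after which your argument goes through.
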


\begin{remark}
The result for open sets implies that it actually holds for all sets.
\end{remark}

\begin{proof}

We first show "$\geq$". Without loss of generality we may suppose that the left hand side is finite, which clearly implies that the right hand side is finite as well.
For every $\eps > 0$ there exists $v \in \mathcal W(\Omega_T)$ with $v \geq 1 \text{ a.e. in } U $ such that 
$$
\|v\|_{\mathcal W(\Omega_T)} \leq \ca_{\text{var}}^* (U,\Omega_T) + \eps.
$$
Let $K \subset U$ be a compact set. Since $U$ is a neighborhood of $K$, it follows that 
$$
\ca_{\mathrm{var}}' (K,\Omega_T) \leq \|v\|_{\mathcal W(\Omega_T)}.
$$
By combining the estimates, letting $\eps \to 0$ and taking the supremum over $K \subset U$ the claim follows.

Then we consider the inequality "$\leq$". Without loss of generality
we assume that the right hand side is finite. Let $K_i  = \left \{ z \in U :  \mathrm{dist  } (z, \partial U) \geq \frac{1}{i}\right\}$. Then, there exists a sequence of functions $v_i \in \mathcal{W}(\Omega_T)$ such that $v_i \geq 1$ a.e. in a neighborhood of $K_i$ and
$$
\|v_i\|_{\mathcal{W}(\Omega_T)} \leq \ca_{\mathrm{var}}' (K_i,\Omega_T) + \tfrac{1}{i} \leq \ca_{\mathrm{var}}' (U,\Omega_T) + \tfrac{1}{i} < \infty.
$$
Thus, there exist $v \in \mathcal{W}(\Omega_T)$ and a (nonrelabeled)
subsequence $(v_i)_{i \in \N}$ such that
\begin{align} \label{eq:W-convergences}
\begin{aligned}
\left\{
\begin{array}{cl}
v_i \wto v \quad &\text{ weakly in } \mathcal{V}(\Omega_T), \\[0.8ex]
\partial_t v_i \wto \partial_t v \quad &\text{ weakly in } \mathcal{V}'(\Omega_T), \\[0.5ex]
v_i \wsto v \quad &\text{ weakly-* in } L^\infty(0,T;L^2(\Omega)),
\end{array}
\right.
\end{aligned}
\end{align}
in the limit $i\to\infty$. 
Furthermore, the norms $\|\cdot\|_{\mathcal{V}(\Omega_T)}, \|\cdot\|_{\mathcal{V}'(\Omega_T)}$ and $\|\cdot\|_{L^\infty(0,T;L^2(\Omega))}$ are lower semicontinuous with respect to the convergences in~\eqref{eq:W-convergences}.
Observe that $v \geq 1$ a.e. in $U$, and thus
$$
\ca_{\mathrm{var}}^* (U,\Omega_T) \leq \|v\|_{\mathcal{W}(\Omega_T)} \leq \ca_{\mathrm{var}}' (U,\Omega_T),
$$
which concludes the proof.

\end{proof}

Then we state that $\ca_{\mathrm{var}}'$ is continuous with respect to decreasing limits of compact sets. The proof is analogous to the proof of Lemma~\ref{lem:capvar-limit-compact}.

\begin{lemma} \label{lem:cap'-limit-compact}
Let $1<p<\infty$, $0< T\leq \infty$ and $(K_i)_{i\in \N}$ be a sequence of compact sets contained in $\Omega_T$ satisfying $K_i \supset K_{i+1}$ for every $i \in \N$. Then
$$
\lim_{i \to \infty} \ca_{\mathrm{var}}' (K_i, \Omega_T) =  \ca_{\mathrm{var}}' (\cap_{i=1}^\infty K_i, \Omega_T).
$$
\end{lemma}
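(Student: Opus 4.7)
The inequality ``$\ge$'' is immediate from the monotonicity property of $\ca'_{\mathrm{var}}$: since $K=\bigcap_{i=1}^\infty K_i\subset K_i$ for every $i$, the analog of Lemma~\ref{lem:capvar-prop}(ii) for $\ca'_{\mathrm{var}}$ (which follows directly from the definition) yields $\ca'_{\mathrm{var}}(K_i,\Omega_T)\ge \ca'_{\mathrm{var}}(K,\Omega_T)$. Passing to the limit gives the first direction, and it also shows that the limit on the left hand side exists in $[0,\infty]$. In the sequel we assume the right hand side is finite, since otherwise the other direction is vacuous.

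For the reverse inequality I would mimic the proof of Lemma~\ref{lem:capvar-limit-compact}, but replace the scaling trick (needed there to push a function above $1$ on an \emph{open} neighborhood) with a direct topological observation that is available because the admissibility in Definition~\ref{def:varcap'} already involves a neighborhood. Concretely, for an arbitrary $\eps>0$ I would select an admissible function $v\in \mathcal{W}(\Omega_T)$ with $v\ge\chi_U$ a.e.\ in $\Omega_T$ for some open set $U\supset K$, satisfying
\begin{equation*}
  \|v\|_{\mathcal{W}(\Omega_T)} \le \ca'_{\mathrm{var}}(K,\Omega_T)+\eps.
\end{equation*}

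The key step is a standard compactness argument to show that $K_i\subset U$ for all sufficiently large $i$. Suppose, towards a contradiction, that there are indices $i_1<i_2<\cdots$ and points $z_{i_k}\in K_{i_k}\setminus U$. The sets $K_{i_k}\setminus U$ are closed subsets of the compact set $K_1$, hence $(z_{i_k})$ has a subsequence converging to some $z\in K_1$. Since $K_j$ is closed and contains $z_{i_k}$ for every $i_k\ge j$, the limit satisfies $z\in K_j$ for all $j$, so $z\in K\subset U$. On the other hand $z_{i_k}\in\Omega_T\setminus U$, which is closed in $\Omega_T$, so $z\in \Omega_T\setminus U$, a contradiction. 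Therefore there exists $i_o=i_o(U)\in\N$ such that $K_i\subset U$ for every $i\ge i_o$. In particular the same function $v$ is then admissible for $\ca'_{\mathrm{var}}(K_i,\Omega_T)$, which gives
\begin{equation*}
  \ca'_{\mathrm{var}}(K_i,\Omega_T)\le \|v\|_{\mathcal{W}(\Omega_T)}\le \ca'_{\mathrm{var}}(K,\Omega_T)+\eps
\end{equation*}
for every $i\ge i_o$. Letting $i\to\infty$ and then $\eps\downarrow 0$ completes the argument.

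The only non-routine point is the compactness observation $K_i\subset U$ for large $i$, but it is entirely standard. Note that, in contrast to Lemma~\ref{lem:capvar-limit-compact}, no scaling factor $(1-\gamma)^{-1}$ is needed, because the condition ``$v\ge\chi_U$ a.e.\ on some open $U\supset K$'' built into Definition~\ref{def:varcap'} is preserved without modification once we know $K_i\subset U$. This actually yields the stronger conclusion that the inequality holds without any multiplicative constant, matching the statement.
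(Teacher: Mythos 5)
Your proof is correct and follows the approach the paper intends (it merely says the argument is ``analogous to Lemma~\ref{lem:capvar-limit-compact}''). You rightly observe that the scaling by $(1-\gamma)^{-1}$ from that earlier proof is unnecessary here: the admissibility condition in Definition~\ref{def:varcap'} already provides an open neighborhood $U\supset K$, and the standard compactness argument (equivalently, the finite intersection property applied to the nested closed sets $K_i\setminus U$ inside $K_1$) gives $K_i\subset U$ for all large $i$, so the same $v$ is admissible for $K_i$ without modification.
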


Next, we prove that $\ca_{\mathrm{var}}'$ is equivalent to the
variational capacity from Definition~\ref{def:varcap}.

\begin{lemma} \label{lem:cap-cap'-equiv}
Let $K \subset \Omega_T$, $0<T\le\infty$, be a compact set. Then
$$
 \ca_{\mathrm{var}}' (K,\Omega_T) \leq \ca_{\mathrm{var}} (K,\Omega_T) \leq c  \ca_{\mathrm{var}}' (K,\Omega_T),
$$
where $c = c(n,p,\alpha) > 0$.
\end{lemma}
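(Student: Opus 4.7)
The two inequalities will be handled separately. The first is elementary, relying on the continuity of smooth admissible functions via a scaling argument. The second is the substantive part: it is proven first for finite times $T<\infty$ by a direct application of Lemma~\ref{lem:W-approx}, and then extended to $T=\infty$ by reduction to the finite-time case via Lemma~\ref{lem:capvar-Tinfty2}, which conveniently bypasses the extra boundedness hypothesis that Lemma~\ref{lem:W-approx} imposes in the critical regime $T=\infty$, $1<p\le\tfrac{2n}{n+2}$.

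For $\ca_{\mathrm{var}}'(K,\Omega_T) \leq \ca_{\mathrm{var}}(K,\Omega_T)$, given any admissible $v \in C_0^\infty(\Omega \times \R)$ with $v \geq \chi_K$ and any $\gamma\in(0,1)$, the continuity of $v$ ensures that $U_\gamma := \{v>1-\gamma\}$ is an open neighborhood of $K$ in $\Omega_T$. The rescaling $v_\gamma := (1-\gamma)^{-1} v \in \mathcal{W}(\Omega_T)$ satisfies $v_\gamma \geq \chi_{U_\gamma}$ everywhere and is therefore admissible for $\ca_{\mathrm{var}}'(K,\Omega_T)$. Since each of the three components in the norm $\|\cdot\|_{\mathcal{W}(\Omega_T)}$ scales by a power of $(1-\gamma)^{-1}$, setting $s:=\max\{p,p'\}$ (which dominates the exponents $p$, $p'$ and $2$) we obtain
$$
\ca_{\mathrm{var}}'(K,\Omega_T) \leq \|v_\gamma\|_{\mathcal{W}(\Omega_T)} \leq (1-\gamma)^{-s}\|v\|_{\mathcal{W}(\Omega_T)}.
$$
Passing to the infimum over $v$ and letting $\gamma\downarrow0$ yields the claim.

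For the reverse inequality when $T<\infty$, let $v \in \mathcal{W}(\Omega_T)$ be admissible for $\ca_{\mathrm{var}}'(K,\Omega_T)$, so that $v \geq \chi_U$ a.e.\ for some open $U \supset K$. Choose a compact set $K'\subset U$ consisting of finitely many space-time cylinders with ball bases such that $K \subset K'$. Since $v \geq \chi_{K'}$ a.e.\ in $\Omega_T$, Lemma~\ref{lem:W-approx} supplies $w \in C_0^\infty(\Omega \times \R)$ with $w \geq \chi_{K'} \geq \chi_K$ and $\|w\|_{\mathcal{W}(\Omega_T)} \leq c(n,p,\alpha)\|v\|_{\mathcal{W}(\Omega_T)}$. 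Since $w$ is admissible for $\ca_{\mathrm{var}}(K,\Omega_T)$, taking the infimum over $v$ gives $\ca_{\mathrm{var}}(K,\Omega_T) \leq c(n,p,\alpha)\ca_{\mathrm{var}}'(K,\Omega_T)$.

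The remaining case $T=\infty$ is reduced to the finite case. Since $K$ is compact, there exists $T_o\in(0,\infty)$ with $K \subset \Omega_{T_o}$. Monotonicity of $\ca_{\mathrm{var}}'$ in the final time, which is immediate from the corresponding monotonicity of each of the three components of $\|\cdot\|_{\mathcal{W}}$ under restriction from $\Omega_\infty$ to $\Omega_{T_o}$, gives $\ca_{\mathrm{var}}'(K,\Omega_{T_o}) \leq \ca_{\mathrm{var}}'(K,\Omega_\infty)$. Combining this with Lemma~\ref{lem:capvar-Tinfty2} and the finite-time case just established, we arrive at
$$
\ca_{\mathrm{var}}(K,\Omega_\infty) \leq c\,\ca_{\mathrm{var}}(K,\Omega_{T_o}) \leq c\,\ca_{\mathrm{var}}'(K,\Omega_{T_o}) \leq c\,\ca_{\mathrm{var}}'(K,\Omega_\infty),
$$
with a constant $c=c(n,p,\alpha)$. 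The main technical point in the proof is the appeal to Lemma~\ref{lem:W-approx}; sidestepping its boundedness hypothesis in the critical range by this reduction (rather than, say, truncating $v$ and invoking Lemma~\ref{lem:petitta}) is the main obstacle and keeps the argument uniform across the full parameter range $1<p<\infty$.
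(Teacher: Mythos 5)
Your proof is correct and follows the paper's overall strategy (scaling argument for the easy inequality, mollification via Lemma~\ref{lem:W-approx} for the hard one, reduction to finite $T$ via Lemma~\ref{lem:capvar-Tinfty2}), but with one genuine simplification in the finite-time case. The paper first establishes the inequality under the additional assumption that $K$ is itself a finite union of space-time cylinders (essentially re-running the argument of Lemma~\ref{lem:W-approx}), and then extends to arbitrary compact $K$ by means of a decreasing sequence of cylinder unions and the continuity statements of Lemmas~\ref{lem:capvar-limit-compact} and~\ref{lem:cap'-limit-compact}. You instead exploit the neighborhood $U\supset K$ built into the definition of $\ca_{\mathrm{var}}'$: by compactness, $K$ can be covered by a finite union of closed cylinders $K'$ with $K\subset K'\subset U$, and since $v\ge\chi_U\ge\chi_{K'}$ a.e., Lemma~\ref{lem:W-approx} applies directly to $K'$. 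This produces an admissible $w\ge\chi_{K'}\ge\chi_K$ in one step and makes the decreasing-limit argument unnecessary. The trade-off is negligible — both are short — but your route is structurally cleaner since it invokes Lemma~\ref{lem:W-approx} as a black box instead of replaying its proof and avoids introducing the auxiliary limit lemmas.
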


\begin{proof}

We first prove the first inequality. Suppose that the right hand side is finite. By definition for every $\eps > 0$ there exists $v \in C_0^\infty(\Omega \times \R)$ with $v \geq \chi_K$ such that
$$
\|v\|_{\mathcal{W} (\Omega_T)} \leq \ca_{\mathrm{var}} (K,\Omega_T) + \eps.
$$
Let $\gamma > 1$. It follows that $\gamma v \geq 1$ in a neighborhood of $K$ such that
$$
\ca_{\mathrm{var}}' (K,\Omega_T) \leq \|\gamma v\|_{\mathcal{W} (\Omega_T)} \leq \gamma^s \|v\|_{\mathcal{W} (\Omega_T)},
$$
where $s = \max\{p,p'\}$. By combining the estimates and letting $\gamma \to 1$ and $\eps \to 0$, respectively, the claim follows.

Then we turn our attention to the second inequality and suppose that the right hand side is finite. By definition, for every $\eps  >0$ there exists $v \in  \mathcal{W} (\Omega_T)$ such that $v\geq 1$ in a neighborhood of $K$ and
$$
\|v\|_{\mathcal{W} (\Omega_T)} \leq \ca_{\mathrm{var}}' (K,\Omega_T) + \eps.
$$
In case $T < \infty$, suppose that $K$ is a finite union of space-time
cylinders, whose bases are balls, as in the proof of
Lemma~\ref{lem:W-approx}. As in the aforementioned lemma, we use a reflection argument to extend the function $v$ to $\mathcal{W}(\Omega \times (-T,2T) )$, and define $v = 0$ outside $\Omega\times [-T,2T]$.
We take a cutoff function $\zeta_\eps \in C_0^\infty(\Omega, \R_{\geq 0})$ in space and mollify $\zeta_\eps v$ such that $(\zeta_\eps v)_\delta = \zeta_\eps v * \eta_\delta \in C_0^\infty(\Omega \times \R)$ with $5 (\zeta_\eps v)_\delta \geq \chi_K$ for small enough $\delta > 0$. As in Lemma~\ref{lem:W-approx}, we have that 
$$
\ca_{\mathrm{var}} (K,\Omega_T)\leq \|5(\zeta_\eps v)_\delta\|_{\mathcal{W} (\Omega_T)} \leq c (n,p,\alpha) \|v\|_{\mathcal{W} (\Omega_T)}.
$$
By combining the estimates and letting $\eps \to 0$ the claim follows in case $T < \infty$ and when $K$ is a finite union of space-time cylinders. By using Lemmas~\ref{lem:capvar-limit-compact} and~\ref{lem:cap'-limit-compact} the claim holds for arbitrary compact set $K \subset \Omega_T$ for $T < \infty$. On the other hand, we have
$$
\ca_{\mathrm{var}} (K,\Omega_\infty) \leq c \ca_{\mathrm{var}} (K,\Omega_T) \leq c \ca_{\mathrm{var}}' (K,\Omega_T) \leq c \ca_{\mathrm{var}}' (K,\Omega_\infty),
$$
by using also Lemma~\ref{lem:capvar-Tinfty2} and the argument above, which concludes the proof.

\end{proof}

For $\ca_{\mathrm{var}}^*$ we have the following continuity property
with respect to increasing limits of sets. 

\begin{lemma} \label{lem:cap*-increasinglim}
Let $1<p<\infty$, $0<T\leq \infty$ and $E_i \subset \Omega_T$ be sets satisfying $E_i \subset E_{i+1}$ for every $i \in \N$. Then
$$
\ca_{\mathrm{var}}^* \left( \bigcup_{i=1}^\infty E_i, \Omega_T \right) =\lim_{i \to \infty} \ca_{\mathrm{var}}^* \left(E_i, \Omega_T \right).
$$
\end{lemma}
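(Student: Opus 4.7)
The $\geq$ direction is immediate from the monotonicity of $\ca_{\mathrm{var}}^*$ under set inclusion, which follows directly from the definitions. For the converse inequality, set $\lambda:=\lim_{i\to\infty}\ca_{\mathrm{var}}^*(E_i,\Omega_T)$ and assume $\lambda<\infty$, since otherwise there is nothing to prove.

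The plan is to handle first the case in which each $E_i$ is open, as this is where the substance of the argument lies. For each $i$, by the definition of $\ca_{\mathrm{var}}^*$ on open sets, I would pick $v_i\in\mathcal{W}(\Omega_T)$ with $v_i\ge\chi_{E_i}$ a.e. in $\Omega_T$ and $\|v_i\|_{\mathcal{W}(\Omega_T)}\le\ca_{\mathrm{var}}^*(E_i,\Omega_T)+1/i$. The sequence $(v_i)$ is thus bounded in $\mathcal{W}(\Omega_T)$, so by reflexivity of $\mathcal{V}(\Omega_T)$ and $\mathcal{V}'(\Omega_T)$ together with weak-$*$ compactness in $L^\infty(0,T;L^2(\Omega))$, I pass to a non-relabeled subsequence along which the three convergences in~\eqref{eq:W-convergences} hold for some $v\in\mathcal{W}(\Omega_T)$. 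Lower semicontinuity of each of the three norms composing $\|\cdot\|_{\mathcal{W}(\Omega_T)}$ with respect to these convergences then yields $\|v\|_{\mathcal{W}(\Omega_T)}\le\liminf_i\|v_i\|_{\mathcal{W}(\Omega_T)}\le\lambda$. To check $v\ge\chi_E$ a.e. (with $E:=\bigcup_iE_i$), I fix $j\in\N$ and a nonnegative $\phi\in L^1(0,T;L^2(\Omega))$ supported on $E_j$; since $E_j\subset E_i$ for every $i\ge j$, the a.e. inequality $v_i\ge\chi_{E_j}$ gives $\iint_{\Omega_T}\phi\,v_i\,\dx\dt\ge\iint_{\Omega_T}\phi\,\dx\dt$ for all $i\ge j$, and passing to the weak-$*$ limit preserves this estimate for $v$. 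Varying $\phi$ forces $v\ge1$ a.e. on $E_j$, and taking the union over $j$ gives $v\ge\chi_E$ a.e. Since $E$ is open, $v$ is admissible in the definition of $\ca_{\mathrm{var}}^*(E,\Omega_T)$, so $\ca_{\mathrm{var}}^*(E,\Omega_T)\le\|v\|_{\mathcal{W}(\Omega_T)}\le\lambda$.

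For a general increasing sequence $(E_i)$, I would reduce to the open case via the outer-regular definition of $\ca_{\mathrm{var}}^*$. Given $\eps>0$, for each $i$ I select an open $U_i\supset E_i$ with $\ca_{\mathrm{var}}^*(U_i,\Omega_T)\le\ca_{\mathrm{var}}^*(E_i,\Omega_T)+\eps/2^i$, and a function $v_i\in\mathcal{W}(\Omega_T)$ with $v_i\ge\chi_{U_i}$ a.e. and $\|v_i\|_{\mathcal{W}(\Omega_T)}\le\ca_{\mathrm{var}}^*(U_i,\Omega_T)+\eps/2^i$, hence $\|v_i\|_{\mathcal{W}(\Omega_T)}\le\lambda+2\eps$. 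Setting $W_k:=\bigcup_{i=1}^kU_i$ produces an increasing sequence of open sets whose union $W$ is open and contains $E$, so that the open case applied to $(W_k)$ gives $\ca_{\mathrm{var}}^*(W,\Omega_T)=\lim_k\ca_{\mathrm{var}}^*(W_k,\Omega_T)$. Running the weak-compactness step on the sequence $(v_i)$ above (which, as in the open case, satisfies $v_i\ge\chi_{E_j}$ a.e. for $i\ge j$) yields a single weak limit $v\in\mathcal{W}(\Omega_T)$ with $\|v\|_{\mathcal{W}(\Omega_T)}\le\lambda+2\eps$ that is admissible for an open neighbourhood of $E$ with controlled norm. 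Letting $\eps\downarrow 0$ then gives $\ca_{\mathrm{var}}^*(E,\Omega_T)\le\lambda$.

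The main obstacle I anticipate is precisely this last transition from the open to the general case: the weak limit $v$ naturally only satisfies $v\ge\chi_E$ a.e., whereas the outer-regular definition of $\ca_{\mathrm{var}}^*(E,\Omega_T)$ requires an admissible function for some \emph{open} neighbourhood of $E$. Bridging this gap rests on choosing the open approximants $U_i$ of $E_i$ coherently enough that the single weak limit simultaneously serves as a competitor on each of the increasing opens $W_k$, with uniform control of the norm; this is delicate because $\mathcal{W}(\Omega_T)$ is not closed under the pointwise lattice operations such as $\max(v_1,v_2)$ that would render the same reduction routine in the elliptic setting (cf.\ the discussion in Section~\ref{sec:removability}).
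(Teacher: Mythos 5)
Your overall strategy is the same as the paper's: pick open $U_i \supset E_i$ nearly realizing $\ca_{\mathrm{var}}^*(E_i,\Omega_T)$, pick corresponding near-minimizers $v_i \in \mathcal{W}(\Omega_T)$, extract a weak limit $v$ via the convergences in~\eqref{eq:W-convergences}, and appeal to weak lower semicontinuity of the three terms composing $\|\cdot\|_{\mathcal{W}(\Omega_T)}$. Your treatment of the open case is correct; there the sets $E_i$ are themselves nested opens, so the weak limit satisfies $v \geq 1$ a.e.\ on $\bigcup_i E_i$, which is open, and $v$ is directly admissible.

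The difficulty you raise in the transition from the open case to arbitrary $E_i$ is genuine, and it is also the point where the paper's own argument is unclear. The paper asserts that the weak limit satisfies $v \geq 1$ a.e.\ on $\bigcup_i U_i$, citing the argument of Lemma~\ref{lem:cap*-cap'-eq}; but that argument relies on the sets $K_i$ being nested, so that for fixed $j$ one has $v_i \geq 1$ a.e.\ on the fixed set $K_j$ for all $i\ge j$, which then passes to the weak limit. In the present lemma the $U_i$ need not be nested; all that is available is $E_j \subset U_i$ for $i \geq j$, so the weak-limit argument only yields $v \geq 1$ a.e.\ on each $E_j$, hence on $\bigcup_j E_j$ (vacuously, whenever $|E_j| = 0$), and $\bigcup_j E_j$ need not be open. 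This is exactly the obstruction you flag. The elliptic analogue of this lemma is closed in \cite{HKM} by building an increasing open cover $U_1\cup\dots\cup U_i$ with capacity close to $\ca_p(E_i)$ via \emph{strong} subadditivity, which in turn rests on $\max$ and $\min$ of competitors remaining admissible; as you correctly observe, this is unavailable in $\mathcal{W}(\Omega_T)$. So the gap is real, the paper's proof as written does not appear to resolve it, and some additional ingredient (e.g.\ a strong subadditivity inequality for $\ca_{\mathrm{var}}^*$, or a coherent, increasing choice of the neighbourhoods $U_i$) would be needed to make the reduction from opens to arbitrary sets rigorous.
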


\begin{remark} \label{rem:cap*-increasinglim}
Lemma~\ref{lem:cap*-cap'-eq} implies that the result holds also for $\ca_{\mathrm{var}}'$ and Lemma~\ref{lem:cap-cap'-equiv} that it holds for $\ca_{\mathrm{var}}$ up to a constant depending on $n,p$ and $\alpha$.
\end{remark}

\begin{proof}
The inequality "$\geq$" is clear by monotonicity. We prove the reverse direction and w.l.o.g. assume that the right hand side is finite. By definition there exists an open set $U_i$ such that $E_i \subset U_i \subset \Omega_T$ and 
$$
\ca_{\mathrm{var}}^* \left(U_i, \Omega_T \right) \leq \ca_{\mathrm{var}}^* \left(E_i, \Omega_T \right) + \tfrac{1}{2i}
$$
for every $i \in \N$.
Furthermore, for each $U_i$ there exists $v_i \in \mathcal{W}(\Omega_T)$ with $v_i \geq 1$ a.e. in $U_i$ such that
$$
\|v_i\|_{ \mathcal{W}(\Omega_T)} \leq \ca_{\mathrm{var}}^* \left(U_i, \Omega_T \right) + \tfrac{1}{2i}.
$$
Thus, as in the proof of Lemma~\ref{lem:cap*-cap'-eq} there exist $v \in \mathcal{W}(\Omega_T)$ and a (nonrelabeled) subsequence $(v_i)_{i\in \N}$ such that the convergences in~\eqref{eq:W-convergences} hold, and furthermore, $v \geq 1$ a.e. in $\bigcup_{i=1}^\infty U_i$. Observe that $\bigcup_{i=1}^\infty U_i \supset \bigcup_{i=1}^\infty
E_i$. By definition of capacity, weak lower semicontinuity of the norm
and the estimates above we obtain
\begin{align*}
\ca_{\mathrm{var}}^* \left( \bigcup_{i=1}^\infty E_i, \Omega_T \right) \leq \|v\|_{\mathcal{W}(\Omega_T)} \leq \liminf_{i\to \infty} \|v_i\|_{\mathcal{W}(\Omega_T)} \leq \lim_{i\to \infty} \ca_{\mathrm{var}}^* \left(E_i, \Omega_T \right),
\end{align*}
which concludes the proof.

\end{proof}

Concerning subadditivity, we have the following result for a certain
power of the variational capacity. 

\begin{lemma}
Let $1<p<\infty$ and $0<T\leq \infty$. Then, $(\ca_{\mathrm{var}}')^s$
satisfies the countable subadditivity property, where $s = (\max\{p,p'\})^{-1}$. That is, if $E_i \subset \Omega_T$ is a set for every $i \in \N$, then
$$
(\ca_{\mathrm{var}}')^s \left(\bigcup_{i=1}^\infty E_i , \Omega_T\right) \leq \sum_{i =1}^\infty (\ca_{\mathrm{var}}')^s ( E_i , \Omega_T).
$$
\end{lemma}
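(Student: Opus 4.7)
The plan is to mirror the proof of Lemma~\ref{lem:subadditivity}, exploiting that the admissibility class defining $\ca_{\mathrm{var}}^{\ast}$ on open sets (which, via Lemma~\ref{lem:cap*-cap'-eq} and its remark, coincides with $\ca_{\mathrm{var}}'$ on all sets) is visibly closed under sums. This sidesteps the smooth-approximation step used in Lemma~\ref{lem:subadditivity}. First I would establish finite subadditivity on open sets: for open $U_1,U_2\subset\Omega_T$ and $\eps>0$, I choose $v_i\in\mathcal{W}(\Omega_T)$ with $v_i\geq\chi_{U_i}$ a.e.\ and $\|v_i\|_{\mathcal{W}(\Omega_T)}\leq\ca_{\mathrm{var}}^{\ast}(U_i,\Omega_T)+\eps$. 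Since $v_i\geq 0$ a.e.\ on $U_i^c$, the sum $v_1+v_2\in\mathcal{W}(\Omega_T)$ satisfies $v_1+v_2\geq\chi_{U_1\cup U_2}$ a.e., hence is admissible for $\ca_{\mathrm{var}}^{\ast}(U_1\cup U_2,\Omega_T)$. Applying the triangle inequality in each of $\mathcal{V}(\Omega_T)$, $\mathcal{V}'(\Omega_T)$ and $L^\infty(0,T;L^2(\Omega))$, then the concavity estimate $(a+b)^\theta\leq a^\theta+b^\theta$ with $\theta=p-1$ and $\theta=2/p'$ (in the range $1<p<2$, where $s=1/p'$; the symmetric exponents $1/(p-1)$ and $2/p$ appear when $p\geq 2$), and finally Minkowski's inequality for the $\ell^{\max\{p,p'\}}$-norm on $\R^3$, one obtains
$$
(\ca_{\mathrm{var}}^{\ast})^s(U_1\cup U_2,\Omega_T)\leq(\ca_{\mathrm{var}}^{\ast})^s(U_1,\Omega_T)+(\ca_{\mathrm{var}}^{\ast})^s(U_2,\Omega_T),
$$
exactly as in Lemma~\ref{lem:subadditivity}. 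Iteration gives the finite union version.

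To pass to countable unions of open sets, I would invoke the upward continuity of $\ca_{\mathrm{var}}^{\ast}$ from Lemma~\ref{lem:cap*-increasinglim}: writing $W_k:=\bigcup_{i=1}^k U_i$ and combining with finite subadditivity,
$$
(\ca_{\mathrm{var}}^{\ast})^s\Big(\bigcup_{i=1}^\infty U_i,\Omega_T\Big)=\lim_{k\to\infty}(\ca_{\mathrm{var}}^{\ast})^s(W_k,\Omega_T)\leq\sum_{i=1}^\infty(\ca_{\mathrm{var}}^{\ast})^s(U_i,\Omega_T).
$$
For arbitrary $E_i\subset\Omega_T$, outer regularity in Definition~\ref{def:varcap*} lets me pick open $U_i\supset E_i$ with $(\ca_{\mathrm{var}}^{\ast})^s(U_i,\Omega_T)\leq(\ca_{\mathrm{var}}^{\ast})^s(E_i,\Omega_T)+\eps/2^i$; monotonicity and the open-set estimate then yield
$$
(\ca_{\mathrm{var}}^{\ast})^s\Big(\bigcup_i E_i,\Omega_T\Big)\leq(\ca_{\mathrm{var}}^{\ast})^s\Big(\bigcup_i U_i,\Omega_T\Big)\leq\sum_i(\ca_{\mathrm{var}}^{\ast})^s(E_i,\Omega_T)+\eps,
$$
and letting $\eps\to 0$, together with the identification $\ca_{\mathrm{var}}^{\ast}=\ca_{\mathrm{var}}'$ on all sets, concludes the proof.

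The only genuinely nontrivial ingredient is the algebraic inequality on the $\mathcal{W}$-norm, but this is bookkeeping already carried out in Lemma~\ref{lem:subadditivity} and transfers essentially verbatim; the benefit of working with $\ca_{\mathrm{var}}^{\ast}$ rather than directly on compact sets is that admissibility of the sum $v_1+v_2$ is automatic (no smoothing back into $C_0^\infty(\Omega\times\R)$ is required), and the step from finite to countable unions is handled uniformly by the upward continuity in Lemma~\ref{lem:cap*-increasinglim}.
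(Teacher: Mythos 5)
Your proof is correct. The paper's proof is simply ``analogous to Lemma~\ref{lem:subadditivity},'' which, carried out for $\ca_{\mathrm{var}}'$, would start with compact sets, exploit that sums of admissible $\mathcal{W}$-functions are admissible (since each majorizes an indicator of an open neighborhood), apply the same triangle/concavity/Minkowski bookkeeping, and then pass from finite to countable unions via outer regularity and a finite-subcover argument as at the end of Lemma~\ref{lem:subadditivity}. You instead pass through $\ca_{\mathrm{var}}^*$ (valid by Lemma~\ref{lem:cap*-cap'-eq}), work with open sets from the start, and replace the finite-subcover step by the upward continuity of $\ca_{\mathrm{var}}^*$ from Lemma~\ref{lem:cap*-increasinglim}. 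That is a genuine variation in the countable-union step: your route leans on the weak-compactness machinery behind Lemma~\ref{lem:cap*-increasinglim}, whereas the paper's route is more elementary there (only Heine--Borel). Both are sound. One small inaccuracy in your commentary: Lemma~\ref{lem:subadditivity} does not contain a smooth-approximation step that needs to be ``sidestepped'' --- the sum of two admissible functions in $C_0^\infty(\Omega\times\R)$ is again in $C_0^\infty(\Omega\times\R)$, so admissibility of the sum is immediate there just as it is for $\ca_{\mathrm{var}}'$ and $\ca_{\mathrm{var}}^*$; this does not affect the validity of your argument.
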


\begin{remark}
By Lemma~\ref{lem:cap*-cap'-eq} the result holds also for $\ca_{\mathrm{var}}^*$.
\end{remark}

\begin{proof}
Analogous to the proof of Lemma~\ref{lem:subadditivity}.
\end{proof}

Monotonicity and Lemmas~\ref{lem:cap'-limit-compact} and~\ref{lem:cap*-increasinglim} (see also Remark~\ref{rem:cap*-increasinglim}) imply that $\ca_{\mathrm{var}}'$ and $\ca_{\mathrm{var}}^*$ are Choquet capacities. This implies that all Borel sets are capacitable, see e.g.~\cite[Chapter 2, Theorem 2.5]{HKM} and references therein.

\begin{proposition} \label{prop:capvar-choquet}
Let $1<p<\infty$, $0<T\leq \infty$ and $B \subset \Omega_T$ be a Borel set. Then
$$
\ca_{\mathrm{var}}^i (B, \Omega_T) = \sup \left\{ \ca_{\mathrm{var}}^i (K, \Omega_T), K \text{ compact}, K \subset B \right\}.
$$
for $i \in \{',*\}$. For $\ca_{\mathrm{var}}$, the above property holds up to a constant depending on $n,p$ and $\alpha$.
\end{proposition}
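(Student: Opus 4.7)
The plan is to recognize Proposition~\ref{prop:capvar-choquet} as a direct consequence of Choquet's capacitability theorem, see e.g.~\cite[Chapter~2, Theorem~2.5]{HKM}. Recall that a set function $C$ defined on the subsets of $\Omega_T$ is a Choquet capacity if it is monotone, continuous from below on arbitrary increasing sequences of sets, and continuous from above on decreasing sequences of compact sets. Once these three properties are established for $\ca_{\mathrm{var}}'$ and $\ca_{\mathrm{var}}^\ast$, Choquet's theorem immediately gives the inner regularity statement for Borel sets.

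First I would verify the axioms for $\ca_{\mathrm{var}}'$. Monotonicity is immediate from the definition. Continuity from below for arbitrary increasing sequences $E_i\nearrow E$ is provided by Lemma~\ref{lem:cap*-increasinglim} combined with Remark~\ref{rem:cap*-increasinglim}, which transfers the statement from $\ca_{\mathrm{var}}^\ast$ to $\ca_{\mathrm{var}}'$ via the identity in Lemma~\ref{lem:cap*-cap'-eq}. Continuity from above on decreasing sequences of compact sets is exactly the content of Lemma~\ref{lem:cap'-limit-compact}. Hence $\ca_{\mathrm{var}}'$ is a Choquet capacity, and Choquet's capacitability theorem yields the claim for $i={}'$.

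For $\ca_{\mathrm{var}}^\ast$, I would simply invoke Lemma~\ref{lem:cap*-cap'-eq}, which asserts $\ca_{\mathrm{var}}^\ast(U,\Omega_T)=\ca_{\mathrm{var}}'(U,\Omega_T)$ for every open set $U$, and hence by the definitions extends to all sets. Since the capacities agree, the inner regularity for $\ca_{\mathrm{var}}'$ transfers verbatim to $\ca_{\mathrm{var}}^\ast$.

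Finally, for $\ca_{\mathrm{var}}$ I only expect equality up to a constant, and this is where the equivalence in Lemma~\ref{lem:cap-cap'-equiv} must be used carefully, since the two-sided bound $\ca_{\mathrm{var}}'(K,\Omega_T)\le\ca_{\mathrm{var}}(K,\Omega_T)\le c\,\ca_{\mathrm{var}}'(K,\Omega_T)$ holds only on compact sets. The argument would proceed as follows: for a Borel set $B\subset\Omega_T$ and any compact $K\subset B$, the first inequality of Lemma~\ref{lem:cap-cap'-equiv} combined with monotonicity gives $\ca_{\mathrm{var}}'(K,\Omega_T)\le\ca_{\mathrm{var}}(K,\Omega_T)\le\ca_{\mathrm{var}}(B,\Omega_T)$, so that
\begin{equation*}
  \sup\{\ca_{\mathrm{var}}(K,\Omega_T)\colon K\subset B\text{ compact}\}\le \ca_{\mathrm{var}}(B,\Omega_T).
\end{equation*}
Conversely, from $\ca_{\mathrm{var}}(B,\Omega_T)\le c\,\ca_{\mathrm{var}}'(B,\Omega_T)$ (valid on arbitrary sets by the definitions through open covers, which reduce the inequality to the compact case covered by Lemma~\ref{lem:cap-cap'-equiv}) and the already established inner regularity of $\ca_{\mathrm{var}}'$ on Borel sets, one obtains
\begin{equation*}
  \ca_{\mathrm{var}}(B,\Omega_T)\le c\sup\{\ca_{\mathrm{var}}'(K,\Omega_T)\colon K\subset B\text{ compact}\}\le c\sup\{\ca_{\mathrm{var}}(K,\Omega_T)\colon K\subset B\text{ compact}\}.
\end{equation*}
The main subtlety, which is the one point worth being careful about, is that $\ca_{\mathrm{var}}$ itself is not known to be a Choquet capacity (the lack of weak lower semicontinuity with respect to the $C_0^\infty$-approximation makes the direct proof of continuity from below delicate); that is precisely why the proof must route through $\ca_{\mathrm{var}}'$ and accept the constant $c=c(n,p,\alpha)$ arising from Lemma~\ref{lem:cap-cap'-equiv}.
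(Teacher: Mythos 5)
Your proposal is correct and follows the same route as the paper: verify the three Choquet axioms for $\ca_{\mathrm{var}}'$ and $\ca_{\mathrm{var}}^\ast$ using Lemmas~\ref{lem:cap'-limit-compact} and~\ref{lem:cap*-increasinglim} together with monotonicity, invoke Choquet's capacitability theorem, and transfer the statement to $\ca_{\mathrm{var}}$ up to the constant from Lemma~\ref{lem:cap-cap'-equiv}. Your careful remarks about extending the two-sided comparison from compact sets to arbitrary sets and about why $\ca_{\mathrm{var}}$ itself is not directly shown to be a Choquet capacity are accurate and fill in exactly the details the paper leaves implicit.
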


\end{document}